\title{Hyperbolic graphs for free products, and the Gromov boundary of the graph of cyclic splittings}
\author{Camille Horbez}
\begin{document}
\maketitle
\newtheorem{de}{Definition} [section]
\newtheorem{theo}[de]{Theorem} 
\newtheorem{prop}[de]{Proposition}
\newtheorem{lemma}[de]{Lemma}
\newtheorem{cor}[de]{Corollary}
\newtheorem{propd}[de]{Proposition-Definition}

\theoremstyle{remark}
\newtheorem{rk}[de]{Remark}
\newtheorem{ex}[de]{Example}
\newtheorem{question}[de]{Question}

\normalsize

\addtolength\topmargin{-.5in}
\addtolength\textheight{1.in}
\addtolength\oddsidemargin{-.045\textwidth}
\addtolength\textwidth{.09\textwidth}

\newcommand{\coucou}[1]{\footnote{#1}\marginpar{$\leftarrow$}}

\begin{abstract}
We define hyperbolic analogues of the graphs of free splittings, of cyclic splittings, and of maximally-cyclic splittings of $F_N$ for free products of groups. Given a countable group $G$ which splits as $G=G_1\ast\dots\ast G_k\ast F$, where $F$ denotes a finitely generated free group, we identify the Gromov boundary of the graph of relative cyclic splittings with the space of equivalence classes of $\mathcal{Z}$-averse trees in the boundary of the corresponding outer space. A tree is \emph{$\mathcal{Z}$-averse} if it is not compatible with any tree $T'$, that is itself compatible with a relative cyclic splitting. Two $\mathcal{Z}$-averse trees are \emph{equivalent} if they are both compatible with a common tree in the boundary of the corresponding outer space. We give a similar description of the Gromov boundary of the graph of maximally-cyclic splittings. 
\end{abstract}

\section*{Introduction}

Masur and Minsky's celebrated proof \cite{MM99} of the hyperbolicity of the curve graph $\mathcal{C}(S)$ of a closed, orientable surface $S$, paved the way to many developments in the study of mapping class groups of surfaces. Striking applications of curve complexes include rigidity results for $\text{Mod}(S)$, bounded cohomology of subgroups of $\text{Mod}(S)$ (see \cite{BF02}), finite asymptotic dimension for $\text{Mod}(S)$ (which was established by Bestvina--Bromberg--Fujiwara \cite{BBF14}, using previous work of Bell--Fujiwara who proved in \cite{BF08} that $\mathcal{C}(S)$ has finite asymptotic dimension), or the study of convex cocompact subgroups of $\text{Mod}(S)$, initiated by Farb--Mosher in \cite{FM02}. 

An important result regarding the geometry of $\mathcal{C}(S)$ was a concrete description by Klarreich \cite{Kla99} of its Gromov boundary (see also \cite{Ham06} for an alternative proof of Klarreich's theorem): Klarreich identified $\partial_{\infty}\mathcal{C}(S)$ with the space of equivalence classes of arational measured foliations in $\mathcal{PMF}$, two foliations $F$ and $F'$ being equivalent (denoted by $F\sim F'$) if they have the same topological support, and only differ by their transverse measure. This space is also homeomorphic to the space of ending laminations on $S$. 

The space $\mathcal{PMF}$ is also homeomorphic to Thurston's boundary of the Teichmüller space $\mathcal{T}(S)$. Klarreich also proved that the coarse projection map $\psi$ from the Teichmüller space $\mathcal{T}(S)$ to the curve graph extends continuously to a map $\partial\psi$ from the subspace $\mathcal{AF}\subseteq\mathcal{PMF}$ made of arational foliations to $\partial_{\infty}\mathcal{C}(S)$, and $\partial\psi$ induces a homeomorphism from the quotient space $\mathcal{AF}/{\sim}$ to $\partial_{\infty}\mathcal{C}(S)$. This relates the geometry at infinity of the Teichmüller space and the curve graph of $S$. Connectedness of $\partial_{\infty}\mathcal{C}(S)$ was first established by Leininger--Schleimer \cite{LS09} when $S$ either has genus at least $4$, or is any punctured surface of genus at least $2$, their result was then extended with a different proof by Gabai \cite{Gab09} to all surfaces of complexity at least $2$.

The description of the Gromov boundary of the curve graph turned out to have several important applications. For example, Rafi--Schleimer used the description of $\partial_{\infty}\mathcal{C}(S)$ and its connectedness to prove in \cite{RS11} rigidity results for curve complexes, and deduce in particular that if $S$ and $S'$ are two surfaces such that $\mathcal{C}(S)$ and $\mathcal{C}(S')$ are quasi-isometric, then $S$ and $S'$ are homeomorphic (apart from a few sporadic cases). More remarkably, Klarreich's result was a key ingredient in the proof by Brock--Canary--Minsky of the Ending Lamination Conjecture \cite{Min10,BCM12}, stating that hyperbolic $3$-manifolds with finitely generated fundamental groups are uniquely determined by their topological type and their end invariants. Other applications of the description of $\partial_{\infty}\mathcal{C}(S)$ include proofs of boundary amenability for $\text{Mod}(S)$, from which it follows that $\text{Mod}(S)$ satisfies the Novikov conjecture, see \cite{Ham09,Kid08}.
\\
\\
\indent These successful applications of curve graphs in the study of mapping class groups of surfaces naturally led geometric group theorists to look for $\text{Out}(F_N)$-analogues of the curve graph of a compact surface. Several analogues have been proposed, and proven to be Gromov hyperbolic: among them stand the \emph{free factor graph} $FF_N$ (whose hyperbolicity was proved by Bestvina--Feighn \cite{BF12}), the \emph{free splitting graph} $FS_N$ (whose hyperbolicity was proved by Handel--Mosher \cite{HM12}, and the \emph{cyclic splitting graph} $FZ_N$ (whose hyperbolicity was proved by Mann \cite{Man12}). 

Bestvina and Reynolds \cite{BR13}, and independently Hamenstädt \cite{Ham12}, have identified the Gromov boundary of the free factor graph of $F_N$ with a space of equivalence classes of arational $F_N$-trees in the boundary $\partial CV_N$ of Culler and Vogtmann's outer space. A tree $T\in\partial CV_N$ is \emph{arational} if no proper free factor of $F_N$ acts trivially or with dense orbits on its minimal subtree in $T$, and two arational trees are \emph{equivalent} if their underlying topological trees are homeomorphic. As in the context of mapping class groups of surfaces, the coarse map from $CV_N$ to $FF_N$ extends to a map from the subspace of $\partial CV_N$ made of arational trees to $\partial_{\infty}FF_N$, and this boundary map induces a homeomorphism in the quotient. Hamenstädt also gives a description of the Gromov boundaries of the graphs $FZ_N$ and $FS_N$.   
\\
\\
\indent In the present paper, we will work in a more general context than free groups, and consider generalizations of the graphs $FS_N$ and $FZ_N$, adapted to the study of automorphism groups of free products. These graphs actually appear to be of interest even in the context of free groups, for studying subgroups of $\text{Out}(F_N)$ made of automorphisms that preserve the conjugacy class of a proper free factor of $F_N$.  

Our setting is the following: let $\{G_1,\dots,G_k\}$ be a finite collection of nontrivial countable groups, let $F_N$ be a free group of rank $N$, and let $$G:=G_1\ast\dots\ast G_k\ast F_N.$$ We denote by $\mathcal{F}:=\{[G_1],\dots,[G_k]\}$ the finite collection of the $G$-conjugacy classes of the subgroups $G_1,\dots,G_k$, which we call a \emph{free factor system} of $G$. Subgroups of $G$ that are conjugate into one of the $G_i$'s will be called \emph{peripheral}. We denote by $\mathcal{Z}$ the collection of subgroups of $G$ that are either trivial, or cyclic and nonperipheral. We denote by $\mathcal{Z}^{max}$ the collection of elements in $\mathcal{Z}$ which are either trivial, or \emph{isolated}, i.e. closed under taking roots. A \emph{$(G,\mathcal{F})$-splitting} is a minimal, simplicial $G$-tree, in which all subgroups in $\mathcal{F}$ have a fixed point. It is a \emph{free splitting} (resp. a \emph{$\mathcal{Z}$-splitting}, resp. a \emph{$\mathcal{Z}^{max}$-splitting}) if all edge stabilizers are trivial (resp. belong to $\mathcal{Z}$, resp. belong to $\mathcal{Z}^{max}$). It is a \emph{one-edge splitting} if it contains a single orbit of edges.

We now define generalizations of the aforementioned $\text{Out}(F_N)$-graphs. Vertices of the graph $FS(G,\mathcal{F})$ (resp. $FZ(G,\mathcal{F})$, resp. $FZ^{max}(G,\mathcal{F})$) are the equivalence classes of one-edge $(G,\mathcal{F})$-free splittings (resp. $\mathcal{Z}$-splittings, resp. $\mathcal{Z}^{max}$-splittings). In all cases, two distinct splittings are joined by an edge if they admit a common refinement. All these graphs are hyperbolic, as follows from a direct adaptation of the proofs of the analogous results in context of free groups (see the Appendix of the present paper for a presentation of the argument). There is also a version of Culler--Vogtmann's outer space for free products \cite{GL07}: the \emph{outer space} $P\mathcal{O}(G,\mathcal{F})$ is the space of homothety classes of minimal, simplicial, metric $(G,\mathcal{F})$-splittings, in which nontrivial vertex stabilizers coincide with the subgroups in $\mathcal{F}$, and edge stabilizers are trivial (we also consider the \emph{unprojectivized outer space} $\mathcal{O}(G,\mathcal{F})$, where trees are considered up to equivariant isometry, instead of homothety). We identified in \cite{Hor14-5} the closure $\overline{\mathcal{O}(G,\mathcal{F})}$ with the space of minimal, \emph{very small} $(G,\mathcal{F})$-trees, i.e. trees whose arc stabilizers belong to the class $\mathcal{Z}^{max}$, and whose tripod stabilizers are trivial.
\\
\\
\indent The goal of the present paper is to give concrete models for the Gromov boundaries of $FZ(G,\mathcal{F})$ and $FZ^{max}(G,\mathcal{F})$. Our original motivation for this comes from our work on the Tits alternative for automorphism groups of free products \cite{Hor14-8}. In \cite{Hor14-8}, we show that any subgroup of $\text{Out}(G,\mathcal{F})$ either contains a nonabelian free subgroup generated by two elements acting as loxodromic isometries of $FZ(G,\mathcal{F})$, or else it virtually fixes either a proper $(G,\mathcal{F})$-free factor, or a boundary point in $\partial_{\infty}FZ(G,\mathcal{F})$. In order to deal with this last situation, it is crucial to have a description of the Gromov boundary of $FZ(G,\mathcal{F})$. It is also important in our proof of the Tits alternative to have a boundary map from a subset of the boundary of the relative outer space to $\partial_{\infty}FZ(G,\mathcal{F})$, relating the geometries at infinity of these two spaces: roughly speaking, the lack of local compactness of $FZ(G,\mathcal{F})$ is counterbalanced by the existence of this boundary map relating the geometry at infinity of $FZ(G,\mathcal{F})$ to the geometry of the compact space $\overline{P\mathcal{O}(G,\mathcal{F})}$.

We now present our descriptions of $\partial_{\infty}FZ(G,\mathcal{F})$ and $\partial_{\infty}FZ^{max}(G,\mathcal{F})$. In what follows, we will use the notation $FZ^{(max)}(G,\mathcal{F})$ as a shortcut to denote either $FZ(G,\mathcal{F})$ or $FZ^{max}(G,\mathcal{F})$. We will show that the natural map $\psi^{(max)}: P\mathcal{O}(G,\mathcal{F})\to FZ^{(max)}(G,\mathcal{F})$ admits a continuous extension to the subspace of $\partial P\mathcal{O}(G,\mathcal{F})$ made of what we call \emph{$\mathcal{Z}^{(max)}$-averse trees}, which induces a homeomorphism from a quotient of this subspace to $\partial_{\infty}FZ^{(max)}(G,\mathcal{F})$. Here, the subspace $\mathcal{X}^{(max)}(G,\mathcal{F})\subseteq\overline{\mathcal{O}(G,\mathcal{F})}$ of \emph{$\mathcal{Z}^{(max)}$-averse} trees consists of those trees that are not compatible with any tree in $\overline{\mathcal{O}(G,\mathcal{F})}$ that is itself compatible with some $\mathcal{Z}^{(max)}$-splitting (two $(G,\mathcal{F})$-trees $T$ and $T'$ are \emph{compatible} if there exists a $(G,\mathcal{F})$-tree $\widehat{T}$ which admits alignment-preserving maps onto both $T$ and $T'$). In Theorems \ref{averse} and \ref{maxi-averse} below, we give several equivalent definitions of $\mathcal{Z}^{(max)}$-averse trees. In particular, it is proved that if two trees are compatible, and one is $\mathcal{Z}^{(max)}$-averse, then so is the other. We show that being compatible with a common tree in $\overline{\mathcal{O}(G,\mathcal{F})}$ defines an equivalence relation on $\mathcal{X}^{(max)}(G,\mathcal{F})$, which we denote by $\sim$. The main theorem of the present paper is the following.

\begin{theo}\label{intro-main}
Let $G$ be a countable group, and let $\mathcal{F}$ be a free factor system of $G$. Then there exists a unique $\text{Out}(G,\mathcal{F})$-equivariant homeomorphism $$\partial\psi:\mathcal{X}(G,\mathcal{F})/{\sim}\to \partial_{\infty} FZ(G,\mathcal{F}),$$ so that for all $T\in\mathcal{X}(G,\mathcal{F})$, and all sequences $(T_n)_{n\in\mathbb{N}}\in \mathcal{O}(G,\mathcal{F})^{\mathbb{N}}$ converging to $T$, the sequence $(\psi(T_n))_{n\in\mathbb{N}}$ converges to $\partial\psi (T)$.
\end{theo}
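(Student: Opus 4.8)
The plan is to adapt to the relative setting the strategy of Klarreich \cite{Kla99}, Bestvina--Reynolds \cite{BR13} and Hamenstädt \cite{Ham12}. Two geometric inputs, which I would establish in the body of the paper before this point, form the backbone of the argument: first, that $\psi$ is coarsely Lipschitz on $P\mathcal{O}(G,\mathcal{F})$ and coarsely surjective onto $FZ(G,\mathcal{F})$ (every one-edge $\mathcal{Z}$-splitting admits a common refinement with a free splitting arising from $\mathcal{O}(G,\mathcal{F})$, so lies at bounded distance from $\psi(\mathcal{O}(G,\mathcal{F}))$); and second, that the $\psi$-image of a folding path in $\mathcal{O}(G,\mathcal{F})$ — more generally, of an interpolation between two trees of $\overline{\mathcal{O}(G,\mathcal{F})}$ built from Guirardel's core — is an unparametrized quasigeodesic of $FZ(G,\mathcal{F})$, with constants independent of the endpoints. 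Granting these, together with the characterizations of $\mathcal{Z}$-averse trees from Theorem~\ref{averse} — in particular that $T$ is $\mathcal{Z}$-averse if and only if $\psi(T_n)$ leaves every bounded set for some, equivalently every, sequence $(T_n)$ in $\mathcal{O}(G,\mathcal{F})$ with $T_n\to T$ — I would define, for $T\in\mathcal{X}(G,\mathcal{F})$, the point $\partial\psi(T)\in\partial_\infty FZ(G,\mathcal{F})$ as the endpoint of the quasigeodesic ray obtained by projecting to $FZ(G,\mathcal{F})$ a folding path issuing from a fixed basepoint $X_0\in\mathcal{O}(G,\mathcal{F})$ and guided by $T$. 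Here $\mathcal{Z}$-averseness is exactly what makes this projected ray unbounded, and hyperbolicity makes its endpoint independent of $X_0$ and of the chosen path.

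The next step is to upgrade this construction to the limit property in the statement and to check that $\partial\psi$ factors through $\sim$. The key estimate is: if $T_n\to T$ and $T'_m\to T'$ in $\overline{\mathcal{O}(G,\mathcal{F})}$ with $T$ and $T'$ compatible (in particular if $T=T'$), then the Gromov product $\langle\psi(T_n)\mid\psi(T'_m)\rangle_{\psi(X_0)}$ tends to infinity. I would prove this by interpolating folding paths through fine approximations of $T$ and $T'$ and through a common refinement of them, and combining the quasigeodesic property with the fact that such a folding path makes definite progress in $FZ(G,\mathcal{F})$ away from any fixed ball (again via Theorem~\ref{averse}). Applied with $T'=T$ and $T_n\in\mathcal{O}(G,\mathcal{F})$, this gives $\psi(T_n)\to\partial\psi(T)$ for every sequence $T_n\to T$ in $\mathcal{O}(G,\mathcal{F})$; applied to trees $T,T'$ compatible with a common $\widehat{T}\in\overline{\mathcal{O}(G,\mathcal{F})}$ — which is then itself $\mathcal{Z}$-averse by Theorem~\ref{averse} — it gives $\partial\psi(T)=\partial\psi(\widehat{T})=\partial\psi(T')$, so $\partial\psi$ is constant on $\sim$-classes. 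This produces a well-defined map $\mathcal{X}(G,\mathcal{F})/{\sim}\to\partial_\infty FZ(G,\mathcal{F})$, which is $\text{Out}(G,\mathcal{F})$-equivariant by naturality of $\psi$.

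The main obstacle is injectivity: I must show that $\partial\psi(T)=\partial\psi(T')$ forces $T\sim T'$. Choosing $T_n\to T$ and $T'_n\to T'$ in $\mathcal{O}(G,\mathcal{F})$, the hypothesis says the two projected folding rays converge to the same point of $\partial_\infty FZ(G,\mathcal{F})$, hence fellow-travel. I would then argue that this fellow-travelling forces, after passing to subsequences, the trees $T_n$ and $T'_m$ to become compatible with a common tree of $\overline{\mathcal{O}(G,\mathcal{F})}$ in the limit: concretely, one normalizes the Guirardel cores of pairs $(T_{\sigma(k)},T'_{\tau(k)})$ taken along the fellow-travelling rays and extracts a limiting tree $\widehat{T}$ using compactness of $\overline{P\mathcal{O}(G,\mathcal{F})}$. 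The two delicate points — and this is where the argument parallels the hardest part of \cite{BR13} and leans most heavily on the structure theory of $(G,\mathcal{F})$-trees — are, first, to check that $\widehat{T}$ lies in $\overline{\mathcal{O}(G,\mathcal{F})}$, i.e. is very small, so that the control on arc stabilizers (in $\mathcal{Z}^{max}$) and on trivial tripod stabilizers survives the limiting process, using the description of $\overline{\mathcal{O}(G,\mathcal{F})}$ from \cite{Hor14-5}; and second, to promote the coarse fellow-travelling to genuine compatibility, namely to exhibit honest alignment-preserving maps from $\widehat{T}$ onto both $T$ and $T'$ rather than merely approximate ones. It is precisely the failure of local compactness of $FZ(G,\mathcal{F})$ that forces this detour through the compact space $\overline{P\mathcal{O}(G,\mathcal{F})}$.

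It remains to establish surjectivity and the homeomorphism statement. For surjectivity, given $\xi\in\partial_\infty FZ(G,\mathcal{F})$ I would choose a quasigeodesic ray $\gamma$ converging to $\xi$, use coarse surjectivity of $\psi$ to find $Y_k\in\mathcal{O}(G,\mathcal{F})$ with $\psi(Y_k)$ tracking $\gamma$, extract a limit $T$ of a subsequence of the projective classes $[Y_k]$ in $\overline{P\mathcal{O}(G,\mathcal{F})}$, note that $\psi(Y_k)$ leaving every bounded set forces $T$ to be $\mathcal{Z}$-averse by Theorem~\ref{averse}, and conclude $\partial\psi(T)=\lim_k\psi(Y_k)=\xi$ by the limit property. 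Continuity of $\partial\psi$ follows from the key estimate by a diagonal argument: given $T^{(j)}\to T$ in $\mathcal{X}(G,\mathcal{F})$, pick $Y_j\in\mathcal{O}(G,\mathcal{F})$ with $Y_j\to T$ and $\langle\psi(Y_j)\mid\partial\psi(T^{(j)})\rangle$ large, so that $\partial\psi(T^{(j)})\to\partial\psi(T)=\lim_j\psi(Y_j)$; this passes to the quotient. Continuity of the inverse needs a separate argument, since $\partial_\infty FZ(G,\mathcal{F})$ is not compact: here I would carry out the surjectivity construction ``continuously in $\xi$'', checking that if $\xi_j\to\xi$ then the associated limit trees all lie in a single $\sim$-class. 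Finally, uniqueness is immediate: the displayed limit property, the density of $\mathcal{O}(G,\mathcal{F})$ in $\overline{\mathcal{O}(G,\mathcal{F})}\supseteq\mathcal{X}(G,\mathcal{F})$, and the uniqueness of limits of sequences in a hyperbolic space together pin down $\partial\psi(T)$ for each $T$. The case of $FZ^{max}(G,\mathcal{F})$ is entirely parallel, with Theorem~\ref{maxi-averse} replacing Theorem~\ref{averse}.
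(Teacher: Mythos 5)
The overall Klarreich-style strategy you lay out is indeed the one the paper follows, and your outline of the definition of $\partial\psi$, its well-definedness on $\sim$-classes via a Gromov-product estimate, and the role of hyperbolicity are all on target. But the proposal leaves a large hole at exactly the point where the paper spends its Sections~\ref{sec-coll-pull} and~\ref{sec-not-X}. You attribute to Theorem~\ref{averse} the claim that $T$ is $\mathcal{Z}$-averse if and only if $\psi(T_n)$ leaves every bounded set for some, equivalently every, sequence $T_n\to T$ in $\mathcal{O}(G,\mathcal{F})$. Theorem~\ref{averse} says nothing of the sort: it only gives equivalent formulations of averseness in terms of chains of compatible tame trees and the set $\mathcal{R}^2(T)$. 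The implication ``$T$ averse $\Rightarrow$ $\psi(T_n)$ unbounded'' is Theorem~\ref{Luo}, a short Kobayashi-style argument; but what surjectivity and closedness of $\partial\psi$ actually need is the much harder statement that for $T\notin\mathcal{X}(G,\mathcal{F})$ the projected sequence $\psi(T_n)$ cannot converge to a point of $\partial_\infty FZ(G,\mathcal{F})$ (Proposition~\ref{psi-not-infty}). That is not the contrapositive of Theorem~\ref{Luo}: unboundedness and convergence to infinity are different. The paper establishes it by showing the set $\mathcal{R}^2(T)$ of reducing splittings has uniformly bounded diameter in $FZ(G,\mathcal{F})$ and absorbs the tail of any tame optimal folding sequence ending at $T$ (Theorem~\ref{canonical-splittings}), and that theorem in turn rests entirely on the collapse and pullback constructions for folding sequences that you do not mention. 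Without this ingredient, the step ``$\psi(Y_k)$ escapes to infinity forces $T$ to be $\mathcal{Z}$-averse'' is unsupported, and both your surjectivity argument and your ``continuity of the inverse'' sketch fail.

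Your injectivity argument also departs from the paper's in a way worth comparing. You propose to build a single common refinement $\widehat T$ from Guirardel cores of pairs along fellow-travelling rays and then verify that $\widehat T$ is very small and genuinely (not just coarsely) compatible with both $T$ and $T'$ --- you flag these verifications as the delicate points, and they are precisely where this route faces its main technical obstruction. The paper avoids both: it chooses mixing $\mathcal{Z}$-incompatible representatives via Proposition~\ref{mixing-representative}, uses Proposition~\ref{folding-uncollapsible} to keep the folding paths simplicial with trivial edge stabilizers until the endpoint, and then takes geodesic segments in $FZ(G,\mathcal{F})$ between corresponding points of the two projected paths, passing to limits of the one-edge splittings along those segments via Proposition~\ref{limit-one-edge}. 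Lemma~\ref{compatible-limit} transports compatibility along the chain, producing a finite sequence of pairwise-compatible trees from $T$ to $T'$ --- which is exactly what the chain-based definition of $\sim$ calls for, with no need to manufacture a single common refinement. This is both more elementary and better adapted to the definition of $\sim$ than the Guirardel-core approach. Finally, the paper deduces the homeomorphism property by proving $\partial\psi$ is a closed map (Proposition~\ref{psi-closed}), again via Proposition~\ref{psi-not-infty}; your suggestion to ``carry out the surjectivity construction continuously in $\xi$'' is gesturing at the same work but does not supply it.
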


When taking the quotient by equivariant homotheties, the relation $\sim$ induces an equivalence relation on $P\mathcal{X}(G,\mathcal{F})$, and $\partial_{\infty}FZ(G,\mathcal{F})$ is also homeomorphic to the quotient $P\mathcal{X}(G,\mathcal{F})/\sim$. The analogous statements also hold true for the Gromov boundary $\partial_{\infty}FZ^{max}(G,\mathcal{F})$, with $\mathcal{X}^{max}(G,\mathcal{F})$ instead of $\mathcal{X}(G,\mathcal{F})$. 
\\
\\
\indent We also provide information on the fibers of the equivalence relation $\sim$: every $\sim$-class of $\mathcal{Z}^{(max)}$-averse trees contains \emph{mixing} representatives (in the sense of Morgan \cite{Mor88}), and any two such representatives belong to the same simplex of length measures in $\partial P\mathcal{O}(G,\mathcal{F})$. We refer to Propositions \ref{mixing-representative} and \ref{mixing-rep-max} for precise statements. As was pointed to us by the referee, this implies that the fibers in $\overline{P\mathcal{O}(G,\mathcal{F})}$ of the boundary map $\partial\psi$ are compact and contractible, and hence that the boundary map $\partial\psi$ is cell-like. Since there is a bound on the topological dimension of $\partial P\mathcal{O}(G,\mathcal{F})$ \cite{Hor14-5}, this yields a bound on the cohomological dimension of $\partial_{\infty}FZ^{(max)}(G,\mathcal{F})$ (but not on its covering dimension, however).
\\
\\
\indent We now mention other applications of Theorem \ref{intro-main} to the geometry of $FZ^{(max)}(G,\mathcal{F})$ and its relations with other complexes. A first consequence of Theorem \ref{intro-main} is unboundedness of $FZ^{(max)}(G,\mathcal{F})$, except in the two sporadic cases where either $G=G_1\ast G_2$ and $\mathcal{F}=\{[G_1],[G_2]\}$, or $G=G_1\ast\mathbb{Z}$ and $\mathcal{F}=\{[G_1]\}$. For these two sporadic cases, an explicit description of the graphs $FS(G,\mathcal{F})$, $FZ(G,\mathcal{F})$ and $FZ^{max}(G,\mathcal{F})$ is given in Remarks \ref{sporadic-fs} and \ref{sporadic}.

Another application is the fact that the inclusion map from $FZ^{max}(G,\mathcal{F})$ to $FZ(G,\mathcal{F})$ is not a quasi-isometry as soon as the free rank $N$ of the decomposition of $G$ is at least $1$, and its Kurosh rank $k+N$ is at least $3$. However, the two graphs are quasi-isometric when $N=0$. We refer to Section \ref{sec-ex-max} for details.

In the context of free groups, there is also a natural map from $FZ_N^{(max)}$ to the free factor graph $FF_N$, defined by mapping any free splitting to one of its vertex groups, and mapping any one-edge $\mathcal{Z}^{(max)}$-splitting with nontrivial edge group to the smallest free factor of $F_N$ containing the edge group. We deduce from Theorem \ref{intro-main} that this map is not a quasi-isometry as soon as $N\ge 3$.
\\
\\
\indent We finally say a word of our proof of Theorem \ref{intro-main}. For simplicity, we will only describe the boundary $\partial_{\infty} FZ^{max}(G,\mathcal{F})$. Our general strategy of proof follows Klarreich's. 

The first step of the proof consists in showing that for all $\mathcal{Z}^{max}$-averse trees $T$, and all sequences of trees $(T_n)_{n\in\mathbb{N}}\in\mathcal{O}(G,\mathcal{F})^{\mathbb{N}}$ that converge to $T$, the sequence $(\psi^{max}(T_n))_{n\in\mathbb{N}}$ is unbounded in $FZ^{max}(G,\mathcal{F})$, and actually converges to a point in the Gromov boundary (Proposition \ref{X-psi}). This will define the boundary map $\partial\psi^{max}$ from $\mathcal{X}^{max}(G,\mathcal{F})$ to $\partial_{\infty}FZ^{max}(G,\mathcal{F})$. Our proof of this fact follows an argument of Kobayashi \cite{Kob88} for showing unboundedness of the curve complex of a nonsporadic surface, and relies on the following characterization of $\mathcal{Z}^{max}$-averse trees: a tree $T\in\overline{\mathcal{O}(G,\mathcal{F})}$ is $\mathcal{Z}^{max}$-averse if and only if there is no finite sequence $(T=T_0,\dots,T_k)$ of trees in $\overline{\mathcal{O}(G,\mathcal{F})}$ such that $T_i$ is compatible with $T_{i+1}$ for all $i\in\{0,\dots,k-1\}$, and $T_k$ is simplicial. We then show that the map $\partial\psi^{max}$ induces a map from the quotient space $\mathcal{X}^{max}(G,\mathcal{F})/{\sim}$ to $\partial_{\infty}FZ^{max}(G,\mathcal{F})$, which is shown to be both continuous and injective. 

The second step consists in showing surjectivity of $\partial\psi^{max}$. To this end, we associate to every tree $T\in\overline{\mathcal{O}(G,\mathcal{F})}$ which is not $\mathcal{Z}^{max}$-averse a set of \emph{reducing splittings}. These are defined as those $\mathcal{Z}^{max}$-splittings $S$ such that there exists a tree $T'\in\overline{\mathcal{O}(G,\mathcal{F})}$ that is compatible with both $T$ and $S$. We prove in Section \ref{sec-not-X} that the set of reducing splittings of $T$ has bounded diameter in $FZ^{max}(G,\mathcal{F})$: the proof relies on a study of folding paths in $\overline{\mathcal{O}(G,\mathcal{F})}$, which requires us to extend in Section \ref{sec-coll-pull} some aspects of folding path theory to the context of nonsimplicial trees. Boundedness of the set of reducing splittings is used to prove that the projection of any sequence of trees in $\mathcal{O}(G,\mathcal{F})$ that converges to a tree which is not $\mathcal{Z}^{max}$-averse, does not converge to any point of the Gromov boundary $\partial_{\infty}FZ^{max}(G,\mathcal{F})$ (Proposition \ref{psi-not-infty}). The proof of surjectivity of $\partial\psi^{max}$ then goes as follows: given $\xi\in\partial_{\infty}FZ^{max}(G,\mathcal{F})$, there exists a sequence $(T_n)_{n\in\mathbb{N}}\in\mathcal{O}(G,\mathcal{F})^{\mathbb{N}}$ such that $(\psi^{max}(T_n))_{n\in\mathbb{N}}$ converges to $\xi$. It then follows from the above that all projective accumulation points of the sequence $(T_n)_{n\in\mathbb{N}}$ belong to a common equivalence class of $\mathcal{Z}^{max}$-averse trees, and given any tree $T$ in this class, we have $\xi=\partial\psi^{max}(T)$. Similar arguments also show that $\partial\psi^{max}$ is closed, and hence it is a homeomorphism from $\mathcal{X}^{max}/{\sim}$ to $\partial_{\infty}FZ^{max}(G,\mathcal{F})$.

\paragraph*{Structure of the paper.}~
\\
\\
The paper is organized as follows. In Section \ref{sec-background}, we review some basic facts about free products of groups, outer spaces, folding paths, and Gromov hyperbolic spaces. We then introduce in Section \ref{sec-FS} the graphs of free splittings and of $\mathcal{Z}^{(max)}$-splittings (a sketch of the proof of their hyperbolicity, following closely Bestvina--Feighn's and Mann's arguments, is given in the Appendix). The rest of the paper is devoted to the description of the Gromov boundary $\partial_{\infty} FZ^{(max)}(G,\mathcal{F})$. In Section \ref{sec-material}, we introduce some more material about the geometry of folding paths in $\mathcal{O}(G,\mathcal{F})$, and the description of trees in $\overline{\mathcal{O}(G,\mathcal{F})}$. All this material will be used in the proof of our main theorem. In Section \ref{sec-X}, we study the properties of $\mathcal{Z}^{(max)}$-averse trees, and explain why these trees lie in some sense \emph{at infinity} of the complex $FZ^{(max)}(G,\mathcal{F})$. We then describe constructions of collapses and pullbacks of folding sequences in Section \ref{sec-coll-pull}, which are then used in Section \ref{sec-not-X} to associate to every tree in $\overline{\mathcal{O}(G,\mathcal{F})}\smallsetminus\mathcal{X}^{(max)}(G,\mathcal{F})$ a bounded set of \emph{reducing splittings} in $FZ^{(max)}(G,\mathcal{F})$. We eventually complete the proof of Theorem \ref{intro-main} in Section \ref{sec-boundary}. 

\section*{Acknowledgments}

It is a pleasure to thank my advisor Vincent Guirardel. The present paper would never have been written without the many hours he spent explaining me a lot of notions and ideas that turned out to be crucial for this work. I am also very grateful to the anonymous referee for a careful reading of the paper, and many valuable suggestions for improving its readibility. I am particularly indepted to the referee for pointing out an inaccuracy in the construction of collapses of optimal folding paths in a previous version of the manuscript. I also owe to the referee the suggestion of using the description of the fibers of the boundary map to deduce a bound on the cohomological dimension of $\partial_{\infty}FZ^{(max)}(G,\mathcal{F})$. I acknowledge support from ANR-11-BS01-013 and from the Lebesgue Center of Mathematics.
\setcounter{tocdepth}{1}
\tableofcontents

\section{Background}\label{sec-background}

We start by collecting general facts about free products of groups, outer spaces, $\mathbb{R}$-trees and Gromov hyperbolic spaces.

\subsection{Free products and free factors}\label{sec-free-product}

Let $G$ be a countable group which splits as a free product of the form $$G=G_1\ast\dots \ast G_k\ast F,$$ where $F$ is a finitely generated free group. We let $\mathcal{F}:=\{[G_1],\dots,[G_k]\}$ be the finite collection of the $G$-conjugacy classes of the $G_i$'s, which we call a \emph{free factor system} of $G$. The rank of the free group $F$ arising in such a splitting only depends on $\mathcal{F}$. We call it the \emph{free rank} of $(G,\mathcal{F})$ and denote it by $\text{rk}_f(G,\mathcal{F})$. The \emph{Kurosh rank} of $(G,\mathcal{F})$ is defined as $\text{rk}_K(G,\mathcal{F}):=\text{rk}_f(G,\mathcal{F})+|\mathcal{F}|$. Subgroups of $G$ which are conjugate into some subgroup in $\mathcal{F}$ are called \emph{peripheral} subgroups. A \emph{$(G,\mathcal{F})$-free splitting} is a minimal simplicial $G$-tree in which all subgroups in $\mathcal{F}$ are elliptic, and all of whose edge stabilizers are trivial. A \emph{$(G,\mathcal{F})$-free factor} is a subgroup of $G$ which is a point stabilizer in some $(G,\mathcal{F})$-free splitting. A $(G,\mathcal{F})$-free factor is \emph{proper} if it is nonperipheral (and in particular nontrivial), and not equal to $G$.

\begin{figure}
\begin{center}
\input{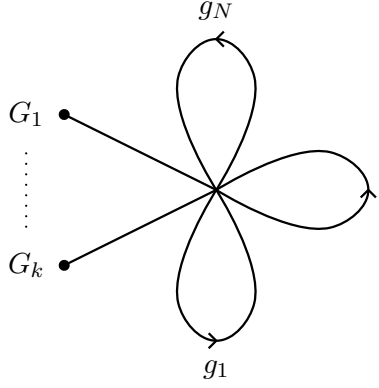}
\caption{A standard $(G,\mathcal{F})$-free splitting.}
\label{fig-Grushko}
\end{center}
\end{figure}

Subgroups of free products were studied by Kurosh in \cite{Kur34}, we will follow the geometric approach by Scott--Wall \cite[Theorem 3.1]{SW79} in the presentation below. Let $H$ be a subgroup of $G$. Let $T$ be the Bass--Serre tree of the decomposition of $G$ as a graph of groups represented in Figure \ref{fig-Grushko} (on which $\{g_1,\dots,g_N\}$ denotes a free basis of $F$). By considering the $H$-minimal subtree in $T$, we get the existence of a (possibly infinite) set $J$, together with an integer $i_j\in\{1,\dots,k\}$, a nontrivial subgroup $H_{j}\subseteq G_{i_j}$ and an element $g_{j}\in G$ for each $j\in J$, and a (not necessarily finitely generated) free subgroup $F'\subseteq G$, so that $$H=\ast_{j\in J}~ g_{j}H_{j}g_{j}^{-1}\ast F'.$$ This is called a \emph{Kurosh decomposition} of $H$. The \emph{Kurosh rank} of $H$ is defined as $\text{rk}_K(H):=|J|+\text{rk}(F')$ (this does not depend on a Kurosh decomposition of $H$). We note that $\text{rk}_K(H)$ may be infinite in general. We let $\mathcal{F}_H$ be the collection of all $H$-conjugacy classes of the subgroups $g_{j}H_{j}g_{j}^{-1}$. If $H$ is a $(G,\mathcal{F})$-free factor, then for all $j\in J$, we have $H_{j}=G_{i_j}$ by definition. In addition, the integers $i_j$ are pairwise distinct, because two distinct $G$-conjugates of the subgroup $G_{i_j}$ cannot have a common fixed point in a splitting of $G$ without fixing an arc in the splitting. In this case, we also get that $F'$ is finitely generated. So any $(G,\mathcal{F})$-free factor has finite Kurosh rank, smaller than $\text{rk}_K(G,\mathcal{F})$. 

We denote by $\text{Out}(G,\mathcal{F})$ the subgroup of the outer automorphism group $\text{Out}(G)$ of $G$ made of those automorphisms which preserve all conjugacy classes in $\mathcal{F}$. 

We denote by $\mathcal{Z}$ the collection of all subgroups of $G$ that are either trivial, or cyclic and nonperipheral. We denote by $\mathcal{Z}^{max}$ the collection of subgroups in $\mathcal{Z}$ that are either trivial, or \emph{isolated}, i.e. closed under taking roots.

\subsection{Outer space and its closure}

An \emph{$\mathbb{R}$-tree} is a metric space $(T,d_T)$ in which any two points $x,y\in T$ are joined by a unique arc, which is isometric to a segment of length $d_T(x,y)$. A \emph{$(G,\mathcal{F})$-tree} is an $\mathbb{R}$-tree equipped with an isometric action of $G$, in which all peripheral subgroups are elliptic. A \emph{Grushko $(G,\mathcal{F})$-tree} is a metric simplicial minimal $(G,\mathcal{F})$-tree with trivial arc stabilizers, whose vertex stabilizers coincide with the subgroups in $G$ whose $G$-conjugacy class belongs to $\mathcal{F}$. Two $(G,\mathcal{F})$-trees are \emph{equivalent} if there exists a $G$-equivariant isometry between them. The \emph{unprojectivized outer space} $\mathcal{O}(G,\mathcal{F})$, introduced by Guirardel and Levitt in \cite{GL07}, is the space of all equivalence classes of Grushko $(G,\mathcal{F})$-trees. \emph{Outer space} $P\mathcal{O}(G,\mathcal{F})$ is defined as the space of homothety classes of trees in $\mathcal{O}(G,\mathcal{F})$. The group $\text{Out}(G,\mathcal{F})$ acts on both $\mathcal{O}(G,\mathcal{F})$ and $P\mathcal{O}(G,\mathcal{F})$ on the right, by precomposing the actions. Let $T\in \mathcal{O}(G,\mathcal{F})$. For all $g\in G$, the \emph{translation length} of $g$ in $T$ is defined to be
\begin{displaymath}
||g||_T:=\inf_{x\in T}d_T(x,gx).
\end{displaymath}

\noindent Culler and Morgan have shown in \cite[Theorem 3.7]{CM87} that the function
\begin{displaymath}
\begin{array}{cccc}
i:&\mathcal{O}(G,\mathcal{F})&\to &\mathbb{R}^{G}\\
&T&\mapsto & (||g||_T)_{g\in G}
\end{array}
\end{displaymath}

\noindent is injective. We equip $\mathcal{O}(G,\mathcal{F})$ with the topology induced by this embedding, which is called the \emph{axes topology}. Taking the quotient by $G$-equivariant homotheties yields an embedding of $P\mathcal{O}(G,\mathcal{F})$ into the projective space $\mathbb{PR}^{G}$, whose image has compact closure $\overline{P\mathcal{O}(G,\mathcal{F})}$, see \cite[Theorem 4.5]{CM87} and \cite[Proposition 1.2]{Hor14-5}. We denote by $\overline{\mathcal{O}(G,\mathcal{F})}$ the preimage of $\overline{P\mathcal{O}(G,\mathcal{F})}$ in $\mathbb{R}^{G}$. A $(G,\mathcal{F})$-tree $T$ is \emph{very small} if arc stabilizers in $T$ belong to the class $\mathcal{Z}^{max}$, and tripod stabilizers are trivial. We identified the space $\overline{\mathcal{O}(G,\mathcal{F})}$ with the space of nontrivial, minimal, very small $(G,\mathcal{F})$-trees \cite[Theorem 0.1]{Hor14-5}. 

Given a tree $T\in\mathcal{O}(G,\mathcal{F})$, the \emph{cone} of $T$ is the set of trees in $\mathcal{O}(G,\mathcal{F})$ obtained by equivariantly varying the lengths of the edges of $T$ (we allow some edges to have length $0$ as long as the tree obtained by collapsing the corresponding edges still belongs to $\mathcal{O}(G,\mathcal{F})$.

\subsection{Liberal folding paths}\label{sec-folding}

From now on, all maps between $G$-trees will be $G$-equivariant. Let $T$ and $T'$ be two $\mathbb{R}$-trees. A \emph{direction} at a point $x\in T$ is a connected component of $T\smallsetminus\{x\}$. A \emph{train track structure} on $T$ is a partition of the set of directions at each point $x\in T$. Elements of the partition are called \emph{gates} at $x$. A pair $(d,d')$ of directions at $x$ is \emph{legal} if $d$ and $d'$ do not belong to the same gate. A path in $T$ is \emph{legal} if it only crosses legal pairs of directions. A \emph{morphism} $f:T\to T'$ is a map such that every segment of $T$ can be subdivided into finitely many subsegments, in restriction to which $f$ is an isometry. Any morphism $f:T\to T'$ defines a train track structure on $T$, two directions $d,d'$ at a point $x\in T$ being in the same gate if there are nondegenerate intervals $(x,a]\subseteq d$ and $(x,b]\subseteq d'$ which have the same $f$-images. A morphism is \emph{optimal} if there are at least two gates at every point in $T$. Let $T$ and $T'$ be two $(G,\mathcal{F})$-trees, and $f:T\to T'$ be a morphism. Following \cite[Appendix A.1]{BF13}, we define a \emph{liberal folding path guided by $f$} to be a continuous family $(T_t)_{t\in\mathbb{R}_+}$, together with a collection of morphisms $f_{t_1,t_2}:T_{t_1}\to T_{t_2}$ for all $0\le t_1<t_2$, such that 

\begin{itemize}
\item there exists $L\in\mathbb{R}$ such that for all $t\ge L$, we have $T_t=T'$, and
\item we have $f_{0,L}=f$, and 
\item for all $0\le t_1<t_2<t_3$, we have $f_{t_1,t_3}=f_{t_2,t_3}\circ f_{t_1,t_2}$.
\end{itemize}

\noindent Given two $(G,\mathcal{F})$-trees $T$ and $T'$, a \emph{liberal folding path} from $T$ to $T'$ is a folding path guided by some morphism $f:T\to T'$. A liberal folding path guided by a morphism $f$ is \emph{optimal} if $f$ is optimal. Notice that in this case, all morphisms $f_{t,t'}$ with $t<t'$ are also optimal. 

Given two $(G,\mathcal{F})$-trees $T$ and $T'$, we say that $T'$ is obtained from $T$ by a \emph{fold} if there exist arcs $e$ and $e'$ in $T$ having a common endpoint, such that $T'$ is obtained from $T$ by $G$-equivariantly identifying $e$ with $e'$. Given a morphism $f:T\to T'$, a way of constructing liberal folding paths from $T$ to $T'$ is by folding pairs of directions that have the same $f$-image. We refer to \cite[Section 2]{BF12} and references therein for various constructions of liberal folding paths between $(G,\mathcal{F})$-trees.

Given two $(G,\mathcal{F})$-trees $T$ and $T'$, and an optimal morphism $f:T\to T'$, Guirardel and Levitt described in \cite[Section 3]{GL07} a construction of a \emph{canonical optimal folding path} $(T_t)_{t\in\mathbb{R}_+}$ guided by $f$. As a set, the tree $T_t$ is defined in the following way. Given $a,b\in T$, we define the \emph{identification time} between $a$ and $b$ as $\tau(a,b):=\sup_{x\in [a,b]}d_{T'}(f(a),f(x))$. We define equivalence relations $\sim_t$ on $T$ for all $t\in\mathbb{R}_+$ by letting $a\sim_t b$ if $f(a)=f(b)$ and $\tau(a,b)\le t$. The tree $T_t$ is the quotient of $T$ by the equivalence relation $\sim_t$. In \cite[Section 3.1]{GL07}, Guirardel and Levitt defined for all $t\in\mathbb{R}_+$ a metric on $T_t$ that turns it into an $\mathbb{R}$-tree.

We now establish one more property of liberal folding paths.

\begin{prop}\label{folding-simplicial}
Let $T$ and $T'$ be simplicial $(G,\mathcal{F})$-trees with trivial edge stabilizers, and let $(T_t)_{t\in [0,L]}$ be a liberal folding path from $T$ to $T'$. Then for all $t\in [0,L]$, the tree $T_t$ is simplicial and has trivial edge stabilizers.
\end{prop}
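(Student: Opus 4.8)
The plan is to analyze what a liberal folding path can do to edge stabilizers, and to show that starting from trivial edge stabilizers, one can never create nontrivial ones along the path, and that simpliciality is preserved. I would argue in two stages: first control the trees $T_t$ for small $t$ (or more precisely, control the behavior of a single fold / the morphisms $f_{t_1,t_2}$), and then either run a connectedness / continuity argument on $[0,L]$ or observe that the path is obtained by successive folds and do an induction.

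First I would set up the basic observation: if $h:S\to S'$ is a morphism between $(G,\mathcal{F})$-trees and $g\in G$ fixes an arc $[x,y]\subseteq S'$ pointwise, then $g$ fixes pointwise the whole preimage $h^{-1}([x,y])$, which is a nonempty $g$-invariant subtree of $S$ (nonempty and connected since $h$ is a morphism, hence continuous and "nondegenerate" on segments). In particular $g$ fixes an arc in $S$. Contrapositively, \emph{trivial edge stabilizers pull back}: if $S$ has trivial arc stabilizers and $h:S\to S'$ is a morphism, then $S'$ has trivial arc stabilizers. Applying this to the morphism $f_{t,L}:T_t\to T_L=T'$ is the wrong direction, so instead I apply it to $f_{0,t}:T_0=T\to T_t$: this shows immediately that \emph{every} $T_t$ has trivial arc stabilizers, since $T=T_0$ does. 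That disposes of the edge-stabilizer claim with essentially no work, and it also shows $T_t$ is very small (no nontrivial arc stabilizers at all), so in particular tripod stabilizers are trivial.

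It remains to show $T_t$ is \emph{simplicial}. Here the key point is that a minimal $(G,\mathcal{F})$-tree with trivial arc stabilizers that receives a morphism from a simplicial tree and maps by a morphism onto a simplicial tree must itself be simplicial; more robustly, I would use a direct counting / Gromov-norm argument. Since $T$ is simplicial with trivial edge stabilizers and cocompact (minimal $(G,\mathcal{F})$-tree of finite Kurosh rank), the graph of groups $T/G$ is finite, and one has the standard "volume/complexity" bound: a folding path cannot increase complexity, and each $T_t$ admits a morphism from $T$, so $T_t$ has a bounded number of $G$-orbits of branch points and the quotient $T_t/G$ has bounded first Betti number plus number of vertices. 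The cleanest route: $f_{0,t}:T\to T_t$ is a surjective morphism, so $T_t$ is a quotient of the \emph{finite} simplicial tree $T$ by an equivariant equivalence relation that is "piecewise" on edges; combined with the fact that $f_{t,L}:T_t\to T'$ is also a morphism onto a simplicial tree, one sees the set of branch points of $T_t$ is discrete (it is contained in the image of the vertex set of $T$ together with preimages of vertices of $T'$, both of which are $G$-finite unions of orbits, hence discrete in $T_t$). A tree with trivial arc stabilizers whose branch set is discrete and $G$-cocompact, and with finitely many orbits of branch points, is simplicial. I would spell this out using the description of $\overline{\mathcal{O}(G,\mathcal{F})}$: $T_t\in\overline{\mathcal{O}(G,\mathcal{F})}$ is very small, and a very small $(G,\mathcal{F})$-tree all of whose points are either regular or in one of finitely many orbits of branch points, with no arc stabilizers, lies in $\mathcal{O}(G,\mathcal{F})$ itself.

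The main obstacle, I expect, is the simpliciality part rather than the edge-stabilizer part: I need to rule out that folding creates a "thin" (non-finitely-supported, Levitt-type or surface-type) piece, i.e. that the continuous interpolation $T_t$ stays simplicial and does not pass through genuinely $\mathbb{R}$-tree behavior. The resolution is that a morphism is injective on sufficiently short subsegments and maps segments onto segments subdivided into finitely many isometric pieces; so the point-preimages $f_{0,t}^{-1}(\text{pt})$ for $\text{pt}$ in the interior of an edge-image are finite, forcing $T_t$ to have only finitely many orbits of edges. I would phrase this carefully: fix an edge $e$ of the finite tree $T/G$ lifted to $T$; its $f_{0,t}$-image is a finite concatenation of segments in $T_t$, and over the interior of each such segment the map $f_{0,t}$ is locally injective with finite fibers, so $T_t$ minus a $G$-finite set of points is a disjoint union of open arcs — which is exactly the statement that $T_t$ is simplicial. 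Assembling these observations gives the proposition; the only genuinely delicate point is getting the finiteness of fibers uniform enough to conclude discreteness of the branch set, which follows from the defining "finite subdivision" property of morphisms applied to a finite generating set of edges of $T$.
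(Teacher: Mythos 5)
Your key ``basic observation'' is stated with the implication reversed, and in the direction you state it, it is false: a morphism $h\colon S\to S'$ does \emph{not} send trees with trivial arc stabilizers to trees with trivial arc stabilizers, because folding routinely creates nontrivial arc stabilizers. For a concrete counterexample, take the Bass--Serre tree $T$ of $\langle a\rangle * \langle b\rangle$, subdivide the edge $[v_a,v_b]$ at its midpoint $m$, and fold $[v_a,m]$ with $a\cdot[v_a,m]$; in the resulting tree the image of $[v_a,m]$ is fixed pointwise by $a$, even though $T$ had trivial arc stabilizers. Your argument breaks at the step ``$g$ fixes pointwise the whole preimage $h^{-1}([x,y])$'': equivariance of $h$ only shows that $g$ \emph{stabilizes} $h^{-1}([x,y])$ as a set (permuting its fibers), not that it fixes it pointwise, and the preimage need not even be connected since morphisms are not alignment-preserving. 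Hence applying this observation to $f_{0,t}\colon T\to T_t$ does not establish triviality of arc stabilizers in $T_t$.

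The correct and easy implication is the one you dismissed as ``the wrong direction''. If $g$ fixes a nondegenerate arc $I\subseteq T_t$ pointwise, then by equivariance $g$ fixes $f_{t,L}(I)$ pointwise, and $f_{t,L}(I)$ contains a nondegenerate arc because $f_{t,L}$ is a morphism; so $g$ fixes a nondegenerate arc in $T'$, contradicting the hypothesis on $T'$. This is exactly the paper's one-line argument: the control on arc stabilizers of $T_t$ comes from the downstream tree $T'$, not from $T$. For simpliciality, your branch-point counting is aimed at the right target but is not tight enough to rule out the genuine failure mode, which is the appearance of a nontrivial dense-orbits piece in $T_t$ rather than an infinite discrete branch set. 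The paper handles this cleanly: once arc stabilizers are known to be trivial, $T_t$ admits a Levitt decomposition (Proposition~\ref{Levitt}) as a graph of actions whose vertex trees have dense orbits, and $f_{t,L}$ restricts to an isometry on each such vertex tree (Corollary~\ref{alignment-preserved}), so these vertex trees embed isometrically in the simplicial tree $T'$ and must be points. You should rework your argument along these lines.
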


\begin{proof}
Arc stabilizers in $T_t$ are trivial, otherwise the $f_{t,L}$-image of an arc with nontrivial stabilizer in $T_t$ would be an arc with nontrivial stabilizer in $T'$. Therefore, the tree $T_t$ splits as a graph of actions, whose vertex actions have dense orbits for their stabilizers (see Proposition \ref{Levitt} below). The morphism $f_{s,L}$ is an isometry in restriction to the vertex trees of this decomposition (see Corollary \ref{alignment-preserved} below). As $T'$ is simplicial, this implies that $T_t$ is simplicial.
\end{proof}

We will also work with the following discrete version of optimal liberal folding paths. Let $T$ be a $(G,\mathcal{F})$-tree. A \emph{folding sequence ending at $T$} is a sequence $(T_p)_{p\in\mathbb{N}}$ of $(G,\mathcal{F})$-trees that converges to $T$ in a nonstationary way, such that for all integers $p<q$, there are morphisms $f_p:T_p\to T$ and $f_{p,q}:T_p\to T_{q}$ such that $f_p=f_q\circ f_{p,q}$ for all $p<q$, and $f_{p,r}=f_{q,r}\circ f_{p,q}$ for all $p<q<r$.

\subsection{Coarse geometry notions}

We now recall the notions of quasi-isometries between metric spaces, and of (reparameterized) quasi-geodesics. 

Two metric spaces $(X,d)$ and $(X',d')$ are \emph{quasi-isometric} if there exist $K,L\ge 0$ and a map $f:X\to X'$ such that 
\begin{itemize}
\item for all $x'\in X'$, there exists $x\in X$ such that $d'(x',f(x))\le L$, and
\item for all $x,y\in X$, we have $$\frac{1}{K}d(x,y)-L\le d'(f(x),f(y))\le K d(x,y)+L.$$
\end{itemize}

Let $(X,d)$ be a metric space, let $x,y\in X$, and let $K,L\ge 0$. A \emph{$(K,L)$-quasi-geodesic} from $x$ to $y$ is a map $\gamma:[a,b]\to X$, where $[a,b]\subseteq\mathbb{R}$ is a segment, such that $\gamma(a)=x$ and $\gamma(b)=y$, and for all $s,t\in[a,b]$, we have $$\frac{1}{K}|t-s|-L\le d(\gamma(s),\gamma(t))\le K |t-s|+L.$$ A \emph{reparameterized quasi-geodesic} is a map $\gamma':[a',b']\to X$, where $[a',b']\subseteq\mathbb{R}$ is a segment, so that there exists a segment $[a,b]\subseteq\mathbb{R}$ and a continuous nondecreasing map $\theta:[a,b]\to [a',b']$, such that $\gamma'\circ\theta$ is a $(K,L)$-quasi-geodesic. 

\subsection{Gromov hyperbolic spaces}

We give a very brief account on hyperbolic spaces, which were defined by Gromov \cite{Gro87}. The reader is referred to \cite{BH99,CDP90,GdlH90} for a detailed introduction.  Let $(X,d)$ be a metric space. Let $p\in X$ be some basepoint. For all $x,y\in X$, the \emph{Gromov product} of $x$ and $y$ with respect to $p$ is defined as
\begin{displaymath}
(x|y)_p:=\frac{1}{2}(d(p,x)+d(p,y)-d(x,y)).
\end{displaymath}

A metric space $X$ is \emph{Gromov hyperbolic} if there exists a constant $\delta>0$ such that for all $x,y,z,p\in X$, we have
\begin{displaymath}
(x|y)_p\ge\min\{(x|z)_p,(y|z)_p\}-\delta.
\end{displaymath}

\noindent (When $X$ is geodesic, hyperbolicity of $X$ is equivalent to a \emph{thin triangles} condition, see the aforementioned references). If $(X,d)$ is Gromov hyperbolic, we say that a sequence $(x_n)_{n\in\mathbb{N}}\in X^{\mathbb{N}}$ \emph{converges to infinity} if the Gromov product $(x_n|x_m)_p$ goes to $+\infty$ as $n$ and $m$ both go to $+\infty$. Two sequences $(x_n)_{n\in\mathbb{N}}$ and $(y_n)_{n\in\mathbb{N}}$ that both converge to infinity are \emph{equivalent} if the Gromov product $(x_n|y_m)_p$ goes to $+\infty$ as $n$ and $m$ go to $+\infty$. It follows from the hyperbolicity of $(X,d)$ that this is indeed an equivalence relation. The \emph{Gromov boundary} $\partial_{\infty} X$ of $X$ is defined to be the collection of equivalence classes of sequences that converge to infinity. For all $a,b\in\partial_{\infty} X$, the \emph{Gromov product} of $a$ and $b$ with respect to $p$ is defined as
\begin{displaymath}
(a|b)_p=\sup\liminf_{i,j\to +\infty}(x_i|y_j)_p,
\end{displaymath}

\noindent where the supremum is taken over all sequences $(x_i)_{i\in\mathbb{N}}$ converging to $a$ and all sequences $(y_j)_{j\in\mathbb{N}}$ converging to $b$. The set $\partial_{\infty} X$ is equipped with the topology for which every point $a\in\partial_{\infty} X$ has a basis of open neighborhoods made of the sets of the form $N_r(a):=\{b\in\partial_{\infty} X|(a|b)_p\ge r\}$. One can also define the Gromov product between an element in $X$ and an element in $\partial_{\infty} X$ similarly, and get a topology on $X\cup\partial_{\infty} X$. Given any $\xi\in\partial_{\infty} X$, there exists a quasi-geodesic ray $\tau:\mathbb{R}_+\to X$ such that $\tau(t)$ converges to $\xi$ as $t$ goes to $+\infty$.

\section{The graphs $FS(G,\mathcal{F})$ and $FZ^{(max)}(G,\mathcal{F})$} \label{sec-FS}

\subsection{The free splitting graph}

We recall that a \emph{$(G,\mathcal{F})$-free splitting} is a minimal, simplicial $(G,\mathcal{F})$-tree, all of whose edge stabilizers are trivial. Two $(G,\mathcal{F})$-free splittings are \emph{equivalent} if they are $G$-equivariantly homeomorphic. Given two $(G,\mathcal{F})$-free splittings $T$ and $T'$, we say that $T'$ is a \emph{refinement} of $T$ if $T$ is obtained from $T'$ by collapsing a $G$-invariant set of edges of $T$ to points. Two $(G,\mathcal{F})$-free splittings are \emph{compatible} if they have a common refinement. The \emph{free splitting graph} $FS(G,\mathcal{F})$ is the graph whose vertices are the equivalence classes of one-edge $(G,\mathcal{F})$-free splittings, two distinct splittings being joined by an edge if they are compatible. Alternatively, one can define $FS(G,\mathcal{F})$ to be the graph whose vertices are the equivalence classes of all $(G,\mathcal{F})$-free splittings, two splittings being joined by an edge if one properly refines the other. The two versions of the complex are quasi-isometric to each other. 

There is map $\phi:{\mathcal{O}(G,\mathcal{F})}\to FS(G,\mathcal{F})$, which extends to the set of simplicial trees in $\overline{\mathcal{O}(G,\mathcal{F})}$ with trivial edge stabilizers, defined by choosing a one-edge collapse of every simplicial tree in $\overline{\mathcal{O}(G,\mathcal{F})}$ (as we have to make choices, this map is not equivariant, but making any other choice can only change distances by at most $2$). Proposition \ref{folding-simplicial} implies that the $\phi$-image of any folding path between simplicial trees in $\overline{\mathcal{O}(G,\mathcal{F})}$ with trivial edge stabilizers is well-defined. The graph $FS(G,\mathcal{F})$ comes with a right action of $\text{Out}(G,\mathcal{F})$, by precomposition of the actions.

Hyperbolicity of the free splitting graph of a finitely generated free group was shown by Handel and Mosher \cite{HM12}, whose proof involves studying folding paths between simplicial $F_N$-trees with trivial edge stabilizers. An alternative proof in the sphere model of the free splitting graph, based on the study of surgery paths, was given by Hilion and Horbez \cite{HH13}; Bestvina and Feighn also gave a simplified proof in Handel and Mosher's setting \cite[Appendix]{BF13}. Handel--Mosher's proof, as interpreted by Bestvina--Feighn, adapts with almost no change to yield hyperbolicity of $FS(G,\mathcal{F})$. We will give a sketch of the argument, following the exposition from \cite[Appendix]{BF13} very closely, in the appendix of the present paper for completeness of the exposition. 

\begin{theo}\label{FS-hyperbolic}
Let $G$ be a countable group, and let $\mathcal{F}$ be a free factor system of $G$. Then the graph $FS(G,\mathcal{F})$ is Gromov hyperbolic. There exist $K,L>0$ such that all $\phi$-images in $FS(G,\mathcal{F})$ of optimal liberal folding paths between simplicial trees in $\overline{\mathcal{O}(G,\mathcal{F})}$ with trivial edge stabilizers are $(K,L)$-reparameterized quasi-geodesics.
\end{theo}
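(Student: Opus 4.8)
The plan is to reduce Theorem \ref{FS-hyperbolic} to the corresponding statement for finitely generated free groups, which was established by Handel--Mosher and reinterpreted by Bestvina--Feighn, by checking that every ingredient of their argument survives the passage from $F_N$-trees to $(G,\mathcal{F})$-trees with peripheral vertex groups. The key observation is that a Grushko $(G,\mathcal{F})$-tree has \emph{trivial} arc stabilizers, so the combinatorial skeleton of the Handel--Mosher machinery --- the bounded combing of free splittings by $\phi$-images of folding paths, and the verification of Masur--Minsky-type projection axioms --- only ever interacts with the free part of the splitting and is insensitive to the nature of the (finitely many, elliptic) vertex groups. Concretely, I would first recall the axiomatic criterion for hyperbolicity of a graph combed by reparameterized quasi-geodesics (for instance, the version used in \cite[Appendix]{BF13}: a coarse Lipschitz combing satisfying a ``thin bigon'' and a ``strong contraction'' property near the combing lines implies hyperbolicity and that the combing lines are uniform reparameterized quasi-geodesics). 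The whole proof then amounts to producing the combing and checking these two properties in the $(G,\mathcal{F})$ setting.

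The construction of the combing is exactly as in the free group case: given two one-edge free splittings $S, S'$, blow them up to Grushko $(G,\mathcal{F})$-trees, build an optimal morphism between them (optimal morphisms exist between any two trees in $\mathcal{O}(G,\mathcal{F})$ — this is part of the Guirardel--Levitt folding-path package cited in Section \ref{sec-folding}), and take the associated canonical optimal folding path $(T_t)$; its $\phi$-image is well defined by Proposition \ref{folding-simplicial}, which guarantees that every $T_t$ is simplicial with trivial edge stabilizers, so $\phi$ applies all along the path. The two verifications to be carried out are: (i) coarse surjectivity/Lipschitz-ness of $\phi$ along folding paths, which follows because a single fold changes the collapse class by a bounded amount; and (ii) the ``projection'' estimates that witness hyperbolicity — these are precisely where one reproduces Handel--Mosher's analysis of how a free splitting that is compatible with (or close to) many points of a folding path forces the path to be short, via the combinatorics of edges, directions, gates, and legal turns. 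Since all of this data lives in the part of the tree with trivial stabilizers, the original arguments transfer essentially verbatim; the role of the peripheral subgroups $G_i$ is merely that they sit inside (non-collapsible) vertex groups and never carry edges, so they do not enter any turn, gate, or legality computation. I would present the argument as a guided tour of \cite[Appendix]{BF13}, pointing at each lemma and indicating the trivial modification (mostly: ``vertex with nontrivial peripheral stabilizer'' plays the role of ``vertex of valence one / non-free part'' and is treated as inert).

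The main obstacle — and the one place where genuine care rather than mere transcription is needed — is the interplay between the \emph{two} equivalent models of $FS(G,\mathcal{F})$ (one-edge splittings with the compatibility adjacency versus all free splittings with the refinement adjacency) and the fact that $\phi$ is not equivariant because it depends on a choice of one-edge collapse; one must check the two models are quasi-isometric in the $(G,\mathcal{F})$ setting (the standard argument: any free splitting with $e$ edge-orbits is connected in $\le 2e$ steps to each of its one-edge collapses, and $e$ is uniformly bounded by the Kurosh rank via a Grushko-type bound on the number of edge orbits of a reduced $(G,\mathcal{F})$-free splitting), and that changing the choices in $\phi$ moves points by at most $2$, so the quasi-geodesic conclusion is choice-independent. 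A secondary technical point is that folding paths in $\mathcal{O}(G,\mathcal{F})$ need not stay among \emph{reduced} splittings, so when passing from the metric folding path to its $\phi$-image one should collapse length-zero edges and invoke Proposition \ref{folding-simplicial} to stay in the domain of $\phi$; this is routine but worth stating. Once these bookkeeping issues are dispatched, hyperbolicity and the quasi-geodesic property of $\phi$-images of optimal folding paths both drop out of the transplanted Handel--Mosher/Bestvina--Feighn argument, and the explicit sketch is deferred to the Appendix as announced.
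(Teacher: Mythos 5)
Your proposal follows the same route as the paper: transplant the Handel--Mosher/Bestvina--Feighn argument from \cite[Appendix]{BF13}, build the combing out of $\phi$-images of optimal liberal folding paths (well-defined by Proposition~\ref{folding-simplicial}), and verify a Masur--Minsky-type axiomatic criterion for hyperbolicity together with the quasi-geodesic property of the combing lines. The bookkeeping points you raise --- equivalence of the two models of $FS(G,\mathcal{F})$, non-equivariance of $\phi$ moving points by at most $2$, collapsing zero-length edges --- are all correct and relevant.

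However, your stated reason for why the transplant works is wrong in a way worth flagging. You claim the peripheral subgroups ``do not enter any turn, gate, or legality computation'' and can be treated as inert. That is not so: gates are a partition of the directions at a point $x$, and at a vertex $v$ whose stabilizer $G_v$ is one of the (possibly infinite) peripheral groups, the directions at $v$ carry a nontrivial $G_v$-action and the gate structure is $G_v$-invariant but not $G_v$-trivial; whether a turn $(e, g e')$ at such a $v$ is legal does genuinely depend on $g\in G_v$ (this is exactly the phenomenon exploited in the proof of Theorem~\ref{candidate-finite}). What actually makes the Bestvina--Feighn arguments transfer is (a) that \emph{edge} stabilizers are trivial, so $\phi$ and Stallings folds behave along the path, and (b) that the relevant complexity counts (number of natural edge orbits, number of orbits of mixed regions) are bounded in terms of the \emph{Kurosh} rank in place of the free rank --- this is the substitution made explicitly in Proposition~\ref{descendant}. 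Relatedly, you miss the one structural change the paper does make: because a vertex with nontrivial peripheral stabilizer can have infinite valence, the quotient graphs $T/G$ are not locally finite, so several lemmas of \cite[Appendix]{BF13} (notably the hanging-tree dichotomy, Proposition~\ref{hanging}) must be restated and reproved at the level of $G$-trees rather than quotient graphs. This is not merely cosmetic; it is the place where one actually has to re-examine the argument rather than cite it, and the two-case analysis in the paper's proof of Proposition~\ref{hanging} (minimal versus non-minimal complementary subtree, the latter forcing an HNN loop-edge with a finite-valence vertex) is precisely what controls the infinite-valence situation.
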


\begin{rk}\label{sporadic-fs}
If $G=G_1\ast G_2$ and $\mathcal{F}=\{[G_1],[G_2]\}$, then $FS(G,\mathcal{F})$ is reduced to a point. If  $G=G_1\ast\mathbb{Z}$ and $\mathcal{F}=\{[G_1]\}$, then $FS(G,\mathcal{F})$ is a star of diameter equal to $2$, whose central vertex corresponds to the HNN extension $G=G_1\ast$, and whose extremal vertices correspond to all splittings of the form $G=G_1\ast\langle g_1t\rangle$, where $t$ is a stable letter of the HNN extension, and $g_1$ varies in $G_1$.
\end{rk}
 
\subsection{The $\mathcal{Z}$-splitting graph and the $\mathcal{Z}^{max}$-splitting graph} 
 
We recall from Section \ref{sec-free-product} that $\mathcal{Z}^{(max)}$ is the collection of all subgroups of $G$ that are either trivial, or (maximally-)cyclic and nonperipheral. A \emph{$\mathcal{Z}^{(max)}$-splitting} is a minimal, simplicial (hence cocompact) $(G,\mathcal{F})$-tree, all of whose edge stabilizers belong to the collection $\mathcal{Z}^{(max)}$. The \emph{graph of $\mathcal{Z}^{(max)}$-splittings} $FZ^{(max)}(G,\mathcal{F})$ is the graph whose vertices are the equivalence classes of one-edge $\mathcal{Z}^{(max)}$-splittings, two distinct vertices being joined by an edge if the corresponding splittings are compatible (note that if two $\mathcal{Z}^{(max)}$-splittings have a common refinement, then they have a common refinement which is a $\mathcal{Z}^{(max)}$-splitting). Again, there are natural maps $\psi^{(max)}:\mathcal{O}(G,\mathcal{F})\to FZ^{(max)}(G,\mathcal{F})$, which extend to the set of trees in $\overline{\mathcal{O}(G,\mathcal{F})}$ having a nontrivial simplicial part. The graphs $FZ(G,\mathcal{F})$ and $FZ^{max}(G,\mathcal{F})$ both come with right $\text{Out}(G,\mathcal{F})$-actions, given by precomposition of the actions. 

In the case where $G$ is a finitely generated free group and $\mathcal{F}=\emptyset$, hyperbolicity of the graph of $\mathcal{Z}$-splittings was proved by Mann \cite{Man12}. Mann's proof actually adapts to the relative setting, and also to the case of  $\mathcal{Z}^{max}$-splittings. Again, we will sketch a proof of the following theorem in the appendix, following Mann's proof very closely.

\begin{theo}\label{FZ-hyperbolic}
Let $G$ be a countable group, and let $\mathcal{F}$ be a free factor system of $G$. Then $FZ(G,\mathcal{F})$ and $FZ^{max}(G,\mathcal{F})$ are Gromov hyperbolic. There exists $K>0$ (only depending on the Kurosh rank of $(G,\mathcal{F})$) such that images in $FZ(G,\mathcal{F})$ and in $FZ^{max}(G,\mathcal{F})$ of optimal liberal folding paths between simplicial trees in $\overline{\mathcal{O}(G,\mathcal{F})}$ with trivial edge stabilizers are $K$-Hausdorff close to any geodesic segment joining their endpoints.
\end{theo}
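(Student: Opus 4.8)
The plan is to adapt Mann's proof of the hyperbolicity of $FZ_N$ \cite{Man12} --- which itself relies on the folding-path technology of Handel--Mosher for $FS_N$, streamlined by Bestvina--Feighn \cite[Appendix]{BF13} --- to the relative setting, treating $FZ(G,\mathcal{F})$ and $FZ^{max}(G,\mathcal{F})$ in parallel. The distinguished family of paths will be the $\psi^{(max)}$-images of optimal liberal folding paths between simplicial $(G,\mathcal{F})$-trees with trivial edge stabilizers; by Proposition \ref{folding-simplicial} every intermediate tree still has trivial edge stabilizers, so up to bounded error such an image coincides with the image, under the canonical $1$-Lipschitz map $\iota\colon FS(G,\mathcal{F})\to FZ^{(max)}(G,\mathcal{F})$, of the corresponding $\phi$-image --- which by Theorem \ref{FS-hyperbolic} is a reparameterized quasi-geodesic of $FS(G,\mathcal{F})$. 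The goal is to verify that this family satisfies the hypotheses of a ``guessing geodesics'' hyperbolicity criterion of Bowditch/Bestvina--Feighn type, which yields at once that $FZ^{(max)}(G,\mathcal{F})$ is Gromov hyperbolic and that the distinguished paths are uniformly Hausdorff close to geodesics joining their endpoints. (A direct transfer of hyperbolicity from $FS(G,\mathcal{F})$ does not obviously suffice, since $\iota$ is far from being a quasi-isometry: $FZ^{max}$ and $FZ$ genuinely collapse distances.)

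Several preliminary reductions are needed. First, $\psi^{(max)}$ is coarsely Lipschitz (trees in a common cone of $\mathcal{O}(G,\mathcal{F})$ have compatible one-edge collapses, and adjacent cones are combinatorially controlled). Second, $\psi^{(max)}$ is coarsely surjective: a one-edge $\mathcal{Z}^{(max)}$-splitting is always compatible with some free splitting --- this uses that a nonperipheral cyclic subgroup lies in a proper $(G,\mathcal{F})$-free factor, so that one can refine across the edge group, with a little care for low-complexity cases --- and a one-edge free splitting is refined by a Grushko tree, so its image lies at bounded distance from $\psi^{(max)}(\mathcal{O}(G,\mathcal{F}))$. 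Using these facts one defines a preferred path between arbitrary vertices of $FZ^{(max)}(G,\mathcal{F})$ by moving the endpoints to nearby images of Grushko trees and taking the $\psi^{(max)}$-image of a folding path between them. All constants in these constructions are bounded in terms of the number of edges of a Grushko $(G,\mathcal{F})$-tree, hence in terms of $\text{rk}_K(G,\mathcal{F})$ alone, which is the source of the stated dependence of $K$.

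It then remains to verify the two axioms of the criterion: stability of preferred paths under bounded perturbation of the endpoints, and uniform thinness of triangles of preferred paths. Neither follows formally from Theorem \ref{FS-hyperbolic} through $\iota$, precisely because $\iota$ is far from a quasi-isometry, so one must bound how much passing from $FS(G,\mathcal{F})$ to $FZ^{(max)}(G,\mathcal{F})$ can fold together distinct portions of a folding path. This is where the (maximally) cyclic nature of the edge groups enters, and I expect it to be the main obstacle: in the spirit of Mann's key lemma, one should prove that if a one-edge $\mathcal{Z}^{(max)}$-splitting $S$ is compatible with some tree $T_t$ lying on an optimal folding path, then $d_{FZ^{(max)}}(\psi^{(max)}(S),\psi^{(max)}(T_t))$ is bounded by a constant depending only on $\text{rk}_K(G,\mathcal{F})$ --- informally, a folding path is quasiconvex for the compatibility relation. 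This replaces the control on links of simplices used in the free-splitting argument; its proof should combine the structure of simplicial $(G,\mathcal{F})$-trees with small edge stabilizers with the analysis of how illegal turns get resolved along optimal folding paths. Granting this lemma, the thinness and perturbation estimates are obtained by reproducing Mann's and Bestvina--Feighn's arguments in the relative setting, and the criterion applies. The $FZ^{max}$ case requires no additional idea, using only that a common refinement of two $\mathcal{Z}^{max}$-splittings can be chosen to be a $\mathcal{Z}^{max}$-splitting.
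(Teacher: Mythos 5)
Your proposal diverges from the paper's proof in its choice of criterion, and I think this divergence masks a gap.

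The paper (following Mann) does not reprove hyperbolicity of $FZ^{(max)}(G,\mathcal{F})$ from scratch with a guessing-geodesics criterion; it \emph{transfers} hyperbolicity from $FS(G,\mathcal{F})$ through the natural map, using the Kapovich--Rafi criterion (Proposition~\ref{Kapovich-Rafi}). Your parenthetical dismissal of transfer arguments --- ``a direct transfer of hyperbolicity from $FS(G,\mathcal{F})$ does not obviously suffice, since $\iota$ is far from being a quasi-isometry'' --- is exactly backwards: the Kapovich--Rafi criterion is designed precisely for Lipschitz surjections that are not quasi-isometries. Its single hypothesis is that whenever $f(x)$ and $f(y)$ are at distance $\le 1$ in the target, the $f$-image of any geodesic from $x$ to $y$ in the source has uniformly bounded diameter. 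It then delivers both hyperbolicity of the target and the Hausdorff-close statement for images of source geodesics --- exactly the two conclusions of Theorem~\ref{FZ-hyperbolic}. A guessing-geodesics criterion could in principle work too, but it forces you to reprove thinness and path-stability in $FZ^{(max)}(G,\mathcal{F})$ from scratch, throwing away the hyperbolicity of $FS(G,\mathcal{F})$ that you already have.

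The key lemma you isolate is also not the right one. As you state it --- ``if a one-edge $\mathcal{Z}^{(max)}$-splitting $S$ is compatible with some tree $T_t$ lying on an optimal folding path, then $d_{FZ^{(max)}}(\psi^{(max)}(S),\psi^{(max)}(T_t))$ is bounded'' --- it is essentially tautological: if $S$ and $T_t$ are compatible, their common refinement also refines $\psi^{(max)}(T_t)$, so $d_{FZ^{(max)}}(S,\psi^{(max)}(T_t))\le 2$ always, with no content. What the Kapovich--Rafi hypothesis actually requires (after passing to the auxiliary graph $FZ'(G,\mathcal{F})$ on free-splitting vertices) is: if two one-edge free splittings $T_1,T_2$ are \emph{both} compatible with a common one-edge $\mathcal{Z}^{(max)}$-splitting $T$, then every $FS$-geodesic from $T_1$ to $T_2$ has bounded $FZ^{(max)}$-image. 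By Theorem~\ref{FS-hyperbolic} it suffices to produce one optimal folding path from $T_1$ to $T_2$ whose $\psi^{(max)}$-image stays near $T$. The paper achieves this not via an analysis of how illegal turns are resolved, but by a blowup construction: one blows up $T+T_1$ using its action on $T+T_2$ to get a morphism $f\colon\widehat{T_1}\to T+T_2$ over $T$, and then Lemma~\ref{folding-collapse} guarantees that every tree on the resulting folding path collapses to $T$; collapsing the $\langle w\rangle$-edge yields the desired path in $FS(G,\mathcal{F})$ staying near $T$ in $FZ^{(max)}(G,\mathcal{F})$. This blowup-and-collapse mechanism is the missing idea in your outline.
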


\begin{rk}\label{sporadic}
When $G=G_1\ast G_2$ and $\mathcal{F}=\{[G_1],[G_2]\}$, all graphs $FZ(G,\mathcal{F})$, $FZ^{max}(G,\mathcal{F})$ and $FS(G,\mathcal{F})$ are equal, and reduced to a point. When $G=G_1\ast\mathbb{Z}$ and $\mathcal{F}=\{[G_1]\}$, the graphs $FZ^{max}(G,\mathcal{F})$ and $FS(G,\mathcal{F})$ are equal: all one-edge $\mathcal{Z}^{max}$-splittings are free splittings. The graph $FZ(G,\mathcal{F})$ is also star-shaped: the splitting $G=G_1\ast$ is the central vertex, it is joined by an edge to all free splittings of the form $G=G_1\ast\langle g_1t\rangle$, where $t$ denotes the stable letter of the HNN extension, and $g_1$ varies in $G_1$. These free splittings are also joined by edges to one-edge $\mathcal{Z}$-splittings of the form $G=(G_1\ast\langle (g_1t)^k\rangle)\ast_{\langle(g_1t)^k\rangle}\langle g_1t\rangle$, with $k\ge 2$. Therefore $FZ(G,\mathcal{F})$ is bounded, of diameter $4$.
\end{rk}

\section{More material}\label{sec-material}

The following sections of the paper aim at describing the Gromov boundaries of the graphs $FZ(G,\mathcal{F})$ and $FZ^{max}(G,\mathcal{F})$. We first introduce more background material that will be used in the proof of our main theorem. 
\subsection{Tame $(G,\mathcal{F})$-trees}\label{sec-good}

In this section, we review the definition and the properties of the class of tame $(G,\mathcal{F})$-trees, introduced in \cite[Section 6]{Hor14-5}, in which we will carry out most of our arguments.

\paragraph*{Definition and properties.}

Let $T$ be a $(G,\mathcal{F})$-tree. A point $x\in T$ is a \emph{branch point} if $T\smallsetminus\{x\}$ has at least three connected components. It is an \emph{inversion point} if $T\smallsetminus\{x\}$ has exactly two connected components, and there exists $g\in G$ that exchanges these two components.

\begin{de}
A minimal $(G,\mathcal{F})$-tree is \emph{small} if its arc stabilizers belong to the class $\mathcal{Z}$. It is \emph{tame} if in addition, it has finitely many orbits of directions at branch or inversion points. A \emph{$\mathcal{Z}^{max}$-tame tree} is a tame $(G,\mathcal{F})$-tree whose arc stabilizers belong to the class $\mathcal{Z}^{max}$.
\end{de}

Tame $(G,\mathcal{F})$-trees also have the following alternative description. For all $k\in\mathbb{N}$, we say that a small $(G,\mathcal{F})$-tree $T$ is \emph{$k$-tame} if for all nonperipheral $g\in G$, all arcs $I\subseteq T$, and all $l\ge 1$, if $g^lI=I$, then $g^kI=I$. Equivalently, a small $(G,\mathcal{F})$-tree is $k$-tame if for all nonperipheral $g\in G$, and all $l\in\mathbb{N}$, we have $\text{Fix}(g^k)=\text{Fix}(g^{kl})$. Notice that being $1$-tame is equivalent to being $\mathcal{Z}^{max}$-tame. Notice also that if a $(G,\mathcal{F})$-tree $T$ is $k$-tame, then it is also $kl$-tame for all $l\ge 1$. In particular, in view of the proposition below, if $T$ and $T'$ are two tame $(G,\mathcal{F})$-trees, then there exists $k\in\mathbb{N}$ such that both $T$ and $T'$ are $k$-tame.

\begin{prop}(Horbez \cite[Proposition 6.5]{Hor14-5})\label{good}
Let $T$ be a minimal $(G,\mathcal{F})$-tree. Then $T$ is tame if and only if there exists $k\in\mathbb{N}$ such that $T$ is $k$-tame.
\end{prop}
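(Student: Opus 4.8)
The plan is to prove both implications by relating the two conditions to the structure of $T$ as a graph of actions, in the spirit of Levitt's decomposition (Proposition \ref{Levitt}, used below). Throughout, both ``tame'' and ``$k$-tame'' presuppose that $T$ is small, so I fix a small minimal $(G,\mathcal{F})$-tree $T$ and compare the property ``finitely many $G$-orbits of directions at branch or inversion points'' with the property ``there is $k$ such that $\text{Fix}(g^k)=\text{Fix}(g^{kl})$ for every nonperipheral $g$ and every $l\ge 1$''. Two elementary reductions come first. If some power $g^l$ fixes an arc, then $g$ is elliptic (a hyperbolic isometry and all its powers share an axis and fix no arc), so only nonperipheral elliptic $g$ are relevant; and to such a $g$ one attaches the subtree $Y_g:=\bigcup_{l\ge 1}\text{Fix}(g^l)$, a directed union of subtrees, the point being that ``$T$ is $k$-tame'' is equivalent to ``$Y_g=\text{Fix}(g^k)$ for every nonperipheral $g$'' (indeed $\text{Fix}(g^k)\subseteq Y_g$ always, and $Y_g\subseteq\text{Fix}(g^k)$ exactly when $\text{Fix}(g^m)\subseteq\text{Fix}(g^{mk})=\text{Fix}(g^k)$ for all $m$). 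I will also repeatedly use the \emph{divergence observation}: if $g$ is elliptic, $p$ is the projection to $\text{Fix}(g)$ of a point $y\notin\text{Fix}(g)$, then $[p,y]$ and $[p,gy]$ diverge at $p$ itself, so $g$ sends a direction at $p$ off itself; combined with the presence of a direction at $p$ into $\text{Fix}(g)$, the point $p$ is then a branch or inversion point.

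For the implication ``there is $k$ with $T$ $k$-tame $\Rightarrow$ $T$ tame'', I would write $T$ as a graph of actions whose Bass--Serre skeleton $S$ is a minimal simplicial $(G,\mathcal{F})$-tree with edge stabilizers in $\mathcal{Z}$, and whose non-degenerate vertex trees $T_v$ have dense $G_v$-orbits. Every branch or inversion point of $T$ either maps to a vertex of $S$ or lies in some $T_v$. On the skeleton side, $k$-tameness bounds the ``height'' of the cyclic edge stabilizers of $S$, which together with an accessibility statement for $(G,\mathcal{F})$-trees with cyclic edge stabilizers of bounded height yields finitely many orbits of edges in $S$, hence finitely many orbits of branch or inversion points of this type. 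On the dense side, each $G_v$ has Kurosh rank at most $\text{rk}_K(G,\mathcal{F})$, and an index argument of Gaboriau--Levitt type bounds the number of orbits of branch points of $T_v$ by a function of this rank; since there are finitely many orbits of vertices $v$, the total number of orbits of branch or inversion points of $T$ is finite. A final short argument then upgrades ``finitely many orbits of branch or inversion points'' to ``finitely many orbits of directions'' at such points: the directions with infinite $G$-stabilizer-orbit split into boundedly many families, and $k$-tameness bounds the sizes of the finite $\langle g\rangle$-orbits of directions at a given point, so $T$ is tame.

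For the converse ``$T$ tame $\Rightarrow$ there is $k$ with $T$ $k$-tame'', fix representatives $d_1,\dots,d_m$ of the finitely many orbits of directions at branch or inversion points, with $d_i$ at $x_i$. The key lemma is that the germ stabilizer $H_i$ of $d_i$ at $x_i$ is cyclic. If it were not, $H_i$ would be a strictly increasing union $\langle c_1\rangle\subsetneq\langle c_2\rangle\subsetneq\cdots$ of cyclic groups with $c_n=c_{n+1}^{m_{n+1}}$ and $m_{n+1}\ge 2$, each $\langle c_n\rangle$ fixing an initial subarc of $d_i$; comparing the fixed subarcs of the $\langle c_n\rangle$ inside a fixed initial subarc $[x_i,a]$ of $d_i$ and applying the divergence observation, one produces infinitely many branch points $b_n\to x_i$ along $d_i$, and these force infinitely many orbits of directions at branch points (the directions at the $b_n$ pointing back towards $x_i$ cannot be exhausted by finitely many $G$-orbits, by a pigeonhole argument using minimality of $T$), a contradiction. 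Granting $H_i=\langle h_i\rangle$, consider a nonperipheral elliptic $g$ and a component $C$ of $Y_g\smallsetminus\text{Fix}(g)$: the point where $C$ meets $\text{Fix}(g)$ carries a direction in the $G$-orbit of some $d_i$, a germ of which is fixed by a power of $g$, so after conjugating into $\langle h_i\rangle$ the smallest power of $g$ fixing that germ is bounded by an integer $k_i$ depending only on $i$; hence $\text{Fix}(g^{k_i})$ already contains $C$, and $k:=\mathrm{lcm}(k_1,\dots,k_m)$ (or a small multiple thereof, to absorb possible inversions) witnesses $k$-tameness.

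The main obstacle in both directions is the same phenomenon: controlling how non-integrally a nonperipheral element can be a root of an arc-stabilizing element. Concretely, the crux is the germ-stabilizer cyclicity lemma and its uniform quantitative refinement (the boundedness of each $k_i$) in the forward direction, together with the index bound for the dense vertex trees in the converse direction; both rest on the interplay between cyclicity and commensurability of arc stabilizers on one side and finiteness of orbits on the other, and it is exactly here that minimality of $T$ and the finiteness of $\text{rk}_K(G,\mathcal{F})$ are used essentially.
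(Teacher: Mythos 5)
This proposition is quoted in the present paper from \cite[Proposition 6.5]{Hor14-5}; the paper does not give a proof of it, so there is no proof here to compare your argument against. Judging the proposal on its own, there are two concrete gaps.

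In the direction ``$k$-tame $\Rightarrow$ tame'' you invoke the Levitt decomposition (Proposition~\ref{Levitt}) of $T$ as a graph of actions with simplicial skeleton and dense-orbit vertex trees, and then bound branch points separately on the two parts. But Proposition~\ref{Levitt} is stated for \emph{tame} trees, and the paper's remark immediately before it points out that the ingredient making Levitt's theorem apply is precisely the finiteness of the number of orbits of branch points --- which is the conclusion you are trying to reach. So this is circular: you cannot decompose $T$ by Levitt before you know the branch points are under control. To make this direction work you would need to replace the appeal to Proposition~\ref{Levitt} by a structural result valid for small $(G,\mathcal{F})$-trees that only uses the $k$-tame bound on arc-stabilizer towers (e.g.\ an acylindrical accessibility statement), and derive the decomposition from that.

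In the direction ``tame $\Rightarrow$ $k$-tame'' the crux is your germ-stabilizer cyclicity lemma. The construction of a sequence of branch or inversion points $b_n\to x_i$ along the direction $d_i$ from a strictly increasing chain of cyclic arc stabilizers is correct, and the divergence observation you use is fine. What is not justified is the final step: ``the directions at the $b_n$ pointing back towards $x_i$ cannot be exhausted by finitely many $G$-orbits, by a pigeonhole argument using minimality.'' This is the entire content of the lemma and it is not obvious. Finitely many orbits of those directions is a priori compatible with your picture: a single $g\in G$ could send a neighbourhood of $b_m$ inside $d_i$ isometrically onto a neighbourhood of $b_n$ inside $d_i$, which is not by itself contradictory. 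What is needed is a genuine index or volume argument (of Gaboriau--Levitt type), or a careful bookkeeping of the $\langle h_n\rangle$-translates of $d_i$, to turn the accumulation of branch points into an actual proliferation of direction orbits. As written, the argument stops exactly where the work begins.
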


In \cite[Corollary 4.5]{Hor14-5}, we proved that trees in $\overline{\mathcal{O}(G,\mathcal{F})}$ have finitely many $G$-orbits of directions at branch or inversion points, so trees in $\overline{\mathcal{O}(G,\mathcal{F})}$ are $\mathcal{Z}^{max}$-tame trees. The converse is not true in general, because $\mathcal{Z}^{max}$-tame trees are not required to have trivial tripod stabilizers. However, tame $(G,\mathcal{F})$-trees with dense $G$-orbits have trivial arc stabilizers \cite[Proposition 4.17]{Hor14-5}. So tame $(G,\mathcal{F})$-trees with dense orbits belong to $\overline{\mathcal{O}(G,\mathcal{F})}$. 

Recall that the space of minimal $(G,\mathcal{F})$-trees is equipped with the axes topology. The subspace consisting of small $(G,\mathcal{F})$-trees is closed (\cite[5.3]{CM87}, \cite[Lemme 5.7]{Pau88}, \cite[Proposition 3.1]{Hor14-5}), and for all $k\in\mathbb{N}$, the subspace consisting of $k$-tame $(G,\mathcal{F})$-trees is closed \cite[Proposition 6.4]{Hor14-5}. However, the subspace consisting of all tame $(G,\mathcal{F})$-trees is not: for example, a sequence of splittings of $F_2=\langle a,b\rangle$ of the form $F_2=(\langle a\rangle\ast_{\langle a^2\rangle}\langle a^2\rangle\ast_{\langle a^4\rangle}\dots\ast_{\langle a^{2^n}\rangle}\langle a^{2^n}\rangle)\ast \langle b\rangle$, in which the edge with stabilizer generated by $a^{2^k}$ has length $\frac{1}{2^k}$, does not converge to a tame $F_2$-tree. The following proposition gives a condition under which a limit of tame trees is tame.

\begin{prop} (Horbez \cite[Proposition 6.7]{Hor14-5})\label{limit-one-edge}
Let $(T_n)_{n\in\mathbb{N}}$ be a sequence of one-edge $\mathcal{Z}$-splittings that converges (projectively) in the axes topology to a minimal $(G,\mathcal{F})$-tree $T$. Then $T$ is tame.
\end{prop}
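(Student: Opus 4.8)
The plan is to reduce everything to the closedness of the class of $k$-tame trees (\cite[Proposition 6.4]{Hor14-5}), combined with the characterization of tameness in Proposition \ref{good}: it suffices to produce a uniform $k\in\mathbb{N}$ such that each $T_n$ is $k$-tame, and then pass to the limit. First I would observe that a one-edge $\mathcal{Z}$-splitting $T_n$ has a single orbit of edges, with edge stabilizer $Z_n$ trivial or maximal-cyclic; but since we only assume $T_n$ is a $\mathcal{Z}$-splitting, $Z_n$ may fail to be isolated. The key point is that the arc stabilizers of $T_n$ are, up to conjugacy, exactly the subgroups contained in some $Z_n$ (an arc stabilizer fixes an edge path, hence fixes each edge it crosses, hence is contained in a conjugate of $Z_n$, which is cyclic so the intersection of conjugates fixing a common arc is again cyclic and sits inside one conjugate of $Z_n$). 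Thus for a one-edge splitting, all arc stabilizers are cyclic subgroups of a single conjugacy class of cyclic groups. This means $T_n$ is $k$-tame precisely when, writing $Z_n=\langle g_n\rangle$ and $\langle h_n\rangle$ for the \emph{isolator} (the maximal cyclic group containing $Z_n$), we have $\langle h_n^{k}\rangle\subseteq\langle g_n\rangle$; equivalently $k$ is a multiple of the index $[\,\langle h_n\rangle:\langle g_n\rangle\,]$.

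The crux is therefore to bound this index uniformly in $n$. Here I would use the convergence: if the indices $d_n:=[\langle h_n\rangle:\langle g_n\rangle]$ were unbounded, then along a subsequence $T_n$ would be a one-edge splitting over $\langle h_n^{d_n}\rangle$ with $d_n\to\infty$, and such splittings (after rescaling) force, in the limit tree $T$, an arc fixed by $h^{d_n}$ for arbitrarily large $d_n$ while $h$ itself does not fix that arc — but this contradicts that $T$, being a limit of small trees, is small (\cite[5.3]{CM87}, \cite[Proposition 3.1]{Hor14-5}) together with the more precise fact that limits of small trees are in fact $\mathcal{Z}^{max}$-tame: any limit of $(G,\mathcal{F})$-trees in the axes topology has a definite bound on the number of orbits of directions at branch points only if... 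Actually, the cleanest route is: the limit $T$ is a very small tree (it lies in $\overline{\mathcal{O}(G,\mathcal{F})}$ if the $T_n$ do, or at worst is small), and a hypothetical unbounded sequence of indices would be detected by translation lengths — $\|g_n\|_{T_n}=0$ for the generator of the edge group, so $\|g\|_T=0$ for the limit of a suitable conjugate, while simultaneously an arc of $T$ would be stabilized by a non-isolated cyclic group of unbounded index, violating that $T$ is $1$-tame. I would make this precise by extracting from the splitting data an element $a_n\in G$ (a generator of the isolator of the edge group, or a well-chosen element overlapping the edge) whose translation lengths converge and whose powers exhibit the tameness defect, and derive the contradiction from Proposition \ref{good} applied to $T$ together with closedness of $k$-tame trees.

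Once the index $d_n$ is bounded, say $d_n\le d$ for all $n$, set $k:=d!$; then each $T_n$ is $k$-tame, so by closedness of the space of $k$-tame trees in the axes topology (\cite[Proposition 6.4]{Hor14-5}) the limit $T$ is $k$-tame, and hence tame by Proposition \ref{good}. I expect the main obstacle to be the uniform bound on $d_n$: it requires arguing that a degeneration of one-edge cyclic splittings over cyclic groups of unbounded index cannot converge to a tame — indeed not even to a very small — tree, which is exactly the phenomenon illustrated by the non-example of the infinitely-folded splittings of $F_2$ given just before the proposition. The finiteness input that rules this out is the bound on the number of orbits of branch directions in trees of $\overline{\mathcal{O}(G,\mathcal{F})}$ from \cite[Corollary 4.5]{Hor14-5}, which I would invoke to force the index sequence to stabilize.
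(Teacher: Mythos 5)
Your reduction is sound in outline: for a one-edge $\mathcal{Z}$-splitting $T_n$ with edge group $\langle g_n\rangle=\langle h_n^{d_n}\rangle$ (with $h_n$ not a proper power), $T_n$ is $k$-tame precisely when $d_n\mid k$, and closedness of the set of $k$-tame trees would then finish the argument. But the uniform bound on $d_n$ that you isolate as the crux is not merely unproved — it is false, so the strategy cannot be completed. Take $G=F_2=\langle a,b\rangle$, $\mathcal{F}=\emptyset$, and let $T_n$ be the Bass--Serre tree of $F_2=\langle a\rangle\ast_{\langle a^n\rangle}\langle a^n,b\rangle$ with edge length $1$. Here $d_n=n$, and indeed one checks (using $g=a$) that $T_n$ is $k$-tame if and only if $n\mid k$. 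Nevertheless $(T_n)$ converges non-projectively (hence projectively) to the Bass--Serre tree $T_\infty$ of the free splitting $\langle a\rangle\ast\langle b\rangle$: for a fixed cyclically reduced word $g=a^{p_1}b^{q_1}\cdots a^{p_l}b^{q_l}$, once $n>\max_i|p_i|$ this is also a normal form for the amalgam $\langle a\rangle\ast_{\langle a^n\rangle}\langle a^n,b\rangle$, giving $\|g\|_{T_n}=2l=\|g\|_{T_\infty}$, while elliptic elements of $T_\infty$ remain elliptic in every $T_n$. The limit $T_\infty$ is $1$-tame, so the proposition holds in this example, but no subsequence of $(d_n)$ is bounded; a proof via a uniform $k$-tameness bound on the approximating splittings is unavailable, and the correct argument must instead control the limit tree directly.

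Two secondary problems in your sketch of the bounding step are also worth flagging. First, you assert that the limit of small trees is ``$\mathcal{Z}^{\max}$-tame'' or ``$1$-tame''; limits of small trees are only small, and the example preceding the proposition in Section~\ref{sec-good} (the iterated splittings of $F_2$) shows that a limit of $\mathcal{Z}$-tame trees need not be tame at all, so the tameness of the limit is exactly what must be proved, not assumed. Second, the finiteness input you invoke, \cite[Corollary~4.5]{Hor14-5}, concerns trees in $\overline{\mathcal{O}(G,\mathcal{F})}$, i.e.\ \emph{very small} trees; the limit $T$ here is a priori only small (the $T_n$ themselves typically have nontrivial tripod stabilizers when $d_n>1$), so that corollary does not apply to $T$ without circularity.
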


\paragraph*{Tame optimal folding paths and sequences.}

An optimal liberal folding path $(T_t)_{t\in\mathbb{R}_+}$ is \emph{tame} (resp. \emph{$k$-tame}) if for all $t\in\mathbb{R}_+$, the tree $T_t$ is tame (resp. $k$-tame). Similarly, an optimal folding sequence $(T_n)_{n\in\mathbb{N}}$ is \emph{tame} if $T_n$ is tame for all $n\in\mathbb{N}$. We recall from Section \ref{sec-folding} the existence of a canonical optimal liberal folding path associated to any optimal morphism between two $(G,\mathcal{F})$-trees $T$ and $T'$.  

\begin{prop}\label{canonical-good}
Let $T$ and $T'$ be two tame $(G,\mathcal{F})$-trees, and let $f:T\to T'$ be an optimal morphism. Then the canonical optimal folding path $\gamma$ guided by $f$ is tame. More precisely, for all $k\in\mathbb{N}$, if $T$ and $T'$ are $k$-tame, then $\gamma$ is $k$-tame.
\end{prop}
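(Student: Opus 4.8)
The plan is to prove that $k$-tameness is preserved along the canonical optimal folding path guided by an optimal morphism $f\colon T\to T'$, where both $T$ and $T'$ are $k$-tame. Fix $t\in\mathbb{R}_+$ and recall that $T_t=T/{\sim_t}$, where $a\sim_t b$ iff $f(a)=f(b)$ and the identification time $\tau(a,b)=\sup_{x\in[a,b]}d_{T'}(f(a),f(x))$ satisfies $\tau(a,b)\le t$. There are canonical morphisms $\pi_t\colon T\to T_t$ (the quotient map) and $f_{t,L}\colon T_t\to T'$ with $f=f_{t,L}\circ\pi_t$. The smallness of $T_t$ (i.e. arc stabilizers in $\mathcal{Z}$) is essentially immediate: an arc with nontrivial stabilizer in $T_t$ maps under the morphism $f_{t,L}$ to an arc with the same stabilizer in $T'$, and $T'$ is small; alternatively, arc stabilizers in $T_t$ embed into arc stabilizers of $T$ via a section, since $\pi_t$ is a morphism and the preimage of a nondegenerate arc contains a nondegenerate arc. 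So the content is the $k$-tameness condition on the exponents.

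First I would reduce the $k$-tameness of $T_t$ to a statement about fixed-point sets. By the reformulation given in the excerpt, it suffices to show that for every nonperipheral $g\in G$ and every $l\in\mathbb{N}$, $\mathrm{Fix}_{T_t}(g^k)=\mathrm{Fix}_{T_t}(g^{kl})$. The inclusion $\mathrm{Fix}_{T_t}(g^k)\subseteq\mathrm{Fix}_{T_t}(g^{kl})$ is trivial. For the reverse, suppose $\bar I\subseteq T_t$ is a nondegenerate arc fixed by $g^{kl}$; I want to show it is fixed by $g^k$. The key tool is that $f_{t,L}\colon T_t\to T'$ is a morphism, so $f_{t,L}(\bar I)$ contains a nondegenerate subarc, which is then fixed by $g^{kl}$ in $T'$; since $T'$ is $k$-tame, that subarc — hence a nondegenerate subarc of $f_{t,L}(\bar I)$ — is fixed by $g^k$. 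The problem is to push this back up: $g^{kl}$ fixes all of $\bar I$ in $T_t$, but a priori $g^k$ might only fix a proper subarc. Here I would use that arc stabilizers of $T$ are controlled by $k$-tameness of $T$ together with a careful analysis of how $\sim_t$ interacts with the $G$-action: if $g^k$ fixes a point of $f_{t,L}^{-1}(\text{fixed arc})$ lifted appropriately, one propagates the fixed point along $\bar I$ using that $g^k$ and $g^{kl}$ have the same fixed subtree in $T$ (by $k$-tameness of $T$), that $\pi_t$ is equivariant, and that the endpoints of $\bar I$ lift to points $a,b\in T$ with $\tau(a,b)\le t$ whose $g^{kl}$-translates are $\sim_t$-equivalent to them.

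More concretely, the argument I would run is: let $a,b\in T$ be lifts of the endpoints of $\bar I$; since $g^{kl}$ fixes $\bar I$ pointwise in $T_t$, we have $g^{kl}a\sim_t a$ and $g^{kl}b\sim_t b$ and more generally $g^{kl}$ preserves the $\sim_t$-class of every point on $[a,b]$. In $T$, the element $g^{kl}$ need not fix $[a,b]$, but it fixes the bridge / the relevant part of $\mathrm{Fix}_T(g^{kl})=\mathrm{Fix}_T(g^k)$; examining where $[a,b]$ meets $\mathrm{Fix}_T(g^k)$ and using that $f$ collapses $g^{kl}a$ to $g^{kl}f(a)=g^k\cdot(g^{k(l-1)}f(a))$ and keeping track of identification times (which are controlled because $T'$ is $k$-tame, so $g^k$ and $g^{kl}$ act the same way on the relevant arcs of $T'$), one gets $g^k a\sim_t a$, and similarly along the whole arc, so $g^k$ fixes $\bar I$ pointwise. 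I expect the main obstacle to be exactly this bookkeeping: showing that the identification-time inequality $\tau(\cdot,\cdot)\le t$ is not destroyed when one replaces $g^{kl}$ by $g^k$, which requires knowing that $g^k$ and $g^{kl}$ have the same translation behaviour not just on $T$ but compatibly on $T'$ under $f$ — this is where $k$-tameness of both $T$ and $T'$ is used in an essential, coordinated way. Once $k$-tameness of each $T_t$ is established, tameness of the path follows from Proposition~\ref{good} (a $k$-tame tree is tame), giving the first sentence of the statement, and the refined ``$k$-tame in, $k$-tame out'' conclusion is precisely what was proved.
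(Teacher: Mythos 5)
Your overall approach is the paper's approach: reduce to the fixed-point formulation of $k$-tameness, lift a fixed point/arc of $g^{kl}$ in $T_t$ back to $T$, use $k$-tameness of $T'$ to see that $g^k$ already fixes the image under $f$, and then argue that the identification time is not increased when $g^{kl}$ is replaced by $g^k$. You also correctly flag that the crux of the matter is the identification-time bookkeeping, and that both $k$-tameness of $T$ and of $T'$ enter. However, this is exactly the step you leave unresolved: you write ``one gets $g^k a\sim_t a$'' and then ``I expect the main obstacle to be exactly this bookkeeping,'' without actually supplying the argument. That is the real content of the proposition, and it is not a routine verification.

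What is missing is the key claim $\tau(a,g^ka)=\tau(a,g^{kl}a)$ together with its proof. The paper establishes it by analyzing the segment $[a,g^{kl}a]$ in $T$, splitting into two cases. If $g$ is hyperbolic in $T$ with axis passing near $a$, decompose
\[
[a,g^{kl}a]=[a,a']\cup[a',g^{k}a']\cup g^{k}[a',g^{k}a']\cup\dots\cup g^{k(l-1)}[a',g^{k}a']\cup g^{kl}[a',a];
\]
since $g^{k}f(a)=f(a)$ (using $k$-tameness of $T'$), equivariance of $f$ forces the supremum defining $\tau(a,g^{kl}a)$ to be attained already on the $f$-image of $[a,g^ka']$, hence $\tau(a,g^ka)=\tau(a,g^{kl}a)$. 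If $g$ is elliptic in $T$, take $a'$ the closest point of $\mathrm{Fix}_T(g^k)=\mathrm{Fix}_T(g^{kl})$ to $a$ (equality of fixed sets uses $k$-tameness of $T$), write $[a,g^{kl}a]=[a,a']\cup[a',g^{kl}a]$, and again conclude the supremum is attained on the $f$-image of $[a,a']$, which already gives $\tau(a,g^ka)=\tau(a,g^{kl}a)$. Without some version of this concrete decomposition argument, the chain of ``one gets \dots, and similarly along the whole arc'' does not constitute a proof. A small further remark: you phrase the argument in terms of arcs $\bar I$ fixed by $g^{kl}$ in $T_t$, which forces you to worry about propagating along the arc; the paper works pointwise with a single $a_t\in\mathrm{Fix}_{T_t}(g^{kl})$ and a single lift $a$, which avoids that propagation issue entirely and is what makes the decomposition of $[a,g^{kl}a]$ the whole story.
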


\begin{proof}
Let $g\in G$ be a nonperipheral element, let $l\ge 1$, and let $K_t$ be the fixed point set of $g^{kl}$ in $T_t$. We want to show that $g^k$ also fixes $K_t$. Let $a_t\in K_t$, and let $a$ be a preimage of $a_t$ in $T$. By definition of $\sim_t$ (see Section \ref{sec-folding} for notations), we have $g^{kl}f(a)=f(a)$ and $\tau(a,g^{kl}a)\le t$. As $T'$ is $k$-tame, this implies that $g^kf(a)=f(a)$. We claim that $\tau(a,g^ka)= \tau(a,g^{kl}a)$.  This will imply that $g^ka_t=a_t$, and therefore $g^k$ fixes $K_t$ pointwise.

First assume that $g$ is hyperbolic in $T$. The segment $[a,g^{kl}a]\subseteq T$ decomposes as $[a,a']\cup [a',g^{k}a']\cup g^{k}[a',g^{k}a']\cup\dots\cup g^{k(l-1)}[a',g^{k}a']\cup g^{kl}[a',a]$. As $g^{k}f(a)=f(a)$, by equivariance of $f$, the supremum of the distance between $f(a)$ and a point in the $f$-image of either $[a,g^{kl}a]$ or $[a,g^ka]$ is achieved by a point in the $f$-image of $[a,g^{k}a']$. This implies that $\tau(a,g^ka)=\tau(a,g^{kl}a)$.  

Now assume that $g$ is elliptic in $T$. Then $[a,g^{kl}a]$ decomposes as $[a,g^{kl}a]=[a,a']\cup [a',g^{kl}a]$, where $a'$ is the point in $\text{Fix}_T(g^k)=\text{Fix}_T(g^{kl})$ closest to $a$. Since $g^{kl}f(a)=f(a)$, the supremum of the distance between $f(a)$ and a point in the $f$-image of $[a,g^{kl}a]$ is achieved by a point in the $f$-image of $[a,a']$. This implies that $\tau(a,g^ka)=\tau(a,g^{kl}a)$.   
\end{proof}

\begin{prop}\label{existence-folding}
Let $T$ be a tame $(G,\mathcal{F})$-tree, and let $T_0\in\mathcal{O}(G,\mathcal{F})$. Then there exists a tame optimal liberal folding path from a point in the closure of the cone of $T_0$ to $T$. In particular, there exists a tame optimal folding sequence ending at $T$.
\end{prop}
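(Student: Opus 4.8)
The plan is to reduce the statement to the existence of an optimal morphism from a well-chosen tree in the closure of the cone of $T_0$ onto $T$, and then invoke Proposition \ref{canonical-good} to upgrade such a morphism to a tame optimal folding path; the folding sequence is then extracted by sampling the path at an increasing sequence of times. The key point to establish is therefore: \emph{given a tame $(G,\mathcal{F})$-tree $T$ and a Grushko tree $T_0$, there exists a tree $T_0'$ in the closure of the cone of $T_0$ and an optimal morphism $f:T_0'\to T$.}

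First I would recall that $T$ is tame, hence $k$-tame for some $k\in\mathbb{N}$ by Proposition \ref{good}, and that $T_0\in\mathcal{O}(G,\mathcal{F})$ has finitely many orbits of edges. To build a morphism $T_0\to T$ one chooses, for each orbit of vertices of $T_0$, an image point in $T$ compatible with the stabilizer inclusions (this is possible since vertex stabilizers of $T_0$ are peripheral, hence elliptic in the $(G,\mathcal{F})$-tree $T$, and the trivial-stabilizer vertices can be sent anywhere equivariantly), and then extends over each orbit of edges by mapping the edge to the geodesic joining the images of its endpoints, subdividing so as to be piecewise isometric. A difficulty is that the resulting map need not be a genuine morphism if some edge is sent to a point, or more generally if it fails to be locally injective enough to be piecewise isometric after finite subdivision; this is handled by first passing to a tree $T_0'$ in the (closed) cone of $T_0$, rescaling edge lengths — and possibly collapsing edges whose image is degenerate, staying inside $\mathcal{O}(G,\mathcal{F})$ — so that each remaining edge maps onto a nondegenerate segment of the correct length, which makes the map a morphism. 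Here one uses that $T$ has branch points only in finitely many orbits (tameness) to ensure the combinatorics stays finite.

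Next I would arrange optimality. A morphism is optimal when every point of the source has at least two gates. If $f:T_0'\to T$ is a morphism that is not optimal, there is a standard procedure — "unfolding'' or moving the image point at an illegal vertex, together with a further modification of edge lengths inside the cone of $T_0$ — producing an optimal morphism from another tree in the closure of the cone; alternatively one observes that among morphisms from trees in the closed cone of $T_0$ to $T$, those minimizing total edge length (a.k.a. Lipschitz-minimal, or those coming from an extremal tree in the cone) are automatically optimal, since a point with a single gate would allow one to shorten an adjacent edge. Either way, we obtain an optimal morphism $f:T_0'\to T$ with $T_0'$ in the closure of the cone of $T_0$; note $T_0'$ is simplicial with trivial edge stabilizers, in particular $\mathcal{Z}^{max}$-tame, hence $k$-tame.

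Finally, apply Proposition \ref{canonical-good} to $f:T_0'\to T$: since both $T_0'$ and $T$ are $k$-tame, the canonical optimal folding path $(T_t)_{t\in\mathbb{R}_+}$ guided by $f$ is $k$-tame, in particular tame, and it runs from $T_0'$ to $T$. This proves the first assertion. For the second, pick any sequence $t_0<t_1<\dots$ of times with $t_p\to\infty$ for which the trees $T_{t_p}$ are pairwise distinct and $T_{t_p}\to T$ non-stationarily (possible because the path is continuous and non-stationary, and $T_t=T$ only for $t$ beyond the length of the path when $T$ is a limit tree; if $T$ is itself reached at finite time one instead chooses a folding sequence inside the path approaching that time); the restriction maps $f_{t_p,t_q}:T_{t_p}\to T_{t_q}$ and $f_{t_p}:T_{t_p}\to T$ coming from the folding path satisfy exactly the compatibility conditions $f_{t_p}=f_{t_q}\circ f_{t_p,t_q}$ and $f_{t_p,t_r}=f_{t_q,t_r}\circ f_{t_p,t_q}$ required of an optimal folding sequence, and all $T_{t_p}$ are tame. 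I expect the main obstacle to be the careful construction of the optimal morphism from a tree in the \emph{closed} cone of $T_0$ — in particular controlling the possible collapses of edges while staying inside $\mathcal{O}(G,\mathcal{F})$ and guaranteeing optimality — rather than the final bookkeeping, which is routine given Proposition \ref{canonical-good}.
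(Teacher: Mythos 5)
Your overall strategy matches the paper's exactly: build a $G$-equivariant map from $T_0$ to $T$ that is affine on edges, reassign each edge of $T_0$ the length of its image in $T$ to produce a tree $T_0'$ in the closed cone with a morphism $T_0'\to T$, upgrade to an optimal morphism from some $T_0''$ in the closed cone, apply Proposition~\ref{canonical-good}, and sample the resulting path for the sequence. The one genuine divergence is the optimality step. The paper performs Stallings-type folds at each one-gate point and asserts the resulting tree is still in the closure of the cone; you instead propose moving the image points $f(v)$ of trivial-stabilizer vertices (equivalently, taking a Lipschitz-minimal choice of vertex images). Your version has the advantage of visibly staying in the cone, since it only re-assigns edge lengths rather than changing the topology of $T_0$. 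It is also worth making explicit the fact that powers this step, which you use implicitly: because $T$ is tame, its arc stabilizers lie in $\mathcal{Z}$ (nonperipheral), so a vertex $v$ of $T_0$ with nontrivial peripheral stabilizer automatically has at least two gates --- a single gate would force two $\mathrm{Stab}(v)$-translates of a direction at $v$ to have the same germ of image, hence a peripheral element would fix an arc of $T$. Thus the only possibly one-gate vertices are the trivial-stabilizer ones, and precisely their images are free to move. Minor criticisms: ``unfolding'' is the wrong word (the relevant elementary moves are folds or slides, not unfolds); and the existence of a Lipschitz-minimal assignment needs either a small properness/compactness argument or the constructive replacement ``slide $f(v)$ into the common gate direction until either it reaches a branch point of $T$ (gaining a second gate) or an adjacent edge length hits zero (collapse), and iterate''. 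You are also more careful than the paper in extracting the discrete folding sequence, which is welcome.
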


\begin{proof}
In view of Proposition \ref{canonical-good}, it is enough to show the existence of an optimal morphism from a point in the closure of the cone of $T_0$ to $T$. This is done in the following way: starting from any $G$-equivariant map from $T_0$ to $T$, one first assign to each edge of $T_0/G$ a new length equal to the length of its image in $T$. This defines a new tree $T'_0$ in the closure of the cone of $T_0$ and a morphism $f$ from $T'_0$ to $T$. By folding $T'_0$ at all points at which there is only one gate for the train track structure determined by $f$, one reaches yet another tree $T''_0$ in the closure of the cone of $T_0$, equipped with an optimal morphism from $T_0$ to $T$.
\end{proof}

\subsection{Metric properties of $\mathcal{O}(G,\mathcal{F})$}

We review work by Francaviglia and Martino \cite{FM13}. Let $G$ be a countable group, and $\mathcal{F}$ be a free factor system of $G$. For all $T,T'\in\mathcal{O}(G,\mathcal{F})$, we denote by $\text{Lip}(T,T')$ the infimal Lipschitz constant of an equivariant map from $T$ to $T'$. Let $T\in \mathcal{O}(G,\mathcal{F})$. An element $g\in G$ is a \emph{candidate} in $T$ if it is hyperbolic in $T$ and, denoting by $C_T(g)$ its translation axis in $T$, there exists $v\in C_T(g)$ such that the segment $[v,gv]$ projects to a loop $\gamma$ in the quotient graph $X:=T/G$ which is either

\begin{itemize}
\item an embedded loop, or
\item a bouquet of two circles in $X$, i.e. $\gamma=\gamma_1\gamma_2$, where $\gamma_1$ and $\gamma_2$ are embedded circles in $X$ which meet in a single point, or
\item a barbell graph, i.e. $\gamma=\gamma_1\eta\gamma_2\overline{\eta}$, where $\gamma_1$ and $\gamma_2$ are embedded circles in $X$ that do not meet, and $\eta$ is an embedded path in $X$ that meets $\gamma_1$ and $\gamma_2$ only at their origin (and $\overline{\eta}$ denotes the path $\eta$ crossed in the opposite direction), or
\item a \emph{simply-degenerate barbell}, i.e. $\gamma$ is of the form $u\eta\overline{\eta}$, where $u$ is an embedded loop in $X$ and $\eta$ is an embedded path in $X$, with two distinct endpoints, which meets $u$ only at its origin, and whose terminal endpoint is a vertex in $X$ with nontrivial stabilizer, or
\item a \emph{doubly-degenerate barbell}, i.e. $\gamma$ is of the form $\eta\overline{\eta}$, where $\eta$ is an embedded path in $X$ whose two distinct endpoints have nontrivial stabilizer,
\end{itemize} 

\noindent see Figure \ref{fig-cand} for a representation of the possible shapes of candidate loops. Given $T\in \mathcal{O}(G,\mathcal{F})$, we denote by $\text{Cand}(T)$ the (infinite) set of all elements in $G$ which are candidates in $T$. 

\begin{figure}
\begin{center}
\input{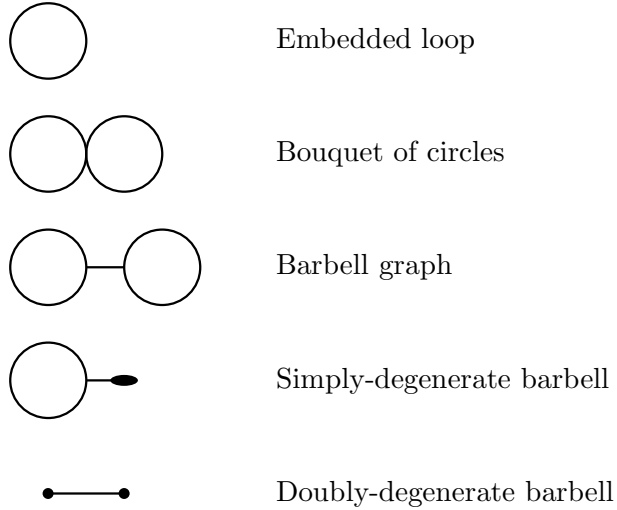}
\caption{Shapes of candidate loops.}
\label{fig-cand}
\end{center}
\end{figure}

\begin{theo} (Francaviglia--Martino \cite[Theorem 9.10]{FM13}) \label{candidate}
For all $T,T'\in \mathcal{O}(G,\mathcal{F})$, we have
\begin{displaymath}
\text{Lip}(T,T')=\sup_{g\in\text{Cand}(T)}\frac{||g||_{T'}}{||g||_T}.
\end{displaymath}

\noindent In addition, there exists a tree $\overline{T}\in\mathcal{O}(G,\mathcal{F})$ onto which $T$ admits a $\text{Lip}(T,T')$-Lipschitz alignment-preserving map, together with an optimal morphism from $\overline{T}$ to $T'$.
\end{theo}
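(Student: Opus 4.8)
The statement has two parts: the Lipschitz-metric formula via candidates, and the existence of the tree $\overline{T}$ with an alignment-preserving Lipschitz map together with an optimal morphism onto $T'$. Since this is quoted verbatim from Francaviglia--Martino, I would treat the first part as essentially a citation and spend my energy on explaining the construction behind the second part, which is the part we actually use later. So the plan is: first recall why the formula holds, then build $\overline{T}$.

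\textbf{Step 1: The candidate formula.} Given $T,T'\in\mathcal{O}(G,\mathcal{F})$, take any $G$-equivariant Lipschitz map $f:T\to T'$ with optimal Lipschitz constant $\lambda:=\text{Lip}(T,T')$; such a map exists by a standard Arzelà--Ascoli argument since $T/G$ is compact. Clearly $||g||_{T'}\le\lambda||g||_T$ for every $g$, so $\lambda\ge\sup_{g\in\text{Cand}(T)}||g||_{T'}/||g||_T$. For the reverse inequality one argues that if the sup over candidates were strictly less than $\lambda$, one could perform an equivariant "tension/relaxation" move on $f$ — decreasing the Lipschitz constant by pushing along the maximally-stretched part of $T$ — without ever creating a candidate that is stretched by $\lambda$; the point is that the maximally-stretched subgraph of $T/G$, if it carries no candidate loop stretched by $\lambda$, must have a vertex or leaf where tension can be released, because a subgraph of $T/G$ every loop of which is a candidate (embedded loop, bouquet, barbell, or one of the two degenerate barbells) is exactly the list of graphs with no "free" direction to relax. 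This is the content of \cite[Theorem 9.10]{FM13}, and I would cite it rather than reproduce the case analysis.

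\textbf{Step 2: Constructing $\overline{T}$.} Start with the optimal Lipschitz map $f:T\to T'$ with constant $\lambda$ from Step 1. The idea is to rescale the edges of $T$ so that $f$ becomes, edge by edge, a speed-$\le\lambda$ map that is an isometry on a maximal subtree, then correct. Concretely: for each edge $e$ of the quotient graph $T/G$, look at $f$ restricted to a lift of $e$; it is Lipschitz with constant $\le\lambda$. Reassign to $e$ a new length equal to $\text{length}_{T'}(f(e))/\lambda \le \text{length}_T(e)$ (collapsing edges whose $f$-image is a point, as long as what remains is still in $\mathcal{O}(G,\mathcal{F})$ — i.e. edges collapsed do not merge the peripheral vertex groups, which is automatic here since $f$ is a morphism off a finite set). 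This produces a tree $\overline{T}$ in the (closed) cone of $T$, and the collapse map $T\to\overline{T}$ is alignment-preserving and $\lambda$-Lipschitz (it is $1$-Lipschitz after rescaling, hence $\lambda$-Lipschitz before, on the nose because we multiplied the target lengths by $1/\lambda$). Now $f$ factors through this collapse as a map $\overline{T}\to T'$ which, on each edge, has constant speed $\lambda$ and so is a homothety of ratio $\lambda$ on each edge; after further subdividing, it is a morphism. Finally, to make it \emph{optimal}, fold $\overline{T}$ at every point having a single gate for the train-track structure induced by this morphism — exactly as in the proof of Proposition \ref{existence-folding} — which stays inside the closed cone of $T$ and yields the desired optimal morphism $\overline{T}\to T'$. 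One should double-check that folding at single-gate points does not destroy the alignment-preserving property of the composite $T\to\overline{T}$: it does not, because collapsing and folding-at-a-single-gate are both alignment-preserving operations, and a composition of alignment-preserving maps is alignment-preserving.

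\textbf{Main obstacle.} The delicate point is Step 1's reverse inequality — controlling where the Lipschitz constant is achieved and showing the maximally-stretched locus, if it contains no candidate, admits a tension-reducing move. This is genuinely the heart of Francaviglia--Martino's argument (it is where the precise, slightly long list of "shapes of candidate loops," including the two degenerate barbells that are special to the free-product setting, comes from), and I would not reprove it: I would cite \cite[Theorem 9.10]{FM13} and note that their proof, written for free products, applies verbatim. The construction of $\overline{T}$ in Step 2 is then routine, the only thing to be careful about being that edge-collapses keep us inside $\mathcal{O}(G,\mathcal{F})$ rather than falling out of it — which holds because the only edges collapsed are those on which $f$ is constant, and $f$ being a morphism (an isometry on a cofinite subgraph) means no such edge separates two peripheral subgroups that must stay apart.
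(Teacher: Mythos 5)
The paper treats Theorem~\ref{candidate} purely as a citation to Francaviglia--Martino and does not reproduce any of its proof, so there is no internal proof to compare against; your instinct to cite \cite{FM13} for the candidate formula matches the paper. The gap is in your Step~2. Reassigning to each edge $e$ the length $\text{length}_{T'}(f(e))/\lambda$ makes the factored map $\overline{T}\to T'$ a homothety of ratio $\lambda$ on every edge, and that is \emph{not} a morphism unless $\lambda=1$: a morphism must restrict to an \emph{isometry} on the pieces of a subdivision, and subdividing an edge does not change the constant speed $\lambda$ at which the map traverses it. The correct rescaling omits the $1/\lambda$: set $\text{length}_{\overline{T}}(e)=\text{length}_{T'}(f(e))$ (after first linearizing $f$ on each edge, so this is $d_{T'}(f(a),f(b))$ for the endpoints $a,b$ of $e$). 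Then $\overline{T}\to T'$ is an isometry on each edge, hence a morphism, while each edge of $T$ is rescaled by a factor $\text{length}_{T'}(f(e))/\text{length}_T(e)\le\lambda$, so $T\to\overline{T}$ is $\lambda$-Lipschitz --- which is exactly the bound the theorem asserts, and cannot be improved to $1$-Lipschitz in general (note $\lambda$ may be $<1$, in which case your $1$-Lipschitz map would not even be $\lambda$-Lipschitz).

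There is a second, subtler gap: the assertion that ``folding-at-a-single-gate'' is an alignment-preserving operation is false. If a vertex $v$ has a single gate, folding identifies initial segments of all directions at $v$; a segment $[a,b]$ through $v$ then maps onto a genuine tripod (with branch point at the end of the folded initial segment and leaves the images of $a$, $b$, and $v$), not onto a segment. So the composite $T\to\overline{T}$ produced by ``rescale then fold'' need not preserve alignment, even though each of ``rescale'' and the composite morphism behave well individually. The way Francaviglia--Martino avoid this is to choose the optimal Lipschitz representative $f$ so that, after rescaling, the induced morphism is already optimal --- roughly, one chooses $f$ so that there are at least two gates at every point of the tension graph (this is precisely what their relaxation argument, the one you cite for Step~1, produces). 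With that choice, $\overline{T}$ is obtained from $T$ by rescaling edge lengths alone, with no folds, so the map $T\to\overline{T}$ is alignment-preserving for free. Note that this concern is genuinely different from the situation in Proposition~\ref{existence-folding}, where the map from $T_0$ to the constructed tree is never required to preserve alignment, only to be a point of the (closed) cone; here the alignment-preserving property is part of the statement.
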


\noindent Building on Francaviglia and Martino's theorem, we show the following result.

\begin{theo}\label{candidate-finite}
For all $T\in \mathcal{O}(G,\mathcal{F})$, there exists a finite set $X(T)\subseteq \text{Cand}(T)$ such that for all $T'\in \mathcal{O}(G,\mathcal{F})$, we have
\begin{displaymath}
\text{Lip}(T,T')=\sup_{g\in X(T)}\frac{||g||_{T'}}{||g||_T}.
\end{displaymath}
\end{theo}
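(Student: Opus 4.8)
The goal is to upgrade Francaviglia--Martino's Theorem \ref{candidate}, which expresses $\text{Lip}(T,T')$ as a supremum over the infinite set $\text{Cand}(T)$, to a statement where the supremum is achieved over a \emph{finite} subset $X(T)$ that does not depend on $T'$. The plan is as follows.

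\textbf{Step 1: Normalize the candidates.} First I would observe that scaling: replacing $g$ by a conjugate does not change $||g||_T$ or $||g||_{T'}$, so one may work with conjugacy classes of candidates. The key structural input is the classification of candidate loops in Theorem \ref{candidate} (embedded loop, bouquet of two circles, barbell, simply-/doubly-degenerate barbell): each candidate loop in the finite graph $X = T/G$ is built from a bounded number of \emph{embedded} circles and embedded arcs in $X$. Since $X$ is a finite graph, it contains only finitely many embedded circles and finitely many embedded arcs between given vertices; hence up to the combinatorial data there are only finitely many "shapes'' of candidate loops realized in $X$. The subtlety is that an embedded circle in $X$ corresponds to a hyperbolic conjugacy class only up to the choice of a lift, and more importantly the arcs $\eta$ in the (doubly-)degenerate barbells end at vertices with nontrivial stabilizer, so the group element $g$ also involves a choice of peripheral element at that vertex — this is where infinitely many candidates come from.

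\textbf{Step 2: Bound the peripheral contributions.} The heart of the argument is to show that, for the purpose of computing the Lipschitz constant, one only needs \emph{one} representative for each shape, regardless of which peripheral elements appear in the degenerate barbells. Here I would use that peripheral subgroups are elliptic in \emph{every} tree of $\mathcal{O}(G,\mathcal{F})$ (by definition of $(G,\mathcal{F})$-tree): if $g = u \cdot h$ where $u$ corresponds to an embedded loop and $h$ is peripheral (elliptic in both $T$ and $T'$), then $||g||_T$ and $||g||_{T'}$ can be estimated in terms of the loop $u$ alone together with fixed bounded quantities (diameters of the relevant segments in $X$ and $X' = T'/G$). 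More precisely, for a doubly-degenerate barbell $\gamma = \eta\bar\eta$, the element $g$ is of the form $h_1 \cdot w \cdot h_2 \cdot w^{-1}$ type data where $h_1, h_2$ are peripheral; its translation length in any tree where $h_1, h_2$ are elliptic is determined (up to a universal additive error depending only on $T$ and $T'$) by the path $\eta$, and dividing by $||g||_T$ — which is bounded below away from $0$ uniformly over the finitely many shapes — the ratio $||g||_{T'}/||g||_T$ is controlled, up to multiplicative error, by a ratio attached to a fixed representative. The technical work is to choose, for each of the finitely many shapes, a single representative $g$ (e.g. taking the peripheral elements to be $1$ when the vertex group is nontrivial but the element can be chosen freely, or a fixed generator-type element otherwise) so that replacing an arbitrary candidate by the representative of its shape \emph{cannot decrease} the ratio beyond what the representative already witnesses, or alternatively so that the sup over representatives already equals the sup over all candidates.

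\textbf{Step 3: Assemble $X(T)$ and conclude.} I would then let $X(T)$ be the finite set consisting of one chosen representative for each shape of candidate loop realizable in $X = T/G$ (this is a finite list, determined by $T$ alone). The inequality $\text{Lip}(T,T') \geq \sup_{g \in X(T)} ||g||_{T'}/||g||_T$ is immediate from $X(T) \subseteq \text{Cand}(T)$ together with Theorem \ref{candidate}. For the reverse inequality, given any $\varepsilon$, pick $g \in \text{Cand}(T)$ with $||g||_{T'}/||g||_T > \text{Lip}(T,T') - \varepsilon$; by Step 2, the shape representative $g_0 \in X(T)$ of $g$ satisfies $||g_0||_{T'}/||g_0||_T \geq ||g||_{T'}/||g||_T - (\text{error depending on } \varepsilon)$; letting $\varepsilon \to 0$ gives the claim. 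The main obstacle is Step 2: controlling uniformly how much the peripheral contributions to the translation lengths of degenerate-barbell candidates can distort the ratio, and doing so with bounds that depend only on $T$ and not on $T'$ — this requires carefully exploiting the ellipticity of peripheral subgroups in all trees of outer space and a compactness/finiteness argument on the finite graphs $X$ and $X'$. One clean way to organize it: show that for each shape there is a representative $g_0$ and a constant $C = C(T)$ such that for \emph{every} candidate $g$ of that shape and \emph{every} $T' \in \mathcal{O}(G,\mathcal{F})$, $||g||_{T'} \le ||g_0||_{T'} + C\cdot||g_0||_{T'}$-type inequalities hold, i.e. the ratios are comparable; here one must be slightly careful to get a purely multiplicative comparison, which may force one to include finitely many representatives per shape rather than exactly one, but still a finite set overall.
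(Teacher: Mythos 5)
Your Step 1 — reducing to the finitely many "shapes" of candidate loops in the finite graph $T/G$ — matches the opening move of the paper's proof. But Step 2 heads in a direction that cannot prove the statement as written, and this is a genuine gap rather than a detail to be filled in.

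The theorem asserts an \emph{exact} equality: $\text{Lip}(T,T')=\sup_{g\in X(T)}\|g\|_{T'}/\|g\|_T$ for a single finite set $X(T)$ and for \emph{all} $T'$. Your Step 2 tries to show that an arbitrary candidate $g$ with the same shape as a representative $g_0\in X(T)$ has a \emph{comparable} ratio $\|g\|_{T'}/\|g\|_T$, up to an additive or multiplicative error controlled by $T$ (and you candidly note you only expect a multiplicative comparison). But a multiplicative comparison with constant $C>1$ only yields $\text{Lip}(T,T')\le C\sup_{g\in X(T)}\|g\|_{T'}/\|g\|_T$, not equality, and there is no limiting mechanism that would send $C$ to $1$ uniformly in $T'$. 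Indeed, the "peripheral contribution" to the translation length of a degenerate barbell in $T'$ is not bounded independently of $T'$: the vertex groups act elliptically, but the configuration of fixed points in $T'$ depends on $T'$, so the error terms you would need to control are not controlled. An approximation scheme cannot recover an exact identity here.

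The actual mechanism is entirely different and exploits the train-track structure induced by an optimal morphism. Francaviglia--Martino's proof of Theorem \ref{candidate} shows that for any $T'$ there is an optimal morphism $f:T\to T'$ and a candidate $g$ that is \emph{legal} for $f$ with axis contained in the tension graph of $f$; any such element realizes the supremum exactly. Legality depends on $T'$ only through which turns at vertices of $T$ are folded by $f$, and this is where finiteness comes from: at a vertex $v$ with nontrivial stabilizer $G_v$ and adjacent edges $e,e'$, if two distinct elements $g,g'\in G_v$ both gave illegal turns $(e,ge')$ and $(e,g'e')$ for $f$, then $g'g^{-1}$ would fix a nondegenerate arc in $T'$, contradicting triviality of arc stabilizers in Grushko trees. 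So for each shape one only needs to record \emph{two} choices of stabilizer element per vertex crossed by the loop; since a candidate loop crosses boundedly many vertices of $T/G$, this gives a finite set $X(\gamma)$ per shape guaranteed to contain, for every $T'$ and every optimal $f$, an element that is legal for $f$ and has the prescribed projection (hence has axis in the tension graph whenever the original candidate did). No comparison of ratios is needed, only the legality dichotomy. To repair your argument you would need to replace Step 2 with this legality observation; the estimating approach you sketch does not converge to the theorem's conclusion.
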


Let $T,T'\in\mathcal{O}(G,\mathcal{F})$, and $f:T\to T'$ be an optimal morphism. An element $g\in G$ is \emph{legal} for $f$ if it is hyperbolic in $T$, and if its axis is legal for $f$. The \emph{tension graph} of $f$ is the set of edges of $T$ that are maximally strecthed by $f$. Francaviglia and Martino's proof of Theorem \ref{candidate} shows that there exists an optimal morphism $f:T\to T'$, and there exists $g\in G$ which is legal for $f$, and whose axis is contained in the tension graph of $f$. In addition, such an element $g\in G$ can be chosen to be a candidate in $T$. Theorem \ref{candidate} follows, because every such element maximizes the stretch factor from $T$ to $T'$. 

\begin{proof}[Proof of Theorem \ref{candidate-finite}]
The set $Y(T)$ of possible projections of axes of candidates in $T$ to the quotient graph $T/G$ is finite. By Theorem \ref{candidate}, it is thus enough to show that from the set of all candidates in $T$ whose projections are equal to some $\gamma\in Y(T)$, we can extract a finite subset $X(\gamma)$ so that for all $T'\in \mathcal{O}(G,\mathcal{F})$, and all optimal morphisms $f:T\to T'$, at least one element in $X(\gamma)$ is legal for $f$. This follows from the observation that for every pair $(e,e')$ of adjacent edges in $T$ whose common vertex $v$ has nontrivial stabilizer $G_v$, and any two distinct elements $g,g'\in G_v$, either $(e,ge')$ or $(e,g'e')$ is legal for $f$, since otherwise $g'g^{-1}$ would fix a nondegenerate arc in $T'$. In addition, any loop in $Y(T)$ crosses boundedly many vertices in $T/G$.  
\end{proof}

\subsection{Lipschitz approximations of trees}\label{sec-Lipschitz}

Let $T\in\overline{\mathcal{O}(G,\mathcal{F})}$. A \emph{Lipschitz approximation} of $T$ is a sequence $(T_n)_{n\in\mathbb{N}}\in\overline{\mathcal{O}(G,\mathcal{F})}^{\mathbb{N}}$ converging (non-projectively) to $T$ such that for all $n\in\mathbb{N}$, there exists a $1$-Lipschitz map from $T_n$ to $T$. The following proposition follows from \cite[Theorem 5.3]{Hor14-5}.

\begin{prop} (Horbez \cite[Theorem 5.3]{Hor14-5}) \label{Lipschitz-approximation}
Every tree $T\in\overline{\mathcal{O}(G,\mathcal{F})}$ with dense orbits admits a Lipschitz approximation by Grushko $(G,\mathcal{F})$-trees. 
\end{prop}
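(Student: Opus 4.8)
The plan is to prove Proposition~\ref{Lipschitz-approximation} by reducing it to the approximation statement of \cite[Theorem 5.3]{Hor14-5}, and then upgrading a naive approximation to a \emph{Lipschitz} one by rescaling. First I would recall that \cite[Theorem 5.3]{Hor14-5} provides, for any tree $T\in\overline{\mathcal{O}(G,\mathcal{F})}$ with dense orbits, a sequence of Grushko trees $(T_n')_{n\in\mathbb{N}}\in\mathcal{O}(G,\mathcal{F})^{\mathbb{N}}$ converging (projectively) to $T$ \emph{together with} a sequence of $G$-equivariant maps $f_n\colon T_n'\to T$ whose Lipschitz constants $\lambda_n:=\mathrm{Lip}(f_n)$ stay bounded, or even converge appropriately; the key input is that one can approximate $T$ by trees admitting equivariant maps of controlled Lipschitz constant onto $T$. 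Given this, for each $n$ I would set $T_n:=\lambda_n\cdot T_n'$, i.e. rescale the metric on $T_n'$ by the factor $\lambda_n$. Since rescaling an $\mathbb{R}$-tree by a positive constant does not change its $G$-action, $T_n$ still lies in $\mathcal{O}(G,\mathcal{F})$, it is still a Grushko tree, and the map $f_n\colon T_n\to T$, viewed now on the rescaled tree, has Lipschitz constant $\mathrm{Lip}(f_n)/\lambda_n = 1$. So each $T_n$ admits a $1$-Lipschitz equivariant map onto $T$.

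The second point is to check convergence: we need $(T_n)_{n\in\mathbb{N}}$ to converge \emph{non-projectively} to $T$ in the axes topology. Since $T_n'$ converges projectively to $T$, after choosing representatives we may assume $||g||_{T_n'}\to ||g||_T$ for all $g\in G$; then $||g||_{T_n}=\lambda_n||g||_{T_n'}$, so it suffices to know $\lambda_n\to 1$. This is where the precise form of \cite[Theorem 5.3]{Hor14-5} matters: that theorem should be read as asserting not merely the existence of bounded-Lipschitz approximations but that one can arrange the optimal Lipschitz constant from $T_n'$ to $T$ (after normalizing, say, $\mathrm{vol}(T_n'/G)$ or some fixed translation length) to tend to $1$; equivalently, that $T$ is a limit of Grushko trees $T_n'$ with $\mathrm{Lip}(T_n',T)\to 1$ in the appropriate normalization. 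Granting this, $\lambda_n\to 1$, hence $||g||_{T_n}\to||g||_T$ for every $g$, which is exactly non-projective convergence $T_n\to T$ in the axes topology. Together with the previous paragraph, $(T_n)_{n\in\mathbb{N}}$ is then the desired Lipschitz approximation of $T$ by Grushko $(G,\mathcal{F})$-trees.

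The main obstacle I anticipate is precisely the extraction of the $1$-Lipschitz (rather than merely bounded-Lipschitz) normalization, i.e. showing $\lambda_n\to 1$. The point is that density of orbits in $T$ forces arbitrarily short ``almost simplicial'' pieces to collapse in the limit, and the content of \cite[Theorem 5.3]{Hor14-5} is that one can nonetheless find approximating Grushko trees whose stretch onto $T$ is asymptotically optimal. I would therefore either cite that theorem in the form ``$T$ is a limit of Grushko trees $T_n'$ admitting equivariant maps $f_n\colon T_n'\to T$ with $\mathrm{Lip}(f_n)\to 1$ (after normalization)'', or, if the statement there is only the weaker bounded-Lipschitz version, observe that by the above rescaling every bounded-Lipschitz approximation already yields trees with a $1$-Lipschitz map onto $T$ provided we let the normalization float, and then verify that the rescaled trees still converge to $T$ by a short computation with translation lengths, using that the $\lambda_n$ are bounded above and below away from $0$ (the lower bound coming from the fact that a $1$-Lipschitz map onto a tree with dense orbits, hence with points arbitrarily close together, cannot exist from a fixed-covolume Grushko tree unless it is rescaled down, and the upper bound from boundedness of the original Lipschitz constants). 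In either formulation the real work has been done in \cite{Hor14-5}, and the proof here is the short rescaling argument above.
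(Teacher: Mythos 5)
The paper itself gives no proof: the proposition is stated as an attribution to \cite[Theorem 5.3]{Hor14-5}, prefaced by the remark that it ``follows from'' that theorem. Your proposal tries to reconstruct this deduction via a rescaling argument. The mechanism is formally correct: scaling $T_n'$ up by $\lambda_n=\mathrm{Lip}(f_n)$ makes the map $1$-Lipschitz, and non-projective convergence of the rescaled sequence to $T$ is equivalent to $\lambda_n\to 1$ (for $T$ nontrivial). But you never actually establish $\lambda_n\to 1$, and the concluding paragraph only argues that $\lambda_n$ is bounded above and bounded away from $0$, which is strictly weaker. From a $\lambda_n$-Lipschitz equivariant map $T_n'\to T$ one gets $||g||_T\le\lambda_n||g||_{T_n'}$ for every $g$, hence $\liminf\lambda_n\ge 1$ once $||g||_{T_n'}\to||g||_T>0$; nothing in a bounded-Lipschitz hypothesis forces $\limsup\lambda_n\le 1$, and if, say, $\lambda_n\equiv 2$ were admissible, your rescaled trees would converge non-projectively to $2T$, not to $T$. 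So as a standalone argument the proposal has a genuine gap at exactly the step you flag as the ``main obstacle.'' Given how the present paper invokes \cite[Theorem 5.3]{Hor14-5} and how Lipschitz approximations are subsequently used (e.g.\ in Proposition \ref{csq-Lipschitz}), that theorem is meant to deliver Grushko trees converging non-projectively to $T$ together with $1$-Lipschitz maps onto $T$ directly; the proposition is then an immediate restatement, and the rescaling detour is neither the intended route nor, as written, a complete one.
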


\begin{prop}\label{csq-Lipschitz}
Let $S,T\in\overline{\mathcal{O}(G,\mathcal{F})}$, let $(S_i)_{i\in\mathbb{N}}\in\mathcal{O}(G,\mathcal{F})^{\mathbb{N}}$ be a Lipschitz approximation of $S$, and let $(T_j)_{j\in\mathbb{N}}\in\mathcal{O}(G,\mathcal{F})^{\mathbb{N}}$ be a sequence that converges (non-projectively) to $T$. Assume in addition that there exists a $1$-Lipschitz map from $S$ to $T$. Then for all $i\in\mathbb{N}$, there exists $J_i\in\mathbb{N}$ so that for all $j\ge J_i$, we have $\text{Lip}(S_i,T_j)\le 1+\frac{1}{i}$.
\end{prop}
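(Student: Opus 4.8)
The plan is to combine the three hypotheses — that $(S_i)$ is a Lipschitz approximation of $S$, that $(T_j) \to T$, and that there is a $1$-Lipschitz map $S \to T$ — to produce, for fixed $i$, maps $S_i \to T_j$ of Lipschitz constant close to $1$ once $j$ is large. The natural candidate map is a composition: a $1$-Lipschitz map $\pi_i : S_i \to S$ (which exists by definition of a Lipschitz approximation), followed by the given $1$-Lipschitz map $\sigma : S \to T$, and then one needs to ``transport'' this to $T_j$. Since we only know $T_j \to T$ in the axes topology and not that there are Lipschitz maps $T \to T_j$, the point is instead to control $\text{Lip}(S_i, T_j)$ directly via translation lengths.

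First I would invoke Theorem \ref{candidate-finite}: fix $i \in \mathbb{N}$ and let $X(S_i) \subseteq \text{Cand}(S_i)$ be the finite set it provides, so that for every $U \in \mathcal{O}(G,\mathcal{F})$ one has $\text{Lip}(S_i, U) = \sup_{g \in X(S_i)} \|g\|_U / \|g\|_{S_i}$. Next, using the $1$-Lipschitz map $S_i \to S$ followed by $\sigma : S \to T$, we get a $1$-Lipschitz map $S_i \to T$, hence $\|g\|_T \le \|g\|_{S_i}$ for all $g \in G$, in particular for the finitely many $g \in X(S_i)$. Now, since $X(S_i)$ is finite and $\|g\|_{T_j} \to \|g\|_T$ as $j \to \infty$ (this is exactly what convergence in the axes topology means, applied finitely many times), there exists $J_i$ such that for all $j \ge J_i$ and all $g \in X(S_i)$ we have $\|g\|_{T_j} \le \|g\|_T + \varepsilon_{i,g}$ for any prescribed small errors; choosing these errors small relative to $\min_{g \in X(S_i)} \|g\|_{S_i} > 0$ (positive and finite since $X(S_i)$ is a finite set of elements hyperbolic in $S_i$), we arrange $\|g\|_{T_j} / \|g\|_{S_i} \le \|g\|_T / \|g\|_{S_i} + \frac{1}{i} \le 1 + \frac{1}{i}$ for every $g \in X(S_i)$. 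Taking the supremum over the finite set $X(S_i)$ and applying Theorem \ref{candidate-finite} gives $\text{Lip}(S_i, T_j) \le 1 + \frac{1}{i}$, as desired.

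The main technical points to be careful about are: (i) that the composition of $1$-Lipschitz equivariant maps is $1$-Lipschitz, so $\|g\|_T \le \|g\|_{S_i}$ genuinely holds (here it is important that $S_i \in \mathcal{O}(G,\mathcal{F})$ so Theorem \ref{candidate-finite} applies, and that the approximation maps $S_i \to S$ are equivariant, which they are by the standing convention that all maps between $G$-trees are $G$-equivariant); and (ii) that $\min_{g \in X(S_i)} \|g\|_{S_i} > 0$, which holds because each $g \in X(S_i) \subseteq \text{Cand}(S_i)$ is hyperbolic in $S_i$ by definition of a candidate, and $X(S_i)$ is finite. I do not expect a serious obstacle here; the only subtlety is ensuring the finiteness of $X(S_i)$ is used to pass from pointwise convergence of translation lengths to a uniform bound, which is why Theorem \ref{candidate-finite} (rather than the infinite version Theorem \ref{candidate}) is the right tool.
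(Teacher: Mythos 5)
Your proof is correct and follows essentially the same route as the paper: fix $i$, use the finite set $X(S_i)$ from Theorem \ref{candidate-finite}, combine the $1$-Lipschitz maps $S_i\to S\to T$ to get $\|g\|_T\le\|g\|_{S_i}$, and use axes-topology convergence of $(T_j)$ on the finitely many elements of $X(S_i)$ to conclude. The only cosmetic difference is that you work with additive errors (and correctly note $\min_{g\in X(S_i)}\|g\|_{S_i}>0$ to convert them), whereas the paper states the convergence estimate multiplicatively as $\|g\|_{T_j}\le(1+\tfrac1i)\|g\|_T$ directly.
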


\begin{proof}
Let $i\in\mathbb{N}$. As $(T_j)_{j\in\mathbb{N}}$ converges non-projectively to $T$, there exists $J_i\in\mathbb{N}$ so that for all $j\ge J_i$, all elements $g$ in the finite set $X(S_i)$ provided by Theorem \ref{candidate-finite} have translation length at most $(1+\frac 1 i)||g||_T\le (1+\frac 1 i)||g||_S\le (1+\frac 1 i)||g||_{S_i}$ in $T_{j}$. The claim then follows from Theorem \ref{candidate-finite}.
\end{proof}

\subsection{Alignment-preserving maps}

A map $f:T\to T'$ is \emph{alignment-preserving} if the $f$-image of every segment in $T$ is a segment in $T'$. We note that alignment-preserving maps are not assumed to be continuous. However, any alignment-preserving map is continuous in restriction to every segment of $T$, and more generally in restriction to every finite subtree of $T$. If there exists an alignment-preserving map from $T$ to $T'$, we say that $T$ \emph{collapses} to $T'$. The following lemma states a few basic topological properties of alignment-preserving maps. It follows from the fact that a subtree of an $\mathbb{R}$-tree is closed if and only if its intersection with every segment in $T$ is closed. Details of the proof are left to the reader.

\begin{lemma} \label{projection-closed}
Let $T$ and $\widehat{T}$ be two $(G,\mathcal{F})$-trees. Let $p:\widehat{T}\to T$ be a surjective alignment-preserving map. Then the $p$-preimage of every closed subtree in $T$ is a closed subtree of $\widehat{T}$. The $p$-image of every closed subtree of $\widehat{T}$ is a closed subtree of $T$.
\qed
\end{lemma}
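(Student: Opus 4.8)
The statement to prove is Lemma \ref{projection-closed}: for a surjective alignment-preserving map $p:\widehat{T}\to T$ between $(G,\mathcal{F})$-trees, preimages of closed subtrees are closed subtrees, and images of closed subtrees are closed subtrees.

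\textbf{Plan.} The key structural fact I would rely on is the one the excerpt already flags: a subtree $Y$ of an $\mathbb{R}$-tree $T$ is closed if and only if $Y\cap I$ is closed in $I$ for every segment $I\subseteq T$. (One direction is trivial; for the other, if $x\notin Y$ but $x$ is a limit of points $y_n\in Y$, pick $y\in Y$ and consider the segment $[x,y]$; the nearest-point projections of the $y_n$ onto $[x,y]$ lie in $Y\cap[x,y]$ and converge to $x$, contradicting closedness of $Y\cap[x,y]$.) So everything reduces to checking the segment-wise statements, and here the hypothesis that $p$ is alignment-preserving — hence continuous and monotone in restriction to each segment — does the work.

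\textbf{Preimage of a closed subtree.} Let $Y\subseteq T$ be a closed subtree. First, $p^{-1}(Y)$ is a subtree: given $a,b\in p^{-1}(Y)$, the segment $[a,b]$ maps under $p$ to the segment $[p(a),p(b)]\subseteq Y$ (since $Y$ is a subtree and contains both endpoints), so $[a,b]\subseteq p^{-1}(Y)$. Now fix a segment $I=[a,b]\subseteq\widehat{T}$; I must show $p^{-1}(Y)\cap I$ is closed in $I$. The restriction $p|_I$ is a continuous map from $I$ onto the segment $[p(a),p(b)]$, and $p^{-1}(Y)\cap I=(p|_I)^{-1}(Y\cap[p(a),p(b)])$; since $Y\cap[p(a),p(b)]$ is closed in the segment $[p(a),p(b)]$ (as $Y$ is closed, using the criterion above or directly) and $p|_I$ is continuous, this preimage is closed in $I$. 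By the criterion, $p^{-1}(Y)$ is closed.

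\textbf{Image of a closed subtree.} Let $Z\subseteq\widehat{T}$ be a closed subtree. Again $p(Z)$ is a subtree: for $a',b'\in p(Z)$ with preimages $a,b\in Z$, the segment $[a,b]\subseteq Z$ maps onto $[a',b']$, so $[a',b']\subseteq p(Z)$. For closedness, fix a segment $J=[a',b']\subseteq T$; I want $p(Z)\cap J$ closed in $J$. Choose $a,b\in Z$ with $p(a)=a'$, $p(b)=b'$; then $[a,b]\subseteq Z$, and I claim $p(Z)\cap J=p([a,b])$. The inclusion $\supseteq$ is clear since $p([a,b])=[a',b']=J$ already (alignment-preservation), so in fact $J\subseteq p(Z)$ and $p(Z)\cap J=J$ is trivially closed. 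The one subtlety is making sure $Z$ actually meets the right fibers: since $p$ is surjective and $a',b'\in p(Z)$ by assumption, suitable $a,b\in Z$ exist, and convexity of $Z$ gives $[a,b]\subseteq Z$. That's the whole argument.

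\textbf{Main obstacle.} There is no serious obstacle; the content is entirely in the reduction to segments and in checking that alignment-preserving maps restrict to continuous monotone maps on segments. The only point requiring a little care is not conflating "alignment-preserving" with "continuous" globally — continuity is only available in restriction to segments (or finite subtrees), which is exactly what the segment-wise criterion for closedness needs. I would state the $\mathbb{R}$-tree closedness criterion as a preliminary observation and then the two verifications are each two or three lines.

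\begin{proof}
Recall that a subtree $Y$ of an $\mathbb{R}$-tree is closed if and only if $Y\cap I$ is closed in $I$ for every segment $I$: indeed, if some $x\notin Y$ were a limit of a sequence $(y_n)$ in $Y$, then picking $y\in Y$ and letting $y'_n$ be the point of $[x,y]$ nearest to $y_n$, we would have $y'_n\in Y\cap[x,y]$ and $y'_n\to x$, contradicting closedness of $Y\cap[x,y]$; the converse is immediate. Also recall that the restriction of an alignment-preserving map to any segment is continuous and monotone, mapping $[a,b]$ onto $[p(a),p(b)]$.

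Let $Y\subseteq T$ be a closed subtree. If $a,b\in p^{-1}(Y)$, then $p([a,b])=[p(a),p(b)]\subseteq Y$, so $[a,b]\subseteq p^{-1}(Y)$; thus $p^{-1}(Y)$ is a subtree. Given a segment $I=[a,b]\subseteq\widehat{T}$, the map $p|_I:I\to[p(a),p(b)]$ is continuous and $p^{-1}(Y)\cap I=(p|_I)^{-1}\bigl(Y\cap[p(a),p(b)]\bigr)$; since $Y\cap[p(a),p(b)]$ is closed in $[p(a),p(b)]$, this set is closed in $I$. By the criterion above, $p^{-1}(Y)$ is closed.

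Let $Z\subseteq\widehat{T}$ be a closed subtree. If $a',b'\in p(Z)$, choose $a,b\in Z$ with $p(a)=a'$, $p(b)=b'$; then $[a,b]\subseteq Z$ (convexity), so $[a',b']=p([a,b])\subseteq p(Z)$, and $p(Z)$ is a subtree. Given a segment $J=[a',b']\subseteq T$ with $a',b'\in p(Z)$, the same choice gives $J=p([a,b])\subseteq p(Z)$, so $p(Z)\cap J=J$ is closed in $J$; if instead one of $a',b'$ is not in $p(Z)$ the relevant intersection statement only shrinks, and in all cases $p(Z)\cap J$ is an intersection of the subtree $p(Z)$ with a segment that, by the previous sentence applied to subsegments with endpoints in $p(Z)$, meets $p(Z)$ in a closed subsegment. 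By the criterion, $p(Z)$ is closed.
\end{proof}
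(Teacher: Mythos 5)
Your preimage argument is correct and follows the paper's suggested route (the paper leaves the details to the reader, only pointing at the segment-wise criterion for closedness): alignment-preservation gives convexity of $p^{-1}(Y)$, and closedness then follows from the criterion together with continuity and monotonicity of $p$ in restriction to a segment.

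The image direction, however, has a genuine gap. What you actually establish is that $p(Z)\cap J$ is a convex subset of the segment $J$, i.e.\ an interval. But convexity does not imply closedness: a half-open or open subinterval of $J$ satisfies ``$[a',b']\subseteq p(Z)$ whenever $a',b'\in p(Z)\cap J$'' and is nevertheless not closed. The final clause of your proof (``\ldots by the previous sentence applied to subsegments with endpoints in $p(Z)$, meets $p(Z)$ in a closed subsegment'') is precisely where the argument slips: the previous sentence only yields convexity, and nothing forces the endpoints of the resulting interval to lie in $p(Z)$. It is also telling that your image argument never invokes the surjectivity of $p$, which should play a role. A clean repair bypasses the segment criterion for this direction: let $x\in T\smallsetminus p(Z)$; by surjectivity $x=p(y)$ for some $y\in\widehat{T}$, and $y\notin Z$ since $x\notin p(Z)$. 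As $Z$ is a closed subtree of the $\mathbb{R}$-tree $\widehat{T}$, the nearest-point projection $z_0$ of $y$ to $Z$ exists and satisfies $z_0\in[y,z]$ for every $z\in Z$; alignment-preservation then gives $p(z_0)\in[p(y),p(z)]=[x,p(z)]$, so $d_T(x,p(z))\ge d_T(x,p(z_0))>0$ for every $z\in Z$ (the strict inequality because $p(z_0)\in p(Z)$ while $x\notin p(Z)$). Hence $x$ lies at positive distance from $p(Z)$, and $p(Z)$ is closed. This is where both the closedness of $Z$ (for the projection to exist) and the surjectivity of $p$ (to produce $y$) are actually used.
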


\subsection{Limits of folding paths}

The goal of this section is to prove Proposition \ref{limit-folding}, which will be used in Section \ref{sec-boundary}, and gives information about possible limit points in $\overline{\mathcal{O}(G,\mathcal{F})}$ of some folding paths in $\mathcal{O}(G,\mathcal{F})$. In the following statement, the last assertion about alignment-preserving maps is an immediate consequence of our description in \cite{Hor14-1} of the map $f$, which is obtained from an ultralimit of the maps $f_n$ by projecting to the minimal subtree.

\begin{prop} (Horbez \cite[Theorem 4.3]{Hor14-1}) \label{Lipschitz-limit}
Let $T$ and $T'$ be tame $(G,\mathcal{F})$-trees, let $(T_n)_{n\in\mathbb{N}}$ (resp. $(T'_n)_{n\in\mathbb{N}}$) be a sequence of tame $(G,\mathcal{F})$-trees that converges to $T$ (resp. $T'$), and let $M\in\mathbb{R}$. Assume that for all $n\in\mathbb{N}$, there exists an $M$-Lipschitz map from $T_n$ to $T'_n$. Then there exists an $M$-Lipschitz map from $T$ to the metric completion of $T'$. If in addition, all maps $f_n$ are alignment-preserving, then $f$ can be chosen to be alignment-preserving.
\end{prop}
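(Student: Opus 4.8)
The plan is to obtain the map $f\colon T\to \overline{T'}$ as an ultralimit of the given $M$-Lipschitz maps $f_n\colon T_n\to T'_n$, using a non-principal ultrafilter $\omega$ on $\mathbb{N}$, and then to project the resulting map to the minimal subtree in order to land in (the completion of) $T'$ itself. Concretely, fix basepoints $x_n\in T_n$; since the trees converge in the axes topology and $T$ is minimal with finitely many orbits of branch directions (being tame), one can choose the $x_n$ compatibly so that the pointed trees $(T_n,x_n)$ converge in the equivariant Gromov--Hausdorff sense to $(T,x)$ for some $x\in T$. The maps $f_n$ being $M$-Lipschitz, the images $f_n(x_n)$ stay within bounded distance of a basepoint of $T'_n$ after recentering, so the pointed trees $(T'_n,f_n(x_n))$ also converge, and the ultralimit $f_\omega:=\lim_\omega f_n$ is a well-defined $G$-equivariant $M$-Lipschitz map from $T$ to the ultralimit $\lim_\omega(T'_n,f_n(x_n))$, which is an $\mathbb{R}$-tree containing an isometric copy of $T'$ (or rather receives $T'$ after projecting). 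This is exactly the content of \cite[Theorem 4.3]{Hor14-1}, which we are allowed to invoke; so the first assertion is essentially a citation, and the work is to make sure the hypotheses (tameness of all trees involved, convergence) are matched, and to recall why the ultralimit target $\mathbb{R}$-tree projects $G$-equivariantly onto $\overline{T'}$ — this uses that $T'$ is the minimal $(G,\mathcal{F})$-subtree and that projection to a closed subtree is $1$-Lipschitz.

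For the second, stronger assertion, suppose in addition that each $f_n$ is alignment-preserving. I would argue that the ultralimit $f_\omega$ is then alignment-preserving as well: given a segment $[a,b]\subseteq T$ and a point $c\in[a,b]$, pick approximating points $a_n,b_n,c_n\in T_n$ with $c_n\in[a_n,b_n]$ (possible since the $T_n$ converge to $T$ and segments converge to segments); since $f_n$ is alignment-preserving, $f_n(c_n)\in[f_n(a_n),f_n(b_n)]$, and passing to the ultralimit (betweenness is a closed condition in an $\mathbb{R}$-tree, as it is detected by the identity $d(f_n(a_n),f_n(c_n))+d(f_n(c_n),f_n(b_n))=d(f_n(a_n),f_n(b_n))$) yields $f_\omega(c)\in[f_\omega(a),f_\omega(b)]$. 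Hence $f_\omega$ maps segments into segments. The only remaining subtlety is that the composition of $f_\omega$ with the projection onto the minimal subtree $T'$ (inside its completion) must still be alignment-preserving; this is where I would appeal to the explicit description of $f$ in \cite{Hor14-1}, as the paper points out in the sentence preceding the statement, namely that $f$ is literally obtained from the ultralimit map by projecting to the minimal subtree, and projection onto a subtree of an $\mathbb{R}$-tree is alignment-preserving.

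The main obstacle, as I see it, is not the alignment-preservation bookkeeping — which is a soft closedness argument — but rather controlling the \emph{basepoints} so that the ultralimit does not degenerate: one must rule out that $d_{T'_n}(f_n(x_n), Y_n)\to\infty$, where $Y_n$ is the minimal $G$-invariant subtree, because otherwise the ultralimit of the $f_n$ would land entirely outside $T'$ and the projection step would lose all information. Tameness of the $T'_n$ and the uniform Lipschitz bound $M$ together with convergence $T_n\to T$ should pin this down: since $T$ is non-trivial (it has a non-degenerate segment), for $\omega$-almost every $n$ the tree $T_n$ contains a segment of definite length whose $f_n$-image has length bounded below (using legality/candidate considerations or simply that $f_n$ restricted to a suitable axis is non-constant for a hyperbolic element of $T$), which forces $f_n(x_n)$ to be at bounded distance from $Y_n$. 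In the write-up I would fold all of this into the citation of \cite[Theorem 4.3]{Hor14-1}, stating that the hypotheses there are met because tame trees form a class closed under the relevant limits (Propositions \ref{good} and the closedness of $k$-tame trees recalled above) and that the final clause about alignment-preserving maps is exactly the last sentence of that theorem, reproved here only to the extent of noting the closedness of betweenness under ultralimits and that projection to the minimal subtree preserves alignment.
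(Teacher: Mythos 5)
Your proposal takes essentially the same route as the paper, which gives no independent proof but simply cites \cite[Theorem 4.3]{Hor14-1} and observes, in the sentence preceding the statement, that the alignment-preserving clause follows because $f$ is obtained from the ultralimit of the $f_n$ by projecting to the minimal subtree. Your reconstruction of the cited argument (ultralimit preserves betweenness since it is a closed metric identity; closest-point projection onto a closed subtree in an $\mathbb{R}$-tree maps segments onto segments; continuity of the Lipschitz ultralimit then upgrades ``image contained in a segment'' to ``image equals a segment'') is accurate, and your final remark that the write-up should collapse to the citation is exactly what the paper does.
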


Let $T,T'\in\overline{\mathcal{O}(G,\mathcal{F})}$, and $f:T\to T'$ be a map. The \emph{bounded cancellation constant} of $f$, denoted by $BCC(f)$, is defined to be the supremum of all real numbers $B$ with the property that there exist $a,b,c\in T$ with $b\in [a,c]$, such that $d_{T'}(f(b),[f(a),f(c)])=B$. Note that a map $f:T\to T'$ is alignment-preserving if and only if $BCC(f)=0$. We denote by $\text{Lip}(f)$ the Lipschitz constant of $f$, and by $qvol(T)$ the quotient volume of $T$, defined as the infimal volume of a finite subtree of $T$ whose $G$-translates cover $T$ (the existence of such a tree was proved by Guirardel in \cite[Lemma 1.14]{Gui08}). The following proposition is a generalization of \cite[Lemma 3.1]{BFH97}.

\begin{prop} \label{bcc} 
Let $T\in\mathcal{O}(G,\mathcal{F})$, let $T'\in \overline{\mathcal{O}(G,\mathcal{F})}$, and let $f:T\to T'$ be a Lipschitz map. Then $BCC(f)\le Lip(f) qvol(T)$.
\end{prop}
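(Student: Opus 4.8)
The plan is to adapt, to the relative setting, the proof of the classical Bounded Cancellation Lemma \cite[Lemma 3.1]{BFH97}, with $qvol(T)$ playing the role of the covolume of the quotient graph. Write $\lambda:=\text{Lip}(f)$ and $V:=qvol(T)$, and fix once and for all a finite subtree $K\subseteq T$ whose $G$-translates cover $T$ and with $\text{vol}(K)$ as close to $V$ as desired (such a $K$ exists by \cite[Lemma 1.14]{Gui08}). Since $BCC(f)=\sup d_{T'}(f(b),[f(a),f(c)])$ over all $a,b,c\in T$ with $b\in[a,c]$, it suffices to bound $d_{T'}(f(b),[f(a),f(c)])$ by $\lambda V$ for each such triple.

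First I would reduce to a \emph{straightened} configuration by a median argument in $\mathbb{R}$-trees. Let $m$ be the median of $f(a),f(b),f(c)$, i.e. the projection of $f(b)$ to $[f(a),f(c)]$, so that $D:=d_{T'}(f(b),[f(a),f(c)])=d_{T'}(f(b),m)$. Let $a'\in[a,b]$ (resp. $c'\in[b,c]$) be the point closest to $b$ whose image lies in $[f(a),f(c)]$; these exist because $f$ is continuous and $f(a),f(c)$ lie in the closed set $f^{-1}([f(a),f(c)])$. Writing $\overline{Y}$ for the closure of the component of $T'\smallsetminus[f(a),f(c)]$ containing $f(b)$ — which meets $[f(a),f(c)]$ only at $m$ — the connectedness of $f([a',b])$ forces $f(a')=f(c')=m$, forces $f([a',b])$ and $f([b,c'])$ to be contained in $\overline{Y}$, and forces each of these images to contain the whole segment $[m,f(b)]$. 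In particular $f$ restricted to the interval $[a',b]$ is $\lambda$-Lipschitz with image containing a segment of length $D$, whence $d_T(a',b)\ge D/\lambda$ (and likewise $d_T(b,c')\ge D/\lambda$).

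The crux is then to prove that the excursion $[a',b]$ has length at most $V$; combined with the previous estimate this gives $D\le\lambda V$. I would argue that the projection of $[a',b]$ to the quotient $T/G$ is injective, so that, being an embedded arc in the metric graph $T/G$ (whose total length can be taken arbitrarily close to $V$), it has length $\le V$. To establish injectivity one assumes that some $G$-orbit of edges is crossed twice by $[a',b]$, witnessed by $e_1,e_2\subseteq[a',b]$ with $e_2=ge_1$ for some $g\in G\smallsetminus\{1\}$, the crossings being coherent so that $g$ is hyperbolic with axis running along $[e_1,e_2]$. Then $f(e_1)$ and $f(e_2)=gf(e_1)$ both lie in $\overline{Y}$, and, pushing further along the axis, all of $f(p_1),\,gf(p_1),\,g^2f(p_1),\dots$ lie in $\overline{Y}$ while $f(a')=m$ is the base point of $\overline{Y}$; I would derive a contradiction from the interaction of this $g$-orbit with the single branch $\overline{Y}$, using that $T$ is a Grushko tree (trivial arc stabilizers) and that $T'$ is very small and tame (Proposition \ref{good}), so that $g$ cannot both translate/fix inside $\overline{Y}$ in the required way and be compatible with the extremal choice of $a'$. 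A convenient way to localize this step, which I would attempt first, is: given $\varepsilon>0$ pick a configuration with $D\ge BCC(f)-\varepsilon$, let $b^\ast$ be the point of $[m,f(b)]$ at distance $\lambda V$ from $m$ (assuming $D>\lambda V$), let $u$ be the first point of $[a',b]$ with $f(u)=b^\ast$, note $d_T(a',u)\ge V$, and run the no-wrapping argument on the shorter segment $[a',u]$, whose image is confined to the ball of radius $\lambda V$ about $m$ in $\overline{Y}$; letting $\varepsilon\to 0$ then yields $BCC(f)\le\lambda V$.

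The main obstacle is precisely this last step: ruling out that the monotone excursion wraps around the quotient. Everything else is soft — the median reduction is formal in $\mathbb{R}$-trees, the Lipschitz length estimate is immediate, and "an embedded segment of $T$ projects to an embedded arc of $T/G$, hence has length $\le qvol(T)$" is a direct consequence of the definition of $qvol$ together with elementary $\mathbb{R}$-tree geometry (in the spirit of Lemma \ref{projection-closed}). I expect the wrapping-exclusion to require genuinely using the constraints on arc and tripod stabilizers built into $\mathcal{O}(G,\mathcal{F})$ and $\overline{\mathcal{O}(G,\mathcal{F})}$, exactly as in the free-group case of \cite{BFH97}.
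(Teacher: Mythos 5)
Your median/straightening reduction and the Lipschitz lower bound $d_T(a',b)\ge D/\lambda$ are correct and are formal $\mathbb{R}$-tree arguments, as you say. But the step you flag as ``the main obstacle'' is in fact the entire content of the lemma, and as written it is not a proof. You want to show that the monotone excursion $[a',b]$ has length at most $qvol(T)$, and your proposed mechanism is that $[a',b]$ projects injectively to $T/G$. This injectivity is not established, only gestured at, and I do not think it can be established by the kind of soft considerations you describe. Given two subarcs $e_1,e_2\subseteq[a',b]$ with $e_2=ge_1$, the element $g$ need not be hyperbolic with axis along $[e_1,e_2]$ (it can be elliptic, it can translate incoherently with the orientation of the excursion, and the two subarcs can share an endpoint at a vertex with nontrivial stabilizer), and even when $g$ is hyperbolic there is no immediate contradiction with $f([a',b])\subseteq\overline{Y}$: the half-tree $\overline{Y}$ is large, $g$ can act on $T'$ as a hyperbolic isometry whose axis lives entirely inside $\overline{Y}$, and the constraint that $a'$ is the last return to $m$ does not obviously conflict with this. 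The phrase ``I would derive a contradiction from the interaction of this $g$-orbit with the single branch $\overline{Y}$'' is precisely the place where the standard proof inserts a real argument, and you have not supplied one. Moreover you do not explain how to pass from a general Lipschitz map to a morphism without changing the relevant values $f(a),f(b),f(c)$, nor how to handle $T'\in\overline{\mathcal{O}(G,\mathcal{F})}$ (which may have dense orbits and no simplicial structure) rather than $T'\in\mathcal{O}(G,\mathcal{F})$.

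The paper does something quite different and much shorter: for $T'$ a Grushko $(G,\mathcal{F})$-tree it decomposes $f$ into Stallings folds, following Guirardel's argument in \cite[Proposition 9.6]{Gui98} — each fold contributes bounded cancellation controlled by the length of the folded edges, and these lengths sum to at most the lost quotient volume, giving $BCC(f)\le \text{Lip}(f)\,qvol(T)$ directly — and then it handles general $T'\in\overline{\mathcal{O}(G,\mathcal{F})}$ by approximating $T'$ by Grushko trees. The fold decomposition is exactly what replaces your unresolved ``no-wrapping'' step, and the approximation step is what replaces your direct appeal to tameness of $T'$. If you want to complete your proof along the lines you propose, the honest thing to do is to carry out the fold decomposition — at which point you will have reproved the Grushko case of the paper's argument — and then add the approximation step; the ``injectivity of the projection'' shortcut does not hold as stated.
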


\begin{proof}[Sketch of proof]
In the case where $T'$ is a Grushko $(G,\mathcal{F})$-tree, the statement follows by decomposing $f$ into Stallings' folds (see the proof of \cite[Proposition 9.6]{Gui98}). The claim is then proved by approximating $T'$ by trees in $\mathcal{O}(G,\mathcal{F})$.
\end{proof}

As in \cite[Corollary 3.9]{Hor14-1}, the following fact is a corollary of Propositions \ref{Lipschitz-approximation} and \ref{bcc}.

\begin{cor} (Horbez \cite[Corollary 3.9]{Hor14-1}) \label{alignment-preserved} 
Let $T,T'\in\overline{\mathcal{O}(G,\mathcal{F})}$ have dense orbits. Then any Lipschitz map from $T$ to the metric completion of $T'$ preserves alignment (and hence takes its values in $T'$). In particular, every morphism from $T$ to $T'$ is an isometry.
\qed
\end{cor}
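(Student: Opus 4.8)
\textbf{Proof proposal for Corollary \ref{alignment-preserved}.}

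The plan is to deduce the statement from the bounded cancellation estimate (Proposition \ref{bcc}) together with the existence of Lipschitz approximations by Grushko trees (Proposition \ref{Lipschitz-approximation}), mimicking the argument of \cite[Corollary 3.9]{Hor14-1}. Let $f:T\to \overline{T'}$ be an $M$-Lipschitz map, where $\overline{T'}$ denotes the metric completion of $T'$, and both $T$ and $T'$ have dense orbits. The key point is that, because $T$ has dense orbits, the quotient volume $qvol(T)$ is zero: one way to see this is to approximate $T$ by Grushko trees via Proposition \ref{Lipschitz-approximation} and observe that $T$ can be written as an increasing union (in the equivariant Gromov--Hausdorff sense) of subtrees of arbitrarily small quotient volume; more directly, a minimal tree with dense orbits has no simplicial part, so any finite subtree whose translates cover $T$ can be chosen with arbitrarily small total length. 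Applying Proposition \ref{bcc} to $f$ restricted to $T$ then gives $BCC(f)\le M\cdot qvol(T)=0$, which by the remark preceding Proposition \ref{bcc} means exactly that $f$ preserves alignment.

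First I would make the $qvol(T)=0$ claim precise: fix a finite subtree $K\subseteq T$ whose $G$-translates cover $T$ (such $K$ exists by \cite[Lemma 1.14]{Gui08}), and note that since $T$ has dense $G$-orbits and is minimal, no point of $T$ has a neighborhood meeting only finitely many branches, so $K$ itself must have dense orbits of branch points; one can then replace $K$ by a smaller covering subtree. The cleanest route, given what is available in the excerpt, is to invoke Proposition \ref{Lipschitz-approximation}: a Lipschitz approximation $(T_n)$ of $T$ by Grushko trees admits $1$-Lipschitz maps $T_n\to T$, and composing gives $M$-Lipschitz maps $T_n\to\overline{T'}$; applying Proposition \ref{bcc} on each $T_n$ and letting $n\to\infty$, one uses that the approximation can be arranged so that $qvol(T_n)\to 0$ (the Grushko trees approximating a dense-orbit tree degenerate, so their quotient volumes shrink), hence $BCC(f)=0$. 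Either way, once $BCC(f)=0$ we are done with the first assertion; and since the image of an alignment-preserving map from a minimal tree is a subtree which, being the continuous alignment-preserving image of $T$, must actually lie in $T'$ rather than only in $\overline{T'}$ (a point of $\overline{T'}\smallsetminus T'$ would be an endpoint not separating any two points of the image, contradicting density of orbits in the image), we conclude $f(T)\subseteq T'$.

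For the final sentence, let $\phi:T\to T'$ be a morphism. A morphism is in particular $1$-Lipschitz after the obvious reparametrization — more precisely, a morphism is locally an isometry, hence $1$-Lipschitz — so by the first part $\phi$ preserves alignment. A morphism that preserves alignment cannot fold: if it identified two directions at a point it would fail to be locally injective there while still being locally isometric, which is impossible; alternatively, an alignment-preserving morphism restricted to any segment is an isometric embedding, so $\phi$ is an isometric embedding, and by minimality of $T'$ and equivariance it is onto, hence an isometry. The main obstacle I anticipate is the justification that $qvol$ vanishes (equivalently, that the Grushko approximations have quotient volume tending to $0$) for dense-orbit trees; this is the one nontrivial input, and it is exactly where Proposition \ref{Lipschitz-approximation} is needed. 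Everything else is formal manipulation of the bounded cancellation constant and the definition of a morphism.
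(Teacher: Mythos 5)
Your approach — deducing the corollary from the Lipschitz approximation (Proposition~\ref{Lipschitz-approximation}) and the bounded-cancellation bound (Proposition~\ref{bcc}) — is exactly the route the paper indicates, so the strategy is right. However, there are a few genuine gaps in the execution. First, you cannot ``apply Proposition~\ref{bcc} to $f$ restricted to $T$'' directly, since that proposition requires the source to lie in $\mathcal{O}(G,\mathcal{F})$; you correctly fall back on the approximation, but the passage from $BCC(f\circ p_n)\le M\cdot qvol(T_n)$ to $BCC(f)=0$ is not spelled out. These are bounded-cancellation constants of different maps. To relate them you must lift an aligned triple $b\in[a,c]\subseteq T$ to an aligned triple in $T_n$; this works if $p_n$ is surjective (in fact the maps in \cite[Theorem~5.3]{Hor14-5} are morphisms), since then one may pick $\tilde a\in p_n^{-1}(a)$, $\tilde c\in p_n^{-1}(c)$ and note $p_n[\tilde a,\tilde c]$ is a subtree containing $a,c$, hence $b$, so some $\tilde b\in[\tilde a,\tilde c]$ maps to $b$. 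But the definition of ``Lipschitz approximation'' recorded in the paper asks only for $1$-Lipschitz maps, so you need to invoke the specific construction, not just the statement of Proposition~\ref{Lipschitz-approximation}.

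Second, and more importantly, you assert $qvol(T_n)\to 0$ and describe it as ``equivalent'' to $qvol(T)=0$. These are not equivalent. That $qvol(T)=0$ for a dense-orbits tree is the easy part. A $1$-Lipschitz surjective map $T_n\to T$ only gives $qvol(T_n)\ge qvol(T)=0$, i.e. the useless inequality; nothing in the abstract definition of a Lipschitz approximation forces the covolumes to shrink. The vanishing $qvol(T_n)\to 0$ is again a feature of the particular construction (the approximating trees arise along a folding path, where quotient volume is monotone nonincreasing and converges to that of the limit, cf.\ the references to \cite{AK12}). So the ``one nontrivial input'' you flag is correctly identified, but it is a property of the construction rather than a consequence of $qvol(T)=0$. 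Finally, the parenthetical argument for $f(T)\subseteq T'$ is not conclusive: if $f(x)\in\overline{T'}\smallsetminus T'$ and $x\in(a,c)$, alignment-preservation only forces $f(x)$ to be an endpoint of $[f(a),f(c)]$, i.e.\ $f(x)=f(a)$ or $f(c)$, so $f$ collapses one side — there is no immediate clash with density. A more reliable route: $T$, being minimal, is covered by translates of axes of hyperbolic elements; for each such $g$, the $g$-invariant image $f(\mathrm{Axis}_T(g))$ is either a $g$-fixed point or the axis of $g$ in $T'$ (which lies in $T'$), and one then argues that not all axes can be collapsed. Your treatment of the last assertion (morphisms are isometries once they preserve alignment, by minimality and equivariance) is correct.
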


\begin{prop} \label{limit-folding}
Let $S,T\in\overline{\mathcal{O}(G,\mathcal{F})}$ be two trees with dense orbits. Let $(S_i)_{i\in\mathbb{N}}\in\mathcal{O}(G,\mathcal{F})^{\mathbb{N}}$ (resp. $(T_i)_{i\in\mathbb{N}}\in\mathcal{O}(G,\mathcal{F})^{\mathbb{N}}$) be a sequence that converges (non-projectively) to $S$ (resp. to $T$). Assume that $S$ admits a $1$-Lipschitz alignment-preserving map onto $T$, and that for all $i\in\mathbb{N}$, we have $\text{Lip}(S_i,T_i)\le 1+\frac{1}{i}$. Then there exists an optimal liberal folding path $\gamma_i$ from the cone of $S_i$ to $T_{i}$ for all $i\in\mathbb{N}$, so that all sequences $(Z_i)_{i\in\mathbb{N}}\in\prod_{i\in\mathbb{N}}\text{Im}(\gamma_i)$ have nontrivial accumulation points in $\overline{\mathcal{O}(G,\mathcal{F})}$, and all such accumulation points $Z$ come with $1$-Lipschitz alignment-preserving maps from $S$ to $Z$ and from $Z$ to $T$.  
\end{prop}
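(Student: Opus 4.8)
The plan is to construct the folding paths $\gamma_i$ using the machinery of Theorem \ref{candidate} and Proposition \ref{canonical-good}, and then to extract accumulation points by a compactness-and-Lipschitz-limit argument based on Proposition \ref{Lipschitz-limit}. First, for each $i\in\mathbb{N}$, since $\text{Lip}(S_i,T_i)\le 1+\frac{1}{i}$, Theorem \ref{candidate} provides a tree $\overline{S_i}\in\mathcal{O}(G,\mathcal{F})$ in the cone of $S_i$ (obtained by rescaling edge lengths, so $S_i$ admits a $\text{Lip}(S_i,T_i)$-Lipschitz alignment-preserving map onto $\overline{S_i}$) together with an optimal morphism $f_i:\overline{S_i}\to T_i$. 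Applying Proposition \ref{canonical-good} to $f_i$ — after noting that $\overline{S_i}$ is simplicial with trivial edge stabilizers and $T_i\in\mathcal{O}(G,\mathcal{F})$, so both are $\mathcal{Z}^{max}$-tame, hence $1$-tame — yields a tame (in fact $1$-tame) optimal liberal folding path $\gamma_i$ from $\overline{S_i}$ (a point in the cone of $S_i$) to $T_i$.

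Next I would control the path $\gamma_i$ metrically. Since $\gamma_i$ is guided by the morphism $f_i$, every tree $Z$ on $\gamma_i$ admits a morphism (hence $1$-Lipschitz map after suitable normalization, using that $f_i$ is a morphism of Lipschitz constant $1$ between the appropriately rescaled trees) onto $T_i$; and $\overline{S_i}$ admits a morphism onto $Z$. Composing with the $(1+\frac{1}{i})$-Lipschitz alignment-preserving collapse $S\to$(limit of $S_i$)$\to\overline{S_i}$ — here one uses that $(S_i)$ is a Lipschitz approximation of $S$ and that $S$ collapses to $T$, so the rescalings are uniformly controlled — one gets, for each $Z_i\in\text{Im}(\gamma_i)$, a map $S_i\to Z_i$ of Lipschitz constant at most $1+\frac{1}{i}$ and a map $Z_i\to T_i$ of Lipschitz constant at most $1+\frac{1}{i}$. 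In particular the translation lengths $\|g\|_{Z_i}$ are squeezed between $(1+o(1))\|g\|_{T_i}$ and $(1+o(1))^{-1}\|g\|_{S_i}$ for every $g\in G$; since $(S_i)$ and $(T_i)$ both converge and $S,T$ are nonzero in $\overline{\mathcal{O}(G,\mathcal{F})}$, the projectivized trees $[Z_i]$ stay in the compact set $\overline{P\mathcal{O}(G,\mathcal{F})}$ and cannot degenerate to the zero tree. Hence any sequence $(Z_i)\in\prod_i\text{Im}(\gamma_i)$ has a nontrivial accumulation point $Z\in\overline{\mathcal{O}(G,\mathcal{F})}$ (after possibly passing to a subsequence and rescaling; the normalization is arranged so no rescaling is needed, but one records that $Z\neq 0$).

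Finally, to produce the alignment-preserving maps at the limit, I would invoke Proposition \ref{Lipschitz-limit}: along the subsequence realizing the accumulation point $Z$, the trees $S_i$ are tame and converge to $S$, the trees $Z_i$ are tame (they lie on the $1$-tame paths $\gamma_i$) and converge to $Z$, and there are $(1+\frac{1}{i})$-Lipschitz maps $S_i\to Z_i$; moreover these maps can be taken alignment-preserving (the collapse $S_i\to\overline{S_i}$ rescales edge lengths and the morphism-induced maps $\overline{S_i}\to Z_i$ can be taken to preserve alignment up to the folding, but more robustly: $S$ and $Z$ have dense orbits, so once a $1$-Lipschitz map $S\to$(completion of $Z$) is obtained in the limit, Corollary \ref{alignment-preserved} upgrades it automatically to an alignment-preserving map taking values in $Z$). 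Thus Proposition \ref{Lipschitz-limit} gives a $1$-Lipschitz map $S\to Z$, which Corollary \ref{alignment-preserved} makes alignment-preserving since both have dense orbits; symmetrically the $(1+\frac{1}{i})$-Lipschitz maps $Z_i\to T_i$ pass to a $1$-Lipschitz, hence alignment-preserving, map $Z\to T$.

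\textbf{Main obstacle.} I expect the delicate point to be the bookkeeping of rescalings: Theorem \ref{candidate} changes edge lengths within the cone of $S_i$, the folding path $\gamma_i$ is parametrized so that it terminates exactly at $T_i$, and one must check that all the intermediate $1$-Lipschitz (up to $1+\frac{1}{i}$) maps compose correctly and that the limiting constants are genuinely $1$ and not merely $1+o(1)$ in a way that spoils alignment-preservation — this is precisely where density of orbits of $S,T,Z$ and Corollary \ref{alignment-preserved} must be used to rigidify the limit, rather than trying to track alignment-preservation uniformly along the sequence. A secondary subtlety is verifying that $Z$ has dense orbits (so that Corollary \ref{alignment-preserved} applies to the maps out of $Z$): this should follow because $Z$ admits a $1$-Lipschitz map from $S$ which has dense orbits, and a $1$-Lipschitz alignment-preserving map onto $T$ which has dense orbits, forcing $Z$ to have dense orbits as well.
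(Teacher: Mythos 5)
Your proposal follows essentially the same route as the paper's proof: invoke Theorem \ref{candidate} to rescale $S_i$ into a tree $S'_i$ in its cone admitting an optimal morphism to $T_i$, run the canonical folding path, observe that every intermediate tree $Z_i$ is sandwiched by $1$-Lipschitz maps $(1+\tfrac{1}{i})S_i\to Z_i\to T_i$, extract a nontrivial accumulation point via translation-length bounds, and then pass to the limit using Proposition \ref{Lipschitz-limit} followed by Corollary \ref{alignment-preserved}. The one place you are more explicit than the paper is in flagging that $Z$ must have dense orbits before Corollary \ref{alignment-preserved} can be applied to the map $Z\to\overline{T}$; the clean way to close that loop (which your sketch hints at) is to first apply Corollary \ref{alignment-preserved} to $S\to\overline{Z}$ using only that the source $S$ has dense orbits, deduce that the image $f(S)$ is a $G$-invariant subtree with dense orbits containing the minimal subtree $Z$, and conclude that $Z$ has dense orbits — whereas your second suggested reason (that the $1$-Lipschitz alignment-preserving surjection $Z\to T$ forces dense orbits of $Z$) does not hold in general and should be dropped.
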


\begin{proof}
Theorem \ref{candidate} yields the existence for all $i\in\mathbb{N}$ of a tree $S'_i\in\mathcal{O}(G,\mathcal{F})$, obtained from $S_i$ by rescaling the lengths of the edges by a factor bounded above by $1+\frac{1}{i}$, such that there exist optimal liberal folding paths from $S'_i$ to $T_{i}$. For all $i\in\mathbb{N}$, let $Z_{i}\in \mathcal{O}(G,\mathcal{F})$ be a tree that lies on a liberal folding path from $S'_i$ to $T_{i}$. There are $1$-Lipschitz maps from $(1+\frac{1}{i})S_i$ to $Z_{i}$ and from $Z_{i}$ to $T_{i}$. In particular, for any accumulation point $Z$ of the sequence $(Z_{i})_{i\in\mathbb{N}}$, Proposition \ref{Lipschitz-limit} yields the existence of $1$-Lipschitz maps from $S$ to the metric completion of $Z$ and from $Z$ to the metric completion of $T$ (in particular, the set of accumulation points contains nontrivial $G$-trees). Corollary \ref{alignment-preserved} implies that these maps are alignment-preserving, and take values in $Z$ and $T$ (without passing to the completion).
\end{proof}

\subsection{Refinements of metric trees} \label{sec-compatibility}

Let $T_1$ and $T_2$ be two \emph{compatible} $(G,\mathcal{F})$-trees, i.e. there exists a $(G,\mathcal{F})$-tree $\widehat{T}$ and alignment-preserving maps $g_i:\widehat{T}\to T_i$ for all $i\in\{1,2\}$. Then $T_1$ and $T_2$ have a \emph{standard common refinement}, defined as follows \cite[Section 3.2]{GL10-2}. For all $i\in\{1,2\}$, denote by $d_i$ the metric on $T_i$, and by $l_i$ the associated length function, and for all $x,y\in\widehat{T}$, let
\begin{displaymath}
\delta(x,y):=d_1(g_1(x),g_1(y))+d_2(g_2(x),g_2(y)).
\end{displaymath} 

\noindent This defines a pseudometric on $\widehat{T}$, which satisfies $\delta(x,y)=\delta(x,z)+\delta(z,y)$ whenever $z\in [x,y]$. The metric space $T_s$ obtained by making this pseudometric Hausdorff is a $(G,\mathcal{F})$-tree, which admits a $1$-Lipschitz alignment-preserving map $f_i:T_s\to T_i$ for all $i\in\{1,2\}$, such that 
\begin{displaymath}
d_{T_s}(x,y)=d_1(f_1(x),f_1(y))+d_2(f_2(x),f_2(y)).
\end{displaymath} 

\noindent Arc stabilizers of $T_s$ fix an arc in either $T_1$ or $T_2$, and if $T_1$ and $T_2$ are $k$-tame, then so is $T_s$. Since
\begin{displaymath}
||g||_{T_s}=\lim_{n\to +\infty}\frac{1}{n}d_{T_s}(x,g^nx)
\end{displaymath}  

\noindent for all $x\in T_s$, it follows that the length function of $T_s$ is the sum of the length functions of $T_1$ and $T_2$. We will denote $T_s=:T_1+T_2$.

\begin{lemma}\label{compatible-limit}(Guirardel--Levitt \cite[Corollary 3.9]{GL10-2})
Let $S$ and $T$ be two $(G,\mathcal{F})$-trees. Let $(S_i)_{i\in\mathbb{N}}$ (resp. $(T_i)_{i\in\mathbb{N}}$) be a sequence of trees that converges in the axes topology to $S$ (resp. to $T$). If $S_i$ is compatible with $T_i$ for all $i\in\mathbb{N}$, then $S$ is compatible with $T$.
\end{lemma}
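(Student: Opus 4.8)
The plan is to use the characterization of compatibility via translation length functions. First I would recall the key fact (due to Guirardel--Levitt, and used implicitly above in the construction of the standard common refinement $T_1 + T_2$) that two $(G,\mathcal{F})$-trees $U$ and $V$ are compatible if and only if the sum $\ell_U + \ell_V$ of their length functions is again the length function of some $(G,\mathcal{F})$-tree. Concretely, $U$ and $V$ are compatible precisely when the function $g \mapsto \|g\|_U + \|g\|_V$ satisfies the conditions characterizing length functions of $\mathbb{R}$-trees — in the setting of actions with a point fixed by every peripheral subgroup, this amounts to a system of inequalities on triples and quadruples of group elements (the ``$0$-hyperbolicity'' type conditions of Culler--Morgan, together with the appropriate relations among $\|g\|$, $\|h\|$, $\|gh\|$, etc.). These conditions are \emph{closed}: each one is an inequality or equality between finite sums of translation lengths of fixed group elements, hence survives passing to pointwise limits.

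The main steps are then as follows. Since $S_i$ is compatible with $T_i$, for each $i$ the function $\ell_i := \ell_{S_i} + \ell_{T_i}$ is the length function of a $(G,\mathcal{F})$-tree (namely $S_i + T_i$). I would write out explicitly the family of closed conditions that a function $G \to \mathbb{R}_{\ge 0}$ must satisfy to be such a length function, and observe that $\ell_i$ satisfies all of them. Now $\ell_{S_i} \to \ell_S$ and $\ell_{T_i} \to \ell_T$ pointwise on $G$ (convergence in the axes topology is exactly pointwise convergence of length functions), so $\ell_i \to \ell_S + \ell_T$ pointwise. By closedness of the conditions, $\ell_S + \ell_T$ satisfies them as well, hence is the length function of a $(G,\mathcal{F})$-tree. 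Invoking once more the characterization of compatibility in terms of the sum of length functions, we conclude that $S$ and $T$ are compatible.

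The main obstacle — really the only subtle point — is making precise the statement that ``compatibility is detected by the sum of length functions being a length function,'' and that ``being a length function'' is a closed condition in the axes topology. For the first, one should cite the relevant result from \cite{GL10-2} (this is essentially the content of the discussion preceding the lemma, where $T_1 + T_2$ is constructed whenever $T_1, T_2$ are compatible; the converse direction, that a tree realizing $\ell_{T_1} + \ell_{T_2}$ forces compatibility, is the part that needs care). For the second, one uses Culler--Morgan's description of the image of the length-function embedding $i : \mathcal{O}(G,\mathcal{F}) \to \mathbb{R}^G$ together with the fact, recorded in the excerpt, that the relevant spaces of very small / tame trees are closed in the axes topology; alternatively one can appeal directly to \cite[Corollary 3.9]{GL10-2} itself and simply reproduce its short argument. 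Either way, no hard analysis is involved — the content is entirely in unwinding the definition of compatibility into a closed system of conditions on length functions.
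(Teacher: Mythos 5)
The paper does not give a proof of this lemma at all: it is stated as a direct citation to Guirardel--Levitt \cite[Corollary~3.9]{GL10-2}. Your sketch does reconstruct the main idea of the Guirardel--Levitt argument, namely that compatibility is equivalent to $\ell_U+\ell_V$ being a length function and that this last condition survives taking pointwise limits. In that sense the route you propose is the same route the cited reference takes.

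That said, two points in your write-up need tightening before the sketch is a proof. First, the equivalence ``$U$ and $V$ are compatible $\Longleftrightarrow$ $\ell_U+\ell_V$ is a length function'' is a theorem of \cite{GL10-2} that requires a hypothesis on the trees --- roughly, that at least one of them is irreducible (or has a nontrivial irreducible minimal subtree) and the other is nontrivial. Without some such assumption the converse implication can fail; in the paper's actual uses the trees are nontrivial minimal very small or tame $(G,\mathcal{F})$-trees, so the hypothesis is satisfied, but your proposal should say so. Second, your claim that Culler--Morgan's axioms form a ``closed'' system of inequalities is not quite right as stated: several of those axioms are implications whose hypotheses involve strict inequalities, so the set of functions satisfying them verbatim is not obviously closed. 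The correct way to get closedness is the one you mention as an alternative --- observe that $S_i+T_i$ lies in a class of trees (small, or uniformly $k$-tame) whose length-function image in $\mathbb{R}^G$ is known to be closed (see \cite{CM87}, \cite{Pau88}, \cite[Proposition~3.1 and 6.4]{Hor14-5}), rather than trying to argue pointwise from the axioms. Finally, for a fully correct argument one also needs to check that the relevant tameness parameter $k$ can be taken uniform along the sequence, which is automatic for very small trees but would need a word if the $S_i$, $T_i$ were only assumed tame. With those adjustments, the argument is the one Guirardel--Levitt give, and the conclusion is correct.
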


\subsection{Transverse families, transverse coverings and graphs of actions}\label{sec-goa}

Let $T$ be a $(G,\mathcal{F})$-tree. A \emph{transverse family} in $T$ is a $G$-invariant collection $\mathcal{Y}$ of nondegenerate subtrees of $T$ such that for all $Y\neq Y'\in\mathcal{Y}$, the intersection $Y\cap Y'$ contains at most one point. A \emph{transverse covering} of $T$ is a transverse family $\mathcal{Y}$ in $T$, all of whose elements are closed subtrees of $T$, such that every finite arc in $T$ can be covered by finitely many elements of $\mathcal{Y}$. A transverse covering $\mathcal{Y}$ of $T$ is \emph{trivial} if $\mathcal{Y}=\{T\}$. The \emph{skeleton} of a transverse covering $\mathcal{Y}$ is the bipartite simplicial tree $S$, whose vertex set is $V(S)=V_0(S)\cup\mathcal{Y}$, where $V_0(S)$ is the set of points of $T$ which belong to at least two distinct trees in $\mathcal{Y}$, with an edge between $x\in V_0(S)$ and $Y\in\mathcal{Y}$ whenever $x\in Y$ \cite[Definition 4.8]{Gui04}. A \emph{$(G,\mathcal{F})$-graph of actions} consists of

\begin{itemize}
\item a marked metric graph of groups $\mathcal{G}$ (in which we might allow some edges to have length $0$), whose fundamental group is isomorphic to $G$, such that all subgroups in $\mathcal{F}$ are conjugate into vertex groups of $\mathcal{G}$, and 

\item an isometric action of every vertex group $G_v$ on a $G_v$-tree $T_v$ (possibly reduced to a point), in which all intersections of $G_v$ with peripheral subgroups of $G$ are elliptic, and

\item a point $p_e\in T_{t(e)}$ fixed by $i_e(G_e)\subseteq G_{t(e)}$ for every oriented edge $e$.
\end{itemize}

It is \emph{nontrivial} if $\mathcal{G}$ is not reduced to a point. Associated to any $(G,\mathcal{F})$-graph of actions $\mathcal{G}$ is a $G$-tree $T(\mathcal{G})$. Informally, the tree $T(\mathcal{G})$ is obtained from the Bass--Serre tree of the underlying graph of groups by equivariantly attaching each vertex tree $T_v$ at the corresponding vertex $v$, an incoming edge being attached to $T_v$ at the prescribed attaching point. The reader is referred to \cite[Proposition 3.1]{Gui98} for a precise description of the tree $T(\mathcal{G})$. We say that a $(G,\mathcal{F})$-tree $T$ \emph{splits as a $(G,\mathcal{F})$-graph of actions} if there exists a $(G,\mathcal{F})$-graph of actions $\mathcal{G}$ such that $T=T({\mathcal{G}})$.

\begin{prop} (Guirardel \cite[Lemmas 1.5 and 1.15]{Gui08})\label{skeleton}
A $(G,\mathcal{F})$-tree splits as a nontrivial $(G,\mathcal{F})$-graph of actions if and only if it admits a nontrivial transverse covering. The skeleton of any transverse covering of $T$ is compatible with $T$, and it is minimal if $T$ is minimal.
\end{prop}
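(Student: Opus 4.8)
The plan is to establish the two directions of the equivalence separately and then verify the compatibility and minimality claims. For the forward direction, suppose $T$ admits a nontrivial transverse covering $\mathcal{Y}$. I would build the associated graph of actions directly from the skeleton $S$ of $\mathcal{Y}$: the underlying graph of groups is $S/G$, where the vertices of $S$ coming from $\mathcal{Y}$ carry the trees $Y\in\mathcal{Y}$ (with the $G_Y$-action obtained by restriction, where $G_Y:=\mathrm{Stab}_G(Y)$), the vertices coming from $V_0(S)$ carry trees reduced to a point, and each edge of $S$ (joining a point $x\in V_0(S)$ to a tree $Y$ containing it) carries as edge group the pointwise stabilizer of the arc around $x$ inside $Y$, with attaching point $x\in Y$. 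One must check that all subgroups in $\mathcal{F}$ are conjugate into vertex groups of this graph of actions: since peripheral subgroups are elliptic in $T$, any such subgroup fixes a point of $T$, which lies in some $Y\in\mathcal{Y}$, hence is conjugate into $G_Y$. Metrizing the edges of $S/G$ with length $0$, the resulting tree $T(\mathcal{G})$ is then canonically identified with $T$: the transverse covering lets one reconstruct $T$ by gluing the trees $Y$ along the points of $V_0(S)$, which is exactly what the graph-of-actions construction does. Conversely, if $T=T(\mathcal{G})$ for a nontrivial graph of actions $\mathcal{G}$, then the $G$-translates of the vertex trees $T_v$ (those that are nondegenerate), together with the translates of the edges of the Bass--Serre tree, form a transverse covering of $T$; nontriviality of $\mathcal{G}$ (its graph is not a point) guarantees this covering is nontrivial, unless $T$ itself is already one of the $T_v$, in which case one uses the edges instead.

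For the compatibility statement, the point is that there is a natural alignment-preserving map $\pi:T\to S$ (after metrizing $S$, e.g. with all edges of length $0$ on the $V_0$-side, or more carefully assigning lengths so that $S\in\overline{\mathcal{O}}$ if needed): each point of $T$ lies in at least one $Y\in\mathcal{Y}$, giving a map to the vertex set $\mathcal{Y}\subseteq V(S)$, and points in $V_0(S)\subseteq T$ map to themselves. The key is to verify that $\pi$ sends segments of $T$ to segments of $S$ — this uses precisely the defining property of a transverse covering, that any finite arc of $T$ is covered by finitely many elements of $\mathcal{Y}$, so that a segment $[a,b]$ decomposes into finitely many subsegments each contained in a single $Y$, and the images of the "crossing points" $V_0(S)\cap[a,b]$ trace out a segment in $S$. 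One then has a common refinement of $T$ and $S$ (indeed $T$ collapses onto $S$), which is what "compatible" means.

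For minimality: if $T$ is minimal, I would argue that $S$ has no proper $G$-invariant subtree. A proper invariant subtree $S'\subsetneq S$ would, via $\pi$, pull back to (or push forward from) a proper invariant subtree of $T$ — more precisely, the union of those $Y\in\mathcal{Y}$ lying in $S'$ would be a proper $G$-invariant subtree of $T$, contradicting minimality; one must check this union is nonempty and a genuine subtree, which follows since $S'$ being a subtree forces the corresponding $Y$'s to overlap consistently along $V_0$-points.

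The main obstacle I anticipate is the careful bookkeeping in the forward direction: showing that the graph of actions one writes down actually has $T(\mathcal{G})=T$ rather than merely a tree with the same length function. This requires checking that the attaching points are correct, that the edge groups are exactly the relevant arc stabilizers so that no spurious folding occurs, and that the gluing respects the $G$-action equivariantly. The transverse-covering axioms (the $Y$'s are closed, pairwise meet in at most a point, and locally finitely cover arcs) are exactly tailored to make this work, so the proof is really an exercise in unwinding the definitions of "skeleton" and "graph of actions" and matching them up — but it is the kind of argument where one must be scrupulous about basepoints and stabilizers. (This is why the statement is attributed to Guirardel \cite[Lemmas 1.5 and 1.15]{Gui08}, and I would ultimately cite that for the technical core while sketching the identification above.)
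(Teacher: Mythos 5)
The paper does not prove Proposition~\ref{skeleton}; it is stated purely as a citation to Guirardel's work on transverse coverings, so there is no in-paper argument to compare against. Your sketch follows the standard line: read off a graph of actions from the skeleton for one direction, take translates of vertex trees and of Bass--Serre edges for the other, obtain compatibility from an alignment-preserving map $T\to S$, and get minimality by intersecting a proper invariant subtree of $S$ with $\mathcal{Y}$. That overall shape is correct and is what is in [Gui08].

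One concrete point needs correcting. In the forward direction you assign to the edge of $S$ joining $x\in V_0(S)$ to $Y\in\mathcal{Y}$ ``the pointwise stabilizer of the arc around $x$ inside $Y$''; the edge group must instead be $\text{Stab}(x)\cap\text{Stab}(Y)$, i.e.\ the edge stabilizer in the skeleton itself. These differ in general: an element of $\text{Stab}(x)\cap\text{Stab}(Y)$ can permute the directions of $Y$ at $x$ and hence fix no nondegenerate arc of $Y$ near $x$. If you use the (possibly strictly smaller) arc stabilizer as edge group, the dual tree $T(\mathcal{G})$ of the resulting graph of actions is a proper refinement of $T$ rather than $T$ itself, so the identification $T(\mathcal{G})=T$ fails. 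Two further spots where the sketch is looser than it should be: you suggest metrizing the edges of $S$ with length $0$, which degenerates $S$ --- for the compatibility claim you only need the alignment-preserving map $\pi:T\to S$, which is a topological statement and needs no metric on $S$; and in the minimality step the key fact (that the $\mathcal{Y}$-chain covering a geodesic $[p,q]\subseteq T$ traces a geodesic edge-path in $S$, which therefore stays inside the subtree $S'$) deserves to be stated explicitly, since it is what makes the union of the $Y$'s lying in $S'$ a genuine $G$-invariant subtree of $T$.
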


Knowing that a $(G,\mathcal{F})$-tree $T$ is compatible with a simplicial $(G,\mathcal{F})$-tree $S$ gives a way of splitting $T$ as a $(G,\mathcal{F})$-graph of actions, in the following way. Let $\pi_T:T+S\to T$ and $\pi_S:T+S\to S$ be the natural alignment-preserving maps. 

We first claim that the family $\mathcal{Y}$ made of all nondegenerate $\pi_S$-preimages of vertices of $S$, and of the closures of $\pi_S$-preimages of open edges of $S$, is a transverse covering of $T+S$. Indeed, these are closed subtrees of $T+S$ (see Lemma \ref{projection-closed}), whose pairwise intersections are degenerate (i.e either empty, or reduced to a point). In addition, let $I\subseteq T+S$ be a segment. Then $\pi_S(I)$ is a segment in $S$, so it is covered by a finite set of open edges and vertices of $S$. The $\pi_S$-preimages in $T+S$ of these edges and vertices cover $I$, which proves that $I$ is covered by finitely many elements of the family $\mathcal{Y}$. 

We claim that the family consisting of nondegenerate subtrees in $\pi_T(\mathcal{Y})$ is a nontrivial transverse covering of $T$. Indeed, since $\pi_T$ is alignment-preserving, this is a transverse family made of closed subtrees of $T$ (Lemma \ref{projection-closed}). If $I\subseteq T$ is a segment, then there is a segment $J\subseteq T+S$ with $\pi_T(J)=I$. Then $J$ is covered by finitely many elements of $\mathcal{Y}$, and $I$ is covered by the $\pi_T$-images of these elements. The family $\pi_T(\mathcal{Y})$ is nontrivial, because otherwise $\mathcal{Y}$ would also be trivial, and hence $\pi_S(T+S)$ would be contained in a vertex or a closed edge of $S$, a contradiction.
\\
\\
We finish this section by mentioning a result due to Levitt \cite{Lev94}, which gives a canonical way of splitting any tame $(G,\mathcal{F})$-tree as a graph of actions, whose vertex actions have dense orbits. The key point in the proof of Proposition \ref{Levitt} is finiteness of the number of orbits of branch points in any tame $(G,\mathcal{F})$-tree. In other words, tame actions are $J$-actions in Levitt's sense.

\begin{prop} (Levitt \cite{Lev94})\label{Levitt}
Every tame $(G,\mathcal{F})$-tree $T$ splits uniquely as a graph of actions, all of whose vertex trees have dense orbits for the action of their stabilizer (they might be reduced to points), and all of whose edges have positive length, and have a stabilizer that belongs to the class $\mathcal{Z}$.
\end{prop}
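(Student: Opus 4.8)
The plan is to derive the statement from Levitt's general structure theorem for actions on $\mathbb{R}$-trees with finitely many orbits of branch points \cite{Lev94}, and there are really three things to carry out: (i) checking that a tame $(G,\mathcal{F})$-tree is within the scope of that theorem, (ii) running (or quoting) Levitt's construction to produce the graph-of-actions decomposition with dense vertex trees, and (iii) observing that the edge stabilizers lie in $\mathcal{Z}$ and that the decomposition is canonical. Step (i) is where tameness enters: by Proposition \ref{good}, a tame $(G,\mathcal{F})$-tree $T$ is $k$-tame for some $k$, and in particular has finitely many $G$-orbits of directions at branch and inversion points; in particular $T$ has only finitely many orbits of branch points, which is exactly Levitt's hypothesis --- tame actions are \emph{$J$-actions} in the sense of \cite{Lev94}.

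For step (ii), I would recall Levitt's construction, phrased through transverse coverings (Proposition \ref{skeleton}). One considers the collection $\mathcal{V}$ of nondegenerate subtrees $Y\subseteq T$ that are maximal among subtrees on which the stabilizer $\mathrm{Stab}_G(Y)$ acts with dense orbits; finiteness of the number of orbits of branch and inversion points is what makes such maximal subtrees exist and makes $\mathcal{V}$ a $G$-invariant transverse family such that the closures of the connected components of $T\smallsetminus\bigcup_{Y\in\mathcal{V}}Y$ are arcs with no branch or inversion point in their interiors. Then $\mathcal{V}$ completes to a transverse covering of $T$ by adjoining these arcs, its skeleton $S$ is a simplicial $(G,\mathcal{F})$-tree compatible with $T$, and the associated $(G,\mathcal{F})$-graph of actions has underlying graph of groups $S/G$; its nondegenerate vertex trees are the elements of $\mathcal{V}$, hence have dense orbits by construction (the remaining vertices carry actions reduced to a point), and its edges, being the nondegenerate complementary arcs, have positive length.

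For step (iii): any edge group of this decomposition fixes the two endpoints of the corresponding edge of $S$, which are distinct, hence fixes the corresponding arc of $T$ pointwise; so it is an arc stabilizer of $T$, and since $T$ is small it belongs to $\mathcal{Z}$. Uniqueness follows once one notes that in \emph{any} decomposition of $T$ as a graph of actions with dense vertex trees, a nondegenerate subtree of $T$ with dense orbits must lie inside a single vertex tree --- otherwise its image under the collapse map onto the skeleton would be a nondegenerate subtree of a simplicial tree carrying a dense orbit, which is absurd --- so the nondegenerate vertex trees are forced to be exactly the maximal dense-orbit subtrees, and the whole structure is then determined by $T$. The technical heart of the argument, and the substance of Levitt's paper, is step (ii): showing that the maximal dense-orbit subtrees exist, form a transverse family, and have simplicial complement. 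This is precisely the place where finiteness of the orbits of branch and inversion points is indispensable, as the non-tame example preceding Proposition \ref{limit-one-edge} indicates.
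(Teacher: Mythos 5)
Your proposal is correct and takes essentially the same route as the paper: the paper's entire "proof" is the remark, preceding the statement, that a tame $(G,\mathcal{F})$-tree has finitely many orbits of branch points --- i.e., is a $J$-action in Levitt's sense --- and then the result is attributed directly to \cite{Lev94}. You make exactly that reduction and additionally sketch the content of Levitt's construction and the routine verifications (edge stabilizers fix arcs hence lie in $\mathcal{Z}$ by smallness; uniqueness because any dense-orbit subtree must land inside a single vertex tree). One small slip worth flagging: you invoke Proposition~\ref{good} to conclude that $T$ has finitely many orbits of directions at branch and inversion points, but that property is already the \emph{defining} clause of tameness (Definition~3.8), so the detour through $k$-tameness is unnecessary and the phrase "in particular" gets the logical direction slightly backwards; the substance of the argument is unaffected.
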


We call this decomposition the \emph{Levitt decomposition} of $T$ as a graph of actions. Proposition \ref{Levitt} gives a natural way of extending the map $\psi:\mathcal{O}(G,\mathcal{F})\to FZ(G,\mathcal{F})$ to the set of tame $(G,\mathcal{F})$-trees without dense orbits.

\section{$\mathcal{Z}$-averse trees}\label{sec-X}

We now introduce the notion of \emph{$\mathcal{Z}$-averse} trees. These will be the trees lying \emph{at infinity} of the complex $FZ(G,\mathcal{F})$. Most arguments work exactly the same way when working with $\mathcal{Z}^{max}$-splittings instead of $\mathcal{Z}$-splittings, we will mention the places where some slight adaptations are required. The case of $\mathcal{Z}^{max}$-splittings will be treated in Section \ref{sec-max}.

\subsection{Definition}

Given a tame $(G,\mathcal{F})$-tree $T$, we denote by $\mathcal{R}^1(T)$ the set of $\mathcal{Z}$-splittings that are compatible with $T$, and by $\mathcal{R}^2(T)$ the set of $\mathcal{Z}$-splittings that are compatible with a tame $(G,\mathcal{F})$-tree $T'$, which is itself compatible with $T$. We say that $T$ is \emph{$\mathcal{Z}$-incompatible} if $\mathcal{R}^1(T)=\emptyset$, and \emph{$\mathcal{Z}$-compatible} otherwise.

\begin{theo}\label{averse}
For all tame $(G,\mathcal{F})$-trees $T$, the following assertions are equivalent.

\begin{enumerate}
\item There exists a finite sequence $(T=T_0,T_1,\dots,T_k=S)$ of tame $(G,\mathcal{F})$-trees, such that $S$ is simplicial, and for all $i\in\{0,\dots,k-1\}$, the trees $T_i$ and $T_{i+1}$ are compatible.
\item We have $\mathcal{R}^2(T)\neq\emptyset$.
\item The tree $T$ collapses to a tame $\mathcal{Z}$-compatible $(G,\mathcal{F})$-tree.
\end{enumerate}
\end{theo}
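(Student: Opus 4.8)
The plan is to prove the cycle of implications $(1)\Rightarrow(2)\Rightarrow(3)\Rightarrow(1)$, with the bulk of the work lying in $(1)\Rightarrow(2)$, where I will need to compress a long chain of compatibilities into a single intermediate tree. For $(3)\Rightarrow(1)$: if $T$ collapses to a tame $\mathcal{Z}$-compatible tree $T_1$, then $T$ is compatible with $T_1$ (a collapse $T\to T_1$ is alignment-preserving, so $T$ itself serves as the common refinement $\widehat{T}$), and $T_1$ is by definition compatible with some $\mathcal{Z}$-splitting $S$, which is simplicial. Hence $(T=T_0,\,T_1,\,S)$ is the desired finite sequence. For $(2)\Rightarrow(3)$: if $\mathcal{R}^2(T)\neq\emptyset$, there is a tame tree $T'$ compatible with $T$ and with some $\mathcal{Z}$-splitting, so $T'$ is $\mathcal{Z}$-compatible; forming the standard common refinement $\widehat{T}=T+T'$ (Section \ref{sec-compatibility}), which is tame since $T$ and $T'$ are, and taking the collapse $\widehat{T}\to T'$, exhibits a collapse of a refinement of $T$ — but I actually want a collapse \emph{of $T$}. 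Here I should instead argue more carefully: the collapse map $T+T'\to T$ and the collapse $T+T'\to T'$ mean $T$ collapses to the common part; more cleanly, I expect $(2)\Rightarrow(3)$ to follow by noting that $T$ is compatible with the $\mathcal{Z}$-compatible tree $T'$ and then the real content is pushing this down to a collapse, which may require the observation that if $T$ is compatible with a $\mathcal{Z}$-compatible tree then one of $T$'s own collapses is $\mathcal{Z}$-compatible (using the transverse-covering/graph-of-actions machinery of Section \ref{sec-goa} applied to $T+T'$).

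**The main step $(1)\Rightarrow(2)$.** This is where I expect the real difficulty. The hypothesis gives a chain $T=T_0,T_1,\dots,T_k=S$ with consecutive trees compatible and $S$ simplicial. I want to produce a \emph{single} tame tree $T'$ compatible with $T$ such that $T'$ is compatible with a $\mathcal{Z}$-splitting. The natural idea is to induct on $k$ and repeatedly take standard common refinements, using that refinements of compatible $k$-tame trees are $k$-tame (Section \ref{sec-compatibility}) — by Proposition \ref{good} all the $T_i$ are $k$-tame for a common $k$. The subtlety is that compatibility is not obviously transitive, so I cannot just collapse the whole chain onto one tree. Instead I would work from the simplicial end: $T_{k-1}$ is compatible with $S=T_k$, which is simplicial. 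A simplicial $(G,\mathcal F)$-tree with edge stabilizers in $\mathcal{Z}$ is (after possibly collapsing edges with peripheral or too-large stabilizers, using minimality) compatible with — indeed collapses to — a one-edge $\mathcal{Z}$-splitting; so $T_{k-1}$ is $\mathcal{Z}$-compatible. Then using Proposition \ref{skeleton} and the graph-of-actions construction from the compatibility $T_{k-2}+T_{k-1}$, I'd like to conclude $T_{k-2}\in\mathcal{R}^2$-type conclusions hold, and ultimately that $T=T_0$ has $\mathcal{R}^2(T)\neq\emptyset$ with witness tree obtained as an iterated standard common refinement $T_1+T_2+\dots$ truncated appropriately. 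The cleanest formulation: prove by downward induction on $i$ that $T_i$ collapses to a tame $\mathcal{Z}$-compatible tree (i.e. $(3)$ holds for each $T_i$), the base case $i=k$ being $S$ itself, and the inductive step using that $T_i$ compatible with $T_{i+1}$ plus $T_{i+1}$ satisfying $(3)$ gives, via the standard common refinement and the collapse maps, that $T_i$ satisfies $(3)$ too — and then $(3)\Rightarrow(2)$ is immediate since a tame $\mathcal{Z}$-compatible collapse of $T$ is itself a witness for $\mathcal{R}^2(T)$ (take $T'$ to be that collapse).

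**The hard part.** The genuine obstacle is the inductive step just described: showing that if $T$ is compatible with $T'$ and $T'$ collapses to a tame $\mathcal{Z}$-compatible tree $U$, then $T$ itself collapses to a tame $\mathcal{Z}$-compatible tree. This is the point where compatibility must be "transported across one step," and it is not formal — I expect to need the transverse-covering analysis of $T+T'$ from Section \ref{sec-goa}: pull back the transverse covering of $T'$ associated to its collapse $T'\to U$ (and to the $\mathcal{Z}$-splitting $U$ is compatible with) through the alignment-preserving map $T+T'\to T'$, then push forward through $T+T'\to T$ to get a transverse covering of $T$ whose skeleton is a simplicial tree compatible with $T$; one must then check the skeleton's edge stabilizers lie in $\mathcal{Z}$ (they stabilize arcs of $T$ or come from edge stabilizers of $U$, hence are in $\mathcal{Z}$ by tameness and the hypothesis on $U$) and that the resulting collapse of $T$ is tame (standard common refinements of $k$-tame trees are $k$-tame). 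Handling the bookkeeping of which subtrees are degenerate, and verifying the skeleton is a genuine $\mathcal{Z}$-splitting rather than something with a peripheral or infinite-rank edge stabilizer, is where the care is required; everything else is a routine assembly of results already available in Sections \ref{sec-good}--\ref{sec-goa}.
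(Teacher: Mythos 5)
You correctly isolate the hard content as ``transporting compatibility across one step of the chain,'' but the plan you sketch for that step has a real flaw, and you are missing the two structural ingredients that the paper's proof is built on.

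A remark first on your decomposition of the implications. The cheap directions are $(3)\Rightarrow(2)\Rightarrow(1)$: if $T$ collapses to a tame $\mathcal{Z}$-compatible tree $T'$, that $T'$ itself witnesses $\mathcal{R}^2(T)\neq\emptyset$; and a witness $T'$ for $\mathcal{R}^2(T)\neq\emptyset$ together with a reducing splitting $S$ gives the chain $(T,T',S)$. In your cycle $(1)\Rightarrow(2)\Rightarrow(3)\Rightarrow(1)$, the middle implication $(2)\Rightarrow(3)$ is \emph{not} cheap --- it is exactly the $k=2$ case of the inductive step you then grapple with under $(1)\Rightarrow(2)$. You have duplicated the hard step rather than isolating it.

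More seriously, the inductive step you would need --- if $T$ is compatible with $T'$ and $T'$ collapses to a tame $\mathcal{Z}$-compatible tree, then so does $T$ --- is not proved, and the transverse-covering argument you sketch does not close the gap. If $p:T'\to U$ is the collapse and $\mathcal{Y}$ is the transverse covering of $U$ coming from a $\mathcal{Z}$-splitting $V$ compatible with $U$, the preimages $p^{-1}(Y)$ for $Y\in\mathcal{Y}$ need not form a transverse family of $T'$: two distinct $Y_1,Y_2\in\mathcal{Y}$ meeting at a single point $x$ have $p^{-1}(Y_1)\cap p^{-1}(Y_2)=p^{-1}(x)$, which can be a nondegenerate subtree, since $p$ is a genuine collapse. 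And even if one repairs the family, the resulting skeleton can have edges of the form $(x,Y)$ where $Y$ is a vertex preimage of $V$; such an edge inherits a vertex stabilizer of $V$, which need not lie in $\mathcal{Z}$. So ``checking the skeleton's edge stabilizers lie in $\mathcal{Z}$,'' which you flag as a verification, is in fact the place the argument stalls.

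What the paper uses, and what your proposal lacks, are the two propositions proved in the same section before the theorem's proof. Proposition \ref{mixing} gives the dichotomy: a tame $(G,\mathcal{F})$-tree is either $\mathcal{Z}$-compatible or collapses onto a mixing tree in $\overline{\mathcal{O}(G,\mathcal{F})}$. Proposition \ref{unique-projection-2} gives the terminal-object property: if $T_1$ is mixing and $\mathcal{Z}$-incompatible and $T_2$ is compatible with $T_1$, then $T_2$ collapses onto $T_1$. The paper's proof of $(1)\Rightarrow(3)$ is then by contradiction: if $T$ does not satisfy (3), then it collapses to a mixing $\mathcal{Z}$-incompatible $\overline{T}$; from $T_i$ collapsing to $\overline{T}$ and $T_i$ compatible with $T_{i+1}$ one gets $T_{i+1}$ compatible with $\overline{T}$ (through $T_i+T_{i+1}$), hence by Proposition \ref{unique-projection-2} the tree $T_{i+1}$ collapses onto $\overline{T}$; iterating all the way to $T_k=S$ shows the simplicial tree $S$ collapses onto $\overline{T}$, contradicting $\mathcal{Z}$-incompatibility of $\overline{T}$. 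The mixing property is precisely what forces the direction of the collapse map and makes the whole chain ``fall down'' onto a single tree; transverse-covering bookkeeping alone cannot substitute for it.
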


The implications $(3)\Rightarrow (2)\Rightarrow (1)$ are obvious. The proof that $(1)$ implies $(3)$ is postponed to Section \ref{sec-averse}. We note that the conditions in Theorem \ref{averse} are not equivalent to being $\mathcal{Z}$-incompatible, see Example \ref{R-empty}, which is why we really need to introduce the set $\mathcal{R}^{2}(T)$ in our arguments. We will see however in Proposition \ref{mixing-representative} that mixing $\mathcal{Z}$-incompatible trees satisfy $\mathcal{R}^2(T)=\emptyset$.

\begin{de}
~
\begin{enumerate}
\item A tree $T\in\overline{\mathcal{O}(G,\mathcal{F})}$ is \emph{$\mathcal{Z}$-averse} if any of the conditions in Theorem \ref{averse} fails. We denote by $\mathcal{X}(G,\mathcal{F})$ the subspace of $\overline{\mathcal{O}(G,\mathcal{F})}$ consisting of $\mathcal{Z}$-averse trees. 

\item Two $\mathcal{Z}$-averse trees $T,T'\in\mathcal{X}(G,\mathcal{F})$ are \emph{equivalent}, which we denote by $T\sim T'$, if there exists a finite sequence $(T=T_0,T_1,\dots,T_k=T')$ of trees in $\overline{\mathcal{O}(G,\mathcal{F})}$ such that for all $i\in\{0,\dots,k-1\}$, the trees $T_i$ and $T_{i+1}$ are compatible.

\item Given a tame $(G,\mathcal{F})$-tree $T$, the set $\mathcal{R}^2(T)$ is called the set of \emph{reducing splittings} of $T$.
\end{enumerate}
\end{de}

Note that $\mathcal{Z}$-averse trees are $\mathcal{Z}$-incompatible, so in particular they have dense orbits. A tree $T\in\overline{\mathcal{O}(G,\mathcal{F})}$ is \emph{mixing} \cite{Mor88} if for all finite subarcs $I,J\subseteq T$, there exist $g_1,\dots,g_k\in G$ such that $J\subseteq g_1 I\cup\dots\cup g_kI$ and for all $i\in\{1,\dots,k-1\}$, we have $g_iI\cap g_{i+1}I\neq\emptyset$. We will show the existence of a canonical simplex of mixing representatives in any equivalence class of $\mathcal{Z}$-averse trees. Two $\mathbb{R}$-trees $T$ and $T'$ are \emph{weakly homeomorphic} if there exist maps $f:T\to T'$ and $g:T'\to T$ that are continuous in restriction to segments, and inverse of each other. Again, the proof of Proposition \ref{mixing-representative} is postponed to Section \ref{sec-averse}.

\begin{prop}\label{mixing-representative}
For all $T\in\mathcal{X}(G,\mathcal{F})$, there exists a mixing tree $\overline{T}\in\mathcal{X}(G,\mathcal{F})$ onto which all trees $T'\in\mathcal{X}(G,\mathcal{F})$ that are equivalent to $T$ collapse. In addition, any two such trees are $G$-equivariantly weakly homeomorphic. Any tree $T\in\overline{\mathcal{O}(G,\mathcal{F})}$ that is both mixing and $\mathcal{Z}$-incompatible is $\mathcal{Z}$-averse.
\end{prop}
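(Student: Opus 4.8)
The plan is to prove the three assertions in order, with the construction of $\overline{T}$ as the technical heart of the matter. Start with a $\mathcal{Z}$-averse tree $T$; by the remark following the definition, $T$ has dense orbits. I would first pass to the equivalence class of $T$ and analyze the partial order on $\mathcal{X}(G,\mathcal{F})$ given by collapse (alignment-preserving maps). The key structural input is Section \ref{sec-compatibility}: if $T_1$ and $T_2$ are compatible trees in the class, they admit a standard common refinement $T_1+T_2$, whose length function is the sum of the two, and whose arc stabilizers fix an arc in $T_1$ or in $T_2$. I want to run this refinement process "downward": rather than refining, I will collapse. Concretely, given $T$ and an equivalent tree $T'$, compatibility via a chain $(T=T_0,\dots,T_k=T')$ lets me build $\widehat{T}=T_0+T_1+\dots+T_k$ (iterating the standard common refinement; one must check compatibility is inherited along the chain so the iterated sum makes sense — this uses Lemma \ref{compatible-limit} or a direct argument that all the $T_i$ collapse to a common tree), and then the collapse map $\widehat{T}\to T_i$ for a suitable "maximal collapse" $T_i$. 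The candidate $\overline{T}$ should be the tree obtained as the common collapse of all trees in the $\sim$-class: I would define it as the quotient of $T$ by the equivalence relation "$x\sim_0 y$ iff $x$ and $y$ have the same image in $T'$ for every $T'$ equivalent to $T$", and show this quotient is an $\mathbb{R}$-tree with dense orbits onto which every equivalent tree collapses.

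The second step is to show $\overline{T}$ is mixing. Here I would argue by contradiction using the theory of transverse coverings from Section \ref{sec-goa}: a $(G,\mathcal{F})$-tree with dense orbits that fails to be mixing carries a nontrivial transverse covering (this is essentially Morgan's observation; a non-mixing tree decomposes as a graph of actions over a nontrivial simplicial structure), hence by Proposition \ref{skeleton} it is compatible with a nontrivial simplicial $(G,\mathcal{F})$-tree $S$ — the skeleton. Now I need to know $S$ can be taken to have edge stabilizers in $\mathcal{Z}$, i.e. that $S$ is a $\mathcal{Z}$-splitting: edge stabilizers of the skeleton fix the intersection point of two pieces of the transverse covering, and since $\overline{T}$ is a limit of very small trees (it lies in $\overline{\mathcal{O}(G,\mathcal{F})}$), its point stabilizers and arc-germ stabilizers are controlled, forcing the skeleton's edge groups into $\mathcal{Z}^{max}\subseteq\mathcal{Z}$. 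But then $\overline{T}$ is $\mathcal{Z}$-compatible; and since $T$ collapses to $\overline{T}$ — equivalently $\overline{T}$ collapses... wait, rather since $\overline{T}$ is equivalent to $T$ (it is in the same $\sim$-class) and is compatible with a $\mathcal{Z}$-splitting $S$, we have $S\in\mathcal{R}^1(\overline{T})\subseteq\mathcal{R}^2(T)$ by transitivity of compatibility through $\overline{T}$, contradicting $\mathcal{Z}$-aversity of $T$. Hence $\overline{T}$ is mixing, and in particular $\overline{T}\in\mathcal{X}(G,\mathcal{F})$ since it is equivalent to $T$ and equivalence preserves $\mathcal{Z}$-aversity (stated in the introduction as following from Theorem \ref{averse}).

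The third step — uniqueness up to $G$-equivariant weak homeomorphism — follows from the maximality in the construction: if $\overline{T}$ and $\overline{T}'$ are two mixing trees in the class onto which every equivalent tree collapses, then $\overline{T}$ collapses onto $\overline{T}'$ and vice versa. A pair of mutually inverse collapse maps between trees with dense orbits must be isometries by Corollary \ref{alignment-preserved} (any Lipschitz, hence any alignment-preserving, map between dense-orbit trees in $\overline{\mathcal{O}(G,\mathcal{F})}$ is an isometry up to scaling) — actually one should be slightly careful: collapse maps need not be Lipschitz, so the right statement is that mutually inverse alignment-preserving maps are $G$-equivariant weak homeomorphisms, which is exactly the conclusion sought; one checks they are continuous on segments by the general properties of alignment-preserving maps recalled before Lemma \ref{projection-closed}. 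Finally, for the last sentence of the proposition: if $T\in\overline{\mathcal{O}(G,\mathcal{F})}$ is mixing and $\mathcal{Z}$-incompatible, I claim $T$ is $\mathcal{Z}$-averse, i.e. $\mathcal{R}^2(T)=\emptyset$. Suppose not: then $T$ is compatible with some tame $T'$ compatible with a $\mathcal{Z}$-splitting $S$. The refinement $T+T'$ collapses onto both $T$ and $S'$ (a common refinement of $T'$ and $S$). A mixing tree is "indecomposable enough" that any tree collapsing onto it and onto a simplicial tree forces $T$ itself to be compatible with a simplicial tree — more precisely, I would show that if $T$ is mixing then the transverse covering of $T+T'$ by preimages of edges/vertices of $S$ (as in Section \ref{sec-goa}) pushes forward to a transverse covering of $T$, which must be trivial by mixing, whence $T$ is already compatible with $S$ (a $\mathcal{Z}$-splitting), contradicting $\mathcal{Z}$-incompatibility. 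The main obstacle I anticipate is the first step: making rigorous the "maximal common collapse" of an entire $\sim$-class and proving the resulting quotient is a genuine minimal $\mathbb{R}$-tree in $\overline{\mathcal{O}(G,\mathcal{F})}$ (not just a metric space) — this requires care with closedness of fibers (Lemma \ref{projection-closed}), with finiteness/tameness so that the quotient stays tame, and with verifying the quotient is nontrivial, i.e. that the class does not collapse everything to a point.
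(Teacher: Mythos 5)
Your proposal departs from the paper's argument in the crucial first step, and that departure introduces a gap you are unable to close. You try to construct $\overline{T}$ as a ``maximal common collapse'' of the entire $\sim$-class, defined as a quotient of $T$ by an explicit equivalence relation. This is problematic for several reasons that you half-recognize: (i) for an arbitrary $T'\sim T$ there is no canonical collapse map $T\to T'$ — compatibility does not give you an alignment-preserving map in either direction, only a common refinement — so the relation ``$x$ and $y$ have the same image in $T'$'' is not defined; (ii) even if consecutive trees in a chain $(T_0,\dots,T_k)$ are compatible, it does not follow that the iterated sum $T_0+\dots+T_k$ makes sense (that $T_0$ and $T_2$ are compatible is precisely what needs proof, and the paper does not prove it via Lemma \ref{compatible-limit}); and (iii) you would still have to show the quotient is a minimal, nontrivial $\mathbb{R}$-tree lying in $\overline{\mathcal{O}(G,\mathcal{F})}$. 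You flag (iii) as ``the main obstacle,'' but (i) and (ii) are already fatal as written.

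The paper avoids all of this by splitting the argument into two much more tractable lemmas, which between them contain the idea you are missing. Proposition \ref{mixing} says that every tame $\mathcal{Z}$-incompatible tree collapses onto \emph{some} mixing tree $\overline{T}\in\overline{\mathcal{O}(G,\mathcal{F})}$; this gives the candidate $\overline{T}$ without any appeal to a ``maximal'' construction. Proposition \ref{unique-projection-2} is the missing rigidity statement: if $T_1$ is mixing and $\mathcal{Z}$-incompatible, and $T_2$ is compatible with $T_1$, then $T_2$ \emph{collapses onto} $T_1$. Iterating this along the chain $(T=T_0,\dots,T_k=T')$ — using that at each step $T_{i+1}$ is compatible with $T_i$, which collapses to $\overline{T}$, so $T_{i+1}$ is compatible with $\overline{T}$, hence collapses to $\overline{T}$ — yields the first assertion in a few lines. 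The same iteration, with $T_k=S$ simplicial, gives the last assertion: if a mixing tree $T$ were $\mathcal{Z}$-incompatible but not $\mathcal{Z}$-averse, then $S$ would collapse onto $T$, which is absurd. Your sketch of the last assertion only treats a chain of length $2$ and asserts things about $T+T'$ collapsing to a refinement $S'$ of $T'$ and $S$ that do not follow. Your uniqueness step (mutually inverse alignment-preserving maps between mixing dense-orbit trees give a weak homeomorphism) is essentially correct and matches the paper, though the paper justifies it via the fact that an alignment-preserving self-map of a dense-orbit tree in $\overline{\mathcal{O}(G,\mathcal{F})}$ is an isometry rather than your appeal to Lipschitz constants. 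In short: the key missing ingredient is the ``projection'' lemma (Proposition \ref{unique-projection-2}), and without it the maximal-collapse construction cannot be made rigorous.
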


When taking the quotient by homotheties, the equivalence relation on $\mathcal{X}(G,\mathcal{F})$ induces an equivalence relation of $P\mathcal{X}(G,\mathcal{F})$. As was pointed to us by the referee, we obtain as a consequence of Proposition \ref{mixing-representative} the following information about equivalence classes in $\overline{P\mathcal{O}(G,\mathcal{F})}$.

\begin{prop}\label{contractible}
Let $T\in\mathcal{X}(G,\mathcal{F})$. Then the $\sim$-class of $T$ is a compact, contractible subspace of $\overline{P\mathcal{O}(G,\mathcal{F})}$.
\end{prop}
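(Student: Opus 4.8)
The plan is to deduce everything from Proposition \ref{mixing-representative}. Fix $T\in\mathcal{X}(G,\mathcal{F})$ and let $\overline{T}\in\mathcal{X}(G,\mathcal{F})$ be the mixing tree provided by that proposition, so that every $T'\in\mathcal{X}(G,\mathcal{F})$ with $T'\sim T$ collapses onto $\overline{T}$. Passing to the quotient by homotheties, I will work in $\overline{P\mathcal{O}(G,\mathcal{F})}$, which is compact. First I would show that the $\sim$-class $\mathcal{C}$ of $T$ is closed in $\overline{P\mathcal{O}(G,\mathcal{F})}$: if $(T_n)$ is a sequence in $\mathcal{C}$ converging projectively to some $S\in\overline{P\mathcal{O}(G,\mathcal{F})}$, then each $T_n$ collapses onto $\overline{T}$ via a $1$-Lipschitz alignment-preserving map (after rescaling), and by Proposition \ref{Lipschitz-limit} (applied with $M=1$, $T'_n=\overline{T}$ for all $n$, using that alignment-preserving maps pass to the limit) the limit $S$ also admits a $1$-Lipschitz alignment-preserving map onto $\overline{T}$, hence is compatible with $\overline{T}$; since $\overline{T}\sim T$, to conclude $S\in\mathcal{C}$ I need $S$ to be $\mathcal{Z}$-averse, which follows from the fact asserted after Theorem \ref{averse} that $\mathcal{Z}$-aversion is preserved under compatibility (being compatible with the $\mathcal{Z}$-averse tree $\overline{T}$ forces $S$ to be $\mathcal{Z}$-averse). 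Compactness of $\mathcal{C}$ is then immediate, being a closed subset of the compact space $\overline{P\mathcal{O}(G,\mathcal{F})}$.

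For contractibility, the idea is to build an explicit deformation retraction of $\mathcal{C}$ onto the point $[\overline{T}]$ by ``sliding the length measure'' of each $T'$ towards that of $\overline{T}$. Concretely, for $T'\in\mathcal{C}$ let $p_{T'}:T'\to\overline{T}$ be the collapse map; by Proposition \ref{mixing-representative} the standard common refinement $T'+\overline{T}$ (with respect to this collapse) makes sense, and for $s\in[0,1]$ I would set $H(T',s)$ to be the tree whose length function is $(1-s)\,\ell_{T'}+s\,\ell_{\overline{T}}$ on $T'+\overline{T}$, appropriately renormalized in $\overline{P\mathcal{O}(G,\mathcal{F})}$; equivalently this is the convex combination of the length functions of $T'$ and $\overline{T}$ in $\mathbb{PR}^G$. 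For $s<1$ this tree still collapses onto $\overline{T}$ and admits a $1$-Lipschitz alignment-preserving map from $T'+\overline{T}$, hence is compatible with $\overline{T}$, hence lies in $\mathcal{C}$; at $s=1$ it is $[\overline{T}]$; at $s=0$ it is $[T']$. Continuity of $H$ in both variables follows from the fact that the length functions, hence the embedding into $\mathbb{PR}^G$, depend continuously on the data (here one uses that $p_{T'}$ and the refinement vary continuously, which is where a little care is needed — see below). This exhibits $\mathcal{C}$ as contractible.

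The main obstacle I anticipate is the continuity of the homotopy $H$, and more precisely making precise the dependence of the collapse map $p_{T'}:T'\to\overline{T}$ and of the standard common refinement $T'+\overline{T}$ on the point $T'\in\mathcal{C}$. The refinement operation itself is additive on length functions, so once one knows the maps are there the formula $\ell_{H(T',s)}=(1-s)\ell_{T'}+s\ell_{\overline{T}}$ is manifestly continuous in $(T',s)$; the delicate part is to verify that the convex combination of two length functions of trees in $\mathcal{C}\cup\{[\overline{T}]\}$ is again (the length function of) a tree in $\mathcal{C}$, which is precisely the content of the additivity statement for $T'+\overline{T}$ in Section \ref{sec-compatibility} together with the collapse map onto $\overline{T}$. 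An alternative, cleaner route that avoids choosing collapse maps continuously: observe directly that the set $L$ of length functions in $\mathbb{PR}^G$ of trees in $\mathcal{C}$, together with $\ell_{\overline{T}}$, is convex — any point on the segment from $\ell_{T'}$ to $\ell_{\overline{T}}$ is realized by a tree compatible with $\overline{T}$ by the refinement construction — and a convex subset of $\mathbb{PR}^G$ containing a distinguished point is contractible via straight-line homotopy to that point; combined with closedness this gives both conclusions at once. I would present the argument in this second form, citing Proposition \ref{mixing-representative}, Section \ref{sec-compatibility}, Proposition \ref{Lipschitz-limit}, and the compatibility-invariance of $\mathcal{Z}$-aversion from Theorem \ref{averse}.
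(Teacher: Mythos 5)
Your proposal is correct, and the contractibility half is essentially the paper's argument. The paper also deduces both claims from Proposition~\ref{mixing-representative}: for contractibility it uses exactly your straight-line retraction $T'\mapsto t\overline{T}+(1-t)T'$ onto the mixing representative $\overline{T}$, and for compactness it invokes Lemma~\ref{compatible-limit} (Guirardel--Levitt: compatibility persists under limits in the axes topology) to conclude directly that any limit of trees compatible with $\overline{T}$ is still compatible with $\overline{T}$, hence lies in the class.

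Two small comparative remarks. First, for compactness you route through Proposition~\ref{Lipschitz-limit} and the existence of $1$-Lipschitz alignment-preserving collapse maps onto $\overline{T}$; this works, but requires managing the normalization of representatives so that the maps really are $1$-Lipschitz along the sequence, and then invoking Corollary~\ref{alignment-preserved} or the alignment-preserving clause of Proposition~\ref{Lipschitz-limit} to get an actual collapse map at the limit. The paper's appeal to Lemma~\ref{compatible-limit} sidesteps all of that: compatibility is a closed condition on pairs of length functions, full stop. Both arguments then need the same closing observation you make — that being compatible with a $\mathcal{Z}$-averse tree forces $\mathcal{Z}$-aversion, which is immediate from condition~(1) of Theorem~\ref{averse} — and the paper leaves this implicit exactly as you flagged.

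Second, on contractibility: your worry about continuity of the homotopy is legitimate, and your resolution via the formula $\ell_{H(T',s)}=(1-s)\ell_{T'}+s\ell_{\overline{T}}$ is the right one, since it expresses $H$ purely in terms of length functions, which is the ambient topology. One terminological nitpick: what you establish is that the $\sim$-class is \emph{star-shaped} with respect to $\ell_{\overline{T}}$ (segments from any point to $\overline{T}$ stay in the class, via the refinement construction of Section~\ref{sec-compatibility}), not that it is convex — you do not show that segments between two arbitrary trees $T',T''$ in the class stay in the class, and indeed $T'$ and $T''$ need not be compatible with each other. Star-shapedness suffices for contractibility via the straight-line homotopy, so the conclusion is unaffected; just say ``star-shaped'' rather than ``convex.''
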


\begin{proof}
Let $\overline{T}$ be a mixing tree in the $\sim$-class of $T$, provided by Proposition \ref{mixing-representative}. Then the straight paths from the trees $T'$ in the $\sim$-class of $T$, to the tree $\overline{T}$, consisting of all trees of the form $t\overline{T}+(1-t)T'$ with $t\in [0,1]$, define a retraction of the $\sim$-class of $T$ to $\overline{T}$. This shows that the $\sim$-class of $T$ is contractible.

To prove compactness, let $(T_n)_{n\in\mathbb{N}}$ be a sequence of trees in the $\sim$-class of $T$, and let $T_{\infty}$ be a limit point of the sequence $(T_n)_{n\in\mathbb{N}}$ in $\overline{P\mathcal{O}(G,\mathcal{F})}$. All trees $T_n$ are compatible with $\overline{T}$, so by Lemma \ref{compatible-limit}, the tree $T_{\infty}$ is also compatible with $\overline{T}$. Hence $T_{\infty}$ belongs to the $\sim$-class of $T$.
\end{proof}

\subsection{Unboundedness of $FZ(G,\mathcal{F})$}\label{sec-unbounded}

\subsubsection{Kobayashi's argument}

We now explain that $\mathcal{Z}$-averse trees lie \emph{at infinity} of $FZ(G,\mathcal{F})$ in some sense. In particular, we show that $\mathcal{X}(G,\mathcal{F})$ is unbounded (except in the two sporadic cases mentioned in Remark \ref{sporadic}, for which we have $\mathcal{X}(G,\mathcal{F})=\emptyset$). The following theorem is a variation over an argument due to Kobayashi \cite{Kob88} to prove unboundedness of the curve complex of a compact surface. We recall our notation $\psi$ for the map from $\mathcal{O}(G,\mathcal{F})$ to $FZ(G,\mathcal{F})$.

\begin{theo} \label{Luo}
Let $T\in\mathcal{X}(G,\mathcal{F})$, and let $(T_i)_{i\in\mathbb{N}}\in \mathcal{O}(G,\mathcal{F})^{\mathbb{N}}$ be a sequence that converges to $T$. Then $\psi(T_i)$ is unbounded in $FZ(G,\mathcal{F})$.
\end{theo}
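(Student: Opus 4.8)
The plan is to follow Kobayashi's original argument, transposed from the surface setting to the relative outer space setting. The key idea is a contradiction argument: suppose $\psi(T_i)$ stayed bounded in $FZ(G,\mathcal{F})$. Then, after passing to a subsequence, all the splittings $\psi(T_i)$ would lie in a ball of fixed radius $R$ around some fixed vertex $v_0 \in FZ(G,\mathcal{F})$. I would then use the hyperbolicity of $FZ(G,\mathcal{F})$ together with the geometric control on folding paths provided by Theorem \ref{FZ-hyperbolic}: between any two Grushko trees $T_i$ and $T_j$ (which are simplicial with trivial edge stabilizers) there is an optimal liberal folding path, and its $\psi$-image is $K$-Hausdorff close to a geodesic in $FZ(G,\mathcal{F})$. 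If $\psi(T_i)$ and $\psi(T_j)$ both stay within distance $R$ of $v_0$, then the whole folding path from $T_i$ to $T_j$ projects into a ball of radius $R + 2K$ (say) around $v_0$.

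The next step is to extract from this a contradiction with $T$ being $\mathcal{Z}$-averse. Here I would invoke the characterization from Theorem \ref{averse}, or rather build a sequence witnessing its failure. The point is that if the $\psi$-images of all folding paths between the $T_i$ stay in a bounded region, one should be able to produce — in the limit — a $\mathcal{Z}$-splitting compatible with a tame tree that is itself compatible with $T$, i.e.\ show $\mathcal{R}^2(T) \neq \emptyset$, contradicting $T \in \mathcal{X}(G,\mathcal{F})$. Concretely, for each $i$, choose $j = j(i)$ large, run a folding path from $T_i$ to $T_j$; the bounded diameter in $FZ(G,\mathcal{F})$ means there is a $\mathcal{Z}$-splitting $S_i$ (namely a one-edge collapse arising along the path, or $\psi$ of a point on the path) that is compatible with a tree $Z_i$ lying on the folding path, hence compatible with trees close to both $T_i$ and $T_j$. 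Using Proposition \ref{limit-folding} (or a direct compactness argument in $\overline{\mathcal{O}(G,\mathcal{F})}$), pass to the limit: $Z_i$ accumulates to a tree $Z$ with a $1$-Lipschitz alignment-preserving map from $T$ to $Z$, so $Z$ is compatible with $T$; and by Lemma \ref{compatible-limit}, the compatibility with $S_i$ passes to a limit $\mathcal{Z}$-splitting $S$ (using Proposition \ref{limit-one-edge} to ensure tameness of limits of one-edge $\mathcal{Z}$-splittings, and the compactness of $FZ(G,\mathcal{F})$'s vertex set inside $\overline{P\mathcal{O}(G,\mathcal{F})}$ to extract a convergent subsequence of the $S_i$). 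Then $S \in \mathcal{R}^2(T)$, contradiction.

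The main obstacle I anticipate is controlling \emph{which} $\mathcal{Z}$-splitting to extract and ensuring its limit is genuinely a $\mathcal{Z}$-splitting rather than degenerating — this is exactly the subtlety that motivates introducing $\mathcal{R}^2$ rather than $\mathcal{R}^1$, and it is why one needs the "compatible with a tame tree which is compatible with $T$" formulation: the limit object $Z$ need not equal $T$, it is merely a collapse-comparable companion. A second delicate point is the logical structure of the contradiction: one needs the boundedness of $\{\psi(T_i)\}$ to force boundedness of whole folding paths, which genuinely requires the reparametrized-quasigeodesic / Hausdorff-close statement of Theorem \ref{FZ-hyperbolic} rather than just hyperbolicity, and one must be careful that the constant $K$ there is uniform. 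I would expect the write-up to first state and prove the "bounded region $\Rightarrow$ reducing splitting exists" implication as the heart of the argument, then package the contradiction, and finally note that $\mathcal{Z}$-averseness of $T$ forbids $\mathcal{R}^2(T) \neq \emptyset$, completing the proof that $\psi(T_i) \to \infty$ — and in fact one sees it must converge to a boundary point, which is presumably Proposition \ref{X-psi} referenced later, though for this theorem unboundedness is all that is claimed.
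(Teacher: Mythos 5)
Your overall strategy (Kobayashi-style contradiction, taking limits of compatible trees) matches the paper's, but the implementation both overcomplicates and has a genuine gap which you yourself flag without resolving.

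The paper's proof does not invoke folding paths at all. It works directly in $FZ(G,\mathcal{F})$: after passing to a subsequence so that $d_{FZ(G,\mathcal{F})}(\ast,\psi(T_i))=M$ for a \emph{fixed} basepoint $\ast$, it takes geodesic segments $\ast=T_i^0,\dots,T_i^M=\psi(T_i)$, rescales and passes to a subsequence so that each column $(T_i^k)_i$ converges to a tame tree $T_\infty^k$ (Proposition \ref{limit-one-edge}), then uses Lemma \ref{compatible-limit} to propagate compatibility to the limit. The crucial point is that the basepoint $\ast$ is \emph{fixed}, so $T_\infty^0=\ast$ is automatically a genuine simplicial $\mathcal{Z}$-splitting, and one gets a finite chain of compatible tame trees from $T$ to $\ast$, contradicting condition (1) of Theorem \ref{averse}.

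This is exactly where your proposal breaks down. You propose extracting $\mathcal{Z}$-splittings $S_i=\psi(Z_i)$ from points $Z_i$ on a folding path, then taking a limit $S$ of the $S_i$. But Proposition \ref{limit-one-edge} only guarantees that such a limit is \emph{tame}, not that it is simplicial; limits of one-edge $\mathcal{Z}$-splittings can perfectly well have dense orbits. You correctly identify this as "the main obstacle," but your suggested cure — invoking $\mathcal{R}^2$ over $\mathcal{R}^1$ — does not help: whether one aims at condition (1) or condition (2) of Theorem \ref{averse}, some tree in the chain must genuinely be simplicial, and nothing in your construction supplies that. The fix is precisely the paper's: anchor the chain at a fixed basepoint $\ast$ rather than at a varying collapse extracted from a folding path; then the simplicial end of the chain is automatic. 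Once you see this, the folding-path machinery (Theorem \ref{FZ-hyperbolic}, Proposition \ref{limit-folding}) drops out entirely from this particular proof — it is needed later (e.g.\ for Lemma \ref{lemma-psi} and Proposition \ref{X-psi}), but Theorem \ref{Luo} is elementary given Proposition \ref{limit-one-edge} and Lemma \ref{compatible-limit}.
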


\begin{proof}
Assume towards a contradiction that the sequence $(\psi(T_i))_{i\in\mathbb{N}}$ lies in a bounded region of $FZ(G,\mathcal{F})$. Up to passing to a subsequence, there exist $M\in\mathbb{N}$ and $\ast\in FZ(G,\mathcal{F})$ such that for all $i\in\mathbb{N}$, we have $d_{FZ(G,\mathcal{F})}(\ast,\psi(T_i))=M$. For all $i\in\mathbb{N}$, let $(T_i^k)_{0\le k\le M}$ be a geodesic segment joining $\ast$ to $\psi(T_i)$ in $FZ(G,\mathcal{F})$. Up to passing to a subsequence again and rescaling, we may assume that for all $k\in\{0,\dots,M\}$, the sequence $(T_i^k)_{i\in\mathbb{N}}$ of one-edge splittings converges (non-projectively) to a tame $(G,\mathcal{F})$-tree $T_{\infty}^k$ (Proposition \ref{limit-one-edge}). In addition, for all $k\in\{1,\dots,M\}$ and all $i\in\mathbb{N}$, the trees $T_i^k$ and $T_i^{k-1}$ are compatible. Lemma \ref{compatible-limit} implies that for all $k\in\{1,\dots,M\}$, the trees $T^k_{\infty}$ and $T^{k-1}_{\infty}$ are compatible, and $T$ is compatible with $T^M_{\infty}$. As $T_{\infty}^{0}=\ast$, the tree $T$ does not satisfy the first definition of $\mathcal{Z}$-averse trees, a contradiction.   
\end{proof}

\begin{rk}
In the case of $\mathcal{Z}^{max}$-splittings, the argument is even a bit simpler, because in this case, we know that all trees $T_i^k$, and hence all limits $T_{\infty}^k$, belong to the closure of outer space (i.e. they are very small). Therefore, we can avoid to appeal to our analysis of tame $(G,\mathcal{F})$-trees (in particular Proposition \ref{limit-one-edge}) in Section \ref{sec-good}.
\end{rk}

\subsubsection{Examples of $\mathcal{Z}$-averse trees, and unboundedness of $FZ(G,\mathcal{F})$}

We now give examples of $\mathcal{Z}$-averse trees in $\overline{\mathcal{O}(G,\mathcal{F})}$ when either $G=F_2$ and $\mathcal{F}=\emptyset$, or $\text{rk}_K(G,\mathcal{F})\ge 3$. In view of Theorem \ref{Luo}, this will prove unboundedness of the graph $FZ(G,\mathcal{F})$ in these cases. Recall from Remark \ref{sporadic} that if either $G=G_1\ast G_2$ and $\mathcal{F}=\{[G_1],[G_2]\}$, or $G=G_1\ast$ and $\mathcal{F}=\{[G_1]\}$, then $FZ(G,\mathcal{F})$ is bounded. In other words, we have the following. 

\begin{prop}\label{ex-unbounded}
Let $G$ be a countable group, and $\mathcal{F}$ be a free factor system of $G$. Assume that either $G=F_2$ and $\mathcal{F}=\emptyset$, or that $\text{rk}_K(G,\mathcal{F})\ge 3$. Then $\mathcal{X}(G,\mathcal{F})\neq\emptyset$.
\end{prop}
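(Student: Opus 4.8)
The plan is to produce, in each of the stated cases, an explicit tree in $\overline{\mathcal{O}(G,\mathcal{F})}$ and verify it is $\mathcal{Z}$-averse. By Proposition \ref{mixing-representative}, it suffices to exhibit a tree $T\in\overline{\mathcal{O}(G,\mathcal{F})}$ that is both \emph{mixing} and \emph{$\mathcal{Z}$-incompatible}, since any such tree is automatically $\mathcal{Z}$-averse. This reduces the problem to two verifications: mixing, and the absence of any compatible $\mathcal{Z}$-splitting.

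For the case $G=F_2$, $\mathcal{F}=\emptyset$, I would use a well-known arational $F_2$-tree, for instance the tree dual to a measured foliation on a once-punctured torus coming from a pseudo-Anosov homeomorphism (equivalently, the attracting fixed tree $T_+$ of an iwip/fully irreducible element of $\text{Out}(F_2)$). Such a tree has dense orbits, is mixing (indecomposability of arational surface trees is classical, cf.\ Reynolds), and is $\mathcal{Z}$-incompatible: if it were compatible with a $\mathcal{Z}$-splitting $S$, then by the graph-of-actions description in Section \ref{sec-goa} (splitting $T$ using its compatibility with the simplicial tree $S$), $T$ would admit a nontrivial transverse covering, contradicting mixing — or more directly, $T$ would be compatible with a one-edge splitting whose edge group is cyclic, forcing a proper free factor (in rank $2$, a cyclic subgroup) to be elliptic or to act non-minimally, contradicting arationality. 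For the general case $\text{rk}_K(G,\mathcal{F})\ge 3$, I would similarly invoke the existence of arational $(G,\mathcal{F})$-trees in $\overline{\mathcal{O}(G,\mathcal{F})}$ — these exist precisely because the Kurosh rank is at least $3$, so $\text{Out}(G,\mathcal{F})$ contains fully irreducible elements and their attracting trees lie in $\partial\mathcal{O}(G,\mathcal{F})$. Again such a tree is mixing and $\mathcal{Z}$-incompatible by the same argument: a compatible $\mathcal{Z}$-splitting would, via the standard common refinement and the transverse covering it induces, contradict either mixing or the fact that no proper $(G,\mathcal{F})$-free factor is elliptic.

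Concretely, the key steps in order are: (1) recall/cite that in each case $\overline{\mathcal{O}(G,\mathcal{F})}$ contains a tree $T$ with dense orbits that is arational (no proper $(G,\mathcal{F})$-free factor acts with a fixed point or minimally non-densely); (2) show arational $\Rightarrow$ mixing, using indecomposability of the associated geometric/arational tree, so that Proposition \ref{mixing-representative} will apply once $\mathcal{Z}$-incompatibility is known; (3) show arational $\Rightarrow$ $\mathcal{Z}$-incompatible: suppose $T$ is compatible with a one-edge $\mathcal{Z}$-splitting $S$, form $T+S$, use the alignment-preserving map $\pi_S\colon T+S\to S$ together with the discussion following Proposition \ref{skeleton} to split $T$ as a nontrivial graph of actions over an edge with stabilizer in $\mathcal{Z}$; the vertex group carrying a nondegenerate vertex tree, or the edge group, then gives rise to a proper $(G,\mathcal{F})$-free factor that violates arationality (here the hypothesis $\text{rk}_K\ge 3$, resp.\ $G=F_2$, guarantees the relevant subgroup is a \emph{proper} free factor and not all of $G$); (4) conclude by Proposition \ref{mixing-representative} that $T\in\mathcal{X}(G,\mathcal{F})$, so $\mathcal{X}(G,\mathcal{F})\neq\emptyset$.

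The main obstacle I expect is step (2)–(3), namely cleanly ruling out compatibility with \emph{every} $\mathcal{Z}$-splitting, not merely with free splittings: a priori $T$ could be compatible with a $\mathcal{Z}$-splitting whose edge group is a nontrivial cyclic subgroup that happens not to be contained in any proper free factor in an obvious way. The resolution is that in the transverse covering of $T$ induced by $T+S$, the edge stabilizer of $S$ fixes an arc of $T+S$ and hence acts elliptically in the graph-of-actions decomposition of $T$; since $T$ has trivial arc stabilizers (it has dense orbits, so Proposition \ref{good}'s companion \cite[Proposition 4.17]{Hor14-5} applies), the edge group must act on $T$ with a fixed point, and the vertex groups of the decomposition are proper — this contradicts arationality. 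Making this step airtight is the only real content; everything else is assembling already-cited machinery.
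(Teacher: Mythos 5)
Your reduction to producing a mixing, $\mathcal{Z}$-incompatible tree via Proposition \ref{mixing-representative} is the paper's strategy, and your $F_2$ example coincides with the paper's. For the general case $\text{rk}_K(G,\mathcal{F})\ge 3$, however, the paper takes a substantively different route: rather than invoking arational trees, it builds an explicit tree $T$ as a graph of actions over an amalgam of $\pi_1(\mathcal{O})$ (where $\mathcal{O}$ is a sphere with boundary and cone points) with the $G_i$'s along boundary curves, inserting an indecomposable $\pi_1(\mathcal{O})$-tree $Y$ dual to a filling foliation. This $T$ is mixing because it is covered by $G$-translates of $Y$, but it is deliberately \emph{not} indecomposable, hence not arational — the remark after the proof uses precisely this to separate $\partial_\infty FZ_N$ from $\partial_\infty FF_N$. $\mathcal{Z}$-incompatibility is then proved by Lemma \ref{lemma-ex}: a compatible $\mathcal{Z}$-splitting would make $\pi_1(\mathcal{O})$ elliptic, and the identifications $b_i=g_i$ force every $G_i$ to share its fixed vertex, making $G$ elliptic. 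Your approach would, if carried out, need the existence of arational $(G,\mathcal{F})$-trees as input, which in the general free-product setting is a nontrivial result (and arguably circular to cite here, since this paper is part of that foundational apparatus); the paper's orbifold construction is both more elementary and more informative.

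More significantly, there is a gap in your step (3). After splitting $T$ as a nontrivial graph of actions with $\mathcal{Z}$-edge group, you assert that "the vertex group carrying a nondegenerate vertex tree, or the edge group, then gives rise to a proper $(G,\mathcal{F})$-free factor that violates arationality," and your closing "resolution" repeats the same move. Vertex groups of $\mathcal{Z}$-splittings are generally \emph{not} free factors: for instance $\langle a^2,b\rangle$ in $F_2=\langle a\rangle\ast_{\langle a^2\rangle}\langle a^2,b\rangle$ is a rank-$2$ free subgroup but not a free factor, and the edge group $\langle a^2\rangle$ is not contained in a proper free factor either. So arationality (a statement about proper $(G,\mathcal{F})$-free factors) is not directly contradicted by the decomposition you obtain. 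The repair is to drop free factors altogether: indecomposable trees admit no nontrivial transverse covering, and compatibility with any nontrivial simplicial tree yields a nontrivial transverse covering of $T$ by the construction following Proposition \ref{skeleton}; hence indecomposable $\Rightarrow$ $\mathcal{Z}$-incompatible. If you want to start from arationality rather than indecomposability you then additionally need "arational $\Rightarrow$ indecomposable," which is Reynolds' theorem for $F_N$ but must be justified in the relative setting.
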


\begin{cor}\label{fz-unbounded}
Let $G$ be a countable group, and let $\mathcal{F}$ be a free factor system of $G$. Then $FZ(G,\mathcal{F})$ has unbounded diameter if and only if either $G=F_2$ and $\mathcal{F}=\emptyset$, or $\text{rk}_K(G,\mathcal{F})\ge 3$.
\qed
\end{cor}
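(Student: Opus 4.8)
The plan is to derive this as a straightforward consequence of Proposition \ref{ex-unbounded}, Theorem \ref{Luo}, and the explicit computations in Remark \ref{sporadic}. I would split the argument along the trichotomy of free factor systems: the two sporadic cases, and everything else.

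First, suppose $\text{rk}_K(G,\mathcal{F})\ge 3$, or $G=F_2$ and $\mathcal{F}=\emptyset$. By Proposition \ref{ex-unbounded}, there exists a $\mathcal{Z}$-averse tree $T\in\mathcal{X}(G,\mathcal{F})$. Since $T$ lies in $\overline{\mathcal{O}(G,\mathcal{F})}$, which is the closure of $\mathcal{O}(G,\mathcal{F})$ in the axes topology, I would pick a sequence $(T_i)_{i\in\mathbb{N}}\in\mathcal{O}(G,\mathcal{F})^{\mathbb{N}}$ converging (projectively, then rescaled to converge non-projectively if needed — only the projective class matters for the image $\psi(T_i)$) to $T$. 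Theorem \ref{Luo} then gives that $(\psi(T_i))_{i\in\mathbb{N}}$ is unbounded in $FZ(G,\mathcal{F})$, so $FZ(G,\mathcal{F})$ has unbounded diameter.

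Conversely, suppose we are not in either of those cases. The only free factor systems $(G,\mathcal{F})$ with $\text{rk}_K(G,\mathcal{F})\le 2$ that are not $(F_2,\emptyset)$ are exactly $G=G_1\ast G_2$ with $\mathcal{F}=\{[G_1],[G_2]\}$ (Kurosh rank $2$, free rank $0$) and $G=G_1\ast\mathbb{Z}$ with $\mathcal{F}=\{[G_1]\}$ (Kurosh rank $2$, free rank $1$); here I should note that $\text{rk}_K(G,\mathcal{F})=1$ forces $\mathcal{O}(G,\mathcal{F})$ to be a point and $FZ(G,\mathcal{F})$ to be empty or trivial, which I would dispatch quickly or absorb into the sporadic discussion. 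In the first sporadic case, Remark \ref{sporadic} states $FZ(G,\mathcal{F})$ is a single point; in the second, Remark \ref{sporadic} exhibits it as a star-shaped graph of diameter $4$. In both cases the diameter is finite, so $FZ(G,\mathcal{F})$ does not have unbounded diameter. This establishes the "only if" direction and completes the equivalence.

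The only mild subtlety — really the sole thing requiring care rather than a main obstacle — is making sure the enumeration of low-Kurosh-rank cases is exhaustive: one must check that $\text{rk}_f(G,\mathcal{F})$ and $|\mathcal{F}|$ together pin down the two sporadic configurations (plus the degenerate rank-$1$ cases), which follows directly from the definition $\text{rk}_K(G,\mathcal{F})=\text{rk}_f(G,\mathcal{F})+|\mathcal{F}|$ together with the standing assumption that each $G_i$ is nontrivial and $\mathcal{F}$ is a genuine free factor system. Since the corollary is already marked \qed in the excerpt, the intended proof is essentially the two-line combination above, and I would keep the writeup correspondingly brief.
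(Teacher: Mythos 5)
Your proposal is correct and reproduces exactly the paper's intended argument: the unboundedness direction follows from Proposition \ref{ex-unbounded} (existence of a $\mathcal{Z}$-averse tree) combined with Theorem \ref{Luo} (Kobayashi-style unboundedness of the $\psi$-image of any sequence in $\mathcal{O}(G,\mathcal{F})$ converging to a $\mathcal{Z}$-averse tree), and the boundedness direction follows from the explicit descriptions of the sporadic graphs in Remark \ref{sporadic} together with the elementary enumeration of free factor systems with $\text{rk}_K\le 2$. The paper leaves the corollary as an immediate consequence (hence the \qed with no separate proof), and your write-up, including the care taken over the degenerate $\text{rk}_K(G,\mathcal{F})\le 1$ cases, is exactly the argument one would supply.
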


\begin{rk}
The examples we provide belong to $\mathcal{X}^{max}(G,\mathcal{F})$. We refer to Section \ref{sec-ex-max} for examples of $\mathcal{Z}^{max}$-averse trees that are not $\mathcal{Z}$-averse.
\end{rk}

\begin{rk}
It will actually follow from our main result (Theorem \ref{main}) that, except in the sporadic cases, the Gromov boundary $\partial_{\infty}FZ(G,\mathcal{F})$ is nonempty.
\end{rk}

A tree $T\in\overline{\mathcal{O}(G,\mathcal{F})}$ is \emph{indecomposable} \cite[Definition 1.17]{Gui08} if for all segments $I,J\subseteq T$, there exist $g_1,\dots,g_r\in G$ such that $J=\cup_{i=1}^r g_iI$, and for all $i\in\{1,\dots,r-1\}$, the intersection $g_iI\cap g_{i+1}I$ is a nondegenerate arc (i.e. it is nonempty, and not reduced to a point). The following lemma follows from \cite[Lemma 1.18]{Gui08} and the description of the transverse covering of $T$ provided by a simplicial tree $S$ that is compatible with $T$ (Section \ref{sec-goa}).  

\begin{lemma}(Guirardel \cite[Lemma 1.18]{Gui08})\label{lemma-ex}
Let $T\in\overline{\mathcal{O}(G,\mathcal{F})}$ be a tree that is compatible with a $\mathcal{Z}$-splitting $S$. Let $H\subseteq G$ be a subgroup, such that the $H$-minimal subtree $T_H$ of $T$ is indecomposable. Then $H$ is elliptic in $S$.
\end{lemma}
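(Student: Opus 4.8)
The plan is to combine Guirardel's Lemma 1.18 from \cite{Gui08} with the explicit description of the transverse covering of $T$ induced by a compatible simplicial tree, which was worked out in Section \ref{sec-goa}. First I would recall what Lemma 1.18 of \cite{Gui08} says: if $T$ admits a nontrivial transverse covering $\mathcal{Y}$, and $H\subseteq G$ is a subgroup whose minimal subtree $T_H$ is indecomposable, then $T_H$ is contained in a single element $Y\in\mathcal{Y}$ (it cannot properly overlap two elements of the covering, since the pairwise intersections in a transverse covering are degenerate, while an indecomposable subtree cannot be covered by its intersections with finitely many such pieces unless it lies in one of them). In particular, $H$ fixes the point of the skeleton corresponding to $Y$, so $H$ is elliptic in the skeleton.

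The key step is then to produce the right transverse covering. Given the $\mathcal{Z}$-splitting $S$ compatible with $T$, Section \ref{sec-goa} constructs the standard common refinement $T+S$ with its alignment-preserving projections $\pi_T:T+S\to T$ and $\pi_S:T+S\to S$, and exhibits a nontrivial transverse covering $\mathcal{Y}$ of $T$: namely, $\mathcal{Y}$ consists of the nondegenerate subtrees among the $\pi_T$-images of the closures of $\pi_S$-preimages of open edges of $S$ and the $\pi_S$-preimages of vertices of $S$. The skeleton of this transverse covering is, up to collapsing, exactly $S$ (or a tree from which $S$ is obtained by a collapse): the vertices of $S$ with their groups, together with the pieces coming from edge-preimages, assemble into a graph of actions whose underlying Bass--Serre tree is $S$. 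So I would apply Lemma 1.18 to conclude that $T_H$ lies in a single element $Y$ of $\mathcal{Y}$, hence $H$ fixes a vertex of the skeleton, hence $H$ is elliptic in $S$.

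The one point requiring a little care is the identification between the skeleton of $\mathcal{Y}$ and the tree $S$ itself — a priori the skeleton could be a refinement of $S$, or differ from $S$ by subdivision, and one should check that ellipticity in the skeleton gives ellipticity in $S$. This follows because $S$ is obtained from (a subdivision of) the skeleton by an equivariant collapse of edges, and ellipticity is preserved under collapses; alternatively, one can argue directly that if $T_H$ is contained in a single piece $Y\in\mathcal{Y}$, then $H$ fixes the point $\pi_S(\widetilde{Y})\in S$ where $\widetilde{Y}$ is the corresponding piece of $T+S$ (since $H$ fixes a point of $T+S$ lying over $Y$, namely any point of $\widetilde{Y}$, and $\pi_S$ is $G$-equivariant, the $H$-orbit of its $\pi_S$-image is a single point or bounded subtree, forcing ellipticity). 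So the main obstacle, such as it is, is purely bookkeeping: making sure the transverse covering of Section \ref{sec-goa} is genuinely nontrivial in our situation (which it is, since $S$ is a nontrivial splitting) and that the passage from the skeleton back to $S$ preserves ellipticity. Everything else is a direct citation of \cite[Lemma 1.18]{Gui08}.
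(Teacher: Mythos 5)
Your overall strategy matches the paper's: invoke Guirardel's Lemma 1.18 on the transverse covering $\mathcal{Y}=\pi_T(\mathcal{Y}')$ of $T$ built in Section \ref{sec-goa}, conclude $T_H\subseteq Y$ for a single $Y\in\mathcal{Y}$, deduce $H\subseteq\mathrm{Stab}(Y)$, and then transfer ellipticity back to $S$. The first two steps are fine. It is the last step where both of your suggested routes are shaky. For the skeleton route, the claim that ``$S$ is obtained from (a subdivision of) the skeleton by an equivariant collapse'' is not established and may in fact go the wrong way: pieces of $\mathcal{Y}'$ whose $\pi_T$-image degenerates drop out of $\mathcal{Y}$, so the skeleton of $\mathcal{Y}$ can be a \emph{collapse} of (a subdivision of) $S$, in which direction ellipticity would not transfer. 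For the ``direct'' route, the parenthetical justification is false as written: $H$ does not fix a point of $T+S$, and points of $\widetilde{Y}$ are not fixed by $H$.

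The correct finishing step, which is what the paper is implicitly relying on, is the observation that since $\mathcal{Y}$ is a transverse family (verified in Section \ref{sec-goa}), the map $\widetilde{Y}\mapsto\pi_T(\widetilde{Y})$ is a $G$-equivariant \emph{bijection} from the set of pieces of $\mathcal{Y}'$ with nondegenerate image onto $\mathcal{Y}$: two distinct pieces of $\mathcal{Y}'$ cannot have the same nondegenerate image, since their images would then intersect nondegenerately. Equivariance and injectivity of this bijection give $\mathrm{Stab}(Y)=\mathrm{Stab}(\widetilde{Y})$, where $\widetilde{Y}$ is either $\pi_S^{-1}(v)$ for a vertex $v$ of $S$ or $\overline{\pi_S^{-1}(\mathring{e})}$ for an edge $e$ of $S$, and in both cases $\mathrm{Stab}(\widetilde{Y})$ is a vertex or edge stabilizer of $S$, hence elliptic. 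So $H\subseteq\mathrm{Stab}(Y)=\mathrm{Stab}(\widetilde{Y})$ is elliptic in $S$. With that repair your argument is complete and agrees with the paper's intended one-line proof.
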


\begin{proof}[Proof of Proposition \ref{ex-unbounded}]
If $G=F_2$ and $\mathcal{F}=\emptyset$, then any tree dual to an arational measured lamination on a compact surface of genus $1$ having exactly one boundary component is arational in the sense of \cite{Rey12}. Hence it is mixing and $\mathcal{Z}$-incompatible, so it belongs to $\mathcal{X}(G,\mathcal{F})$ by Proposition \ref{mixing-representative}. 

We now assume that $\text{rk}_K(G,\mathcal{F})\ge 3$. Let $N:=\text{rk}_f(G,\mathcal{F})$, and let $\{G_1,\dots,G_k\}$ be a set of representatives of the conjugacy classes in $\mathcal{F}$, such that $$G=G_1\ast\dots\ast G_k\ast F_N.$$ For all $i\in\{1,\dots,k\}$, we choose an element $g_i\in G_i\smallsetminus\{e\}$, whose order we denote by $p_i\in\mathbb{N}\cup\{+\infty\}$. We denote by $l$ be the number of indices $i$ so that $p_i=+\infty$. Up to reordering the $g_i$'s, we can assume that $p_1,\dots,p_l=+\infty$, and $p_{l+1},\dots,p_k<+\infty$. 

Let $\mathcal{O}$ be the orbifold obtained from a sphere with $N+l+1$ boundary components by adding a conical point of order $p_i$ for each $i\in\{l+1,\dots,k\}$. For all $i\in\{1,\dots,l\}$, we denote by $b_i$ a generator of the $i^{th}$ boundary curve in $\pi_1(\mathcal{O})$, and for all $i\in\{l+1,\dots,k\}$, we denote by $b_i$ a generator of the subgroup of $\pi_1(\mathcal{O})$ associated to the corresponding conical point. The group $G$ is isomorphic to the group obtained by amalgamating $\pi_1(\mathcal{O})$ with the groups $G_i$, identifying $b_i$ with $g_i$ for all $i\in\{1,\dots,k\}$, see Figure \ref{fig-orbifold}. We denote by $S$ the corresponding splitting of $G$.

As $\text{rk}_K(G,\mathcal{F})\ge 3$, we can equip $\mathcal{O}$ with a minimal and filling measured foliation. Dual to this foliation is an indecomposable $\pi_1(\mathcal{O})$-tree $Y$ (indecomposability is shown in \cite[Proposition 1.25]{Gui08}). We then form a graph of actions $\mathcal{G}$ over the splitting $S$: vertex trees are the $\pi_1(\mathcal{O})$-tree $Y$, and a trivial $G_i$-tree for all $i\in\{1,\dots,k\}$, and edges have length $0$. We denote by $T$ the $(G,\mathcal{F})$-tree defined in this way. 

\begin{figure}
\begin{center}
\input{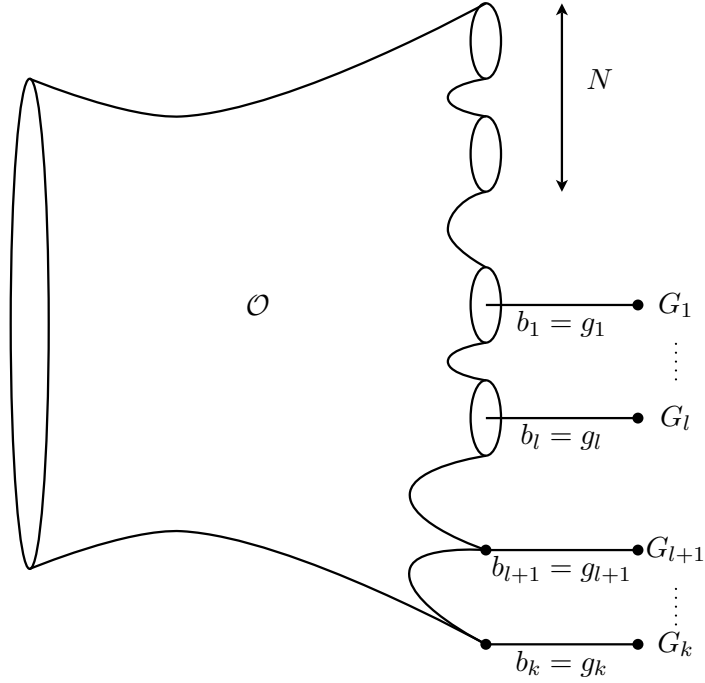}
\caption{The decomposition of $G$ as an amalgam of $\pi_1(\mathcal{O})$ and the $G_i$'s.}
\label{fig-orbifold}
\end{center}
\end{figure}

We claim that $T\in\mathcal{X}(G,\mathcal{F})$. Indeed, the tree $T$ admits a transverse covering by translates of $Y$, so $T$ is mixing. We claim that $T$ is also $\mathcal{Z}$-incompatible, which implies that $T$ is $\mathcal{Z}$-averse by the last assertion of Proposition \ref{mixing-representative}. If $T$ were compatible with a $\mathcal{Z}$-splitting $S'$, then Lemma \ref{lemma-ex} would imply that the stabilizer $\pi_1(\mathcal{O})$ of the indecomposable subtree $Y$ fixes a vertex $v$ in $S'$. Therefore, for all $i\in\{1,\dots,k\}$, the element $b_i=g_i$ fixes $v$. As $S'$ is a $(G,\mathcal{F})$-splitting, the subgroup $G_i$ fixes a vertex $v_i$ in $S'$, and in particular $g_i$ fixes $v_i$. As nontrivial edge stabilizers in $S'$ are nonperipheral, the element $g_i$ does not fix any arc in $S'$, so $v_i=v$. So all subgroups $G_i$ fix the same vertex $v$ of $S'$. Hence $G$ is elliptic in $S'$, a contradiction.   
\end{proof} 

\begin{rk}
When $G=F_N$ with $N\ge 3$, the trees we get are $\mathcal{Z}$-averse trees whose $\sim$-class does not contain any arational tree in the sense of \cite{Rey12}. By comparing our description of the Gromov boundary of $\partial_{\infty} FZ_N$ with Bestvina--Reynolds' and Hamenstädt's description of the Gromov boundary of the free factor graph $FF_N$ as the space of equivalence classes of arational $F_N$-trees \cite{BR13,Ham12}, we get that the natural map from $FZ_N$ to $FF_N$ is not a quasi-isometry (this map is defined by mapping any one-edge free splitting of $F_N$ to one of its vertex groups, and mapping any $\mathcal{Z}$-splitting with nontrivial edge stabilizers to the smallest free factor of $F_N$ that contains the edge group, which is proper by \cite[Lemma 5.11]{Hor14-5}). When $N=2$, it is known that all trees with dense orbits in the boundary $\partial cv_2$ are dual to arational measured foliations on a once-punctured torus, and are therefore arational. So the Gromov boundaries $\partial_{\infty}FF_2$, $\partial_{\infty}FZ_2^{max}$ and $\partial_{\infty}FZ_2$ are all isomorphic.
\end{rk}

\subsection{Proof of the equivalences in the definition of $\mathcal{Z}$-averse trees (Theorem \ref{averse})}\label{sec-averse}

Our proofs of Theorem \ref{averse} and Proposition \ref{mixing-representative} are based on the following two propositions.

\begin{prop}\label{mixing}
Every tame $(G,\mathcal{F})$-tree is either $\mathcal{Z}$-compatible, or collapses to a mixing tree in $\overline{\mathcal{O}(G,\mathcal{F})}$.
\end{prop}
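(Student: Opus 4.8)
The plan is to start from the Levitt decomposition of a tame $(G,\mathcal{F})$-tree $T$ as a graph of actions (Proposition \ref{Levitt}), whose vertex trees have dense orbits for their stabilizers, whose edges have positive length and $\mathcal{Z}$-stabilizers. If the underlying graph of groups of this decomposition is nontrivial, then $T$ admits a collapse to the simplicial tree $S$ obtained by collapsing every vertex tree to a point; this $S$ is a $\mathcal{Z}$-splitting (its edge stabilizers are the edge stabilizers of the Levitt decomposition, which lie in $\mathcal{Z}$), and $T$ is compatible with $S$ via the standard refinement, so $T$ is $\mathcal{Z}$-compatible and we are done in this case. So we may assume the Levitt decomposition is trivial, i.e. $T$ itself has dense orbits, and hence $T\in\overline{\mathcal{O}(G,\mathcal{F})}$ (trees with dense orbits that are tame belong to the closure of outer space, by \cite[Proposition 4.17]{Hor14-5}).

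Now suppose $T$ has dense orbits but is not mixing. I would invoke the classical structure theory for non-mixing $\mathbb{R}$-trees with dense orbits — Morgan's analysis \cite{Mor88} — which produces a nontrivial transverse covering of $T$, hence (by Proposition \ref{skeleton}) a splitting of $T$ as a nontrivial $(G,\mathcal{F})$-graph of actions whose vertex trees are the mixing components. Collapsing these vertex trees to points yields a simplicial $(G,\mathcal{F})$-tree $S'$, which is the skeleton of this transverse covering; by Proposition \ref{skeleton} $S'$ is compatible with $T$ and is minimal. The edge stabilizers of $S'$ stabilize arcs of $T$ (they fix the attaching points and the incident edges), so they lie in $\mathcal{Z}$ since $T$ is small. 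Thus $S'$ is a $\mathcal{Z}$-splitting compatible with $T$, and $T$ is $\mathcal{Z}$-compatible. The remaining alternative is that $T$ has dense orbits and \emph{is} mixing, which is the second conclusion of the proposition — so the trichotomy (collapses to a $\mathcal{Z}$-splitting via a nontrivial Levitt graph of actions; has dense orbits but is non-mixing, hence is $\mathcal{Z}$-compatible by the transverse covering argument; has dense orbits and is mixing) exhausts all cases.

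The main obstacle I anticipate is the middle step: precisely identifying the transverse covering of a non-mixing tree with dense orbits and controlling its pieces. One must be careful that a dense-orbit tree which fails to be mixing genuinely carries a \emph{nontrivial} transverse covering (rather than, say, only a transverse family that does not cover), and that the pieces can be taken $G$-invariantly and closed so that Proposition \ref{skeleton} applies — this is where Morgan's (or Guirardel's) decomposition of $T$ into its maximal mixing subtrees, together with the observation that distinct maximal mixing subtrees meet in at most a point, does the work. A secondary point requiring care is verifying that the resulting skeleton is a genuine $(G,\mathcal{F})$-tree (peripheral subgroups elliptic) and that its edge groups are nonperipheral cyclic, i.e. lie in $\mathcal{Z}$ and not merely in the class of arc stabilizers; this follows because an edge stabilizer of the skeleton fixes an arc of $T$ and $T$ is small, but one should also confirm nonperipherality, which holds because a peripheral subgroup is elliptic in $T$ and hence contained in a single vertex tree of the transverse covering, so it cannot be an edge group of the skeleton unless that edge is degenerate.
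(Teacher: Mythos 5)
Your first reduction — if the Levitt decomposition of $T$ is nontrivial (equivalently, $T$ lacks dense orbits), then collapsing the vertex trees yields a $\mathcal{Z}$-splitting compatible with $T$ — is fine and matches the paper's handling of the non-dense-orbits case. The problem lies precisely where you flag your own concern, in the dense-orbits, non-mixing case, and that concern is not resolvable in the form you have set it up. First, a non-mixing tree $T$ with dense orbits need not carry any \emph{nontrivial} transverse covering at all: the candidate ``maximal mixing pieces'' may all degenerate to points, leaving only a transverse family that fails to cover, or nothing nondegenerate. The paper's Proposition \ref{alternative-2} (quoting Guirardel--Levitt) is phrased for exactly this reason in terms of a \emph{collapse} of $T$: it asserts that $T$ collapses onto a topological tree that is either mixing or ``splits over terminal points,'' and one may have to collapse away part of $T$ before any transverse structure appears. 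Your argument applies the decomposition to $T$ directly and so is not available. (Also, the skeleton of a transverse covering is not obtained by collapsing the vertex trees of $T$ — if the pieces of a dense-orbits tree pairwise meet in points, collapsing each piece collapses all of $T$ to a point — so the identification of $S'$ with the skeleton in your write-up is also off, though this is secondary.)

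Second, and independently, even if one were handed a transverse covering of $T$ into mixing pieces, the claim that its skeleton has edge groups in $\mathcal{Z}$ is false in general. An edge of the skeleton joins a subtree $Y\in\mathcal{Y}$ to a point $x\in V_0(S)$, and its stabilizer is $\text{Stab}(Y)\cap\text{Stab}(x)$, which fixes only the single point $x\in T$ — not an arc — and point stabilizers in dense-orbit trees are not constrained by smallness; they can be free factors of arbitrary Kurosh rank. This is exactly what the ``splits over terminal points'' hypothesis in Proposition \ref{alternative-3} is engineered to handle: there the subtree $Y$ is \emph{not} closed, so a point $y\in\overline{Y}\smallsetminus Y$ has valence $1$ in $\overline{Y}$, and therefore any $g$ stabilizing the edge $(\overline{Y},y)$ must stabilize a nondegenerate arc of $T$, forcing $g=1$. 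That valence-one observation is what produces a free splitting compatible with $T$, and it has no analogue in a generic decomposition into closed mixing pieces. Without it, the conclusion that $T$ is $\mathcal{Z}$-compatible does not follow.
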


\begin{prop}\label{unique-projection-2}
Let $T_1$ and $T_2$ be tame $(G,\mathcal{F})$-trees. If $T_1$ and $T_2$ are compatible, and if $T_1$ is mixing and $\mathcal{Z}$-incompatible, then there is an alignment-preserving map from $T_2$ to $T_1$.
\end{prop}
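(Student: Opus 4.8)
I would begin by recording that $T_1$ cannot have a nontrivial simplicial part: if it did, its Levitt decomposition (Proposition \ref{Levitt}) would exhibit a $G$-invariant set of edges with stabilizers in $\mathcal{Z}$ whose collapse is a $\mathcal{Z}$-splitting, and the collapse map is alignment-preserving, so $T_1$ would be $\mathcal{Z}$-compatible. Hence $T_1$ has dense orbits, so by tameness it has trivial arc stabilizers and lies in $\overline{\mathcal{O}(G,\mathcal{F})}$. Next I would form the standard common refinement $\widehat T := T_1+T_2$ of Section \ref{sec-compatibility}, with its $1$-Lipschitz alignment-preserving projections $\pi_i:\widehat T\to T_i$ and the additive metric $d_{\widehat T}=d_{T_1}\!\circ\pi_1+d_{T_2}\!\circ\pi_2$; we may take $\widehat T$ minimal, so $\pi_1,\pi_2$ are onto. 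Note $\widehat T$ is tame. If $T_2$ (resp.\ $\widehat T$) did not have dense orbits, then collapsing the vertex trees of its Levitt decomposition would give a $\mathcal{Z}$-splitting (resp.\ a free splitting, since $\widehat T$ has trivial arc stabilizers) which is a common collapse of $\widehat T$ with $T_1$, hence compatible with $T_1$ — contradicting $\mathcal{Z}$-incompatibility. So we may assume $T_1,T_2,\widehat T$ all have dense orbits and trivial arc stabilizers.

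\textbf{The dichotomy.} The key alternative is whether $\pi_1$ is constant on every fiber $\pi_2^{-1}(x)$. If it is, then $\pi_1$ descends through $\pi_2$ to a well-defined map $f:T_2\to T_1$ with $f\circ\pi_2=\pi_1$; lifting any segment of $T_2$ to a segment of $\widehat T$ and applying $\pi_1$ shows $f$ preserves alignment, and we are done. So assume there is $x_0\in T_2$ with $Z_{x_0}:=\pi_1(\pi_2^{-1}(x_0))$ a nondegenerate subtree of $T_1$. Let $\mathcal{Y}$ be the $G$-invariant family of all nondegenerate subtrees $Z_x=\pi_1(\pi_2^{-1}(x))$, $x\in T_2$; by Lemma \ref{projection-closed} these are closed subtrees, and since $d_{\widehat T}(a,b)=d_{T_1}(\pi_1a,\pi_1b)$ for $a,b$ in a common $\pi_2$-fiber, $\pi_1$ restricts to an isometry from $\pi_2^{-1}(x)$ onto $Z_x$, whose stabilizer is therefore $\mathrm{Stab}_{T_2}(x)$.

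\textbf{A transverse covering, and the obstacle.} I claim $\mathcal{Y}$ is a nontrivial transverse covering of $T_1$. For transversality, suppose $p\neq q$ lie in $Z_x\cap Z_{x'}$ with $x\neq x'$; pick segments $[a,b]\subseteq\pi_2^{-1}(x)$ and $[a',b']\subseteq\pi_2^{-1}(x')$ mapped by $\pi_1$ onto $[p,q]$. The additive-metric formula gives $d_{\widehat T}(a,b')=d_{T_1}(p,q)+d_{T_2}(x,x')=d_{\widehat T}(a,b)+d_{\widehat T}(b,b')$ and likewise $=d_{\widehat T}(a,a')+d_{\widehat T}(a',b')$, so $a,b,a',b'$ are collinear in $\widehat T$; then $d_{\widehat T}(a',b)=\bigl|d_{T_2}(x,x')-d_{T_1}(p,q)\bigr|$, whereas the formula also gives $d_{\widehat T}(a',b)=d_{T_1}(p,q)+d_{T_2}(x,x')$, forcing $p=q$, a contradiction. (The same computation shows $x\mapsto Z_x$ is injective on the nondegenerate locus, so $\mathrm{Stab}_G(Z_x)=\mathrm{Stab}_{T_2}(x)$.) Covering is immediate from mixing of $T_1$: every finite arc of $T_1$ lies in $g_1Z_{x_0}\cup\dots\cup g_kZ_{x_0}$ and $g_iZ_{x_0}=Z_{g_ix_0}\in\mathcal{Y}$; and $\mathcal{Y}$ is nontrivial since $\mathcal{Y}=\{T_1\}$ together with transversality would force a single $G$-fixed point in $T_2$. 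By Proposition \ref{skeleton} the skeleton $S$ of $\mathcal{Y}$ is then a nontrivial simplicial $(G,\mathcal{F})$-tree compatible with $T_1$, each of whose edge stabilizers is a subgroup of some $\mathrm{Stab}_{T_2}(x)$ and hence — $T_2$ having dense orbits — is trivial, cyclic and nonperipheral, or peripheral; collapsing the edges with nontrivial peripheral stabilizer yields a $(G,\mathcal{F})$-tree $\bar S$ with all edge stabilizers in $\mathcal{Z}$, still compatible with $T_1$. If $\bar S$ is nontrivial it is a $\mathcal{Z}$-splitting compatible with $T_1$, contradicting $\mathcal{Z}$-incompatibility and finishing the proof. \emph{The main obstacle is precisely to show that $\bar S$ is nontrivial}, i.e.\ that not every edge of $S$ carries a nontrivial peripheral stabilizer; equivalently, that some nondegenerate member $Z_x$ of $\mathcal{Y}$ has non-peripheral (hence $\mathcal{Z}$) stabilizer. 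This is where mixing of $T_1$ must be used in an essential way: a nondegenerate $P$-invariant subtree $Z_x$ with $P=\mathrm{Stab}_{T_2}(x)$ peripheral would be a nondegenerate subtree on which $P$ acts with a single fixed point, and I expect that propagating such a configuration through the mixing property of $T_1$ (which spreads translates of $Z_x$ over all of $T_1$) is incompatible with $T_1$ being $\mathcal{Z}$-incompatible unless one of the relevant point stabilizers of $T_2$ is already in $\mathcal{Z}$. Making this last step precise — and distinguishing it from the orbifold examples of Proposition \ref{ex-unbounded}, where the nondegenerate vertex trees of the transverse covering have \emph{non}-peripheral stabilizers and so do not occur here — is the heart of the argument.
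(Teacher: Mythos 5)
Your proposal correctly replicates the first half of the paper's argument: you form the standard common refinement $\widehat T = T_1+T_2$, observe it has dense orbits, and set up the same dichotomy the paper uses, namely whether the fibers of $\pi_2$ are crushed to points by $\pi_1$. When they are not, you extract a nontrivial transverse covering $\mathcal{Y}$ of $T_1$ by the subtrees $Z_x = \pi_1(\pi_2^{-1}(x))$, whose members have stabilizer $\text{Stab}_{T_2}(x)$. Up to this point you are essentially following the paper's proof.

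The gap you flag at the end is genuine, and it is exactly where the heart of the argument lies. Moreover, your intermediate claim that the edge stabilizers of the skeleton $S$ are ``trivial, cyclic and nonperipheral, or peripheral'' is not justified and is in fact false in general: an edge stabilizer of $S$ is $\text{Stab}_{T_2}(x)\cap\text{Stab}_{T_1}(p)$ for a vertex $p\in Z_x$ of the skeleton, and in a very small tree with dense orbits these point stabilizers are not constrained to be cyclic or peripheral (they can be arbitrary subgroups of finite Kurosh rank). So the plan of collapsing the edges with ``bad'' stabilizer to hope the rest is a $\mathcal{Z}$-splitting does not close the argument: the only a priori constraint is that the edge stabilizers of $S$ are \emph{not} in $\mathcal{Z}$ (precisely because $T_1$ is $\mathcal{Z}$-incompatible, a fact you do not exploit), and ``not in $\mathcal{Z}$'' allows noncyclic nonperipheral groups, not just peripheral ones.

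The paper resolves the problem by the combination of Propositions \ref{stabilizers} and \ref{stab-not-cyclic}, rather than by trying to manufacture a $\mathcal{Z}$-splitting directly out of $S$. Proposition \ref{stabilizers} supplies a $\mathcal{Z}$-splitting $S_0$ in which the point stabilizer $\text{Stab}_{T_2}(x) = \text{Stab}_{T_1}(Z_x)$ is elliptic. Proposition \ref{stab-not-cyclic} then rules this out for any subtree of a transverse covering of a mixing $\mathcal{Z}$-incompatible tree, by a case analysis on the skeleton $S$ of that covering: either every vertex group of $S$ is elliptic in $S_0$, in which case the fact that edge groups of $S$ are not in $\mathcal{Z}$ forces adjacent vertex groups of $S$ to share a fixed point in $S_0$, so $G$ itself would be elliptic in $S_0$ (absurd); or some vertex group $G_{x_i}$ acts nontrivially on $S_0$, and one blows up $S$ at the vertex $x_i$ using the minimal $G_{x_i}$-subtree of $S_0$ to produce a refinement $S'$ carrying an edge with $\mathcal{Z}$-stabilizer, over which $T_1$ still splits as a graph of actions, again contradicting $\mathcal{Z}$-incompatibility. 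This blowup trick is the precise content that your proposal is missing, and it is what lets the proof sidestep the problem of the skeleton's own edge stabilizers being uncontrolled.
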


\begin{rk}
The analogues of Propositions \ref{mixing} and \ref{unique-projection-2} for $\mathcal{Z}^{max}$-tame trees also hold. The proof of Proposition \ref{mixing} is the same. For Proposition \ref{unique-projection-2}, we will explain how one has to slightly adapt the argument in the proof of Proposition \ref{stab-not-cyclic} to handle the case of $\mathcal{Z}^{max}$-splittings. 
\end{rk}

We first explain how to deduce Theorem \ref{averse} and Proposition \ref{mixing-representative} from Propositions \ref{mixing} and \ref{unique-projection-2}, before proving these two propositions. 

\begin{proof}[Proof of Theorem \ref{averse}]
The implications $(3)\Rightarrow (2)\Rightarrow (1)$ are obvious, so we need only show that $(1)$ implies $(3)$. Let $T$ be a tame $(G,\mathcal{F})$-tree. Assume that there exists a finite sequence $(T=T_0,T_1,\dots,T_k=S)$ of tame $(G,\mathcal{F})$-trees, where $S$ is simplicial, and for all $i\in\{0,\dots,k-1\}$, the trees $T_i$ and $T_{i+1}$ are compatible. If $T$ did not collapse onto a tame $\mathcal{Z}$-compatible tree, then by Proposition \ref{mixing}, the tree $T$ would collapse onto a mixing $\mathcal{Z}$-incompatible tree $\overline{T}\in\overline{\mathcal{O}(G,\mathcal{F})}$. Notice in particular that $T_1$ is compatible with $\overline{T}$. An iterative application of Proposition \ref{unique-projection-2} then implies that all $T_i$'s collapse onto $\overline{T}$ (see Figure \ref{diagram-averse}, where all arrows represent collapse maps). In particular, the $\mathcal{Z}$-splitting $S$ collapses to $\overline{T}$, a contradiction.
\end{proof}

\begin{figure}
\begin{displaymath}
\xymatrix{
&T+T_1\ar@{>>}[dl]\ar@{>>}[dr]&&T_1+T_2\ar@{>>}[dl]\ar@{>>}[dr]&&\dots &&T_{k-1}+S\ar@{>>}[dl]\ar@{>>}[dr]&\\
T\ar@{>>}[dd]&&T_1\ar@{.>>}[ddll]&&T_2\ar@{.>>}[ddllll]&\dots &T_{k-1}\ar@{.>>}[ddllllll]&&S\ar@{.>>}[ddllllllll]\\
&&&&&&&&\\
\overline{T}&&&&&&&&}
\end{displaymath}
\centering
\caption{The situation in the proof of Theorem \ref{averse}.}
\label{diagram-averse}
\end{figure}
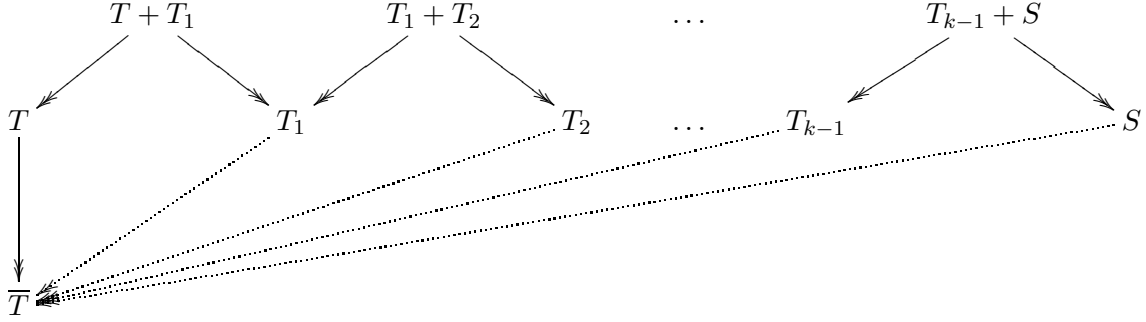

\begin{proof}[Proof of Proposition \ref{mixing-representative}]
The argument is similar to the proof of Theorem \ref{averse}. Let $T,T'\in\mathcal{X}(G,\mathcal{F})$ be two equivalent trees. As $T$ is $\mathcal{Z}$-incompatible, by Proposition \ref{mixing}, it collapses onto a mixing tree $\overline{T}\in\overline{\mathcal{O}(G,\mathcal{F})}$, and $\overline{T}\in\mathcal{X}(G,\mathcal{F})$ because $T\in\mathcal{X}(G,\mathcal{F})$. As $T\sim T'$, there exists a finite sequence $(T=T_0,T_1,\dots,T_{k-1},T_k=T')$ of trees in $\overline{\mathcal{O}(G,\mathcal{F})}$ such that for all $i\in\{0,\dots,k-1\}$, the trees $T_i$ and $T_{i+1}$ are compatible. In particular $T_1$ is compatible with $\overline{T}$. An iterative application of Proposition \ref{unique-projection-2} shows that all $T_i$'s collapse to $\overline{T}$, which proves the first assertion of Proposition \ref{mixing-representative}.

If $\overline{T}_1$ and $\overline{T}_2$ are two mixing trees in $\overline{\mathcal{O}(G,\mathcal{F})}$ that both satisfy the conclusion of Proposition \ref{mixing-representative}, then there is an alignment-preserving map from $T_1$ to $T_2$, and an alignment-preserving map from $T_2$ to $T_1$. As any alignment-preserving map from a tree in $\overline{\mathcal{O}(G,\mathcal{F})}$ with dense orbits to itself is an isometry, this implies that $\overline{T_1}$ and $\overline{T_2}$ are weakly homeomorphic. 

To prove the last assertion of Proposition \ref{mixing-representative}, let $T\in\overline{\mathcal{O}(G,\mathcal{F})}$ be a mixing tree that is not $\mathcal{Z}$-averse. Then there exists a finite sequence $(T=T_0,T_1,\dots,T_k=S)$ of trees in $\overline{\mathcal{O}(G,\mathcal{F})}$, where $S$ is simplicial, and for all $i\in\{0,\dots,k-1\}$, the trees $T_i$ and $T_{i+1}$ are compatible. If $T$ were $\mathcal{Z}$-incompatible, an iterative application of Proposition \ref{unique-projection-2} would imply that $S$ collapses to $T$, a contradiction. So $T$ is $\mathcal{Z}$-compatible.
\end{proof}

\subsubsection{Proof of Proposition \ref{mixing}}\label{sec-mixing}

A \emph{topological $(G,\mathcal{F})$-tree} is a topological space $T$ which is homeomorphic to an $\mathbb{R}$-tree, together with a minimal, bijective, non-nesting (i.e. for all $g\in G$ and all segments $I\subseteq T$, we have $gI\nsubseteq I$), alignment-preserving $G$-action with trivial arc stabilizers and no simplicial arc, with a finite number of orbits of branch points, such that there exists a tree $\widehat{T}\in\overline{\mathcal{O}(G,\mathcal{F})}$ which admits an alignment-preserving map onto $T$ (we recall that a map is \emph{alignment-preserving} if it sends segments onto segments). A topological $(G,\mathcal{F})$-tree $T$ \emph{splits over terminal points} if there exists a subtree $Y\varsubsetneq T$ such that for all $g\in G$, we either have $gY=Y$, or $gY\cap Y=\emptyset$, and $\{\overline{gY}\}_{g\in G}$ is a transverse covering of $T$. Proposition \ref{mixing} is a consequence of the following three propositions. 

\begin{prop} (Guirardel--Levitt \cite{GL14-2})\label{alternative-2}
Every tree $T\in\overline{\mathcal{O}(G,\mathcal{F})}$ with trivial arc stabilizers collapses onto a topological $(G,\mathcal{F})$-tree that is either mixing, or splits over terminal points. 
\end{prop}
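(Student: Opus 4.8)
The plan is to run the $\mathbb{R}$-tree combinatorics of transverse coverings and mixing components, in the spirit of Morgan \cite{Mor88} and Guirardel \cite{Gui08}, with the finiteness of the number of $G$-orbits of branch points (the hallmark of trees in $\overline{\mathcal{O}(G,\mathcal{F})}$, through the very small/tame property) as the key finiteness input; no hyperbolicity is involved. First I would reduce to the case of dense orbits. Taking the Levitt decomposition of $T$ (Proposition \ref{Levitt}) and collapsing all of its simplicial edges — which all have trivial stabilizer since $T$ has trivial arc stabilizers — produces a collapse $T_1$ of $T$ that still has trivial arc stabilizers and finitely many orbits of branch points, and that has dense orbits: indeed $T_1$ carries a transverse covering by the images of the dense-orbit vertex trees of the Levitt decomposition, and a transverse covering by dense-orbit pieces forces every $G$-orbit to be dense. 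Minimality and the non-nesting property pass to the quotient, and no simplicial arc can survive such a collapse, so $T_1$ is a topological $(G,\mathcal{F})$-tree, onto which $T$ (which lies in $\overline{\mathcal{O}(G,\mathcal{F})}$) maps by an alignment-preserving map. Replacing $T$ by $T_1$, I may assume $T$ is itself a topological $(G,\mathcal{F})$-tree; the degenerate case where $T$ has no dense-orbit piece at all — $T$ simplicial — contributes nothing here and is dealt with separately in the application to Proposition \ref{mixing}, where such a $T$ is directly $\mathcal{Z}$-compatible.

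Next I would build the transverse family of mixing components. For $x\in T$ let $Y_x$ be the union of all mixing subtrees of $T$ containing $x$. Morgan's chaining argument shows that a union of two mixing subtrees meeting in at least one point is again mixing, so $Y_x$ is the unique maximal mixing subtree through $x$, and two such subtrees are either equal or meet in at most one point; closures of mixing subtrees are mixing, so each $Y_x$ is closed. Hence $\mathcal{Y}:=\{Y_x : x\in T\}$ is a $G$-invariant transverse family of closed subtrees of $T$.

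Then comes the dichotomy. If $\mathcal{Y}=\{T\}$, then $T$ is mixing and we are done. Otherwise $\mathcal{Y}$ is nontrivial; fix $Y\in\mathcal{Y}$ and let $\overline{T}$ be the collapse of $T$ obtained by crushing to a point every member of $\mathcal{Y}$ outside the $G$-orbit of $Y$. Using that $T$ has only finitely many orbits of branch points (and of directions at them), I would argue that $\overline{T}$ is nondegenerate, still has trivial arc stabilizers and finitely many orbits of branch points, is a collapse of $T$, and that the surviving pieces $\{\overline{gY}\}_{g\in G}$ now form a transverse covering of $\overline{T}$ — by translates of the single subtree $\overline{Y}$. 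By Proposition \ref{skeleton}, $\overline{T}$ splits as a graph of actions over the skeleton of this covering; a final normalization inside each vertex tree — collapsing the convex hulls reaching beyond the attaching points that are not already terminal, which terminates thanks to the bound on the number of orbits of branch points — makes every attaching point a terminal point of the vertex trees. That is precisely the statement that $\overline{T}$ splits over terminal points, and one checks that no simplicial arc appears along the way (the surviving pieces are dense-orbit, so their branch points accumulate everywhere), so $\overline{T}$ is a topological $(G,\mathcal{F})$-tree.

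The hard part will be this last step: promoting the transverse \emph{family} $\mathcal{Y}$ to a transverse \emph{covering} of a suitable collapse, and then putting that covering into the normal form "single orbit of pieces, all attaching points terminal." Both rely on the finiteness of the number of $G$-orbits of branch points and directions in trees of $\overline{\mathcal{O}(G,\mathcal{F})}$, and one must check at each collapse that the class of topological $(G,\mathcal{F})$-trees is preserved — arc stabilizers stay trivial, the non-nesting property survives, branch-point orbits remain finite, and no simplicial arc is created. This is the core of Guirardel--Levitt's \cite{GL14-2} argument and of Guirardel's $J$-action machinery in \cite{Gui08}.
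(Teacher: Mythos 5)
The paper does not prove this proposition; it quotes it from Guirardel--Levitt \cite{GL14-2}, as it does the companion Proposition \ref{mixing-metrizable}, so there is no in-paper argument against which your route could be checked. Only Proposition \ref{alternative-3} among the three inputs to Proposition \ref{mixing} is actually proved in the text.

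On its own terms, your sketch breaks at the construction of the maximal mixing pieces. The claim that ``a union of two mixing subtrees meeting in at least one point is again mixing'' is false, and Example \ref{surfaces} in the paper essentially gives a counterexample: the tree $T$ dual to the lamination equipping $S_1$ with $L_1$ and $S_2$ with $L_2$ admits a transverse covering by translates of two mixing subtrees $Y_1,Y_2$ which meet at the single point dual to the separating curve $c$, yet for a segment $I\subset Y_1$ and a segment $J\subset Y_2$ any translate $gI$ lies in $gY_1$, which meets $Y_2$ in at most one point, so no finite chain of translates of $I$ can cover $J$; hence $Y_1\cup Y_2$ is not mixing. The chaining argument only goes through when the overlap is a nondegenerate arc, and with that correction the set $Y_x:=\bigcup\{\,Y\ni x : Y\ \text{mixing}\,\}$ is no longer obviously a mixing subtree --- the constituent pieces through $x$ may pairwise share only $\{x\}$ --- so one would instead need to pass to equivalence classes under a ``nondegenerate-overlap chain'' relation and re-verify that those classes are mixing, and that the resulting family is genuinely transverse. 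Beyond that, the two steps you yourself flag as ``the hard part'' --- upgrading the transverse family to a transverse covering of a nondegenerate collapse, and the terminal-point normalization that makes the surviving pieces meet only at endpoints --- are precisely where the substance of Guirardel--Levitt's argument must lie, and as written they are gestures rather than proofs.
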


\begin{prop} (Guirardel--Levitt \cite{GL14-2}) \label{mixing-metrizable}
Every mixing topological $(G,\mathcal{F})$-tree admits a $G$-invariant metric that turns it into an element of $\overline{\mathcal{O}(G,\mathcal{F})}$.
\end{prop}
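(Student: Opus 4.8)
The plan is to push the Lebesgue length measure of the metric tree forward along the collapse map, and then use the mixing hypothesis to turn the resulting pseudometric into a genuine metric. So let $T$ be a mixing topological $(G,\mathcal{F})$-tree and let $p\colon\widehat{T}\to T$ be an alignment-preserving map with $\widehat{T}\in\overline{\mathcal{O}(G,\mathcal{F})}$; replacing $\widehat{T}$ by a suitable collapse if necessary, using the Levitt decomposition (Proposition \ref{Levitt}) together with the assumption that $T$ has no simplicial arc, we may assume that $\widehat{T}$ has dense $G$-orbits, so that its arc stabilizers are trivial \cite[Proposition 4.17]{Hor14-5}. By Lemma \ref{projection-closed} the fibers $p^{-1}(x)$ are closed subtrees of $\widehat{T}$; being pairwise disjoint, they form a $G$-invariant transverse family $\mathcal{Y}$, and the effect of $p$ is exactly to collapse each element of $\mathcal{Y}$ to a point.

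First I would define an invariant length measure $\mu$ on $T$. For a segment $I=[a,b]\subseteq T$, let $\beta(I)\subseteq\widehat{T}$ denote the bridge between the disjoint closed subtrees $p^{-1}(a)$ and $p^{-1}(b)$, and set $\mu(I):=\inf\sum_{j}\ell_{\widehat{T}}(\beta(I_j))$, the infimum being taken over all subdivisions $I=I_1\cup\dots\cup I_n$. The key point is that $\ell_{\widehat{T}}\circ\beta$ is \emph{super}-additive along concatenations of segments: if $b\in[a,c]$, then $p^{-1}(b)$ separates $p^{-1}(a)$ from $p^{-1}(c)$ in $\widehat{T}$ (because $p$ preserves alignment), so $\beta([a,c])$ runs through $p^{-1}(b)$ and decomposes as $\beta([a,b])$, followed by a possibly degenerate subarc of $p^{-1}(b)$, followed by $\beta([b,c])$. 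Passing to the infimum over subdivisions therefore yields a genuinely additive, $G$-invariant length measure; concretely, $\mu(I)$ equals $\ell_{\widehat{T}}(\beta(I))$ minus the total length of the at most countably many maximal positive-length subarcs of $\beta(I)$ that $p$ collapses. Then $d_T(a,b):=\mu([a,b])$ is a $G$-invariant pseudometric on $T$; since arc stabilizers in $T$ are trivial (hence so are tripod stabilizers), as soon as $d_T$ is a genuine metric compatible with the topology of $T$ the resulting $\mathbb{R}$-tree is very small and so lies in $\overline{\mathcal{O}(G,\mathcal{F})}$ by \cite[Theorem 0.1]{Hor14-5}.

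It remains to show that $\mu$ is positive on every nondegenerate segment, and this is where mixing enters, via a standard covering argument: $\mu$ is sub-additive under coverings, so if $\mu(I_0)=0$ for some nondegenerate $I_0$ then, covering an arbitrary segment $J$ by finitely many translates $g_1I_0,\dots,g_kI_0$ as allowed by mixing, one gets $\mu(J)\le\sum_i\mu(g_iI_0)=0$; hence $\mu$ is either identically zero or everywhere positive. So everything reduces to proving $\mu\not\equiv 0$ — equivalently, that $p$ does not collapse a full-Lebesgue-measure subset of every segment of $\widehat{T}$ — and I expect this to be the main obstacle. Here I would argue that $\mu\equiv 0$ would force the transverse family $\mathcal{Y}$ to cover Lebesgue-almost every point of every arc of $\widehat{T}$, and then, using that $\widehat{T}$ has dense orbits, trivial arc stabilizers and finitely many orbits of branch points (the latter also holding for $T$ by the definition of a topological $(G,\mathcal{F})$-tree), promote this to the statement that $\mathcal{Y}$ is an honest transverse covering of $\widehat{T}$; but collapsing a transverse covering identifies to a single point all the pieces met by any given arc (consecutive pieces meet in a point, which is sent to both collapsed images), so $T$ would be a point, contradicting that $T$ is a nondegenerate topological $(G,\mathcal{F})$-tree. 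Upgrading ``almost-covering'' to ``transverse covering'' is the delicate step, and is presumably where the finiteness of the number of orbits of branch points is genuinely used; an alternative would be to bypass it by exploiting indecomposability-type consequences of mixing (or the Lipschitz approximations of Proposition \ref{Lipschitz-approximation}) to exhibit directly a positive-length arc of $\widehat{T}$ on which $p$ is injective.
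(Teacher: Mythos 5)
The paper does not prove this statement: it is quoted directly from Guirardel--Levitt's manuscript \cite{GL14-2}, so there is no in-paper proof to measure your argument against. On its own terms your pushforward-measure strategy is a natural one, the verification that $\mu$ is an additive $G$-invariant length measure is essentially correct, and the mixing-plus-covering argument reducing everything to ``$\mu\not\equiv 0$'' is sound. But the step you flag as delicate really is a genuine gap rather than a finishing detail. The promotion from ``the nondegenerate fibers of $p$ cover a full-Lebesgue-measure subset of every bridge $\beta(I)$'' to ``the nondegenerate fibers form a transverse covering of $\widehat{T}$'' is exactly where the work must happen, and you supply no mechanism for it: a transverse covering requires every arc to be covered by \emph{finitely many} members of the family, whereas full measure only says the complement has Lebesgue measure zero and, a priori, it may still be uncountable and meet every nondegenerate arc. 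Nothing in dense orbits, trivial arc stabilizers, or finitely many orbits of branch points obviously rules this out. Indeed, since the fibers are pairwise disjoint closed subtrees, a finite covering of any arc by them forces that arc into a single fiber, hence collapses all of $\widehat{T}$ to a point --- which illustrates just how strong the conclusion you are reaching for is, and hence how much the missing step has to accomplish. This is the heart of the proposition, and it is unproven here.

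Two secondary issues. Your opening reduction to $\widehat{T}$ having dense orbits is not justified as stated: that $T$ has no simplicial arc does not by itself imply that $p$ collapses the simplicial part of $\widehat{T}$, since $p$ can identify translates of a simplicial edge so that the image fails to look simplicial. A cleaner route is to split cases: if $p$ does not collapse some simplicial edge $e$ of $\widehat{T}$, then $\mu(p(e))>0$ and you are done immediately; otherwise $p$ factors through the collapse of the simplicial part of $\widehat{T}$ and you may replace $\widehat{T}$ by that collapse. Finally, even granted $\mu>0$ on all nondegenerate segments, you still need the resulting metric $d_T$ to induce the given topology on $T$, so that the $\mathbb{R}$-tree you build is $T$ itself rather than a proper quotient or a finer completion; this is plausible from alignment-preservation of $p$ but is left unaddressed.
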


\begin{prop}\label{alternative-3}
Let $T\in\overline{\mathcal{O}(G,\mathcal{F})}$ be a $(G,\mathcal{F})$-tree with trivial arc stabilizers. If $T$ collapses onto a topological $(G,\mathcal{F})$-tree which splits over terminal points, then $T$ is compatible with a $(G,\mathcal{F})$-free splitting.
\end{prop}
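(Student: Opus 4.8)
The plan is to exploit the hypothesis that $T$ collapses onto a topological $(G,\mathcal{F})$-tree $\widehat{T}$ which splits over terminal points. By definition, there is a subtree $Y \varsubsetneq \widehat{T}$ whose translates (well, their closures) form a transverse covering $\mathcal{Y} = \{\overline{gY}\}_{g \in G}$ of $\widehat{T}$, and $\widehat{T}$ has trivial arc stabilizers and no simplicial arcs. First I would invoke Proposition \ref{skeleton}: since $\mathcal{Y}$ is a nontrivial transverse covering of $\widehat{T}$, the tree $\widehat{T}$ splits as a nontrivial $(G,\mathcal{F})$-graph of actions, and more usefully the \emph{skeleton} $S$ of this transverse covering is a simplicial $(G,\mathcal{F})$-tree that is compatible with $\widehat{T}$. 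The key structural point I want to extract is that $S$ is in fact a \emph{free} splitting, i.e. has trivial edge stabilizers: an edge of $S$ joins a point $x \in V_0(S)$ (a point lying in at least two distinct translates of $Y$) to a vertex $\overline{gY} \in \mathcal{Y}$, so its stabilizer fixes the point $x$ and stabilizes the subtree $\overline{gY}$. The phrase ``splits over terminal points'' is designed precisely so that the points in $V_0(S)$ are \emph{terminal} points of the subtrees $\overline{gY}$ — each $x$ is an attaching point where $\overline{gY}$ is glued to the rest of the tree, and it lies on the boundary of $\overline{gY}$. Since $\widehat{T}$ has trivial arc stabilizers, a group element fixing $x$ together with a direction into $\overline{gY}$ and a direction out of it would have to fix a nondegenerate arc through $x$ unless $x$ is terminal in $\overline{gY}$ — which is exactly the situation here, so the edge stabilizer must in fact be trivial. (More carefully: the stabilizer of such an edge of $S$ is the subgroup of $G$ fixing $x$ and preserving $\overline{gY}$; I would argue this is trivial because $x$ being a terminal point of $\overline{gY}$ and $\widehat{T}$ having trivial arc stabilizers forces any such element to act trivially.)

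So the skeleton $S$ is a $(G,\mathcal{F})$-free splitting, provided it is nondegenerate; nondegeneracy follows from $Y \varsubsetneq \widehat{T}$ together with minimality, since the transverse covering is nontrivial. Now I combine the two compatibilities: $T$ collapses onto $\widehat{T}$ (hypothesis), and $S$ is compatible with $\widehat{T}$ (just shown via Proposition \ref{skeleton}). To conclude that $T$ is compatible with $S$, I would pass to the standard common refinement $\widehat{T} + S$ from Section \ref{sec-compatibility} and observe that since $T$ collapses onto $\widehat{T}$, there is an alignment-preserving map $T + S \to T$ and one $T + S \to S$ (composing the alignment-preserving collapse $T \to \widehat{T}$ with the maps out of $\widehat{T} + S$, after checking the tree $T+S$ is well-defined, which it is since $T$ and $S$ are compatible through $\widehat{T}$). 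Hence $T$ is compatible with $S$, and $S$ is a $(G,\mathcal{F})$-free splitting, which is what we wanted.

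I expect the main obstacle to be the rigorous verification that the skeleton $S$ has trivial edge stabilizers — i.e. making precise the intuition that ``splits over terminal points'' forces the attaching points $V_0(S)$ to be terminal in the corresponding $\overline{gY}$, and then that this, combined with triviality of arc stabilizers in $\widehat{T}$, gives triviality of the stabilizer of the edge $\{x, \overline{gY}\}$ rather than merely a subgroup in $\mathcal{Z}$. One subtlety is that a priori $x$ could be a branch point of $\overline{gY}$ as well as an attaching point; I would need to argue that the stabilizer of $\overline{gY}$ fixing $x$ acts on $\overline{gY}$ with $x$ as a fixed terminal point, and that such a group, sitting inside a tree with trivial arc stabilizers and equal to the vertex group of the graph-of-actions decomposition, is forced to be trivial — this is where I would lean on the precise definition of topological $(G,\mathcal{F})$-tree (trivial arc stabilizers, finitely many orbits of branch points, no simplicial arc) and possibly on the non-nesting hypothesis. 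A secondary, more routine point is checking that compatibility is transitive enough for the final step: that $T$ collapsing onto $\widehat{T}$, together with $S$ compatible with $\widehat{T}$, yields $T$ compatible with $S$; this should follow formally from the construction of standard common refinements and Lemma \ref{projection-closed}, but deserves a careful sentence.
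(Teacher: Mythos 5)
Your overall strategy --- take the skeleton of the transverse covering, and use terminal points together with trivial arc stabilizers to produce an edge with trivial stabilizer --- is in the right spirit, but there are two genuine gaps, and the fix for both is essentially to transport the entire construction from $\widehat{T}$ into $T$ itself (which is what the paper does).

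The first gap is the concluding compatibility step. You cannot deduce that $T$ is compatible with $S$ from the two facts that $T$ collapses onto $\widehat{T}$ and that $S$ is compatible with $\widehat{T}$. The alignment-preserving maps $T\to\widehat{T}$ and $\widehat{T}+S\to\widehat{T}$ both \emph{end} at $\widehat{T}$, so there is nothing to compose, and the parenthetical ``after checking the tree $T+S$ is well-defined, which it is since $T$ and $S$ are compatible through $\widehat{T}$'' is circular: compatibility of $T$ and $S$ is exactly what is at stake, and it is not a formal consequence of the two compatibilities you already have. The second gap is the claim that the skeleton $S$ of the transverse covering of $\widehat{T}$ is a free splitting. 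This is too strong and the justification does not go through. A point $x\in V_0(S)$ lies in two distinct translates $\overline{gY}$ and $\overline{g'Y}$; since $gY\cap g'Y=\emptyset$, $x$ is a terminal point of at least one of them, but for a given edge $(x,\overline{gY})$ of $S$ the vertex $x$ may well sit in the interior of $gY$, and then your terminal-point argument does not apply to that edge. More seriously, even when $x$ is terminal, the inference ``fixes $y$ and preserves the unique direction into the subtree, hence fixes a nondegenerate arc'' relies on the $G$-action being isometric (one uses the metric to force the fixed-point set to contain an arc); but $\widehat{T}$ is only a \emph{topological} $(G,\mathcal{F})$-tree, so its $G$-action is alignment-preserving and non-nesting but not by isometries, and this step has no metric to lean on.

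The paper avoids both issues by never working with $S$. Instead it builds a transverse covering of $T$ directly, consisting of the closures of the $\pi$-preimages of the translates of $Y$ together with the closures of the $\pi$-preimages of those attaching points $x_{i_j}$ lying in no translate of $Y$ (without these extra pieces, segments of $T$ need not be covered). The skeleton of this new covering is compatible with $T$ immediately by Proposition~\ref{skeleton}. It then picks $y\in\overline{\pi^{-1}(Y)}\smallsetminus\pi^{-1}(Y)$ --- a point that is automatically terminal in $\overline{\pi^{-1}(Y)}$ --- and uses the isometric $G$-action on $T$ and triviality of arc stabilizers \emph{in $T$} to show that the single edge of the skeleton associated to the pair $(\overline{\pi^{-1}(Y)},y)$ has trivial stabilizer. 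One such edge is all that is needed: collapsing the rest of the skeleton yields a one-edge $(G,\mathcal{F})$-free splitting compatible with $T$.
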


\begin{proof}[Proof of Proposition \ref{alternative-3}]
Let $T'$ be a topological $(G,\mathcal{F})$-tree which splits over terminal points, and $\pi:T\to T'$ be an alignment-preserving map. Let $Y\varsubsetneq T'$ be a subtree of $T'$, such that for all $g\in G$, we either have $gY=Y$ or $gY\cap Y=\emptyset$, and $\{\overline{gY}\}_{g\in G}$ is a transverse covering of $T'$. The tree $Y$ is not closed, since otherwise, any segment in $T'$ would be covered by finitely many closed disjoint subtrees, which would imply that $Y=T'$. We denote by $H$ the stabilizer of $Y$ in $T'$. Denote by $\{x_1,\dots,x_k\}$ a set of representatives of the orbits of points in $\overline{Y}\smallsetminus Y$. Finiteness of this set comes from the fact that these points are vertices of the skeleton $S$ of the transverse covering $\{\overline{gY}\}_{g\in G}$, and $S$ is a minimal simplicial $(G,\mathcal{F})$-tree by Lemma \ref{skeleton}.

Let $x_{i_1},\dots,x_{i_s}$ be those of the $x_i$'s that do not belong to any $G$-translate of $Y$ (there might not be any such $x_{i_j}$). We claim that the family $\mathcal{Y}$ made of $\{\overline{g\pi^{-1}(Y)}\}_{g\in G}$ and the sets $\{\overline{g\pi^{-1}(x_{i_j})}\}_{g\in G}$ for $j\in\{1,\dots,s\}$ is a transverse covering of $T$. Indeed, this is a transverse family made of closed subtrees of $T$. Let now $I\subseteq T$ be a segment. Then $\pi(I)$ is a segment in $T'$, which is covered by a finite number of translates of $Y$ and of the points $x_{i_j}$. Their $\pi$-preimages provide a covering of $I$ by finitely many subtrees in $\mathcal{Y}$.

We now claim that the skeleton of $\mathcal{Y}$ contains an edge with trivial stabilizer. This will conclude the proof of Proposition \ref{alternative-3}, since the skeleton of any transverse covering of $T$ is compatible with $T$ (Lemma \ref{skeleton}). 

To check the above claim, we first notice that the preimage $\pi^{-1}(Y)$ is not closed (Lemma \ref{projection-closed}). Let $y\in\overline{\pi^{-1}(Y)}\smallsetminus\pi^{-1}(Y)$. There is only one direction at $y$ in $\overline{\pi^{-1}(Y)}$. As $T$ is minimal, there exists a subtree $Y'\neq\overline{\pi^{-1}(Y)}$ in $\mathcal{Y}$ such that $y\in Y'$. The point $y$ is a vertex of the skeleton of $\mathcal{Y}$, and there is an edge $e$ in this skeleton associated to the pair $(\overline{\pi^{-1}(Y)},y)$. We claim that $e$ has trivial stabilizer. Indeed, if $g\in G$ stabilizes $e$, then as $y$ has valence $1$ in $\overline{\pi^{-1}(Y)}$, the element $g$ stabilizes an arc in $\overline{\pi^{-1}(Y)}$. As $T$ has trivial arc stabilizers, this implies that $g$ is the identity of $G$.
\end{proof}

\begin{proof}[Proof of Proposition \ref{mixing}]
Let $T$ be a tame $(G,\mathcal{F})$-tree. If $T$ has trivial arc stabilizers, then the conclusion of Proposition \ref{mixing} is a consequence of Propositions \ref{alternative-2}, \ref{mixing-metrizable} and \ref{alternative-3}. If $T$ contains an arc with nontrivial stabilizer, then $T$ does not have dense orbits. Proposition \ref{Levitt} implies that $T$ projects to a simplicial tree $S$ with cyclic, non-peripheral arc stabilizers, so $T$ is $\mathcal{Z}$-compatible.
\end{proof} 

\subsubsection{Proof of Proposition \ref{unique-projection-2}}\label{sec-proof}

The following proposition gives control over the possible point stabilizers in a tree in $\overline{\mathcal{O}(G,\mathcal{F})}$. It can be deduced from \cite[Proposition 4.4]{Gui08} by noticing that any simple closed curve on a closed $2$-orbifold with boundary provides a $\mathcal{Z}^{max}$-splitting of its fundamental group. 

\begin{prop} (Bestvina--Feighn \cite{BF95}, Guirardel \cite[Proposition 4.4]{Gui08}, Guirardel--Levitt \cite{GL14}) \label{stabilizers}
Let $T$ be a tame $(G,\mathcal{F})$-tree, and let $X\subset T$ be a finite subset of $T$. Then there exists a $\mathcal{Z}^{max}$-splitting in which $\text{Stab}_T(x)$ is elliptic for all $x\in X$.
\end{prop}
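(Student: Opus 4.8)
The plan is to combine Guirardel's analysis of point stabilizers in $\mathbb{R}$-trees with small tripod stabilizers with the elementary observation that an essential simple closed curve on a compact $2$-orbifold with nonempty boundary yields a splitting over a maximal cyclic subgroup. First I would dispose of the trivial part of the statement: every peripheral subgroup is elliptic in \emph{every} $(G,\mathcal{F})$-tree, hence in every $\mathcal{Z}^{max}$-splitting, so the points $x\in X$ with $\mathrm{Stab}_T(x)$ peripheral impose no condition and may be discarded. Next I would apply the Levitt decomposition of $T$ (Proposition \ref{Levitt}): writing $T$ as a graph of actions with simplicial edges carrying stabilizers in $\mathcal{Z}$ and vertex trees $T_v$ with dense $G_v$-orbits, every point of $T$ either lies on a simplicial edge, in which case its stabilizer already fixes an edge of the underlying Bass--Serre tree, or lies in some $T_v$, so that $\mathrm{Stab}_T(x)=\mathrm{Stab}_{T_v}(x)\subseteq G_v$, and moreover $T_v$ has trivial arc stabilizers by \cite[Proposition 4.17]{Hor14-5}. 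After assembling the Bass--Serre tree of this decomposition with the splittings produced below, the only points left to handle are those whose stabilizer sits inside a dense-orbit vertex group.

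For such a $T_v$, I would invoke the structure theorem for minimal actions with dense orbits and trivial arc stabilizers (Guirardel \cite[Proposition 4.4]{Gui08}, after Levitt and Sela): up to collapse, $T_v$ splits as a graph of actions whose nondegenerate vertex actions are of Seifert type, i.e. dual to a minimal filling measured foliation on a compact $2$-orbifold $O$ with nonempty boundary, with $\pi_1(O)\le G_v$ appearing as a vertex group of the associated graph of groups relative to its boundary subgroups (into which the adjacent edge groups, and the peripheral subgroups of $G$ meeting $G_v$, are conjugated); and the stabilizer in $T_v$ of any point is conjugate into a boundary subgroup of one of these orbifolds, or is elliptic in the skeleton, or is trivial. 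Thus, for each of the finitely many remaining $x\in X$, after conjugation $\mathrm{Stab}_T(x)$ lies in a boundary subgroup of some orbifold $O_x$ occurring in the Seifert piece of the corresponding $T_v$.

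Since each $O_x$ has nonempty boundary, I would then choose on $O_x$ an essential simple closed curve $c_x$ that is disjoint from, and cuts off, all the (finitely many) boundary components into which the relevant stabilizers fall, and that is neither boundary-parallel nor the boundary of a disk carrying a single cone point; when several points of $X$ land in the same $O_x$, a single such separating curve with all the relevant boundary components on one side suffices, and in the degenerate cases where $\pi_1(O_x)$ is itself cyclic or trivial there is nothing to do. Cutting $O_x$ along $c_x$ gives a one-edge splitting of $\pi_1(O_x)$ with edge group $\langle c_x\rangle$ in which every boundary subgroup is elliptic; splicing these into the simplicial trees furnished by Levitt's and Guirardel's decompositions yields a simplicial $(G,\mathcal{F})$-tree $\widehat{S}$ in which $\mathrm{Stab}_T(x)$ is elliptic for all $x\in X$. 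It then remains to verify that $\widehat{S}$ is a $\mathcal{Z}^{max}$-splitting, i.e. that each edge group $\langle c_x\rangle$ lies in $\mathcal{Z}^{max}$. It is non-peripheral: the peripheral subgroups of $G$ are elliptic in $T$, hence conjugate into vertex groups of the skeletons or into boundary subgroups of the $O_x$'s, whereas $c_x$ is an interior, non-boundary-parallel curve not encircling a cone point, so no conjugate of $\langle c_x\rangle$ is peripheral. It is maximal cyclic: a simple closed curve generates a maximal cyclic subgroup of the orbifold-with-boundary group $\pi_1(O_x)$, and since $\pi_1(O_x)$ is a vertex group of a graph-of-groups decomposition of $G$, any $g\in G$ with $g^n\in\langle c_x\rangle$ is elliptic in the corresponding Bass--Serre tree and fixes the $\pi_1(O_x)$-vertex, hence $g\in\pi_1(O_x)$ and therefore $g\in\langle c_x\rangle$.

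I expect the main difficulty to lie not in this assembly but in setting up its input: formulating and justifying the relative version of Guirardel's \cite[Proposition 4.4]{Gui08} in the present generality, where neither $G$ nor $\mathrm{Stab}_T(x)$ need be finitely generated, while keeping track of the free factor system $\mathcal{F}$ throughout. This is where tameness of $T$ is essential, since it supplies the finiteness of orbits of branch points underpinning both Levitt's and Guirardel's arguments, and where the control on Kurosh ranks of point stabilizers of trees in $\overline{\mathcal{O}(G,\mathcal{F})}$ from \cite{Hor14-5} enters. One should also note that the naive shortcut---running only Levitt's decomposition and then replacing each cyclic edge group by its isolator---fails, because the isolator of an arc stabilizer of $T$ need not fix that arc, so Guirardel's finer analysis of the dense-orbit pieces is genuinely needed; a secondary, purely combinatorial point is arranging the curves $c_x$ so that the resulting splittings can be spliced into a \emph{single} $\mathcal{Z}^{max}$-splitting making all of $X$ elliptic at once, which is precisely what the separating-curve choice above achieves.
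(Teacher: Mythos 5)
Your overall strategy matches the paper's intended deduction: invoke Guirardel's analysis of point stabilizers (via the orbifold/Seifert structure) and then use the observation that an essential simple closed curve on a compact $2$-orbifold with boundary gives a splitting over a maximal cyclic, non-peripheral subgroup. The paper gives only a one-line pointer ([Gui08, Prop.\ 4.4] plus the orbifold-curve observation, with [GL14] supplying the relative version), so your proposal is a reasonable attempt to expand it. However there is a genuine gap in the assembly step, and a couple of smaller inaccuracies.

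The main gap: you declare that the final tree $\widehat{S}$ is obtained by splicing the curve splittings into ``the simplicial trees furnished by Levitt's and Guirardel's decompositions,'' and then say ``it remains to verify that each edge group $\langle c_x\rangle$ lies in $\mathcal{Z}^{max}$.'' But if the Levitt Bass--Serre tree is kept as a subtree of $\widehat{S}$, then $\widehat{S}$ also has the Levitt edges, whose stabilizers lie in $\mathcal{Z}$ but need not lie in $\mathcal{Z}^{max}$; similarly the skeleton edges of the Guirardel decomposition may carry boundary subgroups that are peripheral, hence not in $\mathcal{Z}$ at all. You explicitly observe that ``running only Levitt's decomposition and replacing each cyclic edge group by its isolator fails,'' but the resolution you propose --- ``Guirardel's finer analysis of the dense-orbit pieces'' --- only concerns the vertex trees $T_v$ and does nothing to repair the Levitt and skeleton edges. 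What is actually needed is to collapse (or not include) all edges except the $c_x$-edges, and then to re-verify ellipticity of every $\mathrm{Stab}_T(x)$ in the collapsed tree. This does work, because a point $x$ on a Levitt edge has $\mathrm{Stab}_T(x)$ contained in the stabilizer of an attaching point lying in some $T_v$, which is in turn handled by the $c_x$-splitting or is elliptic in the Guirardel skeleton; but as written your argument never makes this reduction, and the verification that $\widehat{S}$ is a $\mathcal{Z}^{max}$-splitting is incomplete.

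Two smaller points. First, your construction asks for a \emph{separating} curve $c_x$ with all the relevant boundary components on one side; such a curve need not exist (a once-holed torus supporting a filling foliation has no essential separating curve). A non-separating essential curve works just as well, since what you actually need is only that $c_x$ be disjoint from the boundary, essential, and not encircle a single cone point; you should drop the separating requirement. Second, your invocation of Guirardel's structure theorem as producing only Seifert-type (orbifold) vertex actions elides the axial and exotic pieces in the general statement; for trees in $\overline{\mathcal{O}(G,\mathcal{F})}$ the relevant cases can be controlled, and this is precisely what the reference to [GL14] handles, but the claim as you state it is stronger than what the cited proposition provides, and the reduction to the Seifert case should be justified rather than asserted.
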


\begin{rk}
Knowing the existence of a $\mathcal{Z}$-splitting would be enough if we were only interested in proving the $\mathcal{Z}$-version of Proposition \ref{unique-projection-2}.
\end{rk}

\begin{proof}[Proof of Proposition \ref{unique-projection-2}]
Let $\widehat{T}:=T_1+T_2$. As $T_1$ is $\mathcal{Z}$-incompatible, the tree $\widehat{T}$ has dense orbits. Let $p_1:\widehat{T}\to T_1$ and $p_2:\widehat{T}\to T_2$ be the associated $1$-Lipschitz alignment-preserving maps. Assuming that $p_2$ is not a bijection (otherwise the map $p_1\circ {p_2}^{-1}$ satisfies the conclusion of Proposition \ref{unique-projection-2} and we are done), we can find a point $x\in T_2$ whose $p_2$-preimage in $\widehat{T}$ is a nondegenerate closed subtree $Y$ of $\widehat{T}$. The set $\{gY\}_{g\in G}$ is a transverse family in $\widehat{T}$. 

First assume that $p_1(Y)$ is reduced to a point for all $x\in T_2$, and let $f$ be the map from $T_2$ to $T_1$ that sends any $x\in T_2$ to $p_1(Y)$, with the above notations. We claim that $f$ preserves alignment. Indeed, let $x,z\in T_2$, and $y\in [x,z]$. Then ${p_2}^{-1}(\{x\})$ and ${p_2}^{-1}(\{z\})$ are closed subtrees of $\widehat{T}$, and the bridge in $\widehat{T}$ between them meets ${p_2}^{-1}(\{y\})$. Since $p_1$ preserves alignment, this implies that $f$ preserves alignment, and we are done in this case. 

We now choose $x\in T_2$ so that $p_1(Y)$ is not reduced to a point. The family $\{gp_1(Y)\}_{g\in G}$ is a transverse family made of closed subtrees of $T_1$ (Lemma \ref{projection-closed}). As $T_1$ is mixing, it is a transverse covering of $T_1$. The stabilizer of $p_1(Y)$ in $T_1$ is equal to the stabilizer of $Y$ in $\widehat{T}$, which in turn is also equal to the stabilizer of $x$ in $T'$. Proposition \ref{stabilizers} shows that there exists a $\mathcal{Z}$-splitting in which $\text{Stab}_{T'}(x)$, and hence $\text{Stab}_T(p_1(Y))$, is elliptic. This contradicts the following proposition.
\end{proof}

\begin{prop}\label{stab-not-cyclic}
Let $T\in\overline{\mathcal{O}(G,\mathcal{F})}$ be mixing and $\mathcal{Z}$-incompatible, and let $\mathcal{Y}$ be a transverse covering of $T$. Then for all $Y\in\mathcal{Y}$, the stabilizer $\text{Stab}_T(Y)$ is not elliptic in any $\mathcal{Z}$-splitting.
\end{prop}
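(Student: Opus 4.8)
The plan is to argue by contradiction: suppose $T\in\overline{\mathcal{O}(G,\mathcal{F})}$ is mixing and $\mathcal{Z}$-incompatible, $\mathcal{Y}$ is a transverse covering of $T$, and $Y\in\mathcal{Y}$ has stabilizer $H:=\mathrm{Stab}_T(Y)$ which is elliptic in some $\mathcal{Z}$-splitting $S$. The rough idea is to use $H$ and the transverse covering to manufacture a $\mathcal{Z}$-splitting compatible with $T$, contradicting $\mathcal{Z}$-incompatibility. First I would recall from Proposition \ref{skeleton} that the skeleton $\Sigma$ of the transverse covering $\mathcal{Y}$ is a simplicial $(G,\mathcal{F})$-tree compatible with $T$. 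If $\Sigma$ is nontrivial and already has an edge stabilizer in $\mathcal{Z}$, we would be done immediately; the difficulty is that the edges of $\Sigma$ carry the stabilizers of the \emph{points} $V_0(S)$ of $T$ lying in several pieces of $\mathcal{Y}$, and its vertices carry the stabilizers of the subtrees $Y\in\mathcal{Y}$, and none of these need be small — indeed $H$ itself could be large. So $\Sigma$ alone does not suffice.

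The key step is to refine the vertex of $\Sigma$ corresponding to $Y$ using the ellipticity of $H$ in $S$. Since $T$ is mixing, the transverse covering $\mathcal{Y}$ consists of a single $G$-orbit of subtrees, all translates of $Y$, and the quotient graph of the skeleton $\Sigma$ is finite; moreover mixing forces $H$ to act on $Y$ with a single orbit of... more precisely, I would use that $T$ is mixing to show that $Y$ itself must already be "large enough" that the induced action of $H$ on $Y$ cannot be elliptic — in fact I expect that $Y$ has dense $H$-orbits (otherwise $Y$, being a piece of a transverse covering with a simplicial direction, would create a simplicial edge in $T$, and combined with mixing this propagates to a $\mathcal{Z}$-splitting of $G$ compatible with $T$). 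Granting that the $H$-action on $Y$ has dense orbits, $Y$ is an $H$-tree with dense orbits in $\overline{\mathcal{O}(H,\mathcal{F}_H)}$, so by Proposition \ref{stabilizers} applied within $H$ (or directly: a nontrivial $H$-tree with dense orbits is not elliptic, and arc stabilizers are trivial by tameness + dense orbits), $H$ acts on $Y$ without a global fixed point. Now if $H$ is elliptic in the $\mathcal{Z}$-splitting $S$, fixing a vertex $v\in S$, I would build a graph of actions over the skeleton $\Sigma$ whose vertex action at the $Y$-vertex is replaced by the $H$-action on (the Bass--Serre tree of) $S^H$, the $H$-minimal subtree... wait, $H$ fixes $v$ so $H$ is elliptic; instead I should cut $S$ along an $H$-invariant feature.

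Let me reorganize the decisive step. Since $T$ is mixing and $\mathcal{Z}$-incompatible, the cleanest route is: the skeleton $\Sigma$ is a simplicial tree compatible with $T$; because $T$ is $\mathcal{Z}$-incompatible, $\Sigma$ cannot be a $\mathcal{Z}$-splitting, so some edge of $\Sigma$ has stabilizer outside $\mathcal{Z}$ — but edge stabilizers of $\Sigma$ fix a point of $T$ lying in two translates of $Y$, hence are contained in $H\cap gHg^{-1}$ for the relevant $g$. In particular $H$ must be "big". On the other hand, using mixing I would show that $T$ is indecomposable as an $H$-space restricted to $Y$, or at least that $Y$ has dense $H$-orbits with trivial arc stabilizers; then Lemma \ref{lemma-ex} (the indecomposability $\Rightarrow$ ellipticity criterion) together with the hypothesized $\mathcal{Z}$-splitting $S$ in which $H$ is elliptic gives a contradiction directly: if $H$ is elliptic in $S$ but acts on $Y$ with dense orbits, and $Y$ (or an indecomposable subtree of it) forces ellipticity, then... the precise contradiction is that a group acting with dense orbits (minimally, nontrivially) on an $\mathbb{R}$-tree cannot be elliptic in \emph{any} simplicial tree while also — no. The real point must be: the transverse covering $\mathcal{Y}$ together with $H$ elliptic in $S$ lets us build a \emph{nontrivial} transverse covering of $T$ refining $\mathcal{Y}$ whose skeleton is a $\mathcal{Z}$-splitting; assembling this via a graph of actions (pulling back the splitting $S$ to the vertex tree $Y$ along the collapse and then re-expanding the skeleton $\Sigma$) yields a $\mathcal{Z}$-splitting compatible with $T$, contradicting $\mathcal{Z}$-incompatibility.

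The main obstacle, and the step I would spend the most care on, is controlling edge stabilizers in this assembled splitting: I must check that refining the $Y$-vertex of $\Sigma$ by (the relevant part of) $S$ produces edge groups that are either trivial or in $\mathcal{Z}$, which requires knowing that the attaching points $p_e\in Y$ have small stabilizers — this is where tameness of $T$ and triviality of arc stabilizers in the dense-orbit piece $Y$ enter, and where, for the $\mathcal{Z}^{max}$-version mentioned in the remark, a slight modification (using that $S$ can be taken to be a $\mathcal{Z}^{max}$-splitting, via Proposition \ref{stabilizers}) is needed. I would conclude by invoking Proposition \ref{skeleton} once more to see the assembled graph of actions is a genuine $(G,\mathcal{F})$-tree compatible with $T$, and that it is a $\mathcal{Z}^{(max)}$-splitting, contradicting the hypothesis that $T$ is $\mathcal{Z}$-incompatible.
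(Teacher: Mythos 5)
Your proposal has the right general shape: start from the skeleton $\Sigma$ of the transverse covering $\mathcal{Y}$ (which is compatible with $T$), observe that since $T$ is $\mathcal{Z}$-incompatible the edge groups of $\Sigma$ do not lie in $\mathcal{Z}$, and then try to refine $\Sigma$ into a $\mathcal{Z}$-splitting still compatible with $T$, giving a contradiction. But the concrete refinement step you propose cannot work, and this is a genuine gap.

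You repeatedly try to blow up the vertex of $\Sigma$ carrying $Y$ by using the action of $H=\mathrm{Stab}_T(Y)$ on the splitting $S$. The problem is exactly the one you flag in passing and then drop: $H$ is \emph{elliptic} in $S$ by hypothesis, so its minimal subtree in $S$ is a point, and blowing up the $Y$-vertex along it produces nothing. No amount of appealing to dense $H$-orbits on $Y$, or to Lemma~\ref{lemma-ex}, fixes this; those facts concern the $H$-action on the $\mathbb{R}$-tree $Y$, not the $H$-action on $S$, and it is the latter that is trivial. The paper's proof goes a different way. Since $T$ is mixing, $\mathcal{Y}$ consists of a single $G$-orbit; the skeleton has the vertex for $Y$ (group $G_Y=H$) together with the finitely many intersection points $x_1,\dots,x_l$ (groups $G_{x_i}$), and its edge groups lie outside $\mathcal{Z}$. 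The key dichotomy is: either \emph{all} vertex groups of the skeleton are elliptic in $S_0$ (your $S$), in which case the non-$\mathcal{Z}$ edge groups force any two adjacent vertex groups to fix the \emph{same} point of $S_0$, and by connectedness $G$ is elliptic in $S_0$ — absurd; or some $G_{x_i}$ acts nontrivially on $S_0$. In the second case one blows up the skeleton at $x_i$ (not at $Y$) using the $G_{x_i}$-action on its minimal subtree in $S_0$; edge groups of the skeleton are elliptic in $S_0$ because they are contained in $H$, so this blow-up is possible, and it produces a refinement $S'$ compatible with $T$ that carries a $\mathcal{Z}$-edge, contradicting $\mathcal{Z}$-incompatibility of $T$. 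So the vertex to refine is one of the $x_i$'s, and the reason some such vertex is available is the propagation argument you never make.

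The secondary concern you raise — that the attaching points might have large stabilizers, preventing the blown-up tree from being a $\mathcal{Z}$-splitting — is misplaced in this form: one does not need the \emph{new} splitting to have all edge groups in $\mathcal{Z}$, only to contain \emph{one} orbit of edges with $\mathcal{Z}$-stabilizer, because collapsing all the other edges then yields a $\mathcal{Z}$-splitting compatible with $T$. The delicate point you anticipate for the $\mathcal{Z}^{max}$-version is real, but it concerns edges of the skeleton with stabilizer in $\mathcal{Z}\smallsetminus\mathcal{Z}^{max}$ and is handled by a separate modification (Proposition~\ref{stab-nc2} in the paper), not by choosing $S$ to be a $\mathcal{Z}^{max}$-splitting at the outset.
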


\begin{rk}
We warn the reader that the argument in the following proof has to be slightly adapted in the case of $\mathcal{Z}^{max}$-splittings. This will be done in Proposition \ref{stab-nc2} below. The difficulty comes from edges with nonperipheral cyclic stabilizers (not belonging to $\mathcal{Z}^{max}$) in $\mathcal{G}$. 
\end{rk}

\begin{proof}[Proof of Proposition \ref{stab-not-cyclic}]
As $T$ is mixing, any transverse covering of $T$ contains at most one orbit of subtrees (otherwise a segment contained in one of these orbits could not be covered by translates of a segment contained in another subtree). We denote by $S$ the skeleton of $\mathcal{Y}$, and by $\Gamma:=S/G$ the quotient graph of groups. The vertex set of $\Gamma$ consists of a vertex associated to $Y$, with vertex group $\text{Stab}_T(Y)$, together with a finite collection of points $x_1,\dots,x_l$. Each $x_i$ is joined to $Y$ by a finite set of edges, whose stabilizers do not belong to the class $\mathcal{Z}$ because $T$ is $\mathcal{Z}$-incompatible. We denote by $G_Y$ and $G_{x_i}$ the corresponding stabilizers. Assume towards a contradiction that $G_Y$ fixes a vertex $v$ in a $\mathcal{Z}$-splitting $S_0$.

Suppose first that all vertex groups of $S$ are elliptic in $S_0$. As edge stabilizers of $S$ do not belong to $\mathcal{Z}$, and as all vertex stabilizers of $S$ fix a point in $S_0$, two adjacent vertex stabilizers of $S$ must have the same fixed point in $S_0$. This implies that $G$ is elliptic in $S_0$, a contradiction. 

Hence one of the $G_{x_i}$'s acts nontrivially on $S_0$. Edge groups of $S$ are elliptic in $S_0$ because $\text{Stab}_T(Y)$ is. By blowing up $S$ at the vertex $x_i$, using the action of $G_{x_i}$ on its minimal subtree in $S_0$, we get a splitting $S'$, which contains an edge whose stabilizer belongs to the class $\mathcal{Z}$. The tree $T$ splits as a graph of actions over $S'$ (by the discussion following Proposition \ref{skeleton} in Section \ref{sec-goa}). This contradicts $\mathcal{Z}$-incompatibility of $T$.  
\end{proof}

\begin{figure}
\begin{center}
\input{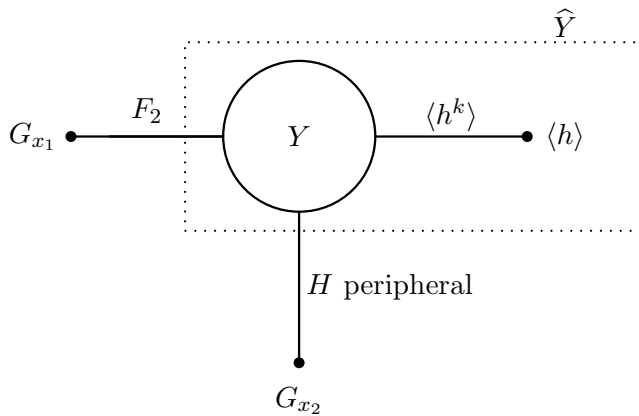}
\caption{The graph of actions in the proof of Proposition \ref{stab-nc2}.}
\label{fig-skeleton}
\end{center}
\end{figure}

\begin{prop}\label{stab-nc2}
Let $T\in\overline{\mathcal{O}(G,\mathcal{F})}$ be mixing and $\mathcal{Z}^{max}$-incompatible, and let $\mathcal{Y}$ be a transverse covering of $T$. Then for all $Y\in\mathcal{Y}$, the stabilizer $\text{Stab}_T(Y)$ is not elliptic in any $\mathcal{Z}^{max}$-splitting.
\end{prop}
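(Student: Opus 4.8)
The plan is to follow the proof of Proposition \ref{stab-not-cyclic} as closely as possible; the single new phenomenon is that an edge group of the skeleton may be cyclic without being isolated, and this is exactly what the extra bookkeeping of Figure \ref{fig-skeleton} is designed to handle. Assume towards a contradiction that $G_Y:=\text{Stab}_T(Y)$ fixes a point $v$ in a $\mathcal{Z}^{max}$-splitting $S_0$. As before, mixing of $T$ forces the transverse covering $\mathcal{Y}$ to consist of a single orbit of subtrees, so the quotient $\Gamma:=S/G$ of the skeleton has one vertex with group $G_Y$, joined by finitely many edges to vertices with groups $G_{x_1},\dots,G_{x_l}$; every edge group of $S$ is contained in a conjugate of $G_Y$, hence is elliptic in $S_0$, and collapsing $S$ onto a single edge orbit shows, using $\mathcal{Z}^{max}$-incompatibility of $T$, that no edge group of $S$ lies in $\mathcal{Z}^{max}$. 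The difference with the $\mathcal{Z}$-case is that this does not force non-cyclicity: an edge group of $S$ may be a nonperipheral, non-isolated cyclic group $\langle h^m\rangle$, and such a group \emph{can} stabilize an arc of $S_0$, being possibly contained in the maximal cyclic stabilizer $\langle h\rangle$ of an edge of $S_0$.

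Suppose first that no edge group of $S$ stabilizes an arc of $S_0$; then the argument of Proposition \ref{stab-not-cyclic} applies with no change. If every vertex group of $S$ is elliptic in $S_0$, the common edge group of two adjacent vertices cannot be an arc stabilizer of $S_0$, hence fixes a unique point of $S_0$, forcing the two vertices to have the same fixed point; by connectedness of $S$, the group $G$ is elliptic in $S_0$, contradicting minimality and nontriviality of $S_0$. Otherwise some $G_{x_i}$ acts nontrivially on $S_0$, and one blows up $S$ at $x_i$ using the minimal $G_{x_i}$-subtree $S_0^{x_i}$ of $S_0$, obtaining a refinement $\widehat S$ which is compatible with $T$ (exactly as in the cited proof). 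The new edges of $\widehat S$ carry stabilizers $\langle h\rangle\cap G_{x_i}$ with $\langle h\rangle$ an edge group of $S_0$; since $T\in\overline{\mathcal{O}(G,\mathcal{F})}$ is $\mathcal{Z}^{max}$-tame, i.e.\ $1$-tame, one has $\text{Fix}_T(h)=\text{Fix}_T(h^k)$ for all $k\ge 1$, so as soon as a nontrivial power of $h$ fixes $x_i$, the element $h$ does too; hence $\langle h\rangle\cap G_{x_i}$ is $\langle h\rangle$ or trivial, in either case in $\mathcal{Z}^{max}$, and collapsing $\widehat S$ onto an orbit of new edges produces a one-edge $\mathcal{Z}^{max}$-splitting compatible with $T$ — a contradiction.

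It remains to reach this situation, i.e.\ to dispose of an edge $e$ of $S$, joining $Y$ to some $x_i$, whose group $\langle h^m\rangle$ stabilizes an arc of $S_0$. Then $h$ is nonperipheral and $h^m$ fixes a point $x$ of $Y$ in $T$, so by $1$-tameness the isolator $h$ is itself elliptic in $T$. The idea (Figure \ref{fig-skeleton}) is to replace the vertex tree $Y$ by a suitable enlargement $\widehat Y$ inside $T$, invariant under $\langle G_Y,h\rangle$, obtained by adjoining to $Y$ its $h$-translates (equivalently, part of $\text{Fix}_T(h)$): this refines the graph-of-actions decomposition of $T$ so that $e$ is now subdivided into two edges, one still stabilized by $\langle h^m\rangle$, the other stabilized by the maximal cyclic group $\langle h\rangle\in\mathcal{Z}^{max}$. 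Carrying out this surgery for each offending edge group yields a transverse covering of $T$ whose skeleton satisfies the hypotheses of the previous paragraph, and the contradiction follows as before.

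I expect the main obstacle to be precisely this last surgery: it must be performed $G$-equivariantly and compatibly with the existing transverse covering, the result must again be a transverse covering of $T$ (so that its skeleton remains compatible with $T$), and one must verify that the edge stabilizers it creates genuinely lie in $\mathcal{Z}^{max}$. The decisive tool throughout is the very smallness of $T$ in the sharp form $\text{Fix}_T(h)=\text{Fix}_T(h^k)$, which is the only point where one uses the full strength of $T\in\overline{\mathcal{O}(G,\mathcal{F})}$ rather than mere tameness.
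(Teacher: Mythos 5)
Your Case~1 is correct and closely parallels the paper: you rightly isolate the new difficulty (an edge group $\langle h^m\rangle$ of the skeleton $S$ may be a non-isolated subgroup of a $\mathcal{Z}^{max}$ edge stabilizer $\langle h\rangle$ of $S_0$), and your use of $1$-tameness of $T$ to see that $\langle h\rangle\cap G_{x_i}$ is either trivial or all of $\langle h\rangle$ is exactly the role played by the opening remark ``$g^p\in G_{x_i}\Rightarrow g\in G_{x_i}$'' in the paper.

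The gap is in Case~2, and you have flagged the right place. Your proposed surgery (enlarging $Y$ to $\widehat Y$ and ``subdividing $e$ into an edge with stabilizer $\langle h^m\rangle$ and one with stabilizer $\langle h\rangle$'') does not reduce to Case~1: both $\langle h^m\rangle$ and $\langle h\rangle$ still stabilize arcs of $S_0$, so the new skeleton does not satisfy the Case~1 hypothesis. More importantly, the paper's proof turns on a structural fact you never extract: subdivide $[x_i,Y]$, fold the half nearest $x_i$ with its $h_i$-image to obtain an edge with $\mathcal{Z}^{max}$-stabilizer $\langle h_i\rangle$ in a tree compatible with $T$, and observe that \emph{$\mathcal{Z}^{max}$-incompatibility of $T$ forces the one-edge splitting dual to that edge to be non-minimal}; this yields $G_{x_i}=\langle h_i\rangle$, so that $x_i$ is a cyclic ``leaf.'' It is only because each offending $G_{x_i}$ is cyclic, equal to $\langle h_i\rangle$, and fixes $v$ (since $g_i=h_i^k$ does and $S_0$ is $\mathcal{Z}^{max}$) that the absorption $G_{\widehat Y}=\langle G_Y,G_{x_{k+1}},\dots,G_{x_l}\rangle$ produces a skeleton whose remaining edges all have peripheral or noncyclic stabilizers, at which point the $\mathcal{Z}$-argument applies. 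Without the dichotomy ``minimal $\Rightarrow$ contradiction / non-minimal $\Rightarrow G_{x_i}$ cyclic,'' the absorption is uncontrolled (one cannot rule out that $\widehat Y$ acquires new attaching points or that its edges retain $\mathcal{Z}\smallsetminus\mathcal{Z}^{max}$ stabilizers), and the reduction to Case~1 is unjustified.
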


\begin{proof}
We keep the notations from the proof of Proposition \ref{stab-not-cyclic}, where this time $S_0$ is a $\mathcal{Z}^{max}$-splitting. We denote by $\mathcal{G}$ the graph of actions corresponding to $\mathcal{Y}$, which is represented on Figure \ref{fig-skeleton}. Note that for all $i\in\{1,\dots,l\}$, and all nonperipheral elements $g\in G$, if $g^p\in G_{x_i}$ for some $p\ge 1$, then $g\in G_{x_i}$. Up to reordering the $x_i$'s, we can assume that for all $i\in\{1,\dots,k\}$, no edge joining $Y$ to $x_i$ has $\mathcal{Z}$-stabilizer, and for all $i\in\{k+1,\dots,l\}$, there is an edge $e_i$ with $\mathcal{Z}\smallsetminus\mathcal{Z}^{max}$-stabilizer $\langle g_i\rangle$ joining $x_i$ to $Y$. Then $g_i$ is a proper power of the form $h_i^k$, with $h_i\in G_{x_i}$. Subdivide the edge $[x_i,Y]$ into $[x_i,m_i]\cup [m_i,Y]$, and fold $[x_i,m_i]$ with its image under $h_i$. We get a refinement $S'$ of $S$ that is still compatible with $T$. By collapsing the orbit of the edge with stabilizer equal to $h_i^k$, we get a new splitting of $T$ as a graph of actions $\mathcal{G}'$. The stabilizer of the edge $e'_i$ joining $Y'$ to $x_i$ in $\mathcal{G}'$ is equal to $\langle h_i\rangle$, and hence it belongs to the class $\mathcal{Z}^{max}$. The splitting of $G$ dual to the edge $e'_i$ is not minimal, otherwise $T$ would be $\mathcal{Z}^{max}$-compatible. Hence $G_{x_i}=\langle h_i\rangle$, and $x_i$ is joined to $Y$ by a single edge in $\mathcal{G}$. As $G_Y$ fixes the vertex $v$ of $S_0$, so does $g_i$. As $S_0$ is a $\mathcal{Z}^{max}$-splitting, the element $h_i$, and hence $G_{x_i}$, also fixes $v$ in $S_0$. 

Therefore, by replacing $G_Y$ by $G_{\widehat{Y}}:=\langle G_Y,G_{x_{k+1}},\dots,G_{x_l}\rangle$, we build a new splitting of $T$ as a graph of actions $\widehat{\mathcal{G}}$, which has the following description. The graph of actions $\widehat{\mathcal{G}}$ consists of a new vertex tree $\widehat{Y}$ with dense orbits, whose stabilizer $G_{\widehat{Y}}$ is elliptic in $S_0$, attached to $x_1,\dots,x_k$, and all its edges have either peripheral or noncyclic stabilizer. The proof then goes as in the case of $\mathcal{Z}$-splittings, by working with the graph of actions $\widehat{\mathcal{G}}$ instead of $\mathcal{G}$.
\end{proof}

Using Proposition \ref{stab-nc2}, we deduce the following $\mathcal{Z}^{max}$-analogue of Proposition \ref{unique-projection-2}. 

\begin{prop}\label{unique-projection-2max}
Let $T_1$ and $T_2$ be $\mathcal{Z}^{max}$-tame $(G,\mathcal{F})$-trees. If $T_1$ and $T_2$ are compatible, and if $T_1$ is mixing and $\mathcal{Z}^{max}$-incompatible, then there is an alignment-preserving map from $T_2$ to $T_1$.
\end{prop}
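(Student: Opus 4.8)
The plan is to follow the proof of Proposition~\ref{unique-projection-2} essentially verbatim, making the two substitutions $\mathcal{Z}\rightsquigarrow\mathcal{Z}^{max}$ everywhere and invoking Proposition~\ref{stab-nc2} in place of Proposition~\ref{stab-not-cyclic} at the one point where $\mathcal{Z}$-incompatibility of $T_1$ is exploited. First I would set $\widehat{T}:=T_1+T_2$ and record that $\widehat{T}$ has dense orbits: as the standard common refinement of two $\mathcal{Z}^{max}$-tame trees it is again $\mathcal{Z}^{max}$-tame (its arc stabilizers fix an arc in $T_1$ or in $T_2$), so were its orbits not dense, Proposition~\ref{Levitt} would let it collapse onto a simplicial $(G,\mathcal{F})$-tree whose edge stabilizers are arc stabilizers of $\widehat{T}$, hence lie in $\mathcal{Z}^{max}$; this would be a $\mathcal{Z}^{max}$-splitting compatible with $T_1$, contradicting $\mathcal{Z}^{max}$-incompatibility. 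Write $p_1:\widehat{T}\to T_1$ and $p_2:\widehat{T}\to T_2$ for the canonical $1$-Lipschitz alignment-preserving maps.

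If $p_2$ is a bijection we are done via $p_1\circ p_2^{-1}$, so assume not, and pick $x\in T_2$ whose $p_2$-preimage $Y\subseteq\widehat{T}$ is a nondegenerate closed subtree. If $p_1(p_2^{-1}(x))$ is degenerate for every $x\in T_2$, then $x\mapsto p_1(p_2^{-1}(x))$ defines an alignment-preserving map $T_2\to T_1$ (using that for $y\in[x,z]$ the bridge in $\widehat{T}$ between $p_2^{-1}(x)$ and $p_2^{-1}(z)$ meets $p_2^{-1}(y)$, together with the fact that $p_1$ preserves alignment), and again we are done. Otherwise, fixing $x$ with $p_1(Y)$ nondegenerate, the family $\{g\,p_1(Y)\}_{g\in G}$ is a transverse family of closed subtrees of $T_1$ by Lemma~\ref{projection-closed}, and is a transverse covering of $T_1$ because $T_1$ is mixing; moreover $\text{Stab}_{T_1}(p_1(Y))=\text{Stab}_{\widehat{T}}(Y)=\text{Stab}_{T_2}(x)$.

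To finish, Proposition~\ref{stabilizers}---which outputs a $\mathcal{Z}^{max}$-splitting directly---provides a $\mathcal{Z}^{max}$-splitting in which $\text{Stab}_{T_2}(x)$, hence $\text{Stab}_{T_1}(p_1(Y))$, is elliptic, contradicting Proposition~\ref{stab-nc2} applied to the above transverse covering of the mixing, $\mathcal{Z}^{max}$-incompatible tree $T_1$. Thus $p_2$ is a bijection and the proof is complete. The only genuine deviation from the $\mathcal{Z}$-case is this final contradiction, so the ``main obstacle'' is entirely contained in Proposition~\ref{stab-nc2}, whose proof already carries out the folding argument needed to promote non-maximal cyclic edge groups to maximal ones; with that proposition in hand, the present statement is a line-by-line transcription of the proof of Proposition~\ref{unique-projection-2}.
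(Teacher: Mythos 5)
Your proposal is correct and follows exactly the route the paper intends: the paper states Proposition~\ref{unique-projection-2max} with only the one-line indication ``Using Proposition~\ref{stab-nc2}, we deduce\ldots,'' and your write-up is precisely the line-by-line transcription of the proof of Proposition~\ref{unique-projection-2} with Proposition~\ref{stab-nc2} replacing Proposition~\ref{stab-not-cyclic} at the final contradiction (noting, as you do, that Proposition~\ref{stabilizers} already produces a $\mathcal{Z}^{max}$-splitting and hence needs no adjustment). Your expanded justification that $\widehat{T}$ has dense orbits---via $1$-tameness of the standard refinement and the Levitt decomposition producing a $\mathcal{Z}^{max}$-splitting---is a correct unpacking of the paper's terse assertion.
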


\subsection{Folding paths ending at mixing and $\mathcal{Z}$-incompatible trees}

We now prove the following property for folding paths ending at mixing $\mathcal{Z}$-incompatible trees.

\begin{prop} \label{folding-uncollapsible}
Let $T\in\overline{\mathcal{O}(G,\mathcal{F})}$ be mixing and $\mathcal{Z}$-incompatible, and let $\gamma:[0,L]\to\overline{\mathcal{O}(G,\mathcal{F})}$ be an optimal liberal folding path ending at $T$. Then for all $t<L$, the tree $\gamma(t)$ is simplicial and has trivial edge stabilizers unless $\gamma(t)=T$. 
\end{prop}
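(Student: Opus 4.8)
The plan is to reduce the statement to Proposition \ref{folding-simplicial} by first showing that, for every $t < L$, the tree $\gamma(t)$ has trivial arc stabilizers (equivalently, has dense orbits nowhere in a problematic way). Fix $t < L$ and suppose for contradiction that $\gamma(t) \neq T$. The morphism $f_{t,L}: \gamma(t) \to T$ is not an isometry (since $\gamma(t)$ and $T$ are non-isometric points on the folding path, and the path is non-stationary at $L$ in the sense that some genuine folding happens after time $t$). First I would argue that $\gamma(t)$ has trivial arc stabilizers: any nontrivial arc stabilizer in $\gamma(t)$ would push forward under the morphism $f_{t,L}$ to a nontrivial arc stabilizer in $T$, but $T$ is mixing and $\mathcal{Z}$-incompatible, hence has dense orbits, hence has trivial arc stabilizers by \cite[Proposition 4.17]{Hor14-5}. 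This is exactly the argument used in the proof of Proposition \ref{folding-simplicial}.

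Next, since $\gamma(t)$ has trivial arc stabilizers, Proposition \ref{Levitt} (the Levitt decomposition) lets me write $\gamma(t)$ as a graph of actions whose vertex trees have dense orbits for their stabilizers, whose edges have positive length, and — since arc stabilizers are trivial here — whose edges have trivial stabilizers. I now want to show this graph of actions is trivial, i.e.\ that there are no edges, so that $\gamma(t)$ itself has dense orbits. Suppose not: then $\gamma(t)$ admits a nontrivial transverse covering $\mathcal{Y}$ (the vertex trees of the Levitt decomposition, together with the edges), with the associated skeleton $S$ a nontrivial simplicial $(G,\mathcal{F})$-tree with trivial edge stabilizers, i.e.\ a $(G,\mathcal{F})$-free splitting, which is compatible with $\gamma(t)$ by Proposition \ref{skeleton}. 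The key point is now to transport this compatibility forward along the folding path to $T$: since $f_{t,L}:\gamma(t)\to T$ is a morphism, and morphisms between trees with dense orbits restrict to isometries on vertex trees of a graph-of-actions decomposition (Corollary \ref{alignment-preserved}), the transverse covering $\mathcal{Y}$ of $\gamma(t)$ pushes forward to a nontrivial transverse covering of $T$ with trivial-stabilizer skeleton; equivalently, $T$ is compatible with a $(G,\mathcal{F})$-free splitting, contradicting that $T$ is $\mathcal{Z}$-incompatible. Thus $\gamma(t)$ has dense orbits and its Levitt decomposition is trivial.

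Finally, with $\gamma(t)$ having dense orbits, I would invoke the description of $T$ as mixing to derive a contradiction directly, or — more cleanly — observe that the previous paragraph in fact already rules out $\gamma(t)\neq T$ once one knows that a morphism from a dense-orbit tree is an isometry onto its image: by Corollary \ref{alignment-preserved}, every morphism between trees in $\overline{\mathcal{O}(G,\mathcal{F})}$ with dense orbits is an isometry, so $f_{t,L}:\gamma(t)\to T$ would be an isometry, contradicting $\gamma(t)\neq T$. Hence $\gamma(t)$ cannot have dense orbits unless it equals $T$; combined with the dichotomy above (trivial arc stabilizers forces either dense orbits or a nontrivial transverse covering with free-splitting skeleton, the latter contradicting $\mathcal{Z}$-incompatibility of $T$), the only remaining possibility is that $\gamma(t)$ is simplicial with trivial edge stabilizers, which is what we wanted.

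The main obstacle I anticipate is making the "push-forward of a transverse covering along a morphism" step rigorous: one must be careful that the $f_{t,L}$-images of the vertex trees of the Levitt decomposition of $\gamma(t)$ are genuinely the pieces of a transverse covering of $T$ (their pairwise intersections should remain degenerate, and finite arcs of $T$ should be finitely covered), and that the skeleton's edge stabilizers stay trivial. This is where Corollary \ref{alignment-preserved} (morphisms are isometries on dense-orbit pieces) and Lemma \ref{projection-closed} (images of closed subtrees are closed) do the real work, and the argument must be organized so that the alternative "$\gamma(t)$ has dense orbits" is excluded by Corollary \ref{alignment-preserved} while the alternative "$\gamma(t)$ has a nontrivial transverse covering" is excluded by $\mathcal{Z}$-incompatibility of $T$.
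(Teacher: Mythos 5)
Your first step (trivial arc stabilizers in $\gamma(t)$, by pushing an arc stabilizer forward to $T$, which has dense orbits and hence trivial arc stabilizers) matches the paper's opening. The rest of the argument, however, has both a logical flaw and a technical gap that the paper's proof is organized precisely to avoid.

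The logical flaw is in the case analysis. You set up the dichotomy ``dense orbits versus nontrivial Levitt decomposition'' and want to rule out the second alternative altogether by a push-forward argument, concluding $\gamma(t)$ has dense orbits and then $\gamma(t)=T$. But the second alternative \emph{includes} the case where $\gamma(t)$ is simplicial (all vertex trees reduced to points), which is exactly the conclusion you are trying to prove, not a contradiction. If your push-forward step worked uniformly it would show $\gamma(t)=T$ for all $t<L$, which is absurd (the path starts at a simplicial tree). What the paper rules out is the genuinely \emph{mixed} case: nontrivial simplicial part \emph{and} some nondegenerate dense-orbit vertex tree $T_0$. Its argument pushes forward only this one piece $T_0$, and it is the existence of a nondegenerate $T_0$ (absent in the simplicial case) that lets the argument fire.

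The technical gap is in the push-forward itself. You propose to push forward the entire transverse covering $\mathcal{Y}$, vertex trees and edges together, and claim this yields a transverse covering of $T$ with free-splitting skeleton. This cannot work: $T$ has dense orbits and is mixing, so its Levitt decomposition is trivial, and the $f_{t,L}$-image of any simplicial edge of $\gamma(t)$ is a nondegenerate segment that will meet translates of $f_{t,L}(T_0)$ in nondegenerate arcs. The family you describe is not transverse. Even restricting attention to a single vertex tree $T_0$, the statement that $\{g\,f_{t,L}(T_0)\}_{g\in G}$ is a transverse family in $T$ is delicate and is where the real work of the paper's proof lies: it introduces elements $h\in H$, $h'\in H^g$ whose axes would have to meet nondegenerately in $T$, takes the smallest time $t_1$ at which the translation-length inequality $\|hh'\|\le\|h\|+\|h'\|$ first holds, uses continuity of the path to conclude the axes meet in a single point in $\gamma(t_1)$, and then uses that $f_{t_1,L}$ is injective on the dense-orbit piece containing $f_{t_0,t_1}(I\cup J)$. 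None of this is captured by Corollary \ref{alignment-preserved} or Lemma \ref{projection-closed} alone. Finally, even once the transverse covering of $T$ is in hand, the paper does not claim its skeleton is a free splitting (the skeleton's edge stabilizers can a priori be nontrivial after folding); instead it observes that $\operatorname{Stab}_T(\overline Y)=H$ is a proper $(G,\mathcal{F})$-free factor, hence elliptic in a $\mathcal{Z}$-splitting, and invokes Proposition \ref{stab-not-cyclic}. Your ``trivial-stabilizer skeleton'' claim would need a separate justification you have not supplied.
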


\begin{proof}
As $T$ has dense orbits, all arc stabilizers in $T$ are trivial, hence all arc stabilizers in trees lying on optimal liberal folding paths ending at $T$ are trivial. Assume towards a contradiction that there exists $t_0<L$, such that $\gamma(t_0)\neq T$ is nonsimplicial. Notice that $\gamma(t_0)$ contains a nontrivial simplicial part, otherwise it would be equal to $T$, as any morphism between two $(G,\mathcal{F})$-trees with dense orbits is an isometry (Corollary \ref{alignment-preserved}). By Proposition \ref{Levitt}, the tree $\gamma(t_0)$ contains a subtree $T_0$ which has dense orbits for the action of its stabilizer $H$. Moreover, the group $H$ is a proper $(G,\mathcal{F})$-free factor.  

Let $Y:=f_{t_0,L}(T_0)$. We claim that for all $g\in G\smallsetminus H$, the intersection $gY\cap Y$ contains at most one point. Otherwise, there exist nondegenerate segments $I\subset T_0$ and $J\subset gT_0$ such that $f_{t_0,L}(I)=f_{t_0,L}(J)$. So there exist $h\in H$ hyperbolic in $T_0$ (whose axis intersects $I$ nondegenerately), and $h'\in H^g$ hyperbolic in $gT_0$ (whose axis intersects $J$ nondegenerately), such that the axes of $h$ and $h'$ have nondegenerate intersection in $T$. We thus have $||hh'||_{T}\le ||h||_{T}+||h'||_{T}$ (see \cite[1.8]{CM87}). Let $t_1$ be the smallest real number for which this inequality holds, so that for all $t<t_1$, the axes of $h$ and $h'$ are disjoint in $\gamma(t)$. By continuity of $\gamma$, we deduce that both $||hh'||_{\gamma(t_1)}$ and $||hh'^{-1}||_{\gamma(t_1)}$ are greater than or equal to $||h||_{\gamma(t_1)}+||h'||_{\gamma(t_1)}$, so the intersection of the axes of $h$ and $h'$ in $f_{t_0,t_1}(T_0)$ is reduced to a point. The image $f_{t_0,t_1}(I\cup J)$ is contained in a subtree with dense orbits of the Levitt decomposition of $\gamma(t_1)$ as a graph of actions given by Proposition \ref{Levitt}. The morphism $f_{t_1,L}$ is injective in restriction to this subtree (Corollary \ref{alignment-preserved}). This implies that $gY\cap Y$ is reduced to a point.  
 
Hence the collection $\{gY\}_{g\in G}$ is a transverse family in $T$, and so is the collection $\{g\overline{Y}\}_{g\in G}$. As $T$ is mixing, the collection $\{g\overline{Y}\}_{g\in G}$ is a transverse covering of $T$. In addition, the stabilizer of $\overline{Y}$ in $T$ is equal to $H$, and hence is elliptic in a $\mathcal{Z}$-splitting (it is even a $(G,\mathcal{F})$-free factor). This contradicts Proposition \ref{stab-not-cyclic}. 
\end{proof}

\subsection{The case of $\mathcal{Z}^{max}$-splittings}\label{sec-max}

By only considering $\mathcal{Z}^{max}$-splittings, we similarly define the space $\mathcal{X}^{max}(G,\mathcal{F})$ of \emph{$\mathcal{Z}^{max}$-averse trees} in the following way. For all $\mathcal{Z}^{max}$-tame trees $T$, we denote by $\mathcal{R}^{1,max}(T)$ the set of $\mathcal{Z}^{max}$-splittings that are compatible with $T$, and by $\mathcal{R}^{2,max}(T)$ the set of $\mathcal{Z}^{max}$-splittings that are compatible with a $\mathcal{Z}^{max}$-tame tree $T'$, which is compatible with $T$. A $\mathcal{Z}^{max}$-tame tree $T$ is \emph{$\mathcal{Z}^{max}$-averse} if $\mathcal{R}^{2,max}(T)=\emptyset$. Two $\mathcal{Z}^{max}$-averse trees $T,T'\in\mathcal{X}^{max}(G,\mathcal{F})$ are \emph{equivalent} if there exists a finite sequence $(T=T_0,T_1,\dots,T_k=T')$ of tame $(G,\mathcal{F})$-trees such that for all $i\in\{1,\dots,k\}$, the trees $T_i$ and $T_{i+1}$ are compatible. 

The analogues of Theorem \ref{averse}, Proposition \ref{mixing-representative}, and Theorem \ref{Luo} also hold true in this setting. The proofs are the same, the only difference is in the proof of Proposition \ref{unique-projection-2max}, as explained above.

\begin{theo}($\mathcal{Z}^{max}$-analogue of Theorem \ref{averse}) \label{maxi-averse}
For all $\mathcal{Z}^{max}$-tame $(G,\mathcal{F})$-trees $T$, the following assertions are equivalent.

\begin{enumerate}
\item There exists a finite sequence $(T=T_0,T_1,\dots,T_k=S)$ of $\mathcal{Z}^{max}$-tame $(G,\mathcal{F})$-trees, such that $S$ is simplicial, and for all $i\in\{0,\dots,k-1\}$, the trees $T_i$ and $T_{i+1}$ are compatible.
\item We have $\mathcal{R}^{2,max}(T)\neq\emptyset$.
\item The tree $T$ collapses to a $\mathcal{Z}^{max}$-tame $\mathcal{Z}^{max}$-compatible $(G,\mathcal{F})$-tree.
\end{enumerate}
\end{theo}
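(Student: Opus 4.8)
The plan is to transcribe the proof of Theorem~\ref{averse} word for word, replacing $\mathcal{Z}$ by $\mathcal{Z}^{max}$ everywhere and feeding in the $\mathcal{Z}^{max}$-versions of its two inputs. As in that theorem, the implications $(3)\Rightarrow(2)\Rightarrow(1)$ are immediate: a $\mathcal{Z}^{max}$-tame $\mathcal{Z}^{max}$-compatible tree is by definition compatible with some $\mathcal{Z}^{max}$-tame tree which is itself compatible with a $\mathcal{Z}^{max}$-splitting, and a $\mathcal{Z}^{max}$-splitting is simplicial; moreover collapses and common refinements compose, so $(2)$ yields a sequence as in $(1)$. Hence the whole content is the implication $(1)\Rightarrow(3)$, which I would prove by contraposition.

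So suppose $T$ does not collapse onto any $\mathcal{Z}^{max}$-tame $\mathcal{Z}^{max}$-compatible $(G,\mathcal{F})$-tree. The $\mathcal{Z}^{max}$-analogue of Proposition~\ref{mixing} applies here with the same proof, since that proof only invokes Propositions~\ref{alternative-2}, \ref{mixing-metrizable}, \ref{alternative-3} and \ref{Levitt}, none of which involves the class of edge groups; it produces a mixing tree $\overline{T}\in\overline{\mathcal{O}(G,\mathcal{F})}$, with dense orbits, onto which $T$ collapses. This $\overline{T}$ must be $\mathcal{Z}^{max}$-incompatible, for otherwise $\overline{T}$ would be a $\mathcal{Z}^{max}$-tame $\mathcal{Z}^{max}$-compatible tree receiving a collapse from $T$, contrary to our assumption. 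Now take the sequence $(T=T_0,T_1,\dots,T_k=S)$ of $\mathcal{Z}^{max}$-tame trees provided by $(1)$. Composing the collapse $T_0\to\overline{T}$ with the collapse $T_0+T_1\to T_0$ and using $T_0+T_1\to T_1$ shows that $\overline{T}$ is compatible with $T_1$; since $\overline{T}$ is mixing and $\mathcal{Z}^{max}$-incompatible and $T_1$ is $\mathcal{Z}^{max}$-tame, Proposition~\ref{unique-projection-2max} gives a collapse $T_1\to\overline{T}$. Iterating this — at step $i$, composing $T_i\to\overline{T}$ with $T_i+T_{i+1}\to T_i$ to see $\overline{T}$ and $T_{i+1}$ are compatible, then applying Proposition~\ref{unique-projection-2max} — one obtains in finitely many steps a collapse $S=T_k\to\overline{T}$. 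But $S$ is a simplicial $\mathcal{Z}^{max}$-tame tree, hence (being minimal) a $\mathcal{Z}^{max}$-splitting, and the collapse $S\to\overline{T}$ exhibits $\overline{T}$ as compatible with the $\mathcal{Z}^{max}$-splitting $S$; this contradicts $\mathcal{Z}^{max}$-incompatibility of $\overline{T}$. (Equivalently, one may note that $\overline{T}$ has dense orbits and so cannot receive a collapse from a simplicial tree.) A diagram identical to Figure~\ref{diagram-averse} records the cascade of collapses, and this finishes $(1)\Rightarrow(3)$.

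At the level of Theorem~\ref{maxi-averse} itself there is no genuine obstacle: it is a mechanical adaptation. The only point where the $\mathcal{Z}^{max}$-story diverges from the $\mathcal{Z}$-story is the input Proposition~\ref{unique-projection-2max}, whose proof differs from that of Proposition~\ref{unique-projection-2} only in replacing Proposition~\ref{stab-not-cyclic} by Proposition~\ref{stab-nc2}; the real difficulty — edges in the skeleton of a transverse covering carrying a nonperipheral cyclic stabilizer that is not isolated — is handled there by the subdivide-fold-and-collapse construction already carried out in the proof of Proposition~\ref{stab-nc2}. Thus all the work has been front-loaded into the $\mathcal{Z}^{max}$-form of Proposition~\ref{mixing} and into Proposition~\ref{unique-projection-2max}, and the proof of Theorem~\ref{maxi-averse} is simply the assembly of these pieces along the template of Theorem~\ref{averse}.
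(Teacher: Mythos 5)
Your proposal is correct and follows exactly the route the paper itself takes: the paper proves Theorem \ref{maxi-averse} by declaring the proof identical to that of Theorem \ref{averse}, with the $\mathcal{Z}^{max}$-analogue of Proposition \ref{mixing} (same proof) and Proposition \ref{unique-projection-2max} (via Proposition \ref{stab-nc2}) plugged in. Your spelled-out cascade of collapses, the observation that $\overline{T}$ is automatically $\mathcal{Z}^{max}$-incompatible and has dense orbits, and the final contradiction from $S\to\overline{T}$ all match the paper's implicit reasoning.
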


\begin{prop}($\mathcal{Z}^{max}$-analogue of Proposition \ref{mixing-representative})\label{mixing-rep-max}
For all $T\in\mathcal{X}^{max}(G,\mathcal{F})$, there exists a mixing tree in $\mathcal{X}^{max}(G,\mathcal{F})$ onto which all trees $T'\in\mathcal{X}^{max}(G,\mathcal{F})$ that are equivalent to $T$ collapse. In addition, any two such trees are $G$-equivariantly weakly homeomorphic. Any tree $T\in\overline{\mathcal{O}(G,\mathcal{F})}$ that is both mixing and $\mathcal{Z}^{max}$-incompatible is $\mathcal{Z}^{max}$-averse. 
\end{prop}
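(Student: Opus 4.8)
The plan is to rerun the proof of Proposition~\ref{mixing-representative} essentially verbatim, replacing each ingredient by its $\mathcal{Z}^{max}$-counterpart: the $\mathcal{Z}^{max}$-analogue of Proposition~\ref{mixing} (whose proof is the same), Proposition~\ref{unique-projection-2max} in place of Proposition~\ref{unique-projection-2}, and Theorem~\ref{maxi-averse} in place of Theorem~\ref{averse}. First I would produce the mixing representative: given $T\in\mathcal{X}^{max}(G,\mathcal{F})$, note that $T$ is $\mathcal{Z}^{max}$-incompatible (since $\mathcal{R}^{1,max}(T)\subseteq\mathcal{R}^{2,max}(T)=\emptyset$), so by the $\mathcal{Z}^{max}$-analogue of Proposition~\ref{mixing} it collapses onto a mixing tree $\overline{T}\in\overline{\mathcal{O}(G,\mathcal{F})}$. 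I would then check $\overline{T}\in\mathcal{X}^{max}(G,\mathcal{F})$: otherwise Theorem~\ref{maxi-averse} provides a finite compatibility chain of $\mathcal{Z}^{max}$-tame trees from $\overline{T}$ to a simplicial tree, and prepending $T$ (which is compatible with $\overline{T}$, the common refinement being $T$ itself) gives such a chain starting at $T$, contradicting $T\in\mathcal{X}^{max}(G,\mathcal{F})$ through Theorem~\ref{maxi-averse}(1).

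Next I would establish the collapse assertion. If $T'\in\mathcal{X}^{max}(G,\mathcal{F})$ is equivalent to $T$, unfold this into a finite chain $(T=T_0,T_1,\dots,T_k=T')$ of tame trees with $T_i$ compatible with $T_{i+1}$, and argue by induction that $\overline{T}$ is compatible with every $T_i$ and that each $T_i$ collapses onto $\overline{T}$. The inductive step is the bookkeeping already used in the proof of Theorem~\ref{averse}: if $T_i$ collapses onto $\overline{T}$ and is compatible with $T_{i+1}$ via a common refinement $\widehat{T}$, then composing collapses shows $\widehat{T}$ collapses onto $\overline{T}$, so $\overline{T}$ is compatible with $T_{i+1}$; since $\overline{T}$ is mixing and $\mathcal{Z}^{max}$-incompatible, Proposition~\ref{unique-projection-2max} then yields an alignment-preserving map $T_{i+1}\to\overline{T}$. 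This is precisely the diagram of Figure~\ref{diagram-averse}, and it gives that $T'$ collapses onto $\overline{T}$.

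For the remaining two assertions I would proceed as in Proposition~\ref{mixing-representative}. If $\overline{T}_1,\overline{T}_2$ are two mixing trees both satisfying the conclusion, they collapse onto each other; since an alignment-preserving self-map of a tree in $\overline{\mathcal{O}(G,\mathcal{F})}$ with dense orbits is an isometry (Corollary~\ref{alignment-preserved}), the two composites are identities, so $\overline{T}_1$ and $\overline{T}_2$ are $G$-equivariantly weakly homeomorphic. For the last assertion, let $T\in\overline{\mathcal{O}(G,\mathcal{F})}$ be mixing and $\mathcal{Z}^{max}$-incompatible; being $\mathcal{Z}^{max}$-incompatible forces dense orbits, since a tree with a nontrivial simplicial part collapses onto a nontrivial simplicial $\mathcal{Z}^{max}$-splitting (the argument in the proof of Proposition~\ref{mixing}, using Proposition~\ref{Levitt} together with the subdivide-and-fold trick to pass from $\mathcal{Z}$- to $\mathcal{Z}^{max}$-edge groups). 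If $T$ were not $\mathcal{Z}^{max}$-averse, Theorem~\ref{maxi-averse}(1) gives a finite compatibility chain of $\mathcal{Z}^{max}$-tame trees from $T$ to a simplicial tree $S$; iterating Proposition~\ref{unique-projection-2max} (valid since $T$ is mixing and $\mathcal{Z}^{max}$-incompatible) shows $S$ collapses onto $T$, which is impossible as $S$ is simplicial and $T$ has dense orbits.

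The hard part is not in this proposition at all: every step above is a formal iteration, and the only content specific to the $\mathcal{Z}^{max}$ setting is hidden in Proposition~\ref{unique-projection-2max}, which rests on Proposition~\ref{stab-nc2} and in particular on the manoeuvre of subdividing and folding the edges with stabilizer in $\mathcal{Z}\smallsetminus\mathcal{Z}^{max}$ into the dense-orbit vertex tree. The one point of genuine care in the present argument is the bookkeeping needed to ensure that Proposition~\ref{unique-projection-2max} applies at each step: one must verify that the trees occurring in the equivalence chains, and in the chains produced by Theorem~\ref{maxi-averse}, may be taken to be $\mathcal{Z}^{max}$-tame rather than merely tame, so that the hypotheses of Proposition~\ref{unique-projection-2max} are met throughout.
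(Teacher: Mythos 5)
Your proposal is correct and is essentially the proof the paper itself gives: the paper explicitly states that Proposition \ref{mixing-rep-max} has ``the same'' proof as Proposition \ref{mixing-representative}, with Proposition \ref{unique-projection-2max} substituted for Proposition \ref{unique-projection-2}, and your write-up is precisely that substitution carried out, with the iterated application of Proposition \ref{unique-projection-2max} along the compatibility chain. One small inaccuracy in a parenthetical: to see that $\mathcal{Z}^{max}$-incompatibility forces dense orbits for $T\in\overline{\mathcal{O}(G,\mathcal{F})}$, the subdivide-and-fold manoeuvre from the proof of Proposition \ref{stab-nc2} is not needed -- since $T$ is very small its arc stabilizers already lie in $\mathcal{Z}^{max}$, so the simplicial tree from the Levitt decomposition (Proposition \ref{Levitt}) is directly a $\mathcal{Z}^{max}$-splitting; the subdivide-and-fold step is what goes into Proposition \ref{unique-projection-2max} itself. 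Your closing remark about ensuring the intermediate trees in the chains are $\mathcal{Z}^{max}$-tame (so that Proposition \ref{unique-projection-2max} applies) is a valid point of care, and in fact flags a slight imprecision in the paper's stated definition of the equivalence relation on $\mathcal{X}^{max}(G,\mathcal{F})$.
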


\begin{theo}\label{max-Luo}($\mathcal{Z}^{max}$-analogue of Theorem \ref{Luo})
Let $T\in\mathcal{X}^{max}(G,\mathcal{F})$, and let $(T_i)_{i\in\mathbb{N}}\in\mathcal{O}(G,\mathcal{F})^{\mathbb{N}}$ be a sequence that converges to $T$. Then $\psi^{max}(T_i)$ is unbounded in $FZ^{max}(G,\mathcal{F})$.
\end{theo}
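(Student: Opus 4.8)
The plan is to follow \emph{mutatis mutandis} the proof of Theorem \ref{Luo}, using the $\mathcal{Z}^{max}$-analogues of the supporting results. As in that proof, I would argue by contradiction: suppose $(\psi^{max}(T_i))_{i\in\mathbb{N}}$ stays in a bounded region of $FZ^{max}(G,\mathcal{F})$. After passing to a subsequence, there is an integer $M$ and a vertex $\ast\in FZ^{max}(G,\mathcal{F})$ with $d_{FZ^{max}(G,\mathcal{F})}(\ast,\psi^{max}(T_i))=M$ for all $i$. For each $i$, pick a geodesic segment $(T_i^0=\ast,T_i^1,\dots,T_i^M=\psi^{max}(T_i))$ in $FZ^{max}(G,\mathcal{F})$, whose vertices are one-edge $\mathcal{Z}^{max}$-splittings, consecutive ones being compatible.

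The next step is to pass to limits in each coordinate. For every $k\in\{0,\dots,M\}$, up to passing to a further subsequence and rescaling, the sequence $(T_i^k)_{i\in\mathbb{N}}$ of one-edge $\mathcal{Z}^{max}$-splittings converges (non-projectively) in the axes topology to some minimal $(G,\mathcal{F})$-tree $T_\infty^k$. Here, as noted in the remark following Theorem \ref{Luo}, the argument is actually simpler than in the $\mathcal{Z}$-case: since all the $T_i^k$ are $\mathcal{Z}^{max}$-splittings, they belong to $\overline{\mathcal{O}(G,\mathcal{F})}$ (they are very small), and hence so do their limits $T_\infty^k$; in particular the $T_\infty^k$ are automatically $\mathcal{Z}^{max}$-tame and one does not need to invoke Proposition \ref{limit-one-edge}. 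Note also $T_\infty^0=\ast$, a simplicial tree. By construction $T_i^k$ is compatible with $T_i^{k-1}$ for all $k\in\{1,\dots,M\}$ and all $i$, and $T_i^M=\psi^{max}(T_i)$ is compatible with $T_i$ (indeed, up to the bounded ambiguity in the definition of $\psi^{max}$, $\psi^{max}(T_i)$ is a one-edge collapse of $T_i$, hence compatible with it; if one prefers, one may absorb this ambiguity by enlarging $M$ by a bounded amount, which does not affect the argument). Applying Lemma \ref{compatible-limit} coordinate by coordinate, $T_\infty^k$ is compatible with $T_\infty^{k-1}$ for all $k\in\{1,\dots,M\}$, and $T$ is compatible with $T_\infty^M$.

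Putting these together yields a finite sequence $(T,T_\infty^M,T_\infty^{M-1},\dots,T_\infty^1,T_\infty^0=\ast)$ of $\mathcal{Z}^{max}$-tame $(G,\mathcal{F})$-trees in which consecutive terms are compatible and the last term $\ast$ is simplicial. By the first characterization of $\mathcal{Z}^{max}$-averse trees in Theorem \ref{maxi-averse} (condition $(1)$), this contradicts the assumption that $T\in\mathcal{X}^{max}(G,\mathcal{F})$. Hence $\psi^{max}(T_i)$ is unbounded in $FZ^{max}(G,\mathcal{F})$.

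I do not expect any serious obstacle: every ingredient is in place, and the proof is genuinely the same as for Theorem \ref{Luo}, only cleaner. The one point deserving a word of care is the bounded-ambiguity issue in the definition of $\psi^{max}$ and the compatibility of $\psi^{max}(T_i)$ with $T_i$; this is handled exactly as in the free-splitting-graph setting (making any other choice changes distances by a bounded amount), and it is harmless here since we only care about unboundedness.
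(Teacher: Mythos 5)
Your proof is correct and is essentially the same as the paper's: the paper proves the $\mathcal{Z}$-version (Theorem \ref{Luo}) by exactly this contradiction argument (bounded region, geodesic segments, coordinate-wise limits via Lemma \ref{compatible-limit}, contradiction with condition (1) of the characterization of averse trees), then states that the proof for Theorem \ref{max-Luo} is identical, and even notes in a remark the very simplification you point out — that in the $\mathcal{Z}^{max}$ case all $T_i^k$ and their limits are very small, so Proposition \ref{limit-one-edge} is not needed.
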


\subsection{A few remarks and examples}

\subsubsection{$\mathcal{Z}$-averse trees versus $\mathcal{Z}^{max}$-averse trees}\label{sec-ex-max}

Building on our construction from the proof of Proposition \ref{ex-unbounded}, we give examples of $\mathcal{Z}^{max}$-averse trees that are not $\mathcal{Z}$-averse as soon as $\text{rk}_f(G,\mathcal{F})\ge 1$ and $\text{rk}_K(G,\mathcal{F})\ge 3$. Together with our main results (Theorem \ref{main} and \ref{main-2}), this implies that the inclusion map from $FZ^{max}(G,\mathcal{F})$ into $FZ(G,\mathcal{F})$ is not a quasi-isometry in these cases.

\begin{prop}
Let $G$ be a countable group, and let $\mathcal{F}$ be a free factor system of $G$. Assume that $\text{rk}_f(G,\mathcal{F})\ge 1$ and $\text{rk}_K(G,\mathcal{F})\ge 3$. Then $\mathcal{X}^{max}(G,\mathcal{F})\neq\mathcal{X}(G,\mathcal{F})$, so the inclusion map from $FZ^{max}(G,\mathcal{F})$ into $FZ(G,\mathcal{F})$ is not a quasi-isometry.
\end{prop}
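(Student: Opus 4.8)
The plan is to exhibit an explicit tree lying in $\mathcal{X}^{max}(G,\mathcal{F})$ but not in $\mathcal{X}(G,\mathcal{F})$. Note first that $\mathcal{X}(G,\mathcal{F})\subseteq\mathcal{X}^{max}(G,\mathcal{F})$ always holds, since every $\mathcal{Z}^{max}$-splitting is a $\mathcal{Z}$-splitting and hence $\mathcal{R}^{2,max}(T)\subseteq\mathcal{R}^2(T)$; so it suffices to prove the inclusion is strict. The tree I would use is a variant of the one built in the proof of Proposition \ref{ex-unbounded}. Since $\text{rk}_f(G,\mathcal{F})\ge 1$, fix a decomposition $G=G_1\ast\dots\ast G_k\ast F_N$ and a basis $\{x,y_2,\dots,y_N\}$ of $F_N$; then $x$ is nonperipheral and is not a proper power in $G$, and $\langle x^2\rangle$ lies in $\mathcal{Z}\smallsetminus\mathcal{Z}^{max}$. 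Let $H:=G_1\ast\dots\ast G_k\ast\langle x^2,y_2,\dots,y_N\rangle$, so that $G=H\ast_{\langle x^2\rangle}\langle x\rangle$ and $\text{rk}_K(H)=\text{rk}_K(G,\mathcal{F})\ge 3$. Applying the orbifold construction of Proposition \ref{ex-unbounded} to $H$ with its free factor system $\{[G_1],\dots,[G_k]\}$, and realizing the free generator $x^2$ by a boundary curve $\delta$ of the orbifold $\mathcal{O}$, I obtain an indecomposable $\pi_1(\mathcal{O})$-tree $Y$ dual to a minimal filling foliation. I then form the $(G,\mathcal{F})$-graph of actions $\mathcal{G}$ whose vertex trees are $Y$, the trivial trees for the $G_i$, and a trivial tree for the new vertex group $\langle x\rangle$, the last attached along an edge of group $\langle x^2\rangle=\langle\delta\rangle$ at the point of $Y$ fixed by $\delta$; all edges are given length $0$. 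Set $T:=T(\mathcal{G})$.

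The easy verifications go exactly as in Proposition \ref{ex-unbounded}: the $G$-translates of $Y$ form a transverse covering of $T$ by indecomposable, hence mixing, subtrees, so $T$ is mixing; and since every arc or tripod of $T$ lies in some translate of the very small tree $Y$, the arc and tripod stabilizers of $T$ are trivial, so $T$ has dense orbits and $T\in\overline{\mathcal{O}(G,\mathcal{F})}$. Moreover $T$ is not $\mathcal{Z}$-averse: collapsing in $\mathcal{G}$ everything except the orbit of the edge carrying $\langle x^2\rangle$ yields a common refinement of $T$ and of the one-edge $\mathcal{Z}$-splitting $G=H\ast_{\langle x^2\rangle}\langle x\rangle$, so $\mathcal{R}^1(T)\neq\emptyset$ and condition $(2)$ of Theorem \ref{averse} holds.

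The crux of the argument — and the step I expect to be the main obstacle — is to show $T$ is $\mathcal{Z}^{max}$-incompatible, which by the last assertion of Proposition \ref{mixing-rep-max} (together with mixing) yields $T\in\mathcal{X}^{max}(G,\mathcal{F})$. Suppose $T$ were compatible with a $\mathcal{Z}^{max}$-splitting $S'$. The $\pi_1(\mathcal{O})$-minimal subtree of $T$ is the indecomposable tree $Y$, so Lemma \ref{lemma-ex} forces $\pi_1(\mathcal{O})$ to fix a vertex $v$ of $S'$. Arguing as in the proof of Proposition \ref{ex-unbounded} (nontrivial edge groups of $S'$ being nonperipheral, the $g_i$ cannot fix an arc), every $G_i$ fixes $v$; since $x^2=\delta\in\pi_1(\mathcal{O})$ also fixes $v$, the subgroup $H=\langle G_1,\dots,G_k,x^2,y_2,\dots,y_N\rangle$ fixes $v$. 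As $x^2$ is then elliptic, $x$ is elliptic in $S'$, fixing some vertex $w$, and $x^2$ fixes the arc $[v,w]$. This is where $\mathcal{Z}^{max}$ is used: each edge of $[v,w]$ has stabilizer in $\mathcal{Z}^{max}$ containing $x^2$, hence isolated, hence containing the root $x$; since $x$ is not a proper power in $G$, this stabilizer equals $\langle x\rangle$. So $x$ fixes $[v,w]$, in particular $x$ fixes $v$, whence $G=\langle H,x\rangle$ fixes $v$ and $S'$ is trivial — a contradiction. (For ordinary $\mathcal{Z}$-splittings the argument collapses precisely because the non-isolated edge group $\langle x^2\rangle$ is allowed, which is exactly why $T$ is $\mathcal{Z}$-compatible.)

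Having produced $T\in\mathcal{X}^{max}(G,\mathcal{F})\smallsetminus\mathcal{X}(G,\mathcal{F})$, the non-quasi-isometry statement follows from the identifications of the Gromov boundaries with $\mathcal{X}^{(max)}(G,\mathcal{F})/{\sim}$ (Theorems \ref{main} and \ref{main-2}): a quasi-isometry $FZ^{max}(G,\mathcal{F})\to FZ(G,\mathcal{F})$ would induce a homeomorphism of Gromov boundaries, whereas the $\sim$-class of $T$ is a point of $\partial_{\infty}FZ^{max}(G,\mathcal{F})$ with no counterpart in $\partial_{\infty}FZ(G,\mathcal{F})$, since any sequence of trees in $\mathcal{O}(G,\mathcal{F})$ converging to the $\mathcal{Z}$-compatible tree $T$ fails to converge to infinity in $FZ(G,\mathcal{F})$.
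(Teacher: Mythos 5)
Your proof is correct and follows essentially the same approach as the paper: you build the same graph of actions over an amalgam of $\pi_1(\mathcal{O})$ with the $G_i$'s and a cyclic group attached along a squared boundary curve (the paper introduces a fresh generator $a_0$ with $b_0=a_0^2$, while you realize a fixed basis element $x$ of the free part via $x^2=\delta$, but the resulting tree is identical), and the $\mathcal{Z}^{max}$-incompatibility argument is the same. You helpfully spell out the two steps the paper leaves terse — the isolated-edge-stabilizer argument on $[v,w]$ and the passage from $\mathcal{X}^{max}\neq\mathcal{X}$ to non-quasi-isometry via boundary maps — but the underlying idea is the paper's.
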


\begin{proof}
For all $i\in\{1,\dots,k\}$, we choose an element $g_i\in G_i\smallsetminus\{e\}$, whose order we denote by $p_i\in\mathbb{N}\cup\{+\infty\}$. We denote by $l$ the number of indices such that $p_i=+\infty$. Up to reindexing the $g_i$'s, we can assume that $p_1,\dots,p_l=+\infty$, and $p_{l+1},\dots,p_k<+\infty$. 

Let $\mathcal{O}$ be the orbifold obtained from a sphere with $N+l+1$ boundary components, where $N:=\text{rk}_f(G,\mathcal{F})\ge 1$, by adding a conical point of order $p_i$ for each $i\in\{l+1,\dots,k\}$. As $\text{rk}_K(G,\mathcal{F})\ge 3$, we can equip $\mathcal{O}$ with an arational measured foliation. For all $i\in\{1,\dots,l\}$, we denote by $b_i$ a generator of the $i^{th}$ boundary curve in $\pi_1(\mathcal{O})$, and for all $i\in\{l+1,\dots,k\}$, we denote by $b_i$ a generator of the subgroup associated to the corresponding conical point. We denote by $b_0$ a generator of one of the other boundary curves. The group $G$ is isomorphic to the group obtained by amalgamating $\pi_1(\mathcal{O})$ with the groups $G_i$ and $\mathbb{Z}=\langle a_0\rangle$, identifying $b_i$ with $g_i$ for all $i\in\{1,\dots,k\}$, and identifying $b_0$ with $a_0^2$. 

We then form a graph of actions $\mathcal{G}$ over this splitting of $G$: vertex trees are the $\pi_1(\mathcal{O})$-tree $Y$ dual to the foliation on $\mathcal{O}$, a trivial $G_i$-tree for all $i\in\{1,\dots,k\}$, and a trivial $\langle a_0\rangle$-tree, and edges have length $0$.
 
This construction yields a $G$-tree $T$ which is not $\mathcal{Z}$-averse, because it splits as a graph of actions, one of whose edge groups belongs to $\mathcal{Z}$. We claim that $T$ is $\mathcal{Z}^{max}$-averse. Indeed, as $T$ is mixing, it is enough to show that $T$ is $\mathcal{Z}^{max}$-incompatible (Proposition \ref{mixing-rep-max}). Assume towards a contradiction that $T$ is compatible with a $\mathcal{Z}^{max}$-splitting $S_0$. The $\pi_1(\mathcal{O})$-minimal subtree of $T$ is indecomposable, so $\pi_1(\mathcal{O})$ has to be elliptic in $S_0$. We denote by $S$ the skeleton of $\mathcal{G}$, and by $x_0$ the vertex of $S$ with vertex group $\langle a_0\rangle$. Arguing as in the proof of Proposition \ref{ex-unbounded}, we then get that for any two adjacent vertices $u,u'\in S$ with $u,u'\notin G.x_0$, the vertex groups $G_u$ and $G_{u'}$ fix a common vertex $v$ of $S_0$. This is still true if $u'\in G.x_0$ because $S_0$ is a $\mathcal{Z}^{max}$-splitting. 
\end{proof}

On the other hand, we show that if $\text{rk}_f(G,\mathcal{F})=0$, then the graphs $FZ(G,\mathcal{F})$ and $FZ^{max}(G,\mathcal{F})$ are quasi-isometric to each other.

\begin{prop}
Let $G$ be a countable group, and let $\mathcal{F}$ be a free factor system of $G$. Assume that $\text{rk}_f(G,\mathcal{F})=0$, and $\text{rk}_K(G,\mathcal{F})\ge 3$. Then the inclusion from $FZ^{max}(G,\mathcal{F})$ into $FZ(G,\mathcal{F})$ is a quasi-isometry.
\end{prop}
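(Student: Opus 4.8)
The plan is to build an explicit coarse inverse to the inclusion $\iota\colon FZ^{max}(G,\mathcal{F})\hookrightarrow FZ(G,\mathcal{F})$, by showing that when $\text{rk}_f(G,\mathcal{F})=0$ every one-edge $\mathcal{Z}$-splitting lies $FZ$-distance at most $1$ from a canonically associated one-edge $\mathcal{Z}^{max}$-splitting, obtained by \emph{saturating the edge group to its maximal root}. First recall the easy generalities: by the remark following the definition of $FZ^{(max)}(G,\mathcal{F})$, any common refinement of two $\mathcal{Z}^{max}$-splittings can be taken to be a $\mathcal{Z}^{max}$-splitting; since such a refinement is in particular a common $\mathcal{Z}$-refinement, $FZ^{max}(G,\mathcal{F})$ is an induced subgraph of $FZ(G,\mathcal{F})$, and $\iota$ is $1$-Lipschitz on vertices. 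So it suffices to produce a vertex map $\rho\colon FZ(G,\mathcal{F})\to FZ^{max}(G,\mathcal{F})$ with $\rho(S)=S$ for every $\mathcal{Z}^{max}$-splitting $S$, with $d_{FZ}(S,\rho(S))\le C$ for all $S$, and with $d_{FZ^{max}}(\rho(S),\rho(S'))\le C$ whenever $S,S'$ are $FZ$-adjacent; a standard argument then shows that $\iota$ and $\rho$ are coarse inverses, hence quasi-isometries.

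\emph{Construction of $\rho$.} Let $S$ be a one-edge $\mathcal{Z}$-splitting with edge group $C$. If $C$ is trivial or already lies in $\mathcal{Z}^{max}$, set $\rho(S)=S$. Otherwise $C=\langle g^m\rangle$ with $m\ge 2$, where $\langle g\rangle$ is the maximal cyclic subgroup containing $C$; note $\langle g\rangle$ is nonperipheral (a root of a nonperipheral element is nonperipheral), so $\langle g\rangle\in\mathcal{Z}^{max}$. Since $g$ normalizes $C$ and $C$ fixes an edge, $g$ is elliptic in $S$ (a hyperbolic element has hyperbolic powers), so after conjugating $g$ lies in one vertex group $A$ of $S$; write $S=A\ast_C B$ (the HNN case is identical). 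Subdividing the edge of $S$, inserting a new vertex with group $\langle g\rangle$, and giving the two new edges groups $\langle g\rangle$ and $C$ produces a two-edge $\mathcal{Z}$-splitting $\widehat S$ which collapses onto $S$ (collapse the $\langle g\rangle$-edge) and onto the one-edge splitting $\rho(S):=A\ast_{\langle g\rangle}\langle g,B\rangle$ (collapse the $C$-edge), whose edge group $\langle g\rangle$ lies in $\mathcal{Z}^{max}$. In particular $S$ and $\rho(S)$ are both collapses of $\widehat S$, so they are compatible and $d_{FZ}(S,\rho(S))\le 1$. The only point needing the hypotheses is that $\rho(S)$ be a genuine vertex of $FZ^{max}(G,\mathcal{F})$, i.e. minimal: this can fail only if $A=\langle g\rangle$ or $B\subseteq\langle g\rangle$, and one checks that either configuration forces $G$ to split as $\langle g\rangle\ast_{\langle g^m\rangle}(\cdots)$ with the complementary factor carrying all of $\mathcal{F}$, which is incompatible with $\text{rk}_f(G,\mathcal{F})=0$ and $\text{rk}_K(G,\mathcal{F})\ge 3$. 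This is the main obstacle of the proof, and is precisely where the rank hypotheses are used; indeed the previous proposition exhibits, for $\text{rk}_f\ge 1$, exactly such degenerate one-edge $\mathcal{Z}$-splittings lying arbitrarily far from $FZ^{max}(G,\mathcal{F})$.

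\emph{$\rho$ is coarsely Lipschitz.} Suppose $S$ and $S'$ are $FZ$-adjacent one-edge $\mathcal{Z}$-splittings. Their common refinement may be taken with exactly two orbits of edges, collapsing either one of which returns $S$ or $S'$. Applying the saturation move above to each of the two edges yields a $\mathcal{Z}^{max}$-splitting $\widetilde S$ which collapses onto both $\rho(S)$ and $\rho(S')$; hence $\rho(S)$ and $\rho(S')$ admit a common $\mathcal{Z}^{max}$-refinement and $d_{FZ^{max}}(\rho(S),\rho(S'))\le 1$. Minimality of $\widetilde S$ and of the relevant intermediate collapses is checked exactly as for $\rho(S)$, again using $\text{rk}_f(G,\mathcal{F})=0$.

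\emph{Conclusion.} Both $\iota$ and $\rho$ are coarsely Lipschitz on vertices, $\rho\circ\iota=\mathrm{id}$ on vertices, and $d_{FZ}(\iota(\rho(S)),S)\le 1$ for every vertex $S$ of $FZ(G,\mathcal{F})$. Therefore $\iota$ is a quasi-isometry. As indicated, the delicate step is verifying non-degeneracy of the saturation construction — equivalently, ruling out the configurations $A=\langle g\rangle$ and $B\subseteq\langle g\rangle$ — which is exactly where $\text{rk}_f(G,\mathcal{F})=0$ and $\text{rk}_K(G,\mathcal{F})\ge 3$ enter, consistently with the previous proposition showing the statement fails when $\text{rk}_f\ge 1$.
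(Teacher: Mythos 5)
Your overall strategy coincides with the paper's: build a coarse inverse $\rho$ to the inclusion by saturating the edge group of a $\mathcal{Z}$-splitting to its maximal root, show $d_{FZ}(S,\rho(S))\le 1$, and show $\rho$ coarsely Lipschitz. The construction of $\rho(S)=A\ast_{\langle g\rangle}\langle B,g\rangle$ is exactly the paper's $\tau(S)=S^{max}$, the compatibility via the two-edge refinement $\widehat S$ is the same, and the Lipschitz step is the same. One small imprecision: the degenerate case $B\subseteq\langle g\rangle$ that you list alongside $A=\langle g\rangle$ is already impossible for reasons independent of the rank hypotheses — since $g\in A$, if $B\subseteq\langle g\rangle$ then $B\subseteq A$, hence $B=A\cap B=\langle g^m\rangle$, contradicting minimality of $S$. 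So the only genuine degeneracy is $A=\langle g\rangle$, and your phrasing ("either configuration forces $G$ to split as $\langle g\rangle\ast_{\langle g^m\rangle}(\cdots)$") suggests you may not have noticed this distinction.

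The real gap is the sentence "one checks that either configuration forces $G$ to split as $\langle g\rangle\ast_{\langle g^m\rangle}(\cdots)$ with the complementary factor carrying all of $\mathcal{F}$, which is incompatible with $\text{rk}_f(G,\mathcal{F})=0$ and $\text{rk}_K(G,\mathcal{F})\ge 3$." In the degenerate case $A=\langle g\rangle$ the original splitting $S$ already \emph{is} $\langle g\rangle\ast_{\langle g^m\rangle}B$, so nothing is "forced" — the whole content of the nondegeneracy claim is precisely that such a one-edge $\mathcal{Z}$-splitting cannot exist under the stated rank hypotheses, and this does not follow from the definitions. The paper proves it by showing that $A=\langle g\rangle$ would be a proper $(G,\mathcal{F})$-free factor lying in $\mathcal{Z}$, contradicting $\text{rk}_f(G,\mathcal{F})=0$; and establishing that $A$ is a free factor takes real work: one invokes \cite[Lemma 5.11]{Hor14-5} (every one-edge $\mathcal{Z}$-splitting is compatible with a one-edge $(G,\mathcal{F})$-free splitting) to refine $S$ on the $B$-side, obtains a proper $(G,\mathcal{F})$-free factor of smaller Kurosh rank still containing $g^m$, and inducts on Kurosh rank until the edge $\langle g^m\rangle$ is attached to a vertex group equal to $\langle g^m\rangle$, at which point collapsing exhibits $A$ as elliptic in a free splitting. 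Your proposal omits this argument entirely; since this is the one step where the hypotheses actually do any work, the proof as written is incomplete. Either reproduce the paper's induction or supply an independent proof that a cyclic nonperipheral group cannot be amalgamated over a proper power onto a group carrying all of $\mathcal{F}$ when $\text{rk}_f(G,\mathcal{F})=0$.
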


\begin{proof}
We will define an inverse map $\tau:FZ(G,\mathcal{F})\to FZ^{max}(G,\mathcal{F})$. Let $S$ be a one-edge $\mathcal{Z}$-splitting, of the form $A\ast_{\langle g^k\rangle}B$ (where $\langle g\rangle\in\mathcal{Z}^{max}$). As $g^k$ is elliptic in $S$, so is $g$. We assume without loss of generality that $g\in A$ (and $g\notin B$), and we let $S^{max}$ be the $\mathcal{Z}^{max}$-splitting $A\ast_{\langle g\rangle}\langle B,g\rangle$. In the case where $S$ is an HNN extension of the form $A\ast_{\langle g^k\rangle}$, we let $S^{max}:=\langle A,g,g^t\rangle\ast_{\langle g\rangle}$, where $t$ is a stable letter. We claim that the $G$-minimal subtree of $S^{max}$ is nontrivial. The map $\tau$ is then defined by letting $\tau(S):=S^{max}$. In addition, if $S_1$ and $S_2$ are two compatible one-edge $\mathcal{Z}$-splittings, one checks that $S_1^{max}$ and $S_2^{max}$ are also compatible. This shows that $\tau$ is Lipschitz, and proves that $FZ(G,\mathcal{F})$ and $FZ^{max}(G,\mathcal{F})$ are quasi-isometric to each other. 

Assume towards a contradiction that $S^{max}$ is trivial. Then $S$ is of the form $\langle g\rangle\ast_{\langle g^k\rangle} B$, so $A=\langle g\rangle$ is cyclic and nonperipheral. We claim that $A$ is a proper $(G,\mathcal{F})$-free factor. This is a contradiction because $G$ has no free factor in $\mathcal{Z}$, since $\text{rk}_f(G,\mathcal{F})=0$. 

By \cite[Lemma 5.11]{Hor14-5}, the splitting $S$ is compatible with a one-edge $(G,\mathcal{F})$-free splitting $S_0$. Since $A$ is cyclic and $g^k\in A$ is elliptic in $S$, the splitting $S+S_0$ can only be obtained by splitting the vertex group $B$ in $S$. Some proper $(G,\mathcal{F})$-free factor $B'$ of $B$ is elliptic in $S+S_0$ and contains $g^k$. Repeating the above argument, we can split $B'$ further. Arguing by induction on the Kurosh rank of $B$, we end up with a $\mathcal{Z}$-splitting $S'$ in which the edge with nontrivial stabilizer $\langle g^k\rangle$ is attached to a vertex whose stabilizer has Kurosh rank equal to $1$, and is therefore equal to $\langle g^k\rangle$. The splitting $S'$ collapses to a $(G,\mathcal{F})$-free splitting in which $A$ is elliptic.
\end{proof}

\subsubsection{Why working with $\mathcal{R}^2(T)$ instead of $\mathcal{R}^1(T)$ ?}

\begin{ex}\label{R-empty}
We give an example of a tree $T\in\overline{cv_N}$ that is $\mathcal{Z}$-incompatible but is not $\mathcal{Z}$-averse. In other words, we have $\mathcal{R}^1(T)=\emptyset$, while $\mathcal{R}^2(T)\neq\emptyset$. This justifies the introduction of the set $\mathcal{R}^2(T)$ in our arguments. 

Let $T_1$ be an indecomposable $F_N$-tree in which some free factor $F_2\subseteq F_N$ of rank $2$ fixes a point $x_1$. Examples of such trees were given in \cite[Part 11.6]{Rey12}. Form a graph of actions over the splitting $F_{2N-2}=F_N\ast_{F_2} F_N$, where the vertex trees are two copies of $T_1$, and the attaching points are the copies of $x_1$. In this way, we get a tree $T\in\overline{cv_{2N-2}}$. 

We claim that $T$ is $\mathcal{Z}$-incompatible. Indeed, assume towards a contradiction that $T$ is compatible with a $\mathcal{Z}$-splitting $S$ of $F_{2N-2}$. Lemma \ref{lemma-ex} implies that both copies of $F_N$ are elliptic in $S$. Therefore, the subgroup $F_2$ is also elliptic in $S$. As edge stabilizers in $S$ are cyclic, this implies that $F_{2N-2}$ is elliptic in $S$, a contradiction. 

However, the tree $T$ is not $\mathcal{Z}$-averse. Indeed, let $\overline{T}$ be the tree obtained by equivariantly collapsing to a point one of the copies of $T_1$ in $T$ (but not the other). Then $\overline{T}$ is $\mathcal{Z}$-compatible, because one can blow up the copy of $F_N$ that got collapsed by using a splitting in which the free factor $F_2$ is elliptic.
\end{ex}

\subsubsection{The importance of working with cyclic splittings rather than free splittings}

\begin{ex}\label{surfaces}
We now give an example of two mixing compatible $F_N$-trees $T_1,T_2\in\overline{cv_N}$, such that $T_2$ is compatible with a free splitting of $F_N$, while $T_1$ is not. This shows that it is crucial to work with cyclic splittings rather than free splittings in Theorem \ref{averse}. The following situation is illustrated on Figure \ref{fig-surfaces}. Let $S$ be a compact orientable surface of genus $2$, with one boundary component. Let $c$ be a simple closed curve that splits the surface $S$ into two subsurfaces $S_1$ and $S_2$, where $S_1$ has genus $1$ and two boundary components, and $S_2$ has genus $1$ and one boundary component.

For all $i\in\{1,2\}$, let $L_i$ be an arational measured lamination on the surface $S_i$. Let $T_1$ be the tree dual to the measured lamination on $S$ obtained by equipping $S_1$ with the lamination $L_1$, and equipping $S_2$ with the empty lamination. Let $T_2$ be the tree dual to the measured lamination on $S$ obtained by equipping $S_1$ with the empty lamination, and equipping $S_2$ with the lamination $L_2$. Both trees $T_1$ and $T_2$ are mixing.  The trees $T_1$ and $T_2$ are compatible, as they are both refined by the tree $T$ dual to the lamination obtained by equipping $S_1$ with $L_1$ and $S_2$ with $L_2$. The tree $T_2$ is compatible with any free splitting of $F_4$ determined by an essential arc of $S$ that lies on the subsurface $S_1$. However, the tree $T_1$ is not compatible with any free splitting of $S$. Indeed, otherwise, Lemma \ref{lemma-ex} would imply that the boundary curve of $S$ is elliptic in this splitting, which is impossible. Notice however that both trees $T_1$ and $T_2$ are compatible with the $\mathcal{Z}^{max}$-splitting determined by the simple closed curve $c$.     
\end{ex}

\begin{figure}
\begin{center}
\input{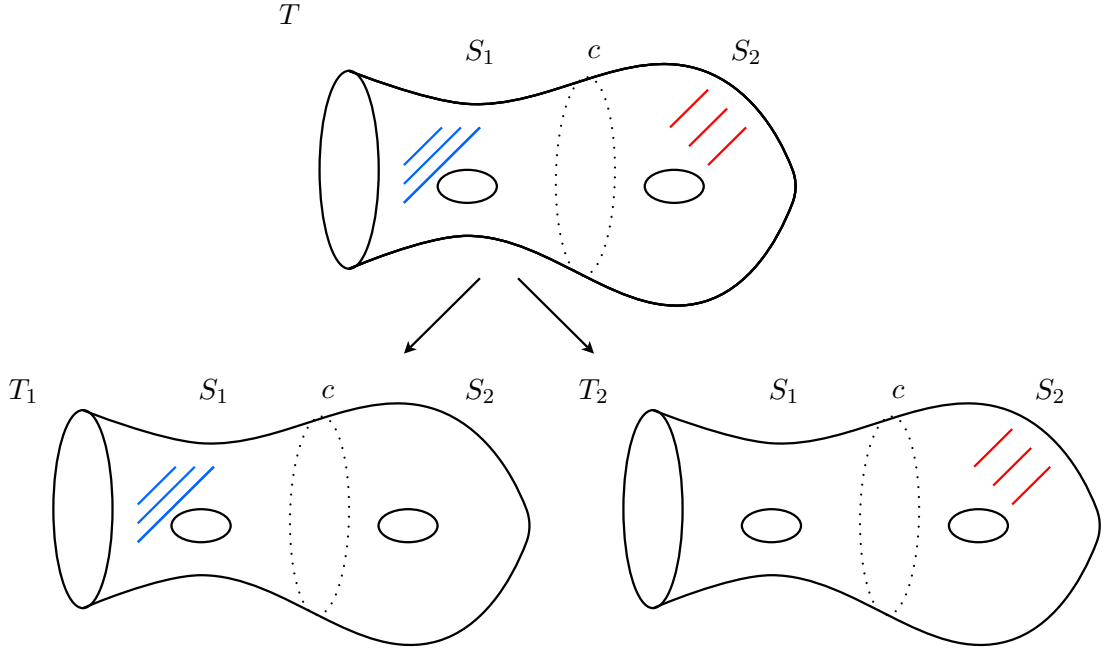}
\caption{The laminations dual to the trees in Example \ref{surfaces}.}
\label{fig-surfaces}
\end{center}
\end{figure}

\subsubsection{Non-mixing $\mathcal{Z}$-averse trees}

\begin{ex}
We have seen (Corollary \ref{mixing-representative}) that any equivalence class in $\mathcal{X}(G,\mathcal{F})$ contains mixing representatives. We now give an example of a tree $T\in\overline{cv_N}$ that is $\mathcal{Z}$-averse but not mixing. We refer to \cite[Example 10.10]{Rey11} for details. Let $\Phi\in\text{Out}(F_N)$ be an automorphism with two strata, and assume that the Perron--Frobenius eigenvalue of the lower stratum is strictly greater than the Perron--Frobenius eigenvalue of the upper stratum. Then the attractive tree of $\Phi$ is not mixing, however it collapses onto a tree which is mixing and $\mathcal{Z}$-incompatible, and hence it is $\mathcal{Z}$-averse.
\end{ex}

\section{Collapses and pullbacks of folding paths and folding sequences}\label{sec-coll-pull}

We now describe two constructions that will turn out to be useful in the next section, for the proof of Theorem \ref{canonical-splittings}. These constructions are inspired from the analogous constructions in \cite[Section 4.2]{HM12} or \cite[Section A.2]{BF13} in the case of folding paths between simplicial $F_N$-trees with trivial edge stabilizers.

\subsection{Collapses}\label{sec-collapse}

In this section, we will present a construction for proving the following proposition.

\begin{prop}\label{prop-coll}
Let $S,T$ and $\overline{T}$ be tame $(G,\mathcal{F})$-trees. Let $\gamma$ be a tame optimal liberal folding path from $S$ to $T$. Assume that there exists a $1$-Lipschitz alignment-preserving map $\pi:T\to\overline{T}$. Then there exists a tame optimal liberal folding path $\overline{\gamma}$ ending at $\overline{T}$ such that for all $t\in\mathbb{R}_+$, there exists a $1$-Lipschitz alignment-preserving map from $\gamma(t)$ to $\overline{\gamma}(t)$.
\end{prop}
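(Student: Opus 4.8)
The plan is to construct $\overline{\gamma}$ directly from $\gamma$ by "collapsing through $\pi$" at every level of the folding path. Recall that $\gamma$ comes with morphisms $f_{s,t}:\gamma(s)\to\gamma(t)$ for $s<t$, and in particular a morphism $f_{t,L}:\gamma(t)\to T$ for each $t$, where $L$ is the length parameter. Composing with the given alignment-preserving $1$-Lipschitz map $\pi:T\to\overline{T}$, we obtain a $1$-Lipschitz map $\pi_t:=\pi\circ f_{t,L}:\gamma(t)\to\overline{T}$ for every $t$. The idea is to define $\overline{\gamma}(t)$ as the quotient of $\gamma(t)$ that records the identifications $\pi_t$ makes "at finite identification time", mimicking the Guirardel--Levitt construction of the canonical optimal folding path from Section \ref{sec-folding}: set $\tau_t(a,b):=\sup_{x\in[a,b]}d_{\overline{T}}(\pi_t(a),\pi_t(x))$ and quotient $\gamma(t)$ by the relation $a\approx_t b$ iff $\pi_t(a)=\pi_t(b)$. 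The first step is to verify that this quotient is a well-defined $\mathbb{R}$-tree equipped with a $1$-Lipschitz alignment-preserving map $\rho_t:\gamma(t)\to\overline{\gamma}(t)$ and a morphism $\overline{f}_{t,L}:\overline{\gamma}(t)\to\overline{T}$, and that $\overline{\gamma}(0)$ lies in the closure of the cone of $S$ (or of the appropriate starting tree). Here one should be careful: unlike the naive collapse, we must check that collapsing the $\pi_t$-point-preimages really does produce a tree rather than something degenerate, and this is where the hypothesis that $\pi$ is alignment-preserving is essential (it guarantees $\pi_t$ is alignment-preserving, so point-preimages are subtrees and the quotient is a tree by an argument like the one in Lemma \ref{projection-closed}).

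The second step is to produce the morphisms $\overline{f}_{s,t}:\overline{\gamma}(s)\to\overline{\gamma}(t)$ making $\overline{\gamma}$ into a liberal folding path. The key compatibility is that $f_{s,t}$ descends to a map $\overline{f}_{s,t}$ on the quotients: if $a\approx_s b$ in $\gamma(s)$, i.e. $\pi_s(a)=\pi_s(b)$, then since $\pi_s=\pi_t\circ f_{s,t}$ we get $\pi_t(f_{s,t}(a))=\pi_t(f_{s,t}(b))$, so $f_{s,t}(a)\approx_t f_{s,t}(b)$; hence $\overline{f}_{s,t}$ is well-defined, and the cocycle relation $\overline{f}_{s,r}=\overline{f}_{t,r}\circ\overline{f}_{s,t}$ and the relation $\overline{f}_{t,L}\circ\rho_t=\rho_L\circ f_{t,L}$ follow formally. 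One then has to check $\overline{f}_{s,t}$ is a morphism: each segment of $\gamma(s)$ subdivides into pieces on which $f_{s,t}$ is an isometry, these map to segments in $\overline{\gamma}(s)$ under $\rho_s$ (alignment-preserving), and on each such piece $\overline{f}_{s,t}$ is either an isometry or constant; optimality of $\overline{\gamma}$ follows from optimality of $\gamma$ together with the fact that collapsing along an alignment-preserving map cannot decrease the number of gates below two (or one argues via the canonical-path characterization). Continuity of $t\mapsto\overline{\gamma}(t)$ in the axes topology follows from continuity of $\gamma$ and the fact that the $\overline{f}_{t,L}$ are $1$-Lipschitz, by a standard approximation argument. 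That $\overline{\gamma}$ ends at $\overline{T}$ is immediate: for $t\ge L$ we have $\gamma(t)=T$ and $\pi_t=\pi$, so $\overline{\gamma}(t)=T/{\ker\pi}\cong\overline{T}$ since $\pi$ is already alignment-preserving and surjective, hence its quotient by point-preimages is $\overline{T}$ itself.

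The third step is the tameness claim. By Proposition \ref{good} there is $k\in\mathbb{N}$ with $S$, $T$, $\overline{T}$ all $k$-tame, and since $\gamma$ is a tame optimal folding path we may (enlarging $k$) assume every $\gamma(t)$ is $k$-tame; I would then show each $\overline{\gamma}(t)$ is $k$-tame. This should follow the pattern of the proof of Proposition \ref{canonical-good}: given a nonperipheral $g$ with $g^{kl}$ fixing a point $\overline{a}_t\in\overline{\gamma}(t)$, lift to $a_t\in\gamma(t)$; using $k$-tameness of $\gamma(t)$ together with an identification-time computation (splitting the segment $[a_t,g^{kl}a_t]$ into a fundamental-domain piece translated by powers of $g^k$, exactly as in Proposition \ref{canonical-good}, in the hyperbolic case, and using $\mathrm{Fix}_{\gamma(t)}(g^k)=\mathrm{Fix}_{\gamma(t)}(g^{kl})$ in the elliptic case) one shows $g^k$ also fixes $\overline{a}_t$. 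Alternatively, and perhaps more cleanly, one observes that $\overline{\gamma}(t)$ collapses onto $\overline{T}$ (which is $k$-tame) via a morphism and also receives a morphism-quotient from $\gamma(t)$ (which is $k$-tame); since arc stabilizers can only shrink under the alignment-preserving collapse $\rho_t$ and $k$-tameness of the source controls the relevant fixed-point sets, $k$-tameness passes to the quotient. The main obstacle I anticipate is the first step — verifying rigorously that the level-wise quotient $\overline{\gamma}(t)$ is genuinely an $\mathbb{R}$-tree with the right metric and that the construction is continuous in $t$ — since this is precisely the kind of point the referee flagged in an earlier version (an inaccuracy "in the construction of collapses of optimal folding paths"); the algebraic bookkeeping of morphisms and the cocycle relations in steps two and three is routine by comparison.
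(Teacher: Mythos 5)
Your first step fails, and the error is structural rather than a matter of bookkeeping. You set $\pi_t := \pi\circ f_{t,L}$ and define $\overline{\gamma}(t)$ as the quotient of $\gamma(t)$ by the relation $a\approx_t b$ if and only if $\pi_t(a)=\pi_t(b)$. But $\pi_t$ is surjective (by minimality of $\overline{T}$), so this quotient has underlying set $\overline{T}$ for every $t$: the relation identifies not only the points that $\pi$ collapses, but also every pair of points that $f_{t,L}$ eventually folds together and whose images $\pi$ does not separate. Concretely, if $e_1$ and $e_2$ are edges of $\gamma(t)$ meeting at a vertex and $f_{t,L}$ folds $e_1$ onto $e_2$ while $\pi$ does not collapse $f_{t,L}(e_1)$, your relation identifies $e_1$ with $e_2$ --- the fold is pre-performed, and the resulting ``path'' $\overline{\gamma}$ is stationary at $\overline{T}$, which is not what is wanted (and in the discrete variant, Proposition \ref{def-collapse} requires a nonstationary sequence). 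Relatedly, your claim that $\pi_t$ is alignment-preserving (invoked to apply Lemma \ref{projection-closed}) is false: $\pi$ is, but $f_{t,L}$ is a morphism and morphisms fold, so the image of a segment straddling an illegal turn need not be a segment, and the fibers of $\pi_t$ need not be subtrees; in the example above a generic fiber consists of two isolated points, one in each of $e_1$ and $e_2$.

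The fix is a genuinely different construction, not a refinement of your relation. The paper uses the Levitt decomposition of $\gamma(t)$ as a graph of actions (Proposition \ref{Levitt}): $f_{t,L}$ restricts to an isometry on each dense-orbit vertex tree $Y$ (Corollary \ref{alignment-preserved}) and on each simplicial edge $e$ (by optimality), so one builds $\overline{\gamma}(t)$ as a new graph of actions over the \emph{same} underlying graph, replacing each $Y$ by $\pi(f_{t,L}(Y))$, each attaching point by its $\pi\circ f_{t,L}$-image, and each edge $e$ by the (possibly degenerate) segment $\pi(f_{t,L}(e))$. The associated collapse $\gamma(t)\to\overline{\gamma}(t)$ then has fibers exactly the maximal subtrees of $\gamma(t)$ on which $\pi\circ f_{t,L}$ is constant --- a strictly smaller relation than yours; distinct pieces that $f_{t,L}$ folds together but $\pi$ does not collapse stay distinct in $\overline{\gamma}(t)$ and are folded only along $\overline{\gamma}$. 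Your second and third steps must then be re-examined against this corrected target; in particular, optimality of the induced morphisms $\overline{f}_{t,t'}$ and continuity in $t$ each need their own argument (the unnumbered lemma following the construction, and Proposition \ref{collapse}, respectively) and do not follow from the generalities you cite.
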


Let $L\in\mathbb{R}_+$ be such that $\gamma(L)=T$, and let $t\le L$. Recall from Proposition \ref{Levitt} that $\gamma(t)$ splits as a graph of actions $\mathcal{G}$, all of whose vertex trees have dense orbits for the action of their stabilizer (some vertex trees may be trivial). If $Y\subseteq \gamma(t)$ is one of the vertex trees of this splitting, then the morphism $f_{t,L}:\gamma(t)\to T$ provided by the definition of a liberal folding path is an isometry in restriction to $Y$. Optimality of $f$ implies that the morphism $f_{t,L}$ is an isometry in restriction to each edge in the simplicial part of $\gamma(t)$. We define $\overline{\gamma}(t)$ from $\gamma(t)$ by replacing in $\mathcal{G}$ each vertex subtree $Y$ by its image $\pi\circ f_{t,L}(Y)$ in $\overline{T}$, replacing each attaching point $x$ in $\mathcal{G}$ by its image $\pi\circ f_{t,L}(x)$, and modifying the metric on each edge $e$ in the simplicial part of $\gamma(t)$, so that $e$ becomes isometric to the segment $\pi(f_{t,L}(e))\subseteq\overline{T}$ (this may collapse some subsegments of $e$). We denote by $\pi_t:\gamma(t)\to\overline{\gamma}(t)$ the natural alignment-preserving map. The $\pi_t$-preimage of any point $x\in\overline{\gamma}(t)$ is a subtree of $\gamma(t)$ whose $f$-image in $T$ is collapsed to a point by $\pi$. Therefore, for all $t'>t$, the $f_{t,t'}$-image of the subtree $\pi_t^{-1}(x)$ collapses to a point $x'$ in $\overline{\gamma}(t')$. Therefore, the optimal morphism $f_{t,t'}$ induces a map $\overline{f}_{t,t'}$ (sending $x$ to $x'$, with the above notations) for all $t<t'$, and this map is again a morphism by construction. 

\begin{lemma}
For all $t<t'$, the morphism $\overline{f}_{t,t'}$ is optimal.
\end{lemma}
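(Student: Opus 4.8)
The statement to prove is that for all $t < t'$, the induced morphism $\overline{f}_{t,t'} : \overline{\gamma}(t) \to \overline{\gamma}(t')$ is optimal, i.e. that there are at least two gates at every point of $\overline{\gamma}(t)$ for the train track structure induced by $\overline{f}_{t,t'}$. The natural strategy is to transport the optimality of $f_{t,t'}$ through the collapse maps $\pi_t : \gamma(t) \to \overline{\gamma}(t)$ and $\pi_{t'} : \gamma(t') \to \overline{\gamma}(t')$, using the commutation relation $\pi_{t'} \circ f_{t,t'} = \overline{f}_{t,t'} \circ \pi_t$ which holds by the very construction of $\overline{f}_{t,t'}$.

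First I would fix a point $\overline{x} \in \overline{\gamma}(t)$ and consider the directions at $\overline{x}$. Each direction $d$ at $\overline{x}$ corresponds, via $\pi_t$, to a direction at some preimage point in the subtree $\pi_t^{-1}(\overline{x})$ of $\gamma(t)$; more precisely, since $\pi_t$ is alignment-preserving and collapses $\pi_t^{-1}(\overline{x})$ to $\overline{x}$, a direction at $\overline{x}$ is the $\pi_t$-image of a direction at some $x \in \pi_t^{-1}(\overline{x})$ which points "out" of the collapsed subtree. The key observation is that $\overline{f}_{t,t'}$ identifies two such directions $d_1, d_2$ at $\overline{x}$ if and only if $f_{t,t'}$ identifies suitable representative directions at points of $\pi_t^{-1}(\overline{x})$ whose images under $f_{t,t'}$ are not collapsed by $\pi_{t'}$ — this is because $\overline{f}_{t,t'}$ is an isometry on each edge of the simplicial part and on each vertex tree of $\overline{\gamma}(t)$ by the way it was defined, so a genuine folding in $\overline{\gamma}(t)$ reflects a genuine folding in $\gamma(t)$ that survives the collapse $\pi_{t'}$.

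The main step is then: given $\overline{x} \in \overline{\gamma}(t)$, I need to produce two directions at $\overline{x}$ lying in distinct gates for $\overline{f}_{t,t'}$. I would argue as follows. Pick $x \in \pi_t^{-1}(\overline{x})$. Since $f_{t,t'}$ is optimal, there are at least two gates at $x$ in $\gamma(t)$; equivalently, one can find a legal pair of directions $(e, e')$ at $x$, i.e. $f_{t,t'}$ does not identify $e$ with $e'$. If the collapsed subtree $\pi_t^{-1}(\overline{x})$ were a single point (the generic case away from the collapsed locus), then the directions at $\overline{x}$ are in bijection with those at $x$, the train track structures correspond, and optimality transfers immediately. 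When $\pi_t^{-1}(\overline{x})$ is a nondegenerate subtree $K$, I would use that $K$ is precisely a subtree whose $f_{t,L}$-image is collapsed by $\pi$; directions at $\overline{x}$ correspond to directions at boundary points of $K$ that point outside $K$. Here the subtlety — and what I expect to be the main obstacle — is to check that when one passes to $\overline{\gamma}(t')$ the image directions are still genuinely distinct, i.e. that a legal pair at some boundary point of $K$ does not get "re-glued" by the collapse $\pi_{t'}$; for this I would use that $\overline{\gamma}(t')$ was built so that its edges become isometric to their images in $\overline{T}$ under $\pi \circ f_{t',L}$ and its vertex trees are isometric images, together with the fact that $f_{t,t'}$ restricted to a legal path is an immersion, so its image is not collapsed under a further alignment-preserving map unless it was already in a fiber of $\pi$ — which would contradict legality combined with $x \in \pi_t^{-1}(\overline{x})$. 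Concretely I would split into the two cases where $\overline{x}$ lies in the interior of an edge of the simplicial part of $\overline{\gamma}(t)$ (two opposite directions, which are clearly legal since $\overline{f}_{t,t'}$ is an isometry on that edge) versus where $\overline{x}$ is a branch point or lies in a vertex tree (use density of orbits in the vertex tree, which already forces at least two gates there, plus the simplicial edges attached). In each case the bound "at least two gates" follows, completing the proof that $\overline{f}_{t,t'}$ is optimal.
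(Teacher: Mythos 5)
Your overall framework is right — transport optimality through the collapse maps, observe that directions at $\overline{x}$ correspond to directions pointing out of the fiber $K=\pi_t^{-1}(\overline{x})$, and check that a legal turn "out of $K$" projects to a legal turn downstairs. That last transfer step is in fact sound (if $d_1,d_2$ both point out of $K$ from the same boundary point $\widetilde{x}$ and form a legal turn for $f_{t,t'}$, the alignment-preserving property of $\pi_{t'}$ forces $\pi_t(d_1),\pi_t(d_2)$ to form a legal turn for $\overline{f}_{t,t'}$; your intuition about ``re-gluing'' being incompatible with legality is correct). But the proposal has a genuine gap before that point: you never establish that such a legal pair, with \emph{both} legs pointing out of $K$, actually exists. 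Optimality of $f_{t,t'}$ gives you two gates at any $\widetilde{x}\in K$, hence a legal pair $(d,d')$ — but nothing prevents $d'$ (or both $d$ and $d'$, if $\widetilde{x}$ is interior to $K$) from pointing \emph{into} $K$, in which case that pair projects to nothing usable at $\overline{x}$. Your sketch of ``pick $x\in\pi_t^{-1}(\overline{x})$ ... one can find a legal pair'' silently assumes the pair survives the collapse, and your fallback case split (``interior of an edge'' vs.\ ``branch point or vertex tree'') glosses over precisely the hard situation: $\overline{x}$ a branch point of the simplicial part with no nondegenerate vertex tree attached, where every available legal turn at $K$ may have a collapsed leg.

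The paper gets around this with a specific device you do not have: from any preimage $\widetilde{x}$ it builds a legal \emph{bi-infinite line} through $\widetilde{x}$ whose two ends are eventually periodic, i.e.\ each is eventually carried by a legal \emph{axis} $l_1,l_2$ (using dense orbits of vertex trees in one case and finiteness of orbits of turns plus pigeonhole in the purely simplicial case). Then either some $l_i$ escapes the fiber, giving the legal turn at $x$ you want, or $\widetilde{x}$ sits on an axis $l_i$ that is entirely collapsed by $\pi_t$ — in which case the nontrivial cyclic group $\mathrm{Stab}(l_i)$ fixes $x$, and a separate argument closes the case: if $\overline{f}_{t,t'}$ had a single gate at $x$, this stabilizer would fix a nondegenerate arc in $\overline{T}$ and hence in $T$, contradicting the fact that $l_i$ is a legal axis (so $f_{t,L}$ is an isometry on it). This last arc-stabilizer step is the piece of the argument entirely absent from your proposal, and it is exactly what handles the case your sketch cannot reach.
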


\begin{proof}
Let $x\in\overline{\gamma}(t)$. We want to prove that $\overline{f}_{t,t'}$ has at least two gates at $x$. If $x\in\overline{\gamma}(t)$ belongs to the interior of one of the nondegenerate subtrees with dense orbits $T_v$ of the Levitt decomposition of $\overline{\gamma}(t)$ as a graph of actions, then any line passing through $x$ and contained in $T_v$ lifts to a legal line in $\gamma(t)$. 

We now assume that $x\in\overline{\gamma}(t)$ is contained in a simplicial edge of $\overline{\gamma}(t)$. We claim that for any $\pi_t$-preimage $\widetilde{x}$ of $x$, one can find two legal axes $l_1$ and $l_2$, such that there exists a legal line in $\overline{\gamma}(t)$ which is the concatenation of a half-line in $l_1$, a legal segment containing $\widetilde{x}$, and a half-line in $l_2$. Indeed, given any direction $d$ at $\widetilde{x}$, optimality of $f_{t,t'}$ ensures the existence of a legal half-line $l$ starting from $\widetilde{x}$ and contained in $d$. If $l$ enters a subtree $H$ in the dense orbits part of $\gamma(t)$, then one can ensure that there is a half-line in $l$ contained in an axis for an element of the stabilizer of $H$, and this axis is legal. If $l$ entirely stays in the simplicial part of $\gamma(t)$, then the construction of $l$ may be done so that $l$ crosses the same orbit of turns in $\gamma(t)$ twice. Hence we can assume that $l$ is eventually periodic, i.e. some half-line of $l$ is contained in a legal axis.

If either $\widetilde{x}$ is contained in the interior of $\widetilde{e}$, or if it is contained in an axis $l_i$ (for some $i\in\{1,2\}$) that does not get collapsed to a point by $\pi_t$, then there is a legal turn at $\widetilde{x}$ which projects to a legal turn at $x$. Otherwise, the point $x$ has nontrivial stabilizer (and $\widetilde{x}$ is contained in $l_i$ for some $i\in\{1,2\}$). This also implies that the morphism $\overline{f}_{t,t'}$ has at least two gates at $x$, since otherwise the stabilizer of $l_i$ would fix a nondegenerate arc in $\overline{T}$. Hence it would also fix a nondegenerate arc in $T$, contradicting the fact that $l_i$ is a legal axis. 
\end{proof}

The morphisms $\overline{f}_{t,t'}$ again satisfy $\overline{f}_{t,t''}=\overline{f}_{t',t''}\circ \overline{f}_{t,t'}$ for all $t<t'<t''$ by construction. We call the folding path constructed in this way the \emph{collapse} of $\gamma$ induced by $\pi$.

\begin{prop}\label{collapse}
Let $S,T$ and $\overline{T}$ be tame $(G,\mathcal{F})$-trees, and $\pi:T\to\overline{T}$ be a $1$-Lipschitz alignment-preserving map. Let $\gamma$ be a tame optimal liberal folding path from $S$ to $T$, and let $\overline{\gamma}$ be the collapse of $\gamma$ induced by $\pi$. Then $t\mapsto\overline{\gamma}(t)$ is continuous.
\end{prop}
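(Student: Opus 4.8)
\emph{Proof proposal.} The plan is to reduce the statement to continuity of the functions $t\mapsto ||g||_{\overline{\gamma}(t)}$ for each $g\in G$, and to control these through the continuous family $\gamma$. First note that for $t<t'$ the morphism $\overline{f}_{t,t'}\colon\overline{\gamma}(t)\to\overline{\gamma}(t')$ is $1$-Lipschitz, so $t\mapsto ||g||_{\overline{\gamma}(t)}$ is non-increasing; together with the bound $||g||_{\overline{\gamma}(t)}\le ||g||_{\gamma(t)}$ coming from the $1$-Lipschitz alignment-preserving map $\pi_t$, and with continuity of $\gamma$, this shows that the family $(\overline{\gamma}(t))$ stays in a bounded region of $\mathbb{R}^G$. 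Since the translation length function is injective on $\overline{\mathcal{O}(G,\mathcal{F})}$, it is enough to prove that each function $t\mapsto ||g||_{\overline{\gamma}(t)}$ is continuous.

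The main tool will be the following identity. By construction the relation $\overline{f}_{t,L}\circ\pi_t=\pi\circ f_{t,L}$ holds, and since $\overline{f}_{t,L}$ is a morphism it collapses no non-degenerate segment; hence a segment of $\gamma(t)$ is collapsed by $\pi_t$ if and only if it is collapsed by $\pi\circ f_{t,L}$. As $\pi_t$ is alignment-preserving, a parametrization-by-arclength argument then gives, whenever $g$ is hyperbolic in $\overline{\gamma}(t)$,
\[
||g||_{\overline{\gamma}(t)}=||g||_{\gamma(t)}-\ell_t(g),
\]
where $\ell_t(g)$ denotes the total length, per period, of the translation axis of $g$ in $\gamma(t)$ along which $\pi\circ f_{t,L}$ is locally constant (one sets $\ell_t(g)=||g||_{\gamma(t)}$ when $g$ is elliptic in $\overline{\gamma}(t)$). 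Equivalently, $||g||_{\overline{\gamma}(t)}$ is the length, measured in the \emph{fixed} tree $\overline{T}$, of one period of the path $\pi\bigl(f_{t,L}(A_g(\gamma(t)))\bigr)$, where $A_g(\gamma(t))$ is the axis of $g$ in $\gamma(t)$. Since $t\mapsto ||g||_{\gamma(t)}$ is continuous ($\gamma$ being a genuine folding path), it remains only to prove that $t\mapsto\ell_t(g)$ is continuous.

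For this last point one uses that $\gamma$ is a continuous optimal folding path: the trees $\gamma(t)$, the guiding morphisms $f_{t,L}$, the Levitt decompositions of the $\gamma(t)$ (Proposition \ref{Levitt}) and the axes $A_g(\gamma(t))$ all vary continuously with $t$, so $f_{t,L}(A_g(\gamma(t)))$ is a continuously varying $g$-periodic path in the fixed tree $T$, and applying the fixed $1$-Lipschitz map $\pi$ yields a continuously varying $g$-periodic path in $\overline{T}$ whose per-period length is $||g||_{\overline{\gamma}(t)}$. I expect the genuine obstacle to be the behaviour at the locally finitely many parameters at which the simplicial structure of $\gamma(t)$ changes (an edge of the Levitt decomposition degenerating, or a new dense-orbit piece appearing): there one must check that the contribution to $\ell_t(g)$ of a degenerating edge $e$, together with the rescaling of $e$ to the segment $\pi(f_{t,L}(e))\subseteq\overline{T}$, tends to $0$ and is absorbed continuously into the adjacent vertex trees, so that $\ell_t(g)$ does not jump. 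A complementary way to package the argument is by sequences: if $t_n\to t_0$ and $Z$ is a subsequential limit of $(\overline{\gamma}(t_n))$, Proposition \ref{Lipschitz-limit} applied to the morphisms $\overline{f}$ connecting the $\overline{\gamma}(t_n)$ to $\overline{\gamma}(t_0)$ (together with the maps $\pi_{t_n}$ and continuity of $\gamma$) produces $1$-Lipschitz alignment-preserving maps relating $Z$ to $\overline{\gamma}(t_0)$, and the identity above forces $||g||_Z=||g||_{\overline{\gamma}(t_0)}$ once one knows $\ell_{t_n}(g)\to\ell_{t_0}(g)$ — so this reformulation isolates the same core fact, namely that the collapsed length varies continuously, as the crux of the proof.
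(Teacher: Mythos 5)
Your proposal follows the same skeleton as the paper's proof: reduce the continuity of $t\mapsto\overline{\gamma}(t)$ (in the axes topology) to continuity of $t\mapsto ||g||_{\overline{\gamma}(t)}$ for each $g\in G$, and identify the collapsed length per period $\ell_t(g)=||g||_{\gamma(t)}-||g||_{\overline{\gamma}(t)}$ as the quantity to control. Your preliminary observations (monotonicity via $\overline{f}_{t,t'}$, the bound $||g||_{\overline{\gamma}(t)}\le||g||_{\gamma(t)}$, and the characterization ``a segment of $\gamma(t)$ is collapsed by $\pi_t$ iff it is collapsed by $\pi\circ f_{t,L}$'' using that $\overline{f}_{t,L}$ is a morphism) are correct and in the right spirit.

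However, the crucial step — actually establishing continuity of $t\mapsto\ell_t(g)$ — is not carried out. Your justification rests on the assertions that ``the Levitt decompositions of the $\gamma(t)$ \dots vary continuously with $t$'' and that the axes $A_g(\gamma(t))$ vary continuously; neither is proved in your proposal, and the first is suspect as stated (the Levitt decomposition changes combinatorial type, and can do so non-tamely, when simplicial edges of $\gamma(t)$ degenerate or new dense-orbit pieces appear). You then correctly flag this as ``the genuine obstacle'' but resolve it only by saying ``there one must check that\dots,'' and your sequential reformulation at the end again bottoms out on ``once one knows $\ell_{t_n}(g)\to\ell_{t_0}(g)$.'' Note also that the length of a varying periodic path is only lower semicontinuous in general, so ``continuously varying path'' does not by itself yield the two-sided bound you need. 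The paper sidesteps all reference to continuity of Levitt decompositions and supplies the missing estimate directly: because $||g||_{\gamma(\cdot)}$ is continuous, for $t$ close to $t_0$ the morphism $f_{t,t_0}$ (or $f_{t_0,t}$) folds at most $\epsilon$ worth of length in a fundamental domain of the axis of $g$; since the collapse along the axis in $\gamma(t)$ is governed by $\pi\circ f_{t,L}=\pi\circ f_{t_0,L}\circ f_{t,t_0}$ and $\pi_t$ is $1$-Lipschitz, the collapsed lengths at times $t$ and $t_0$ can differ by at most twice the folded amount, giving $|\ell_t(g)-\ell_{t_0}(g)|\le 2\epsilon$. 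This concrete inequality is precisely the content your proposal leaves as a gap, and without it the proof is incomplete.
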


\begin{proof}
Since $\pi$ is $1$-Lipschitz, all alignment-preserving maps $\pi_t:\gamma(t)\to\overline{\gamma}(t)$ are $1$-Lipschitz. Let $g\in G$, and let $\epsilon>0$. For $t$ close enough to $t_0$, the total length in a fundamental domain of $g$ that gets folded under the morphism $f_{t,t_0}$ (or $f_{t_0,t}$) between time $t$ and time $t_0$ is at most $\epsilon$. As $\pi_t$ is $1$-Lipschitz, this implies that the total length in a fundamental domain of $g$ that gets collapsed under $\pi_t$ is close to the total length in a fundamental domain of $g$ that gets collapsed under $\pi_{t_0}$. More precisely, for $t$ close enough to $t_0$, we have $|(||g||_{\gamma(t)}-||g||_{\overline{\gamma}(t)})-(||g||_{\gamma(t_0)}-||g||_{\overline{\gamma}(t_0)})|\le 2\epsilon$. This implies that $||g||_{\overline{\gamma}(t)}$ converges to $||g||_{\overline{\gamma}(t_0)}$ as $t$ tends to $t_0$. As this is true for all $g\in G$, the collapse $t\mapsto\overline{\gamma}(t)$ is continuous at $t_0$.
\end{proof}

This finishes the proof of Proposition \ref{prop-coll}. One can also give a discrete version of the above construction. We recall the definition of a tame optimal folding sequence from Section \ref{sec-good}. The following proposition follows from the above analysis.

\begin{prop}\label{def-collapse}
Let $T$ and $\overline{T}$ be tame $(G,\mathcal{F})$-trees, and $\pi:T\to\overline{T}$ be a $1$-Lipschitz alignment-preserving map. Let $(\gamma(n))_{n\in\mathbb{N}}$ be a tame optimal folding sequence ending at $T$. Then there exists a tame optimal folding sequence $(\overline{\gamma}(n))_{n\in\mathbb{N}}$ ending at $\overline{T}$, such that for all $n\in\mathbb{N}$, there exists an alignment-preserving map from $\gamma(n)$ to $\overline{\gamma}(n)$. 
\qed
\end{prop}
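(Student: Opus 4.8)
The plan is to run the construction of Section~\ref{sec-collapse} separately on each term of the folding sequence, using the morphism $f_n\colon\gamma(n)\to T$ supplied by the definition of a tame optimal folding sequence in place of the morphism $f_{t,L}$ of the continuous case. Concretely, for each $n$ I would take the Levitt decomposition of $\gamma(n)$ as a graph of actions (Proposition~\ref{Levitt}); optimality of $f_n$ makes it an isometry in restriction to each vertex subtree with dense orbits and to each edge of the simplicial part. I would then define $\overline{\gamma}(n)$ from $\gamma(n)$ by replacing every vertex subtree $Y$ by its image $\pi(f_n(Y))\subseteq\overline{T}$, every attaching point $x$ by $\pi(f_n(x))$, and re-metrizing each simplicial edge $e$ so that it becomes isometric to the segment $\pi(f_n(e))\subseteq\overline{T}$ (collapsing the subsegments of $e$ on which $\pi\circ f_n$ fails to be injective). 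This produces the natural alignment-preserving map $\pi_n\colon\gamma(n)\to\overline{\gamma}(n)$, which is $1$-Lipschitz since $\pi$ is, and which is exactly the map asserted in the statement.

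Next I would produce the transition morphisms. As in Section~\ref{sec-collapse}, the fibre $\pi_n^{-1}(x)$ of any $x\in\overline{\gamma}(n)$ is a subtree of $\gamma(n)$ whose $f_n$-image is collapsed to a point by $\pi$; since $f_n=f_m\circ f_{n,m}$, the subtree $f_{n,m}(\pi_n^{-1}(x))$ has $f_m$-image equal to $f_n(\pi_n^{-1}(x))$, hence is collapsed to a point by $\pi_m$. Therefore $f_{n,m}$ descends to a well-defined map $\overline{f}_{n,m}\colon\overline{\gamma}(n)\to\overline{\gamma}(m)$, and likewise $\pi\circ f_n$ descends to $\overline{f}_n\colon\overline{\gamma}(n)\to\overline{T}$; by construction these are morphisms, they satisfy $\overline{f}_n=\overline{f}_m\circ\overline{f}_{n,m}$ and $\overline{f}_{n,r}=\overline{f}_{m,r}\circ\overline{f}_{n,m}$, and they are optimal by the same local verification of the existence of at least two gates at each point that appears in Section~\ref{sec-collapse} (that argument is purely local and applies verbatim to the integer indices $n<m$). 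Each $\overline{\gamma}(n)$ is tame: it is small because $\overline{f}_n$ sends every nondegenerate arc to a nondegenerate arc of the small tree $\overline{T}$, so arc stabilizers in $\overline{\gamma}(n)$ lie in $\mathcal{Z}$; and its graph-of-actions decomposition has finitely many orbits of simplicial edges and finitely many orbits of dense-orbit vertex trees, exactly as in the tame tree $\gamma(n)$.

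It then remains to check that $(\overline{\gamma}(n))_{n\in\mathbb{N}}$ genuinely converges to $\overline{T}$. For this I would reproduce the length-function estimate from the proof of Proposition~\ref{collapse}: for every $g\in G$, since $\pi_n$ is $1$-Lipschitz, the difference $||g||_{\gamma(n)}-||g||_{\overline{\gamma}(n)}$ is the total length, in a fundamental domain of the axis of $g$, that $\pi_n$ collapses; as $n\to\infty$ the amount of folding performed by $f_n$ on $g$, namely $||g||_{\gamma(n)}-||g||_{T}$, tends to $0$ because $\gamma(n)\to T$, so this collapsed length tends to the length that $\pi$ collapses along the axis of $g$ in $T$, namely $||g||_{T}-||g||_{\overline{T}}$. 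Combined with $||g||_{\gamma(n)}\to||g||_{T}$ this gives $||g||_{\overline{\gamma}(n)}\to||g||_{\overline{T}}$ for all $g$, i.e. $\overline{\gamma}(n)\to\overline{T}$ in the axes topology; non-stationarity of this convergence is inherited from that of $(\gamma(n))_{n}$. Together with the morphisms built above, this exhibits $(\overline{\gamma}(n))_{n}$ as a tame optimal folding sequence ending at $\overline{T}$, with the alignment-preserving maps $\pi_n\colon\gamma(n)\to\overline{\gamma}(n)$, and completes the proof.

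The main obstacle, such as it is, is not the combinatorics of the construction — which is forced by Section~\ref{sec-collapse} — but the two verifications that the output is legitimate: that each $\overline{\gamma}(n)$ is a tame $(G,\mathcal{F})$-tree (requiring the control on arc stabilizers and on the orbits of directions at branch and inversion points noted above) and that the discrete sequence actually converges to $\overline{T}$ (requiring the folding/collapsing length bookkeeping which, in the continuous setting, is packaged as the continuity statement of Proposition~\ref{collapse}). Both reduce directly to facts already established in the continuous case, which is why the statement follows from the preceding analysis.
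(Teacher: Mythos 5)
Your proof reproduces the paper's intended argument: the paper justifies this proposition with the single sentence ``The following proposition follows from the above analysis,'' deferring to the continuous collapse construction of Section~\ref{sec-collapse}, and you carry out exactly the index-by-index transfer of that construction (defining $\overline{\gamma}(n)$ from the Levitt decomposition of $\gamma(n)$ and the morphism $f_n$, inducing $\overline{f}_{n,m}$ from $f_{n,m}$ via the identity $f_n=f_m\circ f_{n,m}$, and reusing the optimality and length-function arguments) that the paper delegates to the reader. The one claim stated without justification is that nonstationarity of $(\overline{\gamma}(n))_n$ is inherited from that of $(\gamma(n))_n$: this is not automatic, since in principle all the folding could occur inside the fibers of $\pi$ so that $\overline{\gamma}(n)$ becomes eventually constant at $\overline{T}$; the paper glosses over the same point and the degenerate case is harmless in every application, so this is a flag rather than a defect.
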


A sequence $(\overline{\gamma}(n))_{n\in\mathbb{N}}$ satisfying the conclusions of Proposition \ref{def-collapse} is called a \emph{collapse} of $(\gamma(n))_{n\in\mathbb{N}}$ induced by $\pi$.

\subsection{Pullbacks}\label{sec-pullbacks}

\subsubsection{Pullbacks of tame $(G,\mathcal{F})$-trees}

The following construction is inspired from the construction of pullbacks of simplicial metric trees, as it appears in \cite[Proposition 4.4]{HM12} or \cite[Lemma A.3]{BF13}. Let $S,T$ and $\widehat{T}$ be tame $(G,\mathcal{F})$-trees, such that there exists a $1$-Lipschitz alignment-preserving map $p:\widehat{T}\to T$. Let $f:S\to T$ be an optimal morphism. Let
\begin{displaymath}
\mathcal{C}'(S,\widehat{T}):=\{(x,y)\in S\times \widehat{T}|f(x)=p(y)\}
\end{displaymath}
\noindent be the fiber product of $S$ and $\widehat{T}$. The space $\mathcal{C}'(S,\widehat{T})$ is naturally equipped with a $G$-action induced by the diagonal action on $S\times\widehat{T}$. 

Let $(x,y),(x',y')\in\mathcal{C}'(S,\widehat{T})$. A finite sequence $((x,y)=(x_0,y_0),\dots,(x_k,y_k)=(x',y'))$ of elements of $\mathcal{C}'(S,\widehat{T})$ is \emph{admissible} if for all $i\in\{0,\dots,k-1\}$, the morphism $f$ is injective in restriction to $[x_i,x_{i+1}]$. The existence of admissible sequences between any two points of $\mathcal{C}'(S,\widehat{T})$ comes from the fact that $f$ is a morphism and $p$ is surjective. Given an admissible sequence $\sigma:=((x,y)=(x_0,y_0),\dots,(x_k,y_k)=(x',y'))$, we let $$l(\sigma):=\sum_{i=0}^{k-1} d_S(x_i,x_{i+1})+d_{\widehat{T}}(y_i,y_{i+1}).$$ For all $((x,y),(x',y'))\in\mathcal{C}'(S,\widehat{T})^2$, we then let $$d((x,y),(x',y')):=\inf_{\sigma}l(\sigma),$$ where the infimum is taken over all admissible sequences between $(x,y)$ and $(x',y')$. The map $d$ defines a metric on $\mathcal{C}'(S,\widehat{T})$: the triangle inequality follows from the fact that the concatenation of two admissible sequences is again admissible, and the separation axiom follows from the observation that $d((x,y),(x',y'))\ge d_S(x,x')+d_{\widehat{T}}(y,y')$. We first make the following observation.

\begin{lemma}\label{observation}
Let $(x,y),(x',y')\in\mathcal{C}'(S,\widehat{T})$, and let $\sigma:=((x,y)=(x_0,y_0),\dots,(x_k,y_k)=(x',y'))$ be an admissible sequence between $(x,y)$ and $(x',y')$. Let $i\in\{0,\dots,k-1\}$, let $x'_i\in [x_i,x_{i+1}]$, and let $y'_i$ be the projection of $y_i$ to $p^{-1}(x'_i)$. Let $\sigma'$ be the sequence obtained by inserting $(x'_i,y'_i)$ between $(x_i,y_i)$ and $(x_{i+1},y_{i+1})$ in $\sigma$. Then $\sigma'$ is admissible, and $l(\sigma')=l(\sigma)$.
\end{lemma}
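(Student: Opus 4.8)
The plan is to verify the two claims about $\sigma'$ directly from the definitions. First I would check admissibility. The sequence $\sigma'$ differs from $\sigma$ only in that the single step from $(x_i,y_i)$ to $(x_{i+1},y_{i+1})$ has been replaced by two steps: from $(x_i,y_i)$ to $(x_i',y_i')$, and from $(x_i',y_i')$ to $(x_{i+1},y_{i+1})$. Since $x_i'\in[x_i,x_{i+1}]$, both $[x_i,x_i']$ and $[x_i',x_{i+1}]$ are subsegments of $[x_i,x_{i+1}]$, and $f$ is injective in restriction to $[x_i,x_{i+1}]$ by admissibility of $\sigma$; hence $f$ is injective in restriction to each of these two subsegments, which is exactly what admissibility of $\sigma'$ requires (all other steps of $\sigma'$ coincide with steps of $\sigma$). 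I would also note that $(x_i',y_i')$ genuinely lies in $\mathcal{C}'(S,\widehat{T})$: indeed $y_i'$ is the projection of $y_i$ onto the closed subtree $p^{-1}(x_i')$ of $\widehat{T}$ (which is a subtree by Lemma \ref{projection-closed}), so $p(y_i')=x_i'$ — wait, more precisely $p(y_i')=x_i'$ needs the observation that $p^{-1}(x_i')$ is nonempty, which follows from surjectivity of $p$ together with $x_i'\in[x_i,x_{i+1}]$ and $f(x_i),f(x_{i+1})\in p(\widehat{T})$ — and $f(x_i')=p(y_i')$ since both equal $f(x_i')$; here I use that $f(x_i')$ lies on the segment $[f(x_i),f(x_{i+1})]=[p(y_i),p(y_{i+1})]$.

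The second and main point is the length computation $l(\sigma')=l(\sigma)$. The only steps that change are the one step replaced by two, so it suffices to show
\begin{displaymath}
d_S(x_i,x_i')+d_{\widehat{T}}(y_i,y_i') + d_S(x_i',x_{i+1})+d_{\widehat{T}}(y_i',y_{i+1}) = d_S(x_i,x_{i+1})+d_{\widehat{T}}(y_i,y_{i+1}).
\end{displaymath}
The $S$-coordinates are immediate: $x_i'\in[x_i,x_{i+1}]$ gives $d_S(x_i,x_i')+d_S(x_i',x_{i+1})=d_S(x_i,x_{i+1})$. For the $\widehat{T}$-coordinates I must show $y_i'\in[y_i,y_{i+1}]$, after which additivity of the metric along segments finishes the proof. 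This is where the hypothesis that $p$ is alignment-preserving is essential: $p$ maps $[y_i,y_{i+1}]$ onto the segment $[p(y_i),p(y_{i+1})]=[f(x_i),f(x_{i+1})]$, and since $f$ restricted to $[x_i,x_{i+1}]$ is injective, $p$ restricted to $[y_i,y_{i+1}]$ is injective as well (its image equals the $f$-image of $[x_i,x_{i+1}]$, a nondegenerate segment, and $p$ is alignment-preserving, so it cannot fold). Hence there is a unique point of $[y_i,y_{i+1}]$ mapping to $x_i'$, namely $[y_i,y_{i+1}]\cap p^{-1}(x_i')$; and because $p^{-1}(x_i')$ is a subtree, the projection of $y_i$ onto $p^{-1}(x_i')$ is precisely this point — the bridge from $y_i$ to $p^{-1}(x_i')$ runs along $[y_i,y_{i+1}]$ since $[y_i,y_{i+1}]$ already meets $p^{-1}(x_i')$. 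Therefore $y_i'\in[y_i,y_{i+1}]$, giving $d_{\widehat{T}}(y_i,y_i')+d_{\widehat{T}}(y_i',y_{i+1})=d_{\widehat{T}}(y_i,y_{i+1})$, and the claimed equality follows.

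I expect the main obstacle to be the careful bookkeeping around the projection $y_i'$: one has to confirm both that $p^{-1}(x_i')$ is a (nonempty, closed) subtree of $\widehat{T}$ so that the projection is well-defined, and that this projection lands inside the segment $[y_i,y_{i+1}]$ rather than somewhere off it. Both facts rest on Lemma \ref{projection-closed} and on $p$ being alignment-preserving, together with the injectivity of $f$ on $[x_i,x_{i+1}]$ coming from admissibility; once these are in place the argument is a short additivity computation. I would present the proof in that order: admissibility of $\sigma'$, then the $S$-coordinate identity, then the location of $y_i'$ on $[y_i,y_{i+1}]$, then the $\widehat{T}$-coordinate identity, and conclude.
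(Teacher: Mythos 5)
Your proposal follows essentially the same route as the paper: admissibility is immediate from $x_i'\in[x_i,x_{i+1}]$, the $S$-coordinate sum is additive on the subdivided segment, and the key point is that alignment-preservation of $p$ forces the projection $y_i'$ to lie on $[y_i,y_{i+1}]$, after which additivity of $d_{\widehat{T}}$ finishes the computation. The paper phrases the last step via the bridge between the disjoint closed subtrees $p^{-1}(f(x_i))$ and $p^{-1}(f(x_{i+1}))$, but this bridge is a subsegment of $[y_i,y_{i+1}]$, so your direct argument with the segment is equivalent.

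One intermediate claim in your write-up is false, though, and you should excise it: you assert that $p$ restricted to $[y_i,y_{i+1}]$ is injective, ``since $p$ is alignment-preserving, so it cannot fold.'' Alignment-preserving maps only guarantee that images of segments are segments; they are perfectly free to collapse nondegenerate subsegments to points — indeed collapse maps are the paradigmatic examples (this is even what the term ``collapse'' refers to throughout Section 4). In the situation at hand, the portion of $[y_i,y_{i+1}]$ inside $p^{-1}(f(x_i))$ is typically nondegenerate and maps entirely to the single point $f(x_i)$, so injectivity fails. Consequently there need not be a unique point of $[y_i,y_{i+1}]$ in $p^{-1}(f(x_i'))$ — the intersection is a closed subsegment, possibly nondegenerate. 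Fortunately your argument does not actually need injectivity or uniqueness: what you need, and what you correctly state in the next sentence, is that $[y_i,y_{i+1}]$ meets the closed subtree $p^{-1}(f(x_i'))$, which already implies (by the standard $\mathbb{R}$-tree fact about projections onto closed subtrees met by a segment) that the projection $y_i'$ of $y_i$ lies on $[y_i,y_{i+1}]$ — it is the endpoint of $[y_i,y_{i+1}]\cap p^{-1}(f(x_i'))$ nearest $y_i$. So delete the injectivity parenthetical, replace ``there is a unique point'' by ``$[y_i,y_{i+1}]$ meets $p^{-1}(f(x_i'))$,'' and the proof stands.
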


\begin{proof}
As $\sigma$ is admissible, the morphism $f$ is injective in restriction to $[x_i,x_{i+1}]$. Since $x'_i\in [x_i,x_{i+1}]$, it is injective in restriction to both $[x_i,x'_i]$ and $[x'_i,x_{i+1}]$, so $\sigma'$ is admissible. We also have $d_S(x_i,x_{i+1})=d_S(x_i,x'_i)+d_S(x'_i,x_{i+1})$, and $f(x'_i)\in [f(x_i),f(x_{i+1})]$. Since $p$ preserves alignment, the bridge between the (disjoint) closed subtrees $p^{-1}(f(x_i))$ and $p^{-1}(f(x_{i+1}))$ meets $p^{-1}(f(x'_i))$, and therefore it contains the projection $y'_i$ of $y_i$ to $p^{-1}(x'_i)$. As $y_i\in p^{-1}(f(x_i))$ and $y_{i+1}\in p^{-1}(f(x_{i+1}))$, this implies that $d_{\widehat{T}}(y_i,y_{i+1})=d_{\widehat{T}}(y_i,y'_i)+d_{\widehat{T}}(y'_i,y_{i+1})$, from which Lemma \ref{observation} follows.
\end{proof}

\begin{lemma}\label{pullback-tree}
The metric space $(\mathcal{C}'(S,\widehat{T}),d)$ is a $(G,\mathcal{F})$-tree.
\end{lemma}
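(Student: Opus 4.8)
The plan is to verify the three axioms of an $\mathbb{R}$-tree for $(\mathcal{C}'(S,\widehat{T}),d)$: that it is a metric space (already checked above, so only the tree structure remains), that any two points are joined by an arc, and that this arc is unique and geodesic. The $G$-action is isometric since $f$ and $p$ are $G$-equivariant and the diagonal $G$-action on $S\times\widehat{T}$ permutes admissible sequences; the ellipticity of peripheral subgroups will follow from their ellipticity in both $S$ and $T$ together with surjectivity of $p$, by exhibiting a common fixed point.

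The heart of the argument is to understand geodesics in $\mathcal{C}'(S,\widehat{T})$. First I would observe that the two coordinate projections $\pi_S:\mathcal{C}'(S,\widehat{T})\to S$ and $\pi_{\widehat{T}}:\mathcal{C}'(S,\widehat{T})\to\widehat{T}$ are both $1$-Lipschitz by the inequality $d((x,y),(x',y'))\ge d_S(x,x')+d_{\widehat{T}}(y,y')$ noted above, and in fact each is a morphism: in restriction to a segment on which $f$ is injective, the admissible sequence of length one realizes the distance, so $\pi_S$ (resp. $\pi_{\widehat{T}}$) is an isometry there. Using Lemma \ref{observation}, I would show that for any $(x,y),(x',y')$, one can refine any admissible sequence, without changing its length, so that each step $[x_i,x_{i+1}]$ is a segment on which $f$ is injective and moreover the $S$-coordinates $x_0,\dots,x_k$ lie in monotone (aligned) position in $S$; concretely, subdividing at the points where $f$ folds and choosing the $\widehat{T}$-coordinate as the appropriate projection as in Lemma \ref{observation}. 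This produces a canonical ``straightened'' admissible sequence whose $S$-projection is the segment $[x,x']\subseteq S$ traversed monotonically, and one then checks this sequence is length-minimizing: any admissible sequence projects under $\pi_S$ to a path in $S$ from $x$ to $x'$ of length $\ge d_S(x,x')$, and similarly under $\pi_{\widehat{T}}$, so $l(\sigma)\ge d_S(x,x')+d_{\widehat{T}}$ applied stepwise gives the lower bound matching the straightened sequence.

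From the straightened sequence I would build an explicit arc: parametrize by arc length, mapping into $S\times\widehat{T}$, where along each step the $S$-coordinate moves isometrically along a subsegment of $[x,x']$ and the $\widehat{T}$-coordinate stays in a fixed $p$-fiber (hence moves within that fiber, which is a subtree of $\widehat{T}$). Continuity and the fact that $f(x_t)=p(y_t)$ throughout give a path in $\mathcal{C}'(S,\widehat{T})$; its $d$-length equals $d((x,y),(x',y'))$, so it is a geodesic. For uniqueness, I would argue that any injective path from $(x,y)$ to $(x',y')$ must project under $\pi_S$ to a path in $S$ containing $[x,x']$, and the portions of the path lying over a single point of $[x,x']$ project under $\pi_{\widehat{T}}$ into a single $p$-fiber; since fibers of $p$ are subtrees (Lemma \ref{projection-closed}) and $S$ is a tree, reconstructing the path forces it to coincide with the straightened arc. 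The main obstacle I anticipate is precisely this straightening/uniqueness step — controlling how admissible sequences interact with the folding of $f$ and the $p$-fibers simultaneously, and ruling out ``shortcuts'' that backtrack in the $S$-coordinate while making progress in the $\widehat{T}$-coordinate; Lemma \ref{observation} is the key tool, and the point is that backtracking in $S$ can always be removed without increasing length because the corresponding $\widehat{T}$-motion can be rerouted through the bridge between the relevant $p$-fibers.
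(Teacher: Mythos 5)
Your treatment of path-connectedness and the geodesic computation follows the paper's route: subdivide $[x,x']$ at the points where $f$ fails to be locally injective, take iterated projections for the $\widehat{T}$-coordinates, and use Lemma~\ref{observation} to compare against an arbitrary admissible sequence. (One small caution: your length lower bound $l(\sigma)\ge d_S(x,x')+d_{\widehat{T}}(y,y')$ does not in general match the length of the straightened sequence, since the $\widehat{T}$-coordinate of the straightened sequence can backtrack when $f$ folds; the paper instead shows via Lemma~\ref{observation} that any admissible sequence can be refined, without changing length, into one that refines the canonical $\sigma$.)

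The genuine gap is in the uniqueness step, and you correctly flagged it as the obstacle. Establishing that the straightened path is a geodesic does not show that it is the only topological arc from $(x,y)$ to $(x',y')$ — in $\mathbb{R}^2$, for instance, geodesics are unique while arcs abound. Your proposal to ``reconstruct the path'' by noting that $\pi_S$ of any injective path contains $[x,x']$ and that fibers of $\pi_S$ are trees does not rule out an injective path whose $S$-projection overshoots $[x,x']$ and doubles back, with the $\widehat{T}$-coordinate threading through different sheets on the way out and the way back; nothing in your outline forbids this, and the ``backtracking can be removed without increasing length'' observation only controls the metric, not the topology. The paper's mechanism is different and is the missing ingredient: assuming an embedded circle $S^1\hookrightarrow\mathcal{C}'(S,\widehat{T})$, one takes a point whose $\gamma_S$-image is extremal, finds a segment $I'\subseteq S$ every interior point of which has at least two $\gamma_S$-preimages on the circle, and observes that the corresponding pairs yield, for uncountably many $s\in I'$, pairwise disjoint nondegenerate segments in $\widehat{T}$ (disjoint because their $p$-images are distinct points of $T$). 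This contradicts separability of $\widehat{T}$, which is the key fact your sketch never invokes. Without some such finiteness/separability input, the unique-arc property does not follow from the fiber structure alone.
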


\begin{proof}
\noindent We start by proving that the topological space $\mathcal{C}'(S,\widehat{T})$ is path-connected. Let $(x,y),(x',y')\in\mathcal{C}'(S,\widehat{T})$. Since $f$ is a morphism, the collection $\{x_1,\dots,x_{k-1}\}$ of points in $[x,x']$ at which $f_{|[x,x']}$ is not locally injective is finite. We let $x_0:=x$ and $x_k:=x'$, and let $y_0:=y$. As $p$ preserves alignment, for all $i\in\{0,\dots,k-1\}$, the preimage $p^{-1}(f(x_i))$ is a closed subtree of $\widehat{T}$ (Lemma \ref{projection-closed}). We inductively define $y_{i+1}$ as the projection of $y_i$ to $p^{-1}(f(x_{i+1}))$, for $i\in\{0,\dots,k-2\}$, and we let $y_k:=y'$. We then let $\gamma_i:[0,1]\to\widehat{T}$ be the straight path joining $y_i$ to $y_{i+1}$ in $\widehat{T}$. Since $p$ is alignment-preserving, the path $p\circ\gamma_i$ is a continuous path joining $f(x_i)$ to $f(x_{i+1})$ in $T$, whose image is contained in the segment $[f(x_i),f(x_{i+1})]$. Therefore, composing with the inverse of $f_{|[x_i,x_{i+1}]}$, we get a path $\gamma_i^S:[0,1]\to S$ that joins $x_i$ to $x_{i+1}$. By construction, for all $t\in [0,1]$, we have $f(\gamma^S_i(t))=p(\gamma_i(t))$. The concatenation of all paths $(\gamma^S_i,\gamma_i)$ is a continuous path joining $(x,y)$ to $(x',y')$ in $\mathcal{C}'(S,\widehat{T})$. This proves that $\mathcal{C}'(S,\widehat{T})$ is path-connected.

We will now prove that the path we have constructed has length $d((x,y),(x',y'))$, which shows that the metric space $(\mathcal{C}'(S,\widehat{T}),d)$ is geodesic. Notice that the sequence $\sigma:=((x_0,y_0),\dots,(x_k,y_k))$ constructed above is admissible. We first claim that it realizes the infimum in the definition of $d((x,y),(x',y'))$. Let $\sigma':=((x'_0,y'_0),\dots,(x'_{k'},y'_{k'}))$ be another admissible sequence. For all $i\in\{1,\dots,k'-1\}$, let $x''_i$ be the projection of $x'_i$ to the segment $[x,x']$. Let $y''_0:=y'_0$, and inductively define $y''_{i+1}$ as the projection of $y''_i$ to the closed subtree $\pi^{-1}(x''_i)$. By Lemma \ref{observation}, the sequence $\sigma''$ we get by inserting $(x''_i,y''_i)$ between $(x'_i,y'_i)$ and $(x'_{i+1},y'_{i+1})$ in $\sigma'$ for all $i\in\{1,\dots,k'-1\}$ such that $x'_i$ and $x'_{i+1}$ do not project to the same point of $[x,x']$ is admissible, and $l(\sigma'')=l(\sigma')$.  By construction, the sequence $\sigma^{3}$ is a refinement of $\sigma$, so $l(\sigma)\le l(\sigma'')$. The claim follows.

In addition, if we identify $[x_i,x_{i+1}]$ (respectively $[y_i,y_{i+1}]$) with $[0,d_S(x_i,x_{i+1})]$ (resp. $[0,d_{\widehat{T}}(y_i,y_{i+1})]$), the path $(\gamma_i,\gamma_i^S)$ is the graph of a continuous non-decreasing map, whose length is thus equal to $d_{S}(x_i,x_{i+1})+d_{\widehat{T}}(y_i,y_{i+1})$. This follows from the fact that $f$ is isometric in restriction to $[x_i,x_{i+1}]$. Together with the claim from the above paragraph, this shows that the arc we have built from $(x,y)$ to $(x',y')$ has length $d((x,y),(x',y'))$. 

We finally show that $\mathcal{C}'(S,\widehat{T})$ is uniquely path-connected. Assume that there exists a topological embedding $\gamma=(\gamma_S,\gamma_{\widehat{T}}):S^1\to\mathcal{C}'(S,\widehat{T})$ from the circle into $\mathcal{C}'(S,\widehat{T})$. The map $\gamma_S$ cannot be constant, because the fiber of every point in $S$ (under the projection map from $\mathcal{C}'(S,\widehat{T})$ to $S$) is a tree. As $S$ is an $\mathbb{R}$-tree, there exists $u\in S^1$ whose $\gamma_S$-image is extremal in $\gamma_S(S^{1})$. Then $\gamma_S(S^{1})$ contains a segment $I\subseteq S$ whose extremity is equal to $\gamma_S(u)$. There is a subsegment $I'\subseteq I$, one of whose endpoints is equal to $\gamma_S(u)$, such that all points in the interior of $I'$ have at least two $\gamma_S$-preimages in $S^1$ (one on each side of $u$). Hence there exists an uncountable set $J$ of elements $s\in S^1$, with $\gamma_S(s)\neq\gamma_S(t)$ for all $s\neq t\in J$, and such that for all $s\in J$, there exists $s'\in S^1\smallsetminus\{s\}$ satisfying $\gamma_S(s)=\gamma_S(s')$. Injectivity of $\gamma$ implies that for all $s\in S$, the segment $[\gamma_{\widehat{T}}(s),\gamma_{\widehat{T}}(s')]$ is nondegenerate. In addition, for all $s\neq t\in J$, the segments $[\gamma_{\widehat{T}}(s),\gamma_{\widehat{T}}(s')]$ and $[\gamma_{\widehat{T}}(t),\gamma_{\widehat{T}}(t')]$ are disjoint, because they project to distinct points in $T$. We have thus found an uncountable collection of pairwise disjoint nondegenerate segments in $\widehat{T}$, which contradicts separability of $\widehat{T}$ (which follows from minimality). Therefore, there is no topological embedding from the circle into $\mathcal{C}'(S,\widehat{T})$. We have thus proved that any two points $(x,y),(x',y')\in\mathcal{C}'(S,\widehat{T})$ are joined by a unique embedded topological arc, and this arc has length $d((x,y),(x',y'))$. This shows that $\mathcal{C}'(S,\widehat{T})$ is an $\mathbb{R}$-tree. 

For all $g\in G$, the image of an admissible sequence under the action of $g$ is again admissible (by equivariance of $f$ and $p$). Therefore, as $G$ acts by isometries on each of the trees $S$ and $\widehat{T}$, it also acts by isometries on $\mathcal{C}'(S,\widehat{T})$. As all peripheral subgroups of $G$ act elliptically in both $S$ and $\widehat{T}$, they also act elliptically in $\mathcal{C}'(S,\widehat{T})$. Hence $\mathcal{C}'(S,\widehat{T})$ is a $(G,\mathcal{F})$-tree. 
\end{proof}

\begin{de}
Let $S,T$ and $\widehat{T}$ be tame $(G,\mathcal{F})$-trees, such that there exists a $1$-Lipschitz alignment-preserving map $p:\widehat{T}\to T$ and a morphism $f:S\to T$. The \emph{pullack} $\mathcal{C}(S,\widehat{T})$ induced by $f$ and $p$ is defined to be the $G$-minimal subtree of $\mathcal{C}'(S,\widehat{T})$.  
\end{de}

\begin{lemma}\label{pullback-small}
Let $S,T$ and $\widehat{T}$ be $(G,\mathcal{F})$-trees, let $p:\widehat{T}\to T$ be an alignment-preserving map, and let $f:S\to T$ be a morphism. Assume that there exists $k\in\mathbb{N}$ such that $S$ and $\widehat{T}$ are $k$-tame. Then $\mathcal{C}(S,\widehat{T})$ is $k$-tame. 
\end{lemma}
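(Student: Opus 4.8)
The plan is to deduce both defining properties of a $k$-tame tree for $\mathcal{C}:=\mathcal{C}(S,\widehat{T})$ from the corresponding properties of $S$ and $\widehat{T}$, using that $\mathcal{C}$ is a $G$-invariant subtree of $\mathcal{C}'(S,\widehat{T})$ (Lemma~\ref{pullback-tree}) on which $G$ acts through the \emph{diagonal} action on $S\times\widehat{T}$. The point is that under a diagonal action, fixed-point sets and pointwise arc stabilizers split into their two coordinates, so everything reduces to facts about $S$ and $\widehat{T}$ separately.

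First I would check that $\mathcal{C}$ is small, i.e. that its arc stabilizers belong to $\mathcal{Z}$. Since arcs of $\mathcal{C}$ are arcs of $\mathcal{C}'(S,\widehat{T})$, it suffices to bound arc stabilizers of $\mathcal{C}'(S,\widehat{T})$. Let $J\subseteq\mathcal{C}'(S,\widehat{T})$ be a nondegenerate arc and pick two distinct points $(x,y),(x',y')\in J$; then $x\neq x'$ or $y\neq y'$. Every $h\in\text{Stab}(J)$ fixes $(x,y)$ and $(x',y')$. In the case $x\neq x'$, the element $h$ fixes $x$ and $x'$ in $S$, hence fixes the segment $[x,x']$ pointwise, so $\text{Stab}(J)\subseteq\text{Stab}_S([x,x'])$; as $S$ is small this last group lies in $\mathcal{Z}$, and $\mathcal{Z}$ is stable under passing to subgroups (a nonperipheral element of a free product has infinite order and no nontrivial peripheral power), so $\text{Stab}(J)\in\mathcal{Z}$. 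The case $x=x'$, $y\neq y'$ is symmetric, using smallness of $\widehat{T}$. Hence $\mathcal{C}$ is small.

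Next, let $g\in G$ be nonperipheral and $l\geq 1$. Because the action on $\mathcal{C}'(S,\widehat{T})$ is induced by the diagonal action on $S\times\widehat{T}$, a point $(x,y)\in\mathcal{C}'(S,\widehat{T})$ is fixed by $g^m$ if and only if $g^mx=x$ and $g^my=y$, so
\[
\text{Fix}_{\mathcal{C}'(S,\widehat{T})}(g^m)=\bigl(\text{Fix}_S(g^m)\times\text{Fix}_{\widehat{T}}(g^m)\bigr)\cap\mathcal{C}'(S,\widehat{T}).
\]
Since $S$ and $\widehat{T}$ are $k$-tame, $\text{Fix}_S(g^{kl})=\text{Fix}_S(g^k)$ and $\text{Fix}_{\widehat{T}}(g^{kl})=\text{Fix}_{\widehat{T}}(g^k)$, whence $\text{Fix}_{\mathcal{C}'(S,\widehat{T})}(g^{kl})=\text{Fix}_{\mathcal{C}'(S,\widehat{T})}(g^k)$. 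Intersecting with the subtree $\mathcal{C}$ gives $\text{Fix}_{\mathcal{C}}(g^{kl})=\text{Fix}_{\mathcal{C}}(g^k)$. Combined with the smallness of $\mathcal{C}$ established above, this is exactly the criterion for $\mathcal{C}$ to be $k$-tame.

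There is essentially no serious obstacle here; the only point requiring a moment's care is the stability of $\mathcal{Z}$ under passing to subgroups used in the first step, which rests on the standard structure of free products (torsion elements and elements with a nontrivial peripheral power are conjugate into a factor). Everything else is a direct unwinding of the diagonal action.
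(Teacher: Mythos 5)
Your proof is correct and its core observation is the same as the paper's: the $G$-action on the fiber product is diagonal, so stabilizers and fixed-point sets decompose coordinatewise, and the $k$-tameness of $S$ and $\widehat{T}$ transfers immediately to $\mathcal{C}(S,\widehat{T})$ (you phrase this via the $\text{Fix}(g^{k})=\text{Fix}(g^{kl})$ characterization of $k$-tameness, whereas the paper uses the arc characterization directly; the two are declared equivalent in the text). The one difference is that you explicitly verify that arc stabilizers of $\mathcal{C}(S,\widehat{T})$ lie in $\mathcal{Z}$, i.e.\ that $\mathcal{C}(S,\widehat{T})$ is small, which the paper's proof leaves implicit; since smallness is built into the definition of a $k$-tame tree, this is a worthwhile precision rather than a redundancy.
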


\begin{proof}
Let $g\in G$, and let $I:=[(x,y),(x',y')]$ be an arc in $\mathcal{C}(S,\widehat{T})$. Assume that there exists $l\in\mathbb{N}$ such that $g^lI=I$. Then both $[x,x']\subseteq S$ and $[y,y']\subseteq\widehat{T}$ are fixed by $g^l$. Since $S$ and $\widehat{T}$ are $k$-tame, we have $g^k[x,x']=[x,x']$ and $g^k[y,y']=[y,y']$, so $g^kI=I$. This implies that $\mathcal{C}(S,\widehat{T})$ is $k$-tame.
\end{proof}

\subsubsection{Pullbacks of tame optimal folding paths}

We will now present a construction that will prove the following proposition.

\begin{prop}\label{prop-pull}
Let $T$ and $\widehat{T}$ be tame $(G,\mathcal{F})$-trees, and let $p:\widehat{T}\to T$ be a $1$-Lipschitz alignment-preserving map. Let $\gamma$ be a tame optimal folding path ending at $T$. Then there exists a reparameterization $\gamma'$ of $\gamma$, and a reparameterized tame optimal folding path $\widetilde{\gamma}$ ending at $\widehat{T}$ such that for all $t\in\mathbb{R}_+$, there is an alignment-preserving map from $\widetilde{\gamma}(t)$ to $\gamma'(t)$.
\end{prop}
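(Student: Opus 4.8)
The plan is to run the fiber-product construction of Section~\ref{sec-pullbacks} fiberwise along $\gamma$. Write $L$ for the parameter with $\gamma(L)=T$, and let $f_{t,L}\colon\gamma(t)\to T$ be the optimal morphism guiding $\gamma$. For each $t$ I would set $\widetilde\gamma(t):=\mathcal C(\gamma(t),\widehat T)$, the pullback induced by $f_{t,L}$ and $p$. By Lemma~\ref{pullback-tree} this is a $(G,\mathcal F)$-tree, and by Lemma~\ref{pullback-small} it is $k$-tame whenever $\gamma$ and $\widehat T$ are $k$-tame, hence tame. The first-coordinate projection $q_t\colon\widetilde\gamma(t)\to\gamma(t)$, $(x,y)\mapsto x$, is $1$-Lipschitz by the definition of the pullback metric, and it is alignment-preserving: the description of geodesics of $\mathcal C'(\gamma(t),\widehat T)$ obtained in the proof of Lemma~\ref{pullback-tree} (a geodesic is a concatenation of graphs of monotone maps over the successive subsegments of $[x,x']$ on which $f_{t,L}$ is injective) shows that $q_t$ sends segments onto segments. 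Finally, since $f_{L,L}=\mathrm{id}_T$, the tree $\widetilde\gamma(L)=\mathcal C(T,\widehat T)$ is canonically identified with $\widehat T$ through the second projection, so the family ends at $\widehat T$.

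I would then equip this family with morphisms. For $0\le t<t'\le L$ set $\widetilde f_{t,t'}\colon\widetilde\gamma(t)\to\widetilde\gamma(t')$, $(x,y)\mapsto(f_{t,t'}(x),y)$; this is well-defined and $G$-equivariant since $f_{t,L}=f_{t',L}\circ f_{t,t'}$, and the cocycle relation $\widetilde f_{t,t''}=\widetilde f_{t',t''}\circ\widetilde f_{t,t'}$ is immediate, as is $q_{t'}\circ\widetilde f_{t,t'}=f_{t,t'}\circ q_t$. It is a morphism: any segment of $\widetilde\gamma(t)$ lies over a segment $[x,x']$ of $\gamma(t)$ which subdivides into finitely many subsegments on which both $f_{t,L}$ is injective and $f_{t,t'}$ is isometric, and over each of these $\widetilde f_{t,t'}$ is an isometry. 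It is optimal: given $(x,y)\in\widetilde\gamma(t)$, optimality of $f_{t,t'}$ gives two directions $d_1,d_2$ at $x$ in distinct gates; lifting each $d_i$ to a direction $\widetilde d_i$ at $(x,y)$ — for $x'$ near $x$ along $d_i$, pick $y'$ equal to the projection of $y$ onto the closed subtree $p^{-1}(f_{t,L}(x'))$ (Lemma~\ref{projection-closed}) — one sees that $\widetilde f_{t,t'}(\widetilde d_1)$ and $\widetilde f_{t,t'}(\widetilde d_2)$ project to the distinct directions $f_{t,t'}(d_1),f_{t,t'}(d_2)$ downstairs, hence lie in distinct gates. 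In particular $\widetilde f_{t,L}$ is the second projection $\widetilde\gamma(t)\to\widehat T$, and it is an optimal morphism.

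The remaining, and main, task is to organize $(\widetilde\gamma(t))_{t\in[0,L]}$ into an optimal liberal folding path and to match its natural parameterization against that of $\gamma$; this is exactly what forces the reparameterizations in the statement, since, as in the simplicial setting of Handel--Mosher and Bestvina--Feighn~\cite{HM12,BF13}, folding carried out downstairs may have to be realized by strictly more, or differently timed, folding upstairs. Concretely I would show that $t\mapsto\widetilde\gamma(t)$ is continuous away from a locally finite set of exceptional times — this follows, as in Proposition~\ref{collapse}, from the fact that for $t$ near $t_0$ only a small total amount of a fundamental domain is folded by $f_{t,t_0}$ (or $f_{t_0,t}$), and since $q$ is $1$-Lipschitz and the pullback metric is additive, the amount folded by $\widetilde f_{t,t_0}$ (or $\widetilde f_{t_0,t}$) is correspondingly small, so $||g||_{\widetilde\gamma(t)}\to||g||_{\widetilde\gamma(t_0)}$ for every $g\in G$. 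At each exceptional time $t_0$ the one-sided limit trees exist and are related to $\widetilde\gamma(t_0)$ by morphisms obtained from the limits of the $\widetilde f$'s; there I would re-interpolate, over a new parameter interval, by the canonical optimal folding path of Proposition~\ref{canonical-good} between these trees (still $k$-tame and optimal), pausing $\gamma$ correspondingly, and checking that the $1$-Lipschitz alignment-preserving maps $q_t$ extend over the inserted intervals, using that the interpolated trees collapse onto the corresponding tree of $\gamma$. (Alternatively one could discretize $\gamma$ into a tame optimal folding sequence, pull back to a tame optimal folding sequence by the argument above — a pullback analogue of Proposition~\ref{def-collapse} — and then interpolate.) This produces $\gamma'$, $\widetilde\gamma$ and the maps $\widetilde\gamma(t)\to\gamma'(t)$.

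The hard part will be this last step: controlling precisely the discrepancy between the folding carried out downstairs and the folding carried out upstairs, and assembling the interpolating folding paths into a single reparameterized tame optimal folding path while keeping track of all the collapse maps. This is the technical heart of the proposition, and it mirrors the combinatorial subtlety already present in the simplicial pullback constructions of~\cite{HM12} and~\cite{BF13}.
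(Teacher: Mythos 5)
Your overall strategy matches the paper's: take $\widetilde\gamma(t)=\mathcal{C}(\gamma(t),\widehat T)$, endow it with the first-coordinate collapse map and the coordinate-wise morphisms $\widetilde f_{t,t'}(x,y)=(f_{t,t'}(x),y)$, use monotonicity of length functions to get at most countably many discontinuity times, and insert interpolating optimal folding paths at those times (the paper's $\widehat\gamma^{cont}$). But the optimality argument you give for $\widetilde f_{t,t'}$ has a real gap. The lift you describe of a direction $d_i$ at $x\in\gamma(t)$ to a direction at $(x,y)$ --- setting $y'$ to be the projection of $y$ onto $p^{-1}(f_{t,L}(x'))$ as $x'$ moves into $d_i$ --- does not produce a direction based at $(x,y)$ when the fiber $p^{-1}(f_{t,L}(x))$ is nondegenerate and $y$ is not the exit point of that fiber towards $d_i$: the projections $y'$ converge to the exit point, not to $y$. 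In that situation \emph{all} directions at $(x,y)$ lie inside the fiber $\{x\}\times p^{-1}(f_{t,L}(x))$, and legality of turns must be argued differently: $\widetilde f_{t,t'}$ is isometric in restriction to fibers (the $\widehat T$-coordinate is unchanged), so two fiber directions are never identified; and if there is only one fiber direction at $(x,y)$, it cannot be identified with a non-fiber direction, since otherwise $f_{t,t'}\circ q_t$ would collapse a nondegenerate arc, contradicting that $f_{t,t'}$ is a morphism. This case analysis is precisely the content of the paper's Lemma~\ref{morphisms}, and your sketch skips it.

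A secondary imprecision: the second projection $\mathcal{C}(T,\widehat T)\to\widehat T$ is a bijection but not an isometry, since the pullback metric is $d_T(p(y),p(y'))+d_{\widehat T}(y,y')$; so $\mathcal{C}(T,\widehat T)\cong T+\widehat T$ in the notation of Section~\ref{sec-compatibility}, not $\widehat T$ (unless $T$ is trivial). The paper's own remark after Proposition~\ref{def-pullback}, and the added $\mathcal{Z}$-incompatibility hypothesis of Proposition~\ref{pullback}, both acknowledge that a pullback need not literally end at $\widehat T$. You should not assert a canonical isometric identification at $t=L$.
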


Let $S,T$ and $\widehat{T}$ be $k$-tame $(G,\mathcal{F})$-trees, such that there exists a $1$-Lipschitz alignment-preserving map $p:\widehat{T}\to T$. Let $\gamma$ be a $k$-tame optimal liberal folding path from $S$ to $T$, guided by an optimal morphism $f:S\to T$. For all $t\in\mathbb{R}_+$, let $\widehat{\gamma}(t)$ denote the pullback $\mathcal{C}(\gamma(t),\widehat{T})$. Lemma \ref{pullback-small} implies that for all $t\in\mathbb{R}_+$, the $(G,\mathcal{F})$-tree $\widehat{\gamma}(t)$ is $k$-tame. For all $t\in\mathbb{R}_+$, there is an alignment-preserving map $p_t:\widehat{\gamma}(t)\to\gamma(t)$.  

The path $\widehat{\gamma}$ will be called the \emph{pullback} of $\gamma$ induced by $p$. We will prove below (Lemma \ref{morphisms}) that for all $t<t'$, there is an optimal morphism $\widehat{f}_{t,t'}:\widehat{\gamma}(t)\to\widehat{\gamma}(t')$, and these satisfy $\widehat{f}_{t,t''}=\widehat{f}_{t',t''}\circ\widehat{f}_{t,t'}$ for all $t<t'<t''$. However, the path $\widehat{\gamma}$ may fail to be an optimal liberal folding path, because it may be discontinuous. There is a way of turning $\widehat{\gamma}$ into a (continuous) optimal liberal folding path. As length functions can only decrease along the path $\widehat{\gamma}$, there are (at most) countably many times $t$ at which $\widehat{\gamma}(t^-)\neq\widehat{\gamma}(t^+)$, where $\widehat{\gamma}(t^-)$ (resp. $\widehat{\gamma}(t^+)$) is the limit of the trees $\widehat{\gamma}(s)$ as $s$ converges to $t$ from below (resp. from above). If $t$ is one of these discontinuity times, we will show that there exists an optimal morphism from $\widehat{\gamma}(t^-)$ to $\widehat{\gamma}(t^+)$. By inserting the corresponding optimal liberal folding paths at all discontinuity times (and reparameterizing if needed, in particular in case the identification times for the inserted paths are unbounded), we will get a continuous path $\widehat{\gamma}^{cont}$, called a \emph{continuous pullback} of $\gamma$ induced by $p$. We will show that all trees in the inserted path collapse to $\gamma(t)$: this follows from Lemmas \ref{pullback-morphisms} and \ref{folding-collapse} below. 

\begin{lemma}\label{morphisms}
There exist optimal morphisms $\widehat{f}_{t,t'}:\widehat{\gamma}(t)\to\widehat{\gamma}(t')$ for all $t<t'$, such that $\widehat{f}_{t,t''}=\widehat{f}_{t',t''}\circ\widehat{f}_{t,t'}$ for all $t<t'<t''$.
\end{lemma}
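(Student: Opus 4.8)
The plan is to construct the morphisms $\widehat{f}_{t,t'}$ directly from the morphisms $f_{t,t'}:\gamma(t)\to\gamma(t')$ of the original folding path, using the fiber product description of the pullback trees. Recall that, by construction, a point of $\widehat{\gamma}(t)=\mathcal{C}(\gamma(t),\widehat{T})$ is (the class of) a pair $(x,y)$ with $x\in\gamma(t)$, $y\in\widehat{T}$, and $f_t(x)=p(y)$, where $f_t:\gamma(t)\to T$ is the morphism guiding the folding path (and $f_t=f_{t',L}\circ f_{t,t'}$ for $t<t'$). Given such a pair, I would set $\widehat{f}_{t,t'}(x,y):=(f_{t,t'}(x),y)$. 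This is well defined as a point of $\mathcal{C}'(\gamma(t'),\widehat{T})$ because $f_{t'}(f_{t,t'}(x))=f_t(x)=p(y)$; one must then check it lands in the minimal subtree $\mathcal{C}(\gamma(t'),\widehat{T})$, which follows since the map is $G$-equivariant (as $f_{t,t'}$ and the identity on $\widehat{T}$ are) and the minimal subtree is characterized by its translation length function, which can only decrease under equivariant maps that are isometries on the dense-orbit parts — more simply, one checks that the image of the minimal subtree is itself $G$-invariant and non-degenerate, hence contains the minimal subtree, and that the construction is symmetric enough to land inside it. The compatibility relation $\widehat{f}_{t,t''}=\widehat{f}_{t',t''}\circ\widehat{f}_{t,t'}$ is then immediate from $f_{t,t''}=f_{t',t''}\circ f_{t,t'}$ and the fact that the second coordinate is untouched.

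The next step is to verify that $\widehat{f}_{t,t'}$ is a morphism. Here I would use Lemma~\ref{observation} and the explicit geodesic description from the proof of Lemma~\ref{pullback-tree}: a segment in $\mathcal{C}'(\gamma(t),\widehat{T})$ between $(x,y)$ and $(x',y')$ is subdivided into finitely many pieces, each of which is the graph of a non-decreasing map between a subsegment $[x_i,x_{i+1}]\subseteq\gamma(t)$ on which $f_t$ is injective and the corresponding bridge in $\widehat{T}$, with $d_{\widehat\gamma(t)}((x_i,y_i),(x_{i+1},y_{i+1}))=d_{\gamma(t)}(x_i,x_{i+1})+d_{\widehat{T}}(y_i,y_{i+1})$. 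On each such piece, after further subdividing $[x_i,x_{i+1}]$ along the (finitely many) points where $f_{t,t'}$ fails to be locally injective, $f_{t,t'}$ is an isometry, and correspondingly (invoking Lemma~\ref{observation} to insert the projection points of $y_i$ and keep lengths additive) $\widehat{f}_{t,t'}$ is an isometry: it simply replaces the $\gamma(t)$-coordinate subsegment by an isometric subsegment of $\gamma(t')$ while keeping the $\widehat{T}$-coordinate fixed, and the target segment length formula in $\widehat{\gamma}(t')$ has the same additive form. So every segment of $\widehat{\gamma}(t)$ is subdivided into finitely many subsegments on which $\widehat{f}_{t,t'}$ is an isometry.

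Finally, for optimality, I would show that $\widehat{f}_{t,t'}$ has at least two gates at every point $(x,y)\in\widehat{\gamma}(t)$. Since $\gamma$ is an optimal liberal folding path, the morphism $f_{t,t'}$ has at least two gates at $x\in\gamma(t)$, i.e.\ there are at least two directions $d_1,d_2$ at $x$ in $\gamma(t)$ that are not identified by $f_{t,t'}$. Using the path-connectedness construction of Lemma~\ref{pullback-tree} (every path in $\gamma(t)$ starting at $x$ and contained in a direction $d_j$ lifts, via projections in the $\widehat{T}$-fibers, to a path in $\widehat{\gamma}(t)$ starting at $(x,y)$), these lift to at least two directions $\widehat{d}_1,\widehat{d}_2$ at $(x,y)$ in $\widehat{\gamma}(t)$; I would check that $\widehat{f}_{t,t'}$ does not identify $\widehat{d}_1$ with $\widehat{d}_2$, precisely because its $\gamma(t')$-coordinate already separates the images of $d_1$ and $d_2$ under $f_{t,t'}$. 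Thus $\widehat{f}_{t,t'}$ is optimal. The main obstacle I anticipate is the careful bookkeeping in the morphism verification — tracking how the finitely many non-injectivity points of $f_t$ and the finitely many non-local-injectivity points of $f_{t,t'}$ interact, and making sure the additive length formula of Lemma~\ref{pullback-tree} transfers cleanly from $\widehat{\gamma}(t)$ to $\widehat{\gamma}(t')$; invoking Lemma~\ref{observation} to normalize admissible sequences should handle this but requires attention to detail.
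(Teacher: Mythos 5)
Your definition of $\widehat{f}_{t,t'}$, the well-definedness check, the compatibility relation, and the morphism verification all follow the paper's argument. The gap is in the optimality step. You claim that two directions $d_1,d_2$ at $x$ in $\gamma(t)$ that are not identified by $f_{t,t'}$ lift, via the path-connectedness construction, to two distinct directions $\widehat{d}_1,\widehat{d}_2$ at $(x,y)$, and that these are separated by the $\gamma(t')$-coordinate. This works when the fiber $p_t^{-1}(x)$ is a single point, but it fails as soon as that fiber is nondegenerate and $y$ is not an extremal point of it. In that case, any path leaving $(x,y)$ whose $\gamma(t)$-coordinate eventually leaves $x$ must first traverse a nondegenerate subsegment inside the fiber (the lift of $[x,x']$ from Lemma~\ref{pullback-tree} moves the $\widehat{T}$-coordinate from $y$ to the bridge point between $p^{-1}(f_t(x))$ and $p^{-1}(f_t(x'))$, keeping the $\gamma(t)$-coordinate constant at $x$ during that portion). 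So the ``lifts'' of $d_1$ and $d_2$ may have the same initial direction at $(x,y)$ --- a fiber direction --- and in particular your separation-by-$\gamma(t')$-coordinate criterion is vacuous there, since that coordinate is constant along a fiber direction.

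The paper's proof handles exactly this case by a different argument. If $p_t^{-1}(x)$ is nondegenerate, pick a direction $d$ at $(x,y)$ contained in the fiber. If there is a second fiber direction $d'$, they are not identified because $\widehat{f}_{t,t'}$ acts as the identity in the $\widehat{T}$-coordinate, hence is an isometry on fibers. If $d$ is the only fiber direction, minimality of $\widehat{\gamma}(t)$ gives a non-fiber direction $d'$; they cannot be identified, for otherwise a small nondegenerate arc $I$ in $d'$ would be collapsed by $p_{t'}\circ\widehat{f}_{t,t'}=f_{t,t'}\circ p_t$, while $p_t(I)$ is nondegenerate and $f_{t,t'}$ is a morphism. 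Only when the fiber is degenerate does the paper use the lifting argument you describe. So your proposal is missing a case analysis on $p_t^{-1}(x)$, and the missing case requires a genuinely different mechanism (the isometric embedding of the fiber) rather than lifting directions from $\gamma(t)$.
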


\begin{proof}
Let $t<t'\in\mathbb{R}_+$. We can define a $G$-equivariant map $\widetilde{f}_{t,t'}:\mathcal{C}'(\gamma(t),\widehat{T})\to\mathcal{C}'(\gamma(t'),\widehat{T})$ by setting $\widetilde{f}_{t,t'}(x,y):=(f_{t,t'}(x),y)$. Indeed, if $(x,y)\in\mathcal{C}'(\gamma(t),\widehat{T})$, then $f_{t,L}(x)=p(y)$, hence $f_{t',L}(f_{t,t'}(x))=p(y)$. We claim that the map $\widetilde{f}_{t,t'}$ is a morphism. Indeed, let $((x,y),(x',y'))\in\mathcal{C'}(\gamma(t),\widehat{T})^2$. As $f_{t,t'}$ is a morphism, the segment $[x,x']\subseteq\gamma(t)$ can be subdivided into finitely many subsegments $[x_i,x_{i+1}]$, in restriction to which $f_{t,t'}$ is an isometry. Using the arguments from the proof of Lemma \ref{pullback-tree}, we see that the segment $[(x,y),(x',y')]$ can be subdivided into finitely many subsegments that are either of the form $[(x_i,y_i),(x_{i+1},y_{i+1})]$ or $[(x_i,y_i),(x_i,y'_i)]$, in restriction to which $\widetilde{f}_{t,t'}$ is an isometry. 

We now prove that $\widetilde{f}_{t,t'}$ induces an optimal morphism $\widehat{f}_{t,t'}:\widehat{\gamma}(t)\to\widehat{\gamma}(t')$. For all $t\in\mathbb{R}$, the map $p_t$ preserves alignment and is surjective by minimality of $\gamma(t)$, so every arc in $\gamma(t)$ lifts to an arc in $\widehat{\gamma}(t)$. We also notice that for all $t\in\mathbb{R}$, the fibers of the map $p_t$ isometrically embed into $\widehat{T}$, and hence into $\widehat{\gamma}(t')$ for all $t'>t$. Let $\widehat{x}\in\widehat{\gamma}(t)$. Let $x:=p_t(\widehat{x})$. If $p_t^{-1}(x)$ is not reduced to a point, then we can find a direction $d$ at $\widehat{x}$ in $\widehat{\gamma}(t)$ that is contained in $p_t^{-1}(x)$. If there exists another direction $d'$ at $\widehat{x}$ contained in $p_t^{-1}(x)$, then the turn $(d,d')$ is legal. Otherwise, minimality of $\widehat{\gamma}(t)$ shows the existence of a direction $d'$ at $x$ that is not contained in $p_t^{-1}(x)$. We claim that the directions $d$ and $d'$ cannot be identified by $\widehat{f}_{t,t'}$. Indeed, otherwise, any small nondegenerate arc $I$ contained in the direction $d'$ would be mapped to a point by $p_{t'}\circ \widehat{f}_{t,t'}$, and hence by $f_{t,t'}\circ \pi_t$, contradicting the fact that $f_{t,t'}$ is a morphism and $p_t(I)$ is a nondegenerate arc. If $p_t^{-1}(x)$ is reduced to a point, then every legal turn at $x$ lifts to a legal turn at $\widehat{x}$, and optimality of $f_{t,t'}$ ensures the existence of such turns.  
\end{proof}

For all $t\in\mathbb{R}_+$, we denote by $\widehat{\gamma}(t^-)$ (resp. $\widehat{\gamma}(t^+)$) the limit of the trees $\widehat{\gamma}(s)$ as $s$ converges to $t$ from below (resp. from above). This exists by monotonicity of length functions along the path $\widehat{\gamma}$, which comes from the existence of the morphisms $\widehat{f}_{t,t'}$ for all $t,t'\in\mathbb{R}$. As the space of $k$-tame $(G,\mathcal{F})$-trees is closed, the trees $\widehat{\gamma}(t^-)$ and $\widehat{\gamma}(t^+)$ are $k$-tame. It follows from Proposition \ref{Lipschitz-limit} that there are alignment-preserving maps $p_{t^-}:\widehat{\gamma}(t^-)\to\gamma(t)$ and $p_{t^+}:\widehat{\gamma}(t^+)\to\gamma(t)$. Proposition \ref{Lipschitz-limit} also implies that for all $s<t$, there are $1$-Lipschitz maps $\widehat{f}_{s,t^-}$ and $\widehat{f}_{s,t^+}$ from $\widehat{\gamma}(s)$ to the metric completions of both $\widehat{\gamma}(t^-)$ and $\widehat{\gamma}(t^+)$. For all $s>t$, there are $1$-Lipschitz maps $\widehat{f}_{t^-,s}$ and $\widehat{f}_{t^+,s}$ from both $\widehat{\gamma}(t^-)$ and $\widehat{\gamma}(t^+)$ to the metric completion of $\widehat{\gamma}(s)$. There is also a $1$-Lipschitz map $\widehat{f}_{t^-,t^+}$ from $\widehat{\gamma}(t^-)$ to the metric completion of $\widehat{\gamma}(t^+)$. We will show that all these maps are morphisms. We will make use of the following easy lemma, that was noticed by Guirardel and Levitt in \cite[Lemma 3.3]{GL07}.

\begin{lemma}\label{composition-morphisms}
Let $T_1,T_2,T_3$ be $\mathbb{R}$-trees. Let $f:T_1\to T_3$ be a morphism, and let $\phi:T_1\to T_2$ and $\psi:T_2\to T_3$ be $1$-Lipschitz surjective maps, such that $f=\psi\circ\phi$. Then $\phi$ and $\psi$ are morphisms.
\qed
\end{lemma}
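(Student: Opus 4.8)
The plan is to verify the morphism condition for $\phi$ and for $\psi$ separately, using the identity $f=\psi\circ\phi$ together with the $1$-Lipschitz hypotheses to force both maps to be piecewise isometric.

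First I would treat $\phi$. Fix a segment $[a,b]\subseteq T_1$. Since $f$ is a morphism, there is a subdivision $a=x_0,x_1,\dots,x_m=b$ such that $f$ restricts to an isometry on each $[x_i,x_{i+1}]$. On such a subsegment, for all $u,v\in[x_i,x_{i+1}]$ we have
\[
d(u,v)=d(f(u),f(v))=d(\psi(\phi(u)),\psi(\phi(v)))\le d(\phi(u),\phi(v))\le d(u,v),
\]
where the two inequalities use that $\psi$, and then $\phi$, are $1$-Lipschitz. Hence all of these are equalities: $\phi$ is an isometry in restriction to each $[x_i,x_{i+1}]$, so $\phi$ is a morphism. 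As a byproduct, each $\phi([x_i,x_{i+1}])$ is a genuine segment of $T_2$, and the equality $d(\psi(\phi(u)),\psi(\phi(v)))=d(\phi(u),\phi(v))$ shows that $\psi$ is an isometry in restriction to each segment $\phi([x_i,x_{i+1}])$.

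Next I would handle $\psi$. Given a segment $[c,d]\subseteq T_2$, use surjectivity of $\phi$ to choose preimages $a,b\in T_1$ of $c$ and $d$. By the previous paragraph $\phi$ is a morphism, so $\phi([a,b])$ is a finite subtree of $T_2$; being connected and containing $c$ and $d$, it contains $[c,d]$. Subdividing $[a,b]$ as above, the finitely many segments $\phi([x_i,x_{i+1}])$ cover $\phi([a,b])\supseteq[c,d]$, and $\psi$ is an isometry on each of them. The subsegments $[c,d]\cap\phi([x_i,x_{i+1}])$ are then finitely many segments covering $[c,d]$ on each of which $\psi$ is isometric; their endpoints determine an honest subdivision of $[c,d]$ each piece of which lies in some $\phi([x_i,x_{i+1}])$, so $\psi$ restricts to an isometry on each piece. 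Hence $\psi$ is a morphism.

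I do not expect a genuine obstacle: the whole content is the collapsing chain of inequalities, which says that a composition of two $1$-Lipschitz maps which is isometric on a segment forces each factor to be isometric there. The only mild care needed is the bookkeeping in the last step, converting a finite cover of $[c,d]$ by subsegments into a bona fide subdivision, which is routine since the intersection of two segments in an $\mathbb{R}$-tree is again a (possibly degenerate) segment.
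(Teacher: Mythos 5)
The paper does not give a proof of this lemma; it is stated with a \texttt{\textbackslash qed} and attributed to Guirardel--Levitt \cite[Lemma 3.3]{GL07}, so there is no argument in the paper to compare against. Your proof is correct and self-contained. The central step is exactly the right one: on a piece $[x_i,x_{i+1}]$ where $f$ is isometric, the chain
\[
d(u,v)=d(f(u),f(v))=d(\psi\phi(u),\psi\phi(v))\le d(\phi(u),\phi(v))\le d(u,v)
\]
collapses, giving simultaneously that $\phi$ is isometric on $[x_i,x_{i+1}]$ and that $\psi$ is isometric on the segment $\phi([x_i,x_{i+1}])$. The bookkeeping for $\psi$ is also sound: $\phi([a,b])$ is a connected finite union of segments hence a finite subtree containing $c$ and $d$, so it contains $[c,d]$; intersecting the finitely many segments $\phi([x_i,x_{i+1}])$ with $[c,d]$ yields a finite cover of $[c,d]$ by subsegments on each of which $\psi$ is isometric, and sorting the endpoints of these subsegments produces the required finite subdivision because in an interval any point interior to a piece of the subdivision must lie in the interior of one of the covering subsegments, which then contains the whole piece. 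Surjectivity of $\phi$ is used only to find the preimages $a,b$, and surjectivity of $\psi$ is not needed at all for this argument, which is a mild (harmless) strengthening of the stated hypotheses.
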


\begin{lemma}\label{pullback-morphisms}
For all $t\in\mathbb{R}$, the map $\widehat{f}_{t^-,t^+}$ is an optimal morphism, and $p_{t^+}\circ\widehat{f}_{t^-,t^+}=p_{t^-}$. For all $s<t$, the maps $\widehat{f}_{s,t^-}$ and $\widehat{f}_{s,t^+}$ are optimal morphisms. For all $s>t$, the maps $\widehat{f}_{t^-,s}$ and $\widehat{f}_{t^+,s}$ are optimal morphisms. 
\end{lemma}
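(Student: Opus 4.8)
The plan is to prove the three assertions in sequence, in each case factoring a known morphism through the map in question and invoking Lemma \ref{composition-morphisms}. First I would handle the maps $\widehat{f}_{s,t^-}$ and $\widehat{f}_{s,t^+}$ for $s<t$. Pick any $s<u<t$ (or $u>t$ in the second family). By Lemma \ref{morphisms} there is an optimal morphism $\widehat{f}_{s,u}:\widehat{\gamma}(s)\to\widehat{\gamma}(u)$, and by construction this factors as the composite of the $1$-Lipschitz maps $\widehat{f}_{s,t^-}:\widehat{\gamma}(s)\to\widehat{\gamma}(t^-)$ followed by the $1$-Lipschitz map $\widehat{f}_{t^-,u}:\widehat{\gamma}(t^-)\to\widehat{\gamma}(u)$; all these are surjective (surjectivity of the limiting maps follows from minimality together with the fact that length functions are monotone and converge). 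Applying Lemma \ref{composition-morphisms} to $f:=\widehat{f}_{s,u}$, $\phi:=\widehat{f}_{s,t^-}$, $\psi:=\widehat{f}_{t^-,u}$ shows that $\widehat{f}_{s,t^-}$ is a morphism. The same argument, using a parameter $u>t$, handles $\widehat{f}_{t^-,s}$ and $\widehat{f}_{t^+,s}$, and using $s<u<t$ handles $\widehat{f}_{s,t^+}$. For $\widehat{f}_{t^-,t^+}$, factor the morphism $\widehat{f}_{s,u}$ (with $s<t<u$) as $\widehat{f}_{t^+,u}\circ\widehat{f}_{t^-,t^+}\circ\widehat{f}_{s,t^-}$ and apply Lemma \ref{composition-morphisms} twice (or apply it to the already-established morphism $\widehat{f}_{s,u}=\widehat{f}_{t^+,u}\circ(\widehat{f}_{t^-,t^+}\circ\widehat{f}_{s,t^-})$ and then split the inner composite), so that $\widehat{f}_{t^-,t^+}$ is a morphism. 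The compatibility $p_{t^+}\circ\widehat{f}_{t^-,t^+}=p_{t^-}$ is then obtained by passing to the limit in the relations $p_{t'}\circ\widehat{f}_{t,t'}=\ldots$; more precisely, for $s<t$ both sides agree after precomposition with the surjective map $\widehat{f}_{s,t^-}$, since $p_{t^+}\circ\widehat{f}_{t^-,t^+}\circ\widehat{f}_{s,t^-}=p_{t^+}\circ\widehat{f}_{s,t^+}=p_s\circ(\text{inclusion})$ and $p_{t^-}\circ\widehat{f}_{s,t^-}$ equals the same thing, using that $p_s$, $f_{s,L}$ and $p$ are all compatible with the fiber-product structure; surjectivity of $\widehat{f}_{s,t^-}$ then gives the identity.

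It remains to upgrade "morphism" to "optimal morphism" in each case. Here I would argue exactly as in the proof of Lemma \ref{morphisms}: at every point $\widehat{x}$ of the source tree, letting $x$ be its image under the alignment-preserving map to $\gamma(t)$ (that is, $p_{t^-}$, $p_{t^+}$, or $p_s$), one uses the already-established fact that these projection maps are alignment-preserving and surjective, hence every arc and every legal turn downstairs lifts, together with the observation that the fibers of these projections isometrically embed into $\widehat{T}$ and hence into the target tree of the relevant $\widehat{f}$. If the fiber over $x$ is nondegenerate there are two directions at $\widehat{x}$, and either both lie in the fiber (and then the turn is legal, as a fiber arc embeds isometrically into the target) or one lies in the fiber and one does not (and then it cannot be identified with the fiber direction, because a nondegenerate arc in the non-fiber direction maps to a nondegenerate arc downstairs under the composite with the appropriate morphism $f_{t,t'}$ or its limit). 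If the fiber is degenerate, every legal turn at $x$ lifts and optimality downstairs — of $f_{t,t'}$, respectively of its limit, which remains optimal since the space of $k$-tame trees is closed and optimality passes to limits of morphisms between tame trees — provides the required two gates. This gives at least two gates at every point, so the map is optimal.

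The main obstacle I anticipate is the careful bookkeeping at the limiting parameter $t$: one must be sure that the limiting maps $\widehat{f}_{s,t^{\pm}}$, $\widehat{f}_{t^{\pm},s}$, $\widehat{f}_{t^-,t^+}$ and $p_{t^{\pm}}$ exist, are surjective onto the relevant tree (not merely into its metric completion), and genuinely satisfy the factorization identities needed to feed Lemma \ref{composition-morphisms}. Existence and the $1$-Lipschitz bound come from Proposition \ref{Lipschitz-limit}; surjectivity follows because the targets are minimal and the images are $G$-invariant subtrees carrying the full limiting length function; and the factorization identities hold on the dense images of $\widehat{\gamma}(s)$ for $s$ below (or above) $t$ and extend by continuity on segments. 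Once these limiting maps are under control, the rest is a routine iteration of Lemmas \ref{composition-morphisms} and the optimality argument of Lemma \ref{morphisms}.
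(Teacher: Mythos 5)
Your overall strategy is the same as the paper's: establish the factorization identities $\widehat{f}_{s,s'}=\widehat{f}_{t^+,s'}\circ\widehat{f}_{t^-,t^+}\circ\widehat{f}_{s,t^-}$ (and their variants), then apply Lemma \ref{composition-morphisms}, and then handle the compatibility with the collapse maps. However, you gloss over exactly the point the paper singles out as the difficulty. The limiting maps supplied by Proposition \ref{Lipschitz-limit} are built as ultralimits followed by two operations: restriction to the minimal subtree in the source, and projection to (the closure of) the minimal subtree in the target. The ultralimit factorization $\widehat{f}_{s,s'}=\widehat{f}^{\omega}_{t^+,s'}\circ\widehat{f}^{\omega}_{t^-,t^+}\circ\widehat{f}^{\omega}_{s,t^-}$ holds automatically, but it is not automatic that the post-composed projections to minimal subtrees commute with the composition, so the factorization for the restricted/projected maps $\widehat{f}_{\bullet,\bullet}$ needs a justification. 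The paper's resolution is to observe that optimality of the morphisms $\widehat{f}_{s,s'}$ forces minimal subtrees to be sent onto minimal subtrees, so no information is lost by projecting. Your phrase ``the factorization identities hold on the dense images \dots{} and extend by continuity on segments'' does not address this; you acknowledge the obstacle in your last paragraph but do not resolve it, and this is precisely the content of the paper's proof.

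Two smaller issues. First, your choice of auxiliary parameter $u$ in the first family: to feed $\widehat{f}_{s,t^-}$ into Lemma \ref{composition-morphisms} you must factor a \emph{known} morphism as $\psi\circ\widehat{f}_{s,t^-}$, so you should take $u>t$ and write $\widehat{f}_{s,u}=\widehat{f}_{t^-,u}\circ\widehat{f}_{s,t^-}$; the option $s<u<t$ does not factor through $\widehat{\gamma}(t^-)$. Second, your closing paragraph on upgrading to \emph{optimal} morphism invokes the fiber-product structure of $\mathcal{C}(\gamma(t),\widehat{T})$ (``the fibers of these projections isometrically embed into $\widehat{T}$''), which is available for $\widehat{\gamma}(t)$ but not manifestly for the limit trees $\widehat{\gamma}(t^{\pm})$, which are defined as minimal subtrees of ultralimits rather than as explicit pullbacks. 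The paper's proof is itself terse on optimality, but what one actually gets easily from the factorization is that the ``first-applied'' factor ($\widehat{f}_{s,t^-}$) is optimal, because two directions identified by it are already identified by the optimal $\widehat{f}_{s,s'}$; optimality of the inner and last factors requires a bit more care, and your appeal to fiber structure at the limit points would need to be reworked.
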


\begin{proof}
We refer to \cite[Section 4]{Hor14-1} for notations and definitions. Let $\widehat{\gamma}^{\omega}(t^-)$ (resp. $\widehat{\gamma}^{\omega}(t^+)$) be an ultralimit of a sequence of trees $\widehat{\gamma}(s)$, with $s$ converging to $t^-$ (resp. to $t^+$). Then $\widehat{\gamma}^{\omega}(t^-)$ (resp. $\widehat{\gamma}^{\omega}(t^+)$) contains $\widehat{\gamma}(t^-)$ (resp. $\widehat{\gamma}(t^+)$) as its $G$-minimal subtree. In \cite[Theorem 4.3]{Hor14-1}, the map $\widehat{f}_{t^-,t^+}$ is constructed from the ultralimit $\widehat{f}_{t^-,t^+}^{\omega}:\widehat{\gamma}^{\omega}(t^-)\to\widehat{\gamma}^{\omega}(t^+)$ of the maps $\widehat{f}_{s,s'}$, by restricting to the minimal subtree $\widehat{\gamma}(t^-)$, and projecting to the closure of the minimal subtree $\widehat{\gamma}(t^+)$. For all $s<t<s'$, we have $\widehat{f}_{s,s'}=\widehat{f}_{t^+,s'}^{\omega}\circ \widehat{f}_{t^-,t^+}^{\omega}\circ \widehat{f}_{s,t^-}^{\omega}$ (where the maps $\widehat{f}_{t^+,s'}^{\omega}$ and $\widehat{f}_{s,t^-}^{\omega}$ are defined similarly as ultralimits). The difficulty might come from projection to minimal subtrees in the definition of $\widehat{f}_{t^-,t^+}$. However, optimality of $\widehat{f}_{s,s'}$ implies that minimal subtrees are mapped to minimal subtrees, so we get $\widehat{f}_{s,s'}=\widehat{f}_{t^+,s'}\circ \widehat{f}_{t^-,t^+}\circ \widehat{f}_{s,t^-}$. Using Lemma \ref{composition-morphisms}, this implies that all maps $\widehat{f}_{t^+,s'}$, $\widehat{f}_{t^-,t^+}$ and $\widehat{f}_{s,t^-}$ are morphisms. We can similarly prove that $\widehat{f}_{s,t^+}$ and $\widehat{f}_{t^-,s'}$ are morphisms. Any ultralimit of alignment-preserving maps is again alignment-preserving, and hence maps minimal subtrees to minimal subtrees. We similarly deduce that $p_{t^+}\circ\widehat{f}_{t^-,t^+}=p_{t^-}$. 
\end{proof}

\begin{lemma}\label{folding-collapse}
Let $T,T'$ and $\overline{T}$ be minimal $(G,\mathcal{F})$-trees, let $f:T\to T'$ be an optimal morphism, and let $(T_t)_{t\in [0,L]}$ be a folding path guided by $f$. Let $\pi:T\to \overline{T}$ and $\pi':T'\to\overline{T}$ be alignment-preserving maps, such that $\pi'\circ f=\pi$. Then for all $t\in [0,L]$, there is an alignment-preserving map from $T_t$ to $\overline{T}$.
\end{lemma}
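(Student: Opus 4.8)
The plan is to produce the required alignment-preserving map $T_t\to\overline{T}$ as an explicit composition, and then to verify that it preserves alignment by a lifting argument through the morphism $f_{0,t}$.

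Recall that a liberal folding path guided by $f$ comes equipped with morphisms $f_{0,t}:T\to T_t$ and $f_{t,L}:T_t\to T'$ satisfying $f_{t,L}\circ f_{0,t}=f_{0,L}=f$, and that a morphism between minimal $(G,\mathcal{F})$-trees is surjective (its image is a nonempty $G$-invariant subtree, hence all of the target by minimality). I will set
$$\pi_t:=\pi'\circ f_{t,L}:T_t\to\overline{T},$$
which is a $G$-equivariant map, and record the identity $\pi_t\circ f_{0,t}=\pi'\circ f_{t,L}\circ f_{0,t}=\pi'\circ f=\pi$.

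It then remains to check that $\pi_t$ preserves alignment. Fix a segment $[z_1,z_3]\subseteq T_t$ and a point $z_2\in[z_1,z_3]$. Using surjectivity of $f_{0,t}$, choose $x_1,x_3\in T$ with $f_{0,t}(x_i)=z_i$; since $f_{0,t}$ is continuous, the image $f_{0,t}([x_1,x_3])$ is a path-connected subset of the $\mathbb{R}$-tree $T_t$ containing $z_1$ and $z_3$, hence it contains the arc $[z_1,z_3]$, so there is $x_2\in[x_1,x_3]$ with $f_{0,t}(x_2)=z_2$. As $\pi$ preserves alignment, $\pi(x_2)\in[\pi(x_1),\pi(x_3)]$, and rewriting via $\pi_t\circ f_{0,t}=\pi$ this reads $\pi_t(z_2)\in[\pi_t(z_1),\pi_t(z_3)]$. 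Therefore $\pi_t([z_1,z_3])\subseteq[\pi_t(z_1),\pi_t(z_3)]$; since $\pi_t$ is continuous in restriction to segments — being the composite of the morphism $f_{t,L}$ with $\pi'$, which is continuous on finite subtrees — the set $\pi_t([z_1,z_3])$ is compact, connected, and contained in a segment, hence is itself a segment. This proves that $\pi_t$ is alignment-preserving, which is the desired conclusion.

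The argument is essentially formal once one has the right candidate map; the only point requiring a little care is to route $\pi_t$ through $f_{t,L}$ and to test alignment by lifting through $f_{0,t}$, rather than attempting to transport $\pi$ forward along the folding path by hand (the identity $\pi_t\circ f_{0,t}=\pi$ shows a posteriori that $\pi_t$ is that transported map, but writing it as $\pi'\circ f_{t,L}$ makes well-definedness automatic). I do not anticipate any serious obstacle; note in particular that optimality of $f$ is not needed here beyond ensuring that the trees and morphisms in play are minimal and onto.
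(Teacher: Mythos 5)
Your proof is correct and takes essentially the same approach as the paper: same candidate map $\pi'\circ f_{t,L}$, same use of the identity $\pi'\circ f_{t,L}\circ f_{0,t}=\pi$ together with surjectivity and continuity of the morphism $f_{0,t}$. The paper finishes slightly differently, by observing that the point-preimages $(\pi'\circ f_{t,L})^{-1}(y)=f_{0,t}\bigl(\pi^{-1}(y)\bigr)$ are connected, but this is a routine variation on your direct lifting argument.
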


\begin{proof}
Let $y\in \overline{T}$, and let $x\in T_t$ be a preimage of $y$ under the map $\pi'\circ f_{t,L}$. As $\pi'\circ f=\pi$, all $f_{0,t}$-preimages of $x$ in $T$ map to $y$ under $\pi$, so $(\pi'\circ f_{t,L})^{-1}(y)=f_{0,t}(\pi^{-1}(y))$. As $\pi$ preserves alignment, the preimage $\pi^{-1}(y)$ is connected, and therefore $(\pi'\circ f_{t,L})^{-1}(y)$ is connected. This implies that $\pi'\circ f_{t,L}$ preserves alignment.
\end{proof}

This finishes the proof of Proposition \ref{prop-pull}. Again, there is a discrete version of the above construction. The following proposition follows from the above analysis.

\begin{prop}\label{def-pullback}
Let $T$ and $\widehat{T}$ be tame $(G,\mathcal{F})$-trees, and let $p:\widehat{T}\to T$ be a $1$-Lipschitz alignment-preserving map. Let $(\gamma(n))_{n\in\mathbb{N}}$ be a tame optimal folding sequence ending at $T$. Then there exists a tame $(G,\mathcal{F})$-tree $\widehat{\gamma}(\infty)$, and a tame optimal folding sequence $(\widehat{\gamma}(n))_{n\in\mathbb{N}}$ ending at $\widehat{\gamma}(\infty)$, such that 
\begin{itemize}
\item for all $n\in\mathbb{N}$, the tree $\widehat{\gamma}(n)$ collapses to $\gamma(n)$, and
\item there is an alignment-preserving map $p_{\infty}:\widehat{\gamma}(\infty)\to T$, and a morphism $\widehat{f}_{\infty}:\widehat{\gamma}(\infty)\to\widehat{T}$, such that $p_{\infty}=p\circ \widehat{f}_{\infty}$.
\end{itemize}
\qed 
\end{prop}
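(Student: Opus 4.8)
The plan is to build the tree $\widehat\gamma(\infty)$ and the folding sequence $(\widehat\gamma(n))_{n\in\mathbb{N}}$ by running the continuous construction of Sections \ref{sec-pullbacks} entirely at the level of the discrete folding sequence, and then reading off the two asserted properties from the lemmas already proved. Concretely, since $(\gamma(n))_{n\in\mathbb{N}}$ is a tame optimal folding sequence ending at $T$, we have morphisms $f_n\colon\gamma(n)\to T$ with $f_n=f_m\circ f_{n,m}$ for $n<m$. First I would fix $k\in\mathbb{N}$ such that $T$, $\widehat{T}$ and all the $\gamma(n)$ are $k$-tame (possible by Proposition \ref{good}, since a folding sequence ends at $T$ and length functions are monotone; alternatively take $k$ large using that $T$ and $\widehat{T}$ are tame, then use $k$-tameness of the $\gamma(n)$ which are dominated by $\gamma(n+1)$ and ultimately by $T$). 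Then for each $n$ I would set $\widehat\gamma(n):=\mathcal{C}(\gamma(n),\widehat{T})$, the pullback of Definition following Lemma \ref{pullback-tree}, induced by $f_n\colon\gamma(n)\to T$ and $p\colon\widehat{T}\to T$. By Lemma \ref{pullback-small} each $\widehat\gamma(n)$ is $k$-tame, and by construction there is an alignment-preserving map $p_n\colon\widehat\gamma(n)\to\gamma(n)$; this already gives the first bullet, that $\widehat\gamma(n)$ collapses to $\gamma(n)$.

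Next I would produce the morphisms making $(\widehat\gamma(n))_{n\in\mathbb{N}}$ a folding sequence. Exactly as in the proof of Lemma \ref{morphisms}, the assignment $(x,y)\mapsto(f_{n,m}(x),y)$ defines, for $n<m$, a $G$-equivariant map $\widetilde f_{n,m}\colon\mathcal{C}'(\gamma(n),\widehat{T})\to\mathcal{C}'(\gamma(m),\widehat{T})$ which is a morphism (subdivide $[x,x']$ into pieces on which $f_{n,m}$ is isometric, then apply the subdivision argument from Lemma \ref{pullback-tree}), and it restricts to an optimal morphism $\widehat f_{n,m}\colon\widehat\gamma(n)\to\widehat\gamma(m)$ on minimal subtrees; the relations $\widehat f_{n,r}=\widehat f_{m,r}\circ\widehat f_{n,m}$ hold because the corresponding relations hold for $f_{n,m}$. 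Monotonicity of length functions along this sequence (a consequence of the existence of the $\widehat f_{n,m}$) shows that $(\widehat\gamma(n))_{n\in\mathbb{N}}$ converges in the axes topology to a limit tree, which I name $\widehat\gamma(\infty)$; since the space of $k$-tame $(G,\mathcal{F})$-trees is closed (\cite[Proposition 6.4]{Hor14-5}), the tree $\widehat\gamma(\infty)$ is $k$-tame, hence tame. The sequence $(\widehat\gamma(n))_{n\in\mathbb{N}}$ then ends at $\widehat\gamma(\infty)$ by definition, provided the convergence is nonstationary; if it happens to be stationary from some point on I would simply truncate, and the limit is then literally $\widehat\gamma(N)$ for large $N$, which is harmless.

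It remains to produce the maps in the second bullet. Passing to the limit in the family $p_n\colon\widehat\gamma(n)\to\gamma(n)\to T$ (the composite being $1$-Lipschitz and alignment-preserving after composing with $f_n$), Proposition \ref{Lipschitz-limit} gives a $1$-Lipschitz alignment-preserving map $p_\infty\colon\widehat\gamma(\infty)\to T$. On the other hand, the second coordinate projections $q_n\colon\widehat\gamma(n)\to\widehat{T}$, $(x,y)\mapsto y$, are $1$-Lipschitz and are compatible with the morphisms $\widehat f_{n,m}$ (which fix the second coordinate), so they pass to the limit to give a $1$-Lipschitz map $\widehat f_\infty\colon\widehat\gamma(\infty)\to\widehat{T}$; applying Proposition \ref{Lipschitz-limit} again (or Corollary \ref{alignment-preserved} if $\widehat{T}$ has dense orbits, and in general the ultralimit description from \cite{Hor14-1} together with Lemma \ref{composition-morphisms}) upgrades $\widehat f_\infty$ to a morphism. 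The identity $f_n\circ p_n=p\circ q_n$ holds on $\mathcal{C}'(\gamma(n),\widehat{T})$ by the very definition of the fiber product, so in the limit $p_\infty=p\circ\widehat f_\infty$, as required. I expect the only delicate point to be the passage to the limit for $\widehat f_\infty$ and the verification that it is a morphism rather than merely $1$-Lipschitz: this is where one must invoke the ultralimit construction of \cite[Section 4]{Hor14-1} and Lemma \ref{composition-morphisms}, exactly as in the proof of Lemma \ref{pullback-morphisms}, to avoid loss of the morphism property under projection to minimal subtrees. Everything else is a routine transcription of the continuous arguments of Section \ref{sec-pullbacks} to the discrete setting.
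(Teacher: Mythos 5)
Your construction matches what the paper intends when it says the proposition ``follows from the above analysis'' --- set $\widehat\gamma(n):=\mathcal{C}(\gamma(n),\widehat{T})$, read off the first bullet from the first-coordinate projection, build the morphisms $\widehat f_{n,m}(x,y)=(f_{n,m}(x),y)$ as in Lemma \ref{morphisms}, and take the limit --- and the derivation of $p_\infty$ from the alignment-preserving maps $p_n:\widehat\gamma(n)\to\gamma(n)$ via Proposition \ref{Lipschitz-limit} is fine. (Fixing a single $k$ for all the $\gamma(n)$ simultaneously is an over-claim: each $\gamma(n)$ has its own $k_n$, and the existence of morphisms $\gamma(n)\to T$ does not transport $T$'s tameness constant backwards. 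In practice the sequences come from canonical folding paths and are uniformly $k$-tame by Proposition \ref{canonical-good}, so this is minor, but the sentence as written overstates.)

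The step you yourself flag as delicate --- that $\widehat f_\infty$ is a morphism --- is a genuine gap, and your proposed resolution does not close it. Your source for $\widehat f_\infty$ is the second-coordinate projection $q_n:(x,y)\mapsto y$, and you appeal to Lemma \ref{composition-morphisms} to upgrade its limit to a morphism; but that lemma requires a morphism factoring through the limit, and $q_n$ is not one. With the metric $l(\sigma)=\sum_i\bigl(d_S(x_i,x_{i+1})+d_{\widehat T}(y_i,y_{i+1})\bigr)$ on $\mathcal{C}'$, a piece $[(x_i,y_i),(x_{i+1},y_{i+1})]$ of the geodesic on which $f_n$ is injective is contracted by $q_n$ by the ratio $d_{\widehat T}(y_i,y_{i+1})/(d_S(x_i,x_{i+1})+d_{\widehat T}(y_i,y_{i+1}))<1$ whenever $x_i\ne x_{i+1}$, so $q_n$ is $1$-Lipschitz but not piecewise isometric; the morphisms supplied by Lemma \ref{morphisms} are the $\widehat f_{n,m}$ \emph{between pullbacks}, which keep the $\widehat T$-coordinate fixed and are not the projection to $\widehat T$. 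In fact, with this metric $\mathcal{C}(T,\widehat T)$ is the graph of $p$ with the sum metric, i.e.\ the tree $T+\widehat T$, and one checks $\Vert g\Vert_{\widehat\gamma(n)}\ge\Vert g\Vert_{\gamma(n)}+\Vert g\Vert_{\widehat T}$, hence $\Vert g\Vert_{\widehat\gamma(\infty)}\ge\Vert g\Vert_T+\Vert g\Vert_{\widehat T}$, so no morphism $\widehat\gamma(\infty)\to\widehat T$ can exist when $T$ is nontrivial. The step can be repaired by instead endowing $\mathcal{C}'(\gamma(n),\widehat T)$ with the pullback of the $\widehat T$-metric alone, $l(\sigma):=\sum_i d_{\widehat T}(y_i,y_{i+1})$: since admissibility forces $d_S(x_i,x_{i+1})=d_T(p(y_i),p(y_{i+1}))\le d_{\widehat T}(y_i,y_{i+1})$, this still separates points, $q_n$ then is a genuine morphism, $\mathcal{C}(T,\widehat T)\cong\widehat T$, and your ultralimit argument via Lemma \ref{composition-morphisms} becomes legitimate. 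As written, though, your proof inherits a mismatch between the chosen metric and the morphism claim, and the declaration that $\widehat f_\infty$ is a morphism is unjustified.
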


A sequence $(\widehat{\gamma}(n))_{n\in\mathbb{N}}$ satisfying the conclusions of Proposition \ref{def-pullback} will be called a \emph{pullback} of $(\gamma(n))_{n\in\mathbb{N}}$ induced by $p$. We note that in general, the morphism $\widehat{f}_{\infty}:\widehat{\gamma}(\infty)\to \widehat{T}$ given by Proposition \ref{def-pullback} need not be an isometry. In other words, the sequence $(\widehat{\gamma}(n))_{n\in\mathbb{N}}$ need not end at $\widehat{T}$. Here is an example: suppose that $\widehat{T}$ is a simplicial $(G,\mathcal{F})$-tree containing an edge with nontrivial stabilizer $\langle g\rangle$, and that this edge gets collapsed to a point in $T$. We may find an optimal folding sequence $(\gamma(n))_{n\in\mathbb{N}}$ ending at $T$ such that $g$ is hyperbolic in $\gamma(n)$ for all $n\in\mathbb{N}$. In this situation, the element $g$ will also be hyperbolic in $\widehat{\gamma}(n)$ for all $n\in\mathbb{N}$, which implies that $g$ cannot fix a nondegenerate edge in $\widehat{\gamma}(\infty)$: indeed, the quotient volume of $\widehat{\gamma}(\infty)$ has to be equal to the limit of the quotient volumes of the trees $\widehat{\gamma}(n)$, which would not be the case if $g$ fixed a nondegenerate arc in $\widehat{\gamma}(\infty)$ (see \cite[Propositions 3.12 and 3.15]{AK12}, for instance). This implies that $\widehat{\gamma}(\infty)$ is not isometric to $\widehat{T}$. The following proposition shows however that if $T$ is $\mathcal{Z}$-irreducible, then the map from $\widehat{\gamma}(\infty)$ to $\widehat{T}$ is an isometry.

\begin{prop}\label{pullback}
Let $T$ and $\widehat{T}$ be tame $(G,\mathcal{F})$-trees, and let $p:\widehat{T}\to T$ be an alignment-preserving map. Let $(\gamma(n))_{n\in\mathbb{N}}$ be a tame optimal folding sequence ending at $T$. If $T$ is $\mathcal{Z}$-incompatible, then any pullback of $(\gamma(n))_{n\in\mathbb{N}}$ induced by $p$ ends at $\widehat{T}$.
\end{prop}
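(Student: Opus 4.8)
The plan is to prove that the morphism $\widehat{f}_{\infty}:\widehat{\gamma}(\infty)\to\widehat{T}$ furnished by Proposition \ref{def-pullback} is an isometry, which is precisely the assertion that the folding sequence $(\widehat{\gamma}(n))_{n\in\mathbb{N}}$ ends at $\widehat{T}$. The key preliminary observation I would record is the following: \emph{any tame $(G,\mathcal{F})$-tree $X$ that collapses onto $T$ has dense orbits.} Indeed, if $X$ did not have dense orbits, then by Proposition \ref{Levitt} its Levitt decomposition would be a nontrivial graph of actions, and collapsing all its vertex trees to points would exhibit an alignment-preserving map from $X$ onto a nontrivial minimal simplicial $(G,\mathcal{F})$-tree whose edge stabilizers lie in $\mathcal{Z}$, i.e.\ onto a $\mathcal{Z}$-splitting $S$. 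Since $X$ also collapses onto $T$ by hypothesis, the tree $X$ would witness that $S$ and $T$ are compatible, contradicting $\mathcal{Z}$-incompatibility of $T$.

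I would then apply this observation three times. First, $T$ collapses onto itself, so $T$ has dense orbits; being tame, it then has trivial arc stabilizers by \cite[Proposition 4.17]{Hor14-5} (recalled in Section \ref{sec-good}), hence $T\in\overline{\mathcal{O}(G,\mathcal{F})}$. Second, the given map $p$ shows that $\widehat{T}$ collapses onto $T$, so $\widehat{T}$ has dense orbits, and again $\widehat{T}\in\overline{\mathcal{O}(G,\mathcal{F})}$. Third, Proposition \ref{def-pullback} tells us that $\widehat{\gamma}(\infty)$ is tame and provides an alignment-preserving map $p_{\infty}:\widehat{\gamma}(\infty)\to T$, so $\widehat{\gamma}(\infty)$ collapses onto $T$, hence has dense orbits and lies in $\overline{\mathcal{O}(G,\mathcal{F})}$. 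At this stage $\widehat{f}_{\infty}:\widehat{\gamma}(\infty)\to\widehat{T}$ is a morphism between two trees in $\overline{\mathcal{O}(G,\mathcal{F})}$ with dense orbits, so it is an isometry by Corollary \ref{alignment-preserved}, and we are done.

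I do not expect a serious obstacle here: the whole content is the reduction ``$\mathcal{Z}$-incompatibility of the base forces every tame tree collapsing onto it to have dense orbits'', after which Corollary \ref{alignment-preserved} finishes the argument immediately. The only points requiring a little care are checking that the simplicial tree obtained from a nontrivial Levitt decomposition is a genuine (nontrivial, minimal) $\mathcal{Z}$-splitting with $(G,\mathcal{F})$ elliptic peripheral subgroups, and that the tameness and dense-orbit hypotheses of Corollary \ref{alignment-preserved} are met by both $\widehat{\gamma}(\infty)$ and $\widehat{T}$ — both of which are taken care of by the preliminary observation together with the statements of Proposition \ref{def-pullback}.
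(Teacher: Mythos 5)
Your proof is correct and follows essentially the same route as the paper's: you extract dense orbits for $\widehat{\gamma}(\infty)$ (and $\widehat{T}$) from $\mathcal{Z}$-incompatibility of $T$ via the Levitt decomposition, then conclude via Corollary~\ref{alignment-preserved} that the morphism $\widehat{f}_\infty$ is an isometry. The paper's proof is more terse (it only states the dense-orbits conclusion for $\widehat{\gamma}(\infty)$ and leaves the analogous check for $\widehat{T}$ implicit), but the underlying argument is the same; your explicit packaging of the ``every tame tree collapsing onto a $\mathcal{Z}$-incompatible tree has dense orbits'' step is a reasonable and slightly more careful rendering of what the paper uses.
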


\begin{proof}
There exists a $1$-Lipschitz alignment-preserving map from $\widehat{\gamma}(\infty)$ to $T$. As $T$ is not compatible with any $\mathcal{Z}$-splitting, this implies that $\widehat{\gamma}(\infty)$ has dense orbits. We also know that there exists a morphism from $\widehat{\gamma}(\infty)$ to $\widehat{T}$. Corollary \ref{alignment-preserved} implies that this morphism is an isometry.
\end{proof}

\section{Boundedness of the set of reducing splittings of a tame $(G,\mathcal{F})$-tree}\label{sec-not-X}

We will now prove the following result that bounds the diameter in $FZ(G,\mathcal{F})$ of the set of reducing splittings of a tame $(G,\mathcal{F})$-tree $T\notin\mathcal{X}(G,\mathcal{F})$. We recall there is a map $\psi:\mathcal{O}(G,\mathcal{F})\to FZ(G,\mathcal{F})$, which naturally extends to the set of tame $(G,\mathcal{F})$-trees having a nontrivial simplicial part. By definition, any tame optimal sequence ending at $T$ comes with morphisms $f_n:T_n\to T$ and is nonstationary. By Corollary \ref{alignment-preserved}, this implies that for all $n\in\mathbb{N}$, the tree $T_n$ does not have dense orbits. The existence of the Levitt decomposition of $T_n$ as a graph of actions (Proposition \ref{Levitt}) implies that $\psi(T_n)$ is well-defined for all $n\in\mathbb{N}$. The goal of the present section is to prove the following theorem. 

\begin{theo} \label{canonical-splittings}
There exists $C_1\in\mathbb{R}$ so that for all tame $(G,\mathcal{F})$-trees $T\notin\mathcal{X}(G,\mathcal{F})$, the diameter of $\mathcal{R}^{2}(T)$ in $FZ(G,\mathcal{F})$ is at most $C_1$. Furthermore, there exists $C_2\in\mathbb{R}$ such that for all tame $(G,\mathcal{F})$-trees $T\notin\mathcal{X}(G,\mathcal{F})$, all $\psi$-images of tame optimal folding sequences ending at $T$ eventually stay at distance at most $C_2$ from $\mathcal{R}^{2}(T)$ in $FZ(G,\mathcal{F})$. 
\end{theo}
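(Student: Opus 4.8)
The plan is to prove the two assertions together, using a folding-path argument modeled on the Handel–Mosher / Bestvina–Feighn strategy for hyperbolic complexes, combined with the collapse and pullback machinery of Section \ref{sec-coll-pull} and the structural results on $\mathcal{Z}$-averse trees from Section \ref{sec-X}. Fix a tame $(G,\mathcal{F})$-tree $T\notin\mathcal{X}(G,\mathcal{F})$; by Theorem \ref{averse}, $T$ collapses onto a tame $\mathcal{Z}$-compatible tree, and in fact by Proposition \ref{mixing} either $T$ is itself $\mathcal{Z}$-compatible or it collapses onto a mixing $\mathcal{Z}$-incompatible tree. First I would reduce to the latter case: if $T$ is already $\mathcal{Z}$-compatible the statement is comparatively soft, so assume $T$ collapses via a $1$-Lipschitz alignment-preserving map onto a mixing $\mathcal{Z}$-incompatible tree $\overline{T}$. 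The key point, established via Proposition \ref{unique-projection-2}, is that $\overline{T}$ is a \emph{canonical} collapse of $T$: every tree compatible with $T$ collapses onto $\overline{T}$. In particular every reducing splitting $S\in\mathcal{R}^2(T)$ is compatible (through some intermediate tame tree $T'$) with $T$, hence $T'$ collapses onto $\overline{T}$, hence $S$ is compatible with a tree collapsing onto $\overline{T}$; so it suffices to bound the set of $\mathcal{Z}$-splittings compatible with some tree that collapses onto $\overline{T}$, i.e. to bound $\mathcal{R}^2(\overline{T})$ (note $\mathcal{R}^2(T)\subseteq\mathcal{R}^2(\overline{T})$), reducing everything to the mixing $\mathcal{Z}$-incompatible case.

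Next I would exploit folding sequences. Fix a tame optimal folding sequence $(\overline{T}_n)_{n\in\mathbb{N}}$ ending at $\overline{T}$ (Proposition \ref{existence-folding}); by Proposition \ref{folding-uncollapsible} every $\overline{T}_n$ is simplicial with trivial edge stabilizers, so $\psi(\overline{T}_n)$ lands in $FS(G,\mathcal{F})\subseteq FZ(G,\mathcal{F})$ and, by the hyperbolicity statements (Theorems \ref{FS-hyperbolic} and \ref{FZ-hyperbolic}), its $\psi$-image is a uniform-quality reparameterized quasi-geodesic that is Hausdorff-close to any geodesic joining its endpoints. Given $S\in\mathcal{R}^2(\overline{T})$, pick a tame tree $T'$ compatible with both $S$ and $\overline{T}$; take the standard common refinement $\widehat{T}=\overline{T}+T'$ (Section \ref{sec-compatibility}), which comes with a $1$-Lipschitz alignment-preserving map $p:\widehat{T}\to\overline{T}$. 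Now apply the pullback construction (Proposition \ref{def-pullback}, and Proposition \ref{pullback} using $\mathcal{Z}$-incompatibility of $\overline{T}$) to $(\overline{T}_n)_n$ and $p$: this produces a tame optimal folding sequence $(\widehat{T}_n)_n$ ending at $\widehat{T}$ with each $\widehat{T}_n$ collapsing onto $\overline{T}_n$. Since $\widehat{T}$ collapses onto $T'$, which collapses onto $S$, the \emph{collapse} construction (Proposition \ref{def-collapse}) applied to $(\widehat{T}_n)_n$ yields a tame optimal folding sequence ending at $S$, whose $\psi$-image is again a uniform quasi-geodesic (or bounded, if $S$ is simplicial one-edge — in which case it stays near $S$). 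The upshot is that $\psi(\overline{T}_n)$, $\psi(\widehat{T}_n)$, and $\psi$ of the collapse toward $S$ fellow-travel in the hyperbolic space $FZ(G,\mathcal{F})$ up to a constant depending only on the Kurosh rank, because consecutive trees in each sequence are connected by collapses (hence are compatible, hence at distance $\le 1$ in $FZ(G,\mathcal{F})$), so $d_{FZ(G,\mathcal{F})}(\psi(\overline{T}_n),\psi(\widehat{T}_n))$ and $d_{FZ(G,\mathcal{F})}(\psi(\widehat{T}_n),S')$ are uniformly bounded for $S'$ a one-edge collapse of $S$.

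From here I would extract the diameter bound. All the quasi-geodesics $\psi(\overline{T}_n)$ share the common "ideal direction" determined by $\overline{T}$: since $\overline{T}$ is $\mathcal{Z}$-incompatible but \emph{not} $\mathcal{Z}$-averse, Theorem \ref{Luo} does \emph{not} apply, and in fact (this is Proposition \ref{psi-not-infty}, which is exactly the complementary statement to Theorem \ref{Luo}) the sequence $\psi(\overline{T}_n)$ stays in a bounded region of $FZ(G,\mathcal{F})$; combining this with the fellow-traveling above, every one-edge collapse $S'$ of every $S\in\mathcal{R}^2(\overline{T})$ lies within a uniform constant of this bounded region, giving a uniform bound $C_1$ on $\mathrm{diam}\,\mathcal{R}^2(T)\le\mathrm{diam}\,\mathcal{R}^2(\overline{T})$. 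For the second assertion: given a tame optimal folding sequence $(T_n)_n$ ending at $T$, compose the collapse $T\to\overline{T}$ with the collapse construction to get a tame optimal folding sequence ending at $\overline{T}$ with each term collapsing onto a term near $T_n$, so $\psi(T_n)$ fellow-travels $\psi(\overline{T}_n)$, which stays near the bounded set $\mathcal{R}^2(\overline{T})$ by the first part; quasi-geodesicity of $\psi(T_n)$ then forces it to eventually remain within a uniform constant $C_2$ of $\mathcal{R}^2(T)$. The main obstacle I anticipate is the bookkeeping in the collapse/pullback step — ensuring the constant is genuinely uniform, i.e. depends only on $(G,\mathcal{F})$ and not on $T$ or $S$. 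This hinges on: (i) the uniform quasi-geodesic constants in Theorems \ref{FS-hyperbolic}–\ref{FZ-hyperbolic}; (ii) the fact that passing to a collapse or pullback changes $\psi$-images by at most a uniformly bounded amount because consecutive trees in the new sequences are compatible; and (iii) that Proposition \ref{psi-not-infty}, whose proof is the technical heart of this section, provides the uniform bound on how far $\psi$ of a folding sequence can travel when it ends at a tree that is not $\mathcal{Z}$-averse. Getting (iii) with a constant independent of $T$ is where the real work lies; the rest is the coarse-geometry assembly above.
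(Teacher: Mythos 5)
Your proposal has a fundamental structural error at the very start of the reduction, and it propagates through the whole argument. You write that for $T\notin\mathcal{X}(G,\mathcal{F})$ ``by Proposition~\ref{mixing} either $T$ is itself $\mathcal{Z}$-compatible or it collapses onto a mixing $\mathcal{Z}$-incompatible tree $\overline{T}$,'' and then run your argument in the second case. But that case is vacuous. Proposition~\ref{mixing} only says the collapse is onto a \emph{mixing} tree; it says nothing about $\mathcal{Z}$-incompatibility. And the last assertion of Proposition~\ref{mixing-representative} says a mixing $\mathcal{Z}$-incompatible tree is $\mathcal{Z}$-averse. If $T$ collapsed onto such a $\overline{T}$, then an iterative application of Proposition~\ref{unique-projection-2} (exactly the argument used in the paper's proof of Theorem~\ref{averse}) would force every tree in the compatibility chain witnessing $\mathcal{R}^2(T)\neq\emptyset$ to collapse onto $\overline{T}$, including the simplicial $\mathcal{Z}$-splitting at the end — impossible. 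So $T\notin\mathcal{X}(G,\mathcal{F})$ implies precisely that $T$ does \emph{not} collapse onto any mixing $\mathcal{Z}$-incompatible tree, which is the negation of the hypothesis you work under. Your subsequent assertion ``$\overline{T}$ is $\mathcal{Z}$-incompatible but \emph{not} $\mathcal{Z}$-averse'' is a direct contradiction of Proposition~\ref{mixing-representative}.

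There is also a circularity. You invoke Proposition~\ref{psi-not-infty} to conclude that $\psi(\overline{T}_n)$ stays in a bounded region of $FZ(G,\mathcal{F})$. But Proposition~\ref{psi-not-infty} is proved in Section~\ref{sec-boundary}, and its proof uses Proposition~\ref{psi-bounded}, which in turn uses Corollary~\ref{reducing-projection}, which is a corollary of Theorem~\ref{canonical-splittings} — the very statement you are trying to prove. The logical order of the paper is: the boundedness of $\mathcal{R}^2(T)$ must be established \emph{first}, by direct geometric analysis; ``$\psi$ of a sequence converging to a non-averse tree does not escape to infinity'' is then a consequence, not an input.

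What is actually missing from your proposal is the geometric engine of the proof, namely Proposition~\ref{freely-reducible-case}: given a morphism $f:T_1\to T_2$ between two tame trees with nontrivial simplicial parts that both collapse onto a common dense-orbits tree $\overline{T}$ compatibly with $f$, one shows by direct examination of the Levitt decompositions (and how $f$ carries the transverse covering of $T_1$ to one of $T_2$, only folding around the simplicial parts) that $\mathcal{R}^1(T_1)\cup\mathcal{R}^1(T_2)$ has diameter at most $4$. That uniform estimate is then propagated along folding sequences via the collapse and pullback constructions (Lemmas~\ref{case-nonempty} and~\ref{push}), with the base case $T$ having nontrivial simplicial part handled separately (Proposition~\ref{folding-nondense}). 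Your high-level picture of ``collapse/pullback the folding sequence and compare in the hyperbolic $FZ(G,\mathcal{F})$'' is in the right spirit, but without the concrete diameter-$4$ bound of Proposition~\ref{freely-reducible-case} there is nothing to anchor the fellow-traveling argument to, and the appeal to Proposition~\ref{psi-not-infty} cannot substitute for it.
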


Again, the proof of Theorem \ref{canonical-splittings} adapts without change to the case of $\mathcal{Z}^{max}$-splittings to give the following statement.

\begin{theo}($\mathcal{Z}^{max}$-analogue of Theorem \ref{canonical-splittings})
There exists $C_1\in\mathbb{R}$ so that for all $\mathcal{Z}^{max}$-tame trees $T\notin\mathcal{X}^{max}(G,\mathcal{F})$, the diameter of $\mathcal{R}^{2,max}(T)$ in $FZ^{max}(G,\mathcal{F})$ is at most $C_1$. Furthermore, there exists $C_2\in\mathbb{R}$ such that for all $\mathcal{Z}^{max}$-tame trees $T\notin\mathcal{X}^{max}(G,\mathcal{F})$, all $\psi^{max}$-images of tame optimal folding sequences ending at $T$ eventually stay at distance at most $C_2$ from $\mathcal{R}^{2,max}(T)$ in $FZ^{max}(G,\mathcal{F})$. 
\end{theo}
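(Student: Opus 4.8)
The plan is to reduce the statement to two coarse facts about $\psi$ and the constructions of Section~\ref{sec-coll-pull}. The first is that $\psi$ is \emph{coarsely invariant under collapse of folding sequences}: there is a constant $C_0$, depending only on $\text{rk}_K(G,\mathcal{F})$, such that whenever $(\overline{\gamma}(n))$ is a collapse (in the sense of Proposition~\ref{def-collapse}) of a tame optimal folding sequence $(\gamma(n))$, we have $d_{FZ(G,\mathcal{F})}(\psi(\gamma(n)),\psi(\overline{\gamma}(n)))\le C_0$ for all large $n$. The second is that the $\psi$-image of a tame optimal folding sequence ending at a \emph{fixed} tame $(G,\mathcal{F})$-tree $\overline{T}$ eventually enters, and stays in, a region of $FZ(G,\mathcal{F})$ whose location and diameter are uniformly controlled; in particular any two tame optimal folding sequences ending at the same tree $\overline{T}$ have terminally Hausdorff-close $\psi$-images. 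The first fact is read off from the collapse construction together with the quasi-geodesic estimates of Theorems~\ref{FS-hyperbolic} and~\ref{FZ-hyperbolic}, keeping track of the Levitt decompositions; the second follows from hyperbolicity of $FZ(G,\mathcal{F})$ (Theorem~\ref{FZ-hyperbolic}) via the standard fact that uniform reparametrised quasi-geodesics sharing an endpoint fellow-travel on a terminal segment.

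Granting these, I would first analyse tame $\mathcal{Z}$-compatible trees $\overline{T}$. Fix a one-edge $\mathcal{Z}$-splitting $S$ compatible with $\overline{T}$. The standard common refinement $R:=\overline{T}+S$ is tame (Section~\ref{sec-compatibility}), has a nontrivial simplicial part, and carries $1$-Lipschitz alignment-preserving maps onto both $\overline{T}$ and $S$. Pick a tame optimal folding sequence ending at $R$ (Proposition~\ref{existence-folding}) and push it through these two collapses (Proposition~\ref{def-collapse}), obtaining tame optimal folding sequences ending at $\overline{T}$ and at $S$ whose $\psi$-images stay uniformly close by the collapse fact. A tame optimal folding sequence ending at a one-edge $\mathcal{Z}$-splitting $S$ has $\psi$-values eventually within a uniform constant of $S$ (its Levitt simplicial structure eventually approximates that of $S$, so a one-edge collapse of it lies uniformly near $S$ in $FZ(G,\mathcal{F})$), so the sequence ending at $\overline{T}$ also has $\psi$-values eventually uniformly near $S$; by the terminal fellow-travelling fact, so does every tame optimal folding sequence ending at $\overline{T}$. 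Since $S$ was an arbitrary element of $\mathcal{R}^1(\overline{T})$, this simultaneously shows that $\mathcal{R}^1(\overline{T})$ has diameter bounded by a uniform constant, and that all $\psi$-images of tame optimal folding sequences ending at $\overline{T}$ eventually stay within a uniform constant of $\mathcal{R}^1(\overline{T})$.

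Now let $T\notin\mathcal{X}(G,\mathcal{F})$ be tame. By Theorem~\ref{averse} it collapses onto a tame $\mathcal{Z}$-compatible tree $\overline{T}$, and $\mathcal{R}^1(\overline{T})\subseteq\mathcal{R}^2(T)$. Given any tame optimal folding sequence $(\gamma(n))$ ending at $T$, collapse it (Proposition~\ref{def-collapse}) to one ending at $\overline{T}$; combining the collapse fact with the previous paragraph, $\psi(\gamma(n))$ eventually stays within a uniform constant $C_2$ of $\mathcal{R}^1(\overline{T})\subseteq\mathcal{R}^2(T)$, which is the second assertion of the theorem. For the first assertion, let $S'\in\mathcal{R}^2(T)$, witnessed by a tame tree $T'$ compatible with both $S'$ and $T$; then $T'$ is $\mathcal{Z}$-compatible with $S'\in\mathcal{R}^1(T')$, and $T+T'$ is tame and collapses onto both $T$ and $T'$. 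Take a tame optimal folding sequence ending at $T+T'$, push it through these two collapses, and apply the collapse fact: the resulting sequences ending at $T$ and at $T'$ have uniformly close $\psi$-images. The one ending at $T$ has $\psi$-values eventually near $\mathcal{R}^1(\overline{T})$ by the above, while the one ending at $T'$ has them eventually near $\mathcal{R}^1(T')$, hence near $S'$ since $\mathcal{R}^1(T')$ has uniformly bounded diameter. Hence $S'$ lies within a uniform distance of $\mathcal{R}^1(\overline{T})$; as $\mathcal{R}^1(\overline{T})$ has uniformly bounded diameter, so does $\mathcal{R}^2(T)$, giving $C_1$.

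The main obstacle is establishing the two coarse facts with constants uniform in $T$: the coarse collapse-invariance of $\psi$, and especially the terminal fellow-travelling of $\psi$-images of tame optimal folding sequences ending at a common tame tree. Both require re-running the Handel--Mosher/Bestvina--Feighn and Mann hyperbolicity bookkeeping in the nonsimplicial setting of Section~\ref{sec-coll-pull}, while controlling the possibly nonsimplicial intermediate trees; the pullback construction (Propositions~\ref{def-pullback} and~\ref{pullback}) is the tool that lets one transport folding sequences across refinements when a direct collapse is unavailable. Once the $\mathcal{Z}$-version is established, the $\mathcal{Z}^{max}$-version follows by the same argument, replacing Theorem~\ref{averse} by Theorem~\ref{maxi-averse}, using the $\mathcal{Z}^{max}$-tame analogues of the collapse and pullback propositions, and working in $FZ^{max}(G,\mathcal{F})$ with $\mathcal{R}^{1,max}$ and $\mathcal{R}^{2,max}$ throughout.
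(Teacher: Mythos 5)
Your reduction to ``collapse-invariance of $\psi$'' plus ``terminal fellow-travelling'' is close in spirit to the paper's pullback/collapse strategy, and the first fact is indeed essentially immediate: if $\overline{\gamma}(n)$ is a collapse of $\gamma(n)$, then $\gamma(n)$ itself is a common refinement of $\gamma(n)$ and $\overline{\gamma}(n)$, so $\psi(\gamma(n))$ and $\psi(\overline{\gamma}(n))$ are adjacent in $FZ^{max}(G,\mathcal{F})$ (no need for Levitt bookkeeping). The genuine gap is the second fact. You assert that ``any two tame optimal folding sequences ending at the same tame tree $\overline{T}$ have terminally Hausdorff-close $\psi$-images'' and present it as a consequence of hyperbolicity via fellow-travelling of quasi-geodesics sharing an endpoint. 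But when $\overline{T}$ is not $\mathcal{Z}^{max}$-averse, the $\psi$-images of folding sequences ending at $\overline{T}$ do \emph{not} converge to any point of $\partial_{\infty}FZ^{max}(G,\mathcal{F})$ (indeed Proposition~\ref{psi-not-infty} is a consequence, not a prerequisite, of the result you are proving), so the two quasi-geodesic rays have no common ideal endpoint, and there is no general hyperbolic-geometry fact forcing bounded sequences to fellow-travel. Showing that every such folding sequence eventually localises near the same bounded set is precisely the content of Theorem~\ref{canonical-splittings}, so the argument as written is circular.

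The fix is not a variant of fellow-travelling but the pullback construction, which you mention at the end but never actually deploy. Given an \emph{arbitrary} tame optimal folding sequence $(\gamma(n))$ ending at $T$ with $\mathcal{R}^{1,max}(T)\neq\emptyset$, and a choice of $S\in\mathcal{R}^{1,max}(T)$, one applies Proposition~\ref{def-pullback} to $(\gamma(n))$ along the alignment-preserving map $T+S\to T$. The resulting sequence $(\widehat{\gamma}(n))$ terminates at a tree $\widehat{\gamma}(\infty)$ that has a nontrivial simplicial part (since there is a morphism $\widehat{\gamma}(\infty)\to T+S$, and $T+S$ has one), so Proposition~\ref{folding-nondense} pins $\psi(\widehat{\gamma}(n))$ near $\psi(\widehat{\gamma}(\infty))$; Proposition~\ref{freely-reducible-case} applied to $T_1=\widehat{\gamma}(\infty)$ and $T_2=T+S$ pins that near $S$; and since $\widehat{\gamma}(n)$ collapses to $\gamma(n)$, your collapse fact pins $\psi(\gamma(n))$ near $S$ as well. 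This handles each $\gamma$ and each $S$ individually, with no uniqueness input, and yields simultaneously the bound on $\text{diam}\,\mathcal{R}^{1,max}(T)$ and the localisation of the sequence. The remaining cases ($\mathcal{R}^{1,max}(T)=\emptyset$ but $\mathcal{R}^{2,max}(T)\neq\emptyset$, and $T+T'$ having a simplicial part) are then handled by combining this with a collapse step as in your third paragraph. So your scaffolding is salvageable, but you must replace the appeal to ``terminal fellow-travelling'' by an explicit pullback of the \emph{given} folding sequence.
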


\begin{rk}
Here again, it is crucial to work with cyclic splittings rather than free splittings. Indeed, it is possible to find a tree $T\in\overline{cv_N}$ that is compatible with infinitely many free splittings of $F_N$ that do not lie in a region of finite diameter of the free splitting graph $FS_N$. Here is an example. The tree $T_2$ from Example \ref{surfaces} is compatible with all the free splittings of $F_N$ that are determined by arcs on $S$ that lie in the subsurface $S_1$. These arcs form an unbounded subset of the arc graph of $S$ \cite{MS13}. In addition, it is known that the arc graph of $S$ embeds quasi-isometrically into the free splitting graph of $F_N$ \cite[Lemma 4.17 and Proposition 4.18]{HH14}.
\end{rk}

\subsection{The case where $T$ has a nontrivial simplicial part}

\begin{prop} \label{folding-nondense}
Let $T$ be a tame $(G,\mathcal{F})$-tree having a nontrivial simplicial part. Let $(T_n)_{n\in\mathbb{N}}$ be a tame optimal folding sequence ending at $T$. Then for all $n\in\mathbb{N}$, the tree $T_n$ has a nontrivial simplicial part, and there exists $n_0\in\mathbb{N}$ such that for all $n\ge n_0$, we have $d_{FZ(G,\mathcal{F})}(\psi(T_n),\psi(T))\le 2$.
\end{prop}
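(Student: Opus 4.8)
The plan is to exploit the morphisms $f_n : T_n \to T$ that come with a tame optimal folding sequence, together with the Levitt decomposition of each $T_n$, to transfer the nontriviality of the simplicial part of $T$ back to each $T_n$, and then to show that one of the simplicial edges of $T_n$ eventually carries a $\mathcal{Z}$-splitting close to $\psi(T)$. First I would record the easy direction: since $T$ has a nontrivial simplicial part, it has an edge $e$ with (possibly nontrivial, cyclic) stabilizer whose collapse defines the vertex $\psi(T) \in FZ(G,\mathcal{F})$. The morphism $f_n : T_n \to T$ is optimal, so by Proposition \ref{Levitt} and the discussion following it, $f_n$ is an isometry in restriction to every edge of the simplicial part of $T_n$ and in restriction to each vertex tree of the Levitt decomposition; hence the preimage $f_n^{-1}($ interior of $e)$ must meet the simplicial part of $T_n$ (a dense-orbit vertex tree cannot surject isometrically onto a simplicial edge). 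This shows $T_n$ has a nontrivial simplicial part for all $n$, so $\psi(T_n)$ is well-defined, which is the first assertion.

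For the distance bound, I would argue that for $n$ large, $T_n$ is compatible with a $\mathcal{Z}$-splitting that is at bounded distance from $\psi(T)$ in $FZ(G,\mathcal{F})$. The point is that the morphism $f_n$ eventually becomes ``injective enough'' on the simplicial part: since $(T_n)$ converges to $T$ in a nonstationary way and length functions are monotone along the sequence, there exists $n_0$ such that for $n \ge n_0$ no folding among the simplicial edges of $T_n$ occurs beyond time $n$ at the relevant edge; more precisely, for $n \ge n_0$ there is an edge $e_n$ in the simplicial part of $T_n$ mapping isometrically by $f_n$ onto a subsegment of (a translate of) $e$. Collapsing all edges of $T_n$ except the orbit of $e_n$ gives a one-edge $\mathcal{Z}$-splitting $S_n$ which refines (or is refined by, after possibly also collapsing in $T$) a common refinement with the one-edge collapse $\psi(T)$ of $T$ — because both are collapses of a common tree, namely a suitable intermediate tree on the folding sequence that dominates $e_n$ and collapses onto $T$. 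Hence $S_n$ and $\psi(T)$ are compatible or differ by a bounded number of elementary moves, so $d_{FZ(G,\mathcal{F})}(\psi(T_n), \psi(T)) \le 2$, using that $\psi(T_n)$ is itself obtained by a one-edge collapse of $T_n$ that can be chosen to factor through $S_n$ (changing the choice of collapse moves the point by at most the bound built into the definition of $\psi$).

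The main obstacle I anticipate is making precise the claim that for large $n$ the morphism $f_n$ restricts to an isometry from a simplicial edge of $T_n$ onto a segment of $T$ that is itself contained in the simplicial part of $T$ — a priori the simplicial part of $T_n$ could map into a dense-orbit piece of $T$, and one must rule this out using both optimality of $f_n$ (which forbids collapsing simplicial edges) and the fact that the total length of simplicial edges of $T_n$ mapping into dense-orbit vertex trees of $T$ would have to go to zero, contradicting non-stationarity combined with the monotonicity of the simplicial volume. Once this localization is established, producing the common refinement and reading off the distance bound is routine bookkeeping with one-edge collapses; I would also double-check the constant by recalling that any two choices in the definition of $\psi$ change distances by at most $2$, which is exactly why the final estimate is $2$ rather than $1$.
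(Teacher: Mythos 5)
Your first assertion (that each $T_n$ has a nontrivial simplicial part, hence $\psi(T_n)$ is defined) is fine, though the paper obtains this once and for all from nonstationarity and Corollary \ref{alignment-preserved} in the discussion preceding Theorem \ref{canonical-splittings}; your variant via the Levitt decomposition of $T_n$ would also work.

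The second part, however, has a genuine gap, and it is a different gap from the one you flag. Your plan is to find, for $n$ large, a simplicial edge $e_n$ of $T_n$ that $f_n$ maps isometrically onto a \emph{subsegment} of a translate of $e$, collapse everything else to get a one-edge splitting $S_n$, and then conclude that $S_n$ and $\psi(T)$ are close because "both are collapses of a common tree, namely a suitable intermediate tree on the folding sequence that dominates $e_n$ and collapses onto $T$." No such tree is exhibited, and there is no reason for one to exist: trees along a folding sequence ending at $T$ are not in general compatible with $T$ (morphisms are not collapses), so compatibility of $S_n$ with $\psi(T)$ does not follow. Moreover, having $e_n$ map onto only a proper subsegment of $e$ is genuinely insufficient: the complement of the orbit of $e_n$ in $T_n$ can then map across the rest of $e$, so the vertex groups of $S_n$ need not coincide with the vertex groups $A$ and $B$ of the splitting of $T$ at $e$, and $S_n$ need not be compatible with $\psi(T)$ at all. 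Your proposed resolution of the localization obstacle (total length of simplicial edges mapping into dense-orbit parts going to zero contradicts "monotonicity of simplicial volume") is not an argument the paper makes available, and is not obviously true.

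What the paper actually does is stronger and more quantitative. It fixes the edge $e$ with a specific choice when its stabilizer $\langle g_0\rangle$ is nontrivial (no proper root of $g_0$ fixes an arc), takes the graph-of-actions decomposition $\mathcal{G}$ of $T$ dual to $e$ with adjacent vertex trees $T^A,T^B$, and shows that for $n$ large the tree $T_n$ itself splits as a graph of actions over the \emph{skeleton} of $\mathcal{G}$. The key technical input is controlling, via the Kurosh decomposition of $A$ (resp.\ $B$) and explicit hyperbolic/elliptic elements, that the $f_n$-image of the $A$-minimal subtree of $T_n$ lies in a small neighborhood of $T^A$; from this one deduces the bridge between $T_n^A$ and $T_n^B$ has length $\ge K-\epsilon$, that $g_0$ fixes it, and one must still verify that the resulting graph of actions is actually isometric to $T_n$ (this uses the careful choice of $e$, to rule out the bridge being partially folded with a translate of itself). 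None of this localization machinery is present in your sketch; without it, the claim that $\psi(T_n)$ stays within distance $2$ of $\psi(T)$ is unsupported.
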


\begin{proof}
Let $e$ be a simplicial edge in $T$. If $T$ contains an edge with nontrivial stabilizer, then we choose $e$ to be an edge whose stabilizer $\langle g_0\rangle$ is nontrivial, such that no proper root of $g_0$ fixes an arc in $T$. The tree $T$ splits as graph of actions $\mathcal{G}$, dual to the edge $e$. We will prove that for $n\in\mathbb{N}$ sufficiently large, the tree $T_n$ splits as a graph of actions over the skeleton of $\mathcal{G}$. This will imply that both $T_n$ and $T$ are compatible with this skeleton, and therefore $d_{FZ(G,\mathcal{F})}(\psi(T_n),\psi(T))\le 2$. We denote by $f_n:T_n\to T$ the morphism given by the definition of a tame optimal folding sequence.

We first assume that the edge $e$ projects to a separating edge in $\mathcal{G}$. We denote by $T^A$ and $T^B$ the adjacent vertex trees, with nontrivial stabilizers $A$ and $B$. Let $K$ denote the length of $e$ in $T$, and let $\epsilon>0$, chosen to be very small compared to $K$. For all $n\in\mathbb{N}$, let $T_n^A$ be the closure of the $A$-minimal subtree in $T_n$. We will first show that for $n\in\mathbb{N}$ large enough, the $f_n$-image of $T_n^A$ is contained in an $\epsilon$-neighborhood of $T^A$ in $T$. The analogous statement will also hold for the $B$-minimal subtrees.

Assume that $A$ is not elliptic in $T$. Let $s\in A$ be hyperbolic in $T$. The Kurosh decomposition of $A$ reads as $$A:=g_{1}G_{i_1}g_{1}^{-1}\ast\dots\ast g_{l}G_{i_l}g_{l}^{-1}\ast F.$$ We let $X_A$ be a finite set made of a free basis of $F$, and a nontrivial element in each of the peripheral subgroups $g_{j}G_{i_j}g_{j}^{-1}$. Let $X'_A$ be the finite set consisting of $s$, and of the elements of the form $s.s^a$ for all $a\in X_A$ such that the axes of $s$ and $s^a$ do not intersect in $T$. Notice that all elements in $X'_A$ are hyperbolic in $T$, and hence in $T_n$ for all $n\in\mathbb{N}$. For all $n\in\mathbb{N}$ the translates of the axes in $T_n$ of elements in $X'_A$ cover $T_n^A$, because the same holds true in $T$. For all $g\in X'_A$, the $f_n$-image of the axis of $g$ in $T_n$ is contained in the $\epsilon$-neighborhood of the axis of $g$ in $T$ for $n$ large enough. Our claim follows in this case.

Assume now that $A$ is elliptic in $T$, and let $s\in A$ be an element that fixes a unique point $p$ in $T$ (this exists by minimality of $T$). If $s$ is hyperbolic in $T_n$, then the axis of $s$ is mapped to the $\epsilon$-neighborhood of $p$ for $n$ large enough. If $s$ is elliptic in $T_n$, then $s$ fixes a unique point $p_n\in T_n$ (otherwise, by optimality, the $f_n$-image of a nondegenerate arc fixed by $s$ would be a nondegenerate arc fixed by $s$ in $T$), and $f_n(p_n)=p$. In both cases, we denote by $Y_n$ the union of the characteristic sets in $T_n$ of all elements $s^a$ with $a\in X_A$. As $||s.s^a||_{T_n}$ converges to $0$ for all $a\in X_A$, the convex hull of $Y_n$ in $T_n$ is contained in the $\epsilon$-neighborhood of $Y_n$, for $n$ large enough. The translates of the convex hull of $Y_n$ cover $T_n^A$. Hence the $f_n$-image of $T_n^A$ is contained in an $\epsilon$-neighborhood of $p$, and the claim follows.

Therefore, for $n$ large enough, all translates of $T_n^A$ and $T_n^B$ are disjoint, and the stabilizer of $T_n^A$ (resp. $T_n^B$) is equal to $A$ (resp. $B$). For $n$ large enough, the bridge between $T_n^A$ and $T_n^B$ has length $l_n\ge K-\epsilon$. Since $g_0\in A\cap B$, the element $g_0$ has to be elliptic in both $T_n^A$ and $T_n^B$, and therefore $g_0$ fixes the bridge between $T_n^A$ and $T_n^B$. Hence we can form a graph of actions $S_n$ over the skeleton of $\mathcal{G}$, with vertex trees $T_n^A$ and $T_n^B$, whose attaching points are given by the extremities of the bridge between $T_n^A$ and $T_n^B$. The unique orbit of edges $e_n$ of this splitting is assigned length $l_n$. We claim that $S_n$ is isometric to $T_n$, which will prove that $T_n$ splits as a graph of actions over $\mathcal{G}$. By construction, there is a morphism $f_n:S_n\to T_n$, which is an isometry in restriction to both $T_n^A$ and $T_n^B$. The morphism $f_n$ cannot identify a subarc of $e_n$ with a subarc in either $T_n^A$ or $T_n^B$ by definition of $l_n$. It cannot either identify a subarc of $e_n$ with one of its translates by our choice of the edge $e$ (because otherwise $T$ would contain an arc fixed by a proper subgroup of $\langle g_0\rangle$). This implies that $f_n$ is an isometry, and proves the claim.

In the case of an HNN extension, we denote by $C$ the vertex group and by $t$ a stable letter. The same argument as above yields $l_n\ge K-\epsilon$, where this time $l_n$ denotes the distance between the closure $T_n^C$ of the $C$-minimal subtree of $T_n$, and its $t$-translate (one has to be slightly careful if $G=F_2$, because in this case $C$ is cyclic, so the $C$-minimal subtree of $T_n$ is not well-defined; we leave the argument to the reader in this case). We can similarly define the graph of actions $S_n$ over the skeleton of $\mathcal{G}$, with vertex tree $T_n^C$, with attaching points the extremities of the bridge between $T_n^C$ and its $t$-translate. The unique orbit of edges $e_n$ is assigned length $l$. In this case, the morphism $f_n:S_n\to T_n$ may fail to be an isometry, however it can only fold $e_n$ with $t'\overline{e_n}$, where $t'$ is a stable letter of the HNN extension. As $t'$ is hyperbolic in $T$, and hence in $T_n$, the edge $e_n$ is not entirely folded. Again, we get that $T_n$ splits as a graph of actions over $\mathcal{G}$.
\end{proof}

\subsection{The general case}

\begin{prop} \label{freely-reducible-case}
Let $T_1,T_2$ and $\overline{T}$ be tame $(G,\mathcal{F})$-trees. Assume that there exist alignment-preserving maps $p_1:T_1\to\overline{T}$ and $p_2:T_2\to\overline{T}$, and a morphism $f:T_1\to T_2$, such that $p_1=p_2\circ f$. If $T_1$ and $T_2$ both have a nontrivial simplicial part, then $\mathcal{R}^1(T_1)\cup\mathcal{R}^1(T_2)$ has diameter at most $4$ in $FZ(G,\mathcal{F})$.
\end{prop}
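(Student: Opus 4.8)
The statement to prove is Proposition \ref{freely-reducible-case}: if $T_1,T_2,\overline{T}$ are tame $(G,\mathcal{F})$-trees with alignment-preserving maps $p_i:T_i\to\overline{T}$ and a morphism $f:T_1\to T_2$ satisfying $p_1=p_2\circ f$, and if both $T_1$ and $T_2$ have a nontrivial simplicial part, then $\mathcal{R}^1(T_1)\cup\mathcal{R}^1(T_2)$ has diameter at most $4$ in $FZ(G,\mathcal{F})$. The natural strategy is to produce a single $\mathcal{Z}$-splitting $S_0$ that lies (coarsely) in the middle of all of $\mathcal{R}^1(T_1)\cup\mathcal{R}^1(T_2)$, or at least to bound the distance between any reducing splitting of $T_1$ or $T_2$ and one fixed reference splitting. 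First I would extract from the hypotheses a common collapse: the factorization $p_1 = p_2\circ f$ means that $T_1$ collapses to $\overline{T}$ through $T_2$, so in particular $T_1$, $T_2$ and $\overline{T}$ are pairwise compatible (the standard common refinements $T_1+T_2$, $T_1+\overline T$, $T_2+\overline T$ all exist, and in fact $T_1$ refines $T_2$ refines $\overline T$ in the alignment-preserving sense). Since $T_1$ and $T_2$ each have a nontrivial simplicial part, the extended map $\psi$ is defined on both (via their Levitt decompositions, Proposition \ref{Levitt}).

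The key step is to exhibit a $\mathcal{Z}$-splitting $S_0$ compatible with \emph{both} $T_1$ and $T_2$ — i.e. an element of $\mathcal{R}^1(T_1)\cap\mathcal{R}^1(T_2)$ — which will pin $\mathcal{R}^1(T_1)$ and $\mathcal{R}^1(T_2)$ together. I would obtain $S_0$ by collapsing all the dense-orbit (indecomposable) vertex subtrees in the Levitt decomposition of $T_1$, followed if necessary by a one-edge collapse, to produce a one-edge simplicial tree with edge stabilizer in $\mathcal{Z}$: indeed $T_1$ has a nontrivial simplicial part, so collapsing the vertex trees with dense orbits yields a nontrivial simplicial tree $S_1$ with edge stabilizers in $\mathcal{Z}$ (arc stabilizers of tame trees lie in $\mathcal{Z}$), and $S_1$ is compatible with $T_1$ since it is a collapse of $T_1$. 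The point is that $S_1$ is \emph{also} compatible with $T_2$: using $p_1=p_2\circ f$ and Lemma \ref{projection-closed}, one checks that the collapse map $T_1\to S_1$ factors through $f:T_1\to T_2$, because the dense-orbit subtrees of $T_1$ map (via $f$) into dense-orbit subtrees of $T_2$ (a morphism between trees with dense orbits on a subtree is an isometry there, Corollary \ref{alignment-preserved}, and morphisms cannot create simplicial structure from dense-orbit pieces). Hence $S_1$ is a common collapse of $T_1$ and $T_2$, so a one-edge collapse $S_0$ of $S_1$ is a one-edge $\mathcal{Z}$-splitting compatible with both $T_1$ and $T_2$, i.e. $S_0\in\mathcal{R}^1(T_1)\cap\mathcal{R}^1(T_2)$.

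Once $S_0$ is in hand, the diameter bound follows quickly. Any $S\in\mathcal{R}^1(T_1)$ is a $\mathcal{Z}$-splitting compatible with $T_1$; I must bound $d_{FZ(G,\mathcal{F})}(S,S_0)$. One way: the standard common refinement $T_1+S$ collapses to both a one-edge collapse of $S$ and to $S_0$ (since $S_0$ is a collapse of $T_1$ hence of $T_1+S$), giving a path of length at most $4$ (from a one-edge representative of $S$, to $T_1+S$ collapsed suitably, to $S_0$) — more carefully, $S$ is compatible with $S_0$ because both are compatible with $T_1$ and a common refinement $\widehat T$ of $S$ and $T_1$ collapses onto a common refinement of $S$ and $S_0$; passing through one-edge collapses of the intermediate splittings costs at most $2$ on each side, so $d(S,S_0)\le 4$. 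The same argument applies to $S'\in\mathcal{R}^1(T_2)$ using that $S_0\in\mathcal{R}^1(T_2)$. Combining, any two elements of $\mathcal{R}^1(T_1)\cup\mathcal{R}^1(T_2)$ are within distance $4$ of the common point $S_0$, hence within $8$ of each other — so to land exactly on the constant $4$ claimed, I expect the sharper bookkeeping is that $S_0$ is itself simultaneously a one-edge $\mathcal{Z}$-splitting and that each of $S,S'$ shares a common refinement with $S_0$ of a controlled form, giving $d(S,S_0)\le 2$ and hence diameter $\le 4$; I would verify this by noting $S_0$ is obtained from the refinement $T_1+S$ (resp.\ $T_2+S'$) by an edge collapse, so $S_0$ and $S$ (resp.\ $S'$) have the common refinement $T_1+S$ (resp.\ $T_2+S'$), putting them at distance $\le 2$ after replacing by one-edge representatives. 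The main obstacle is the middle step: rigorously checking that the collapse $T_1\to S_1$ factors through $f:T_1\to T_2$, i.e.\ that the edges of the simplicial part of $S_1$ survive in $T_2$ and are not folded away — this is where the factorization hypothesis $p_1=p_2\circ f$ does the real work, via an argument along the lines of Lemma \ref{folding-collapse} and Proposition \ref{folding-nondense}, tracking how simplicial edges of $T_1$ behave under the morphism $f$ and under projection to $\overline T$.
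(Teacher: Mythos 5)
Your proof contains a genuine gap at the crucial step, and the strategy differs materially from the paper's.

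First, a preliminary issue: the factorization $p_1 = p_2 \circ f$ does \emph{not} make $T_1$ and $T_2$ compatible. Compatibility means having a common \emph{refinement}, i.e.\ a tree that collapses onto both; here you only have a common \emph{quotient} $\overline{T}$ (both $T_1$ and $T_2$ collapse onto it), and the intermediate map $f$ is a morphism, not alignment-preserving, so $T_1$ does not refine $T_2$. The standard refinements $T_1 + T_2$ you invoke need not exist.

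The more serious gap is the claim that the collapse $T_1 \to S_1$ (collapsing the dense-orbit parts of $T_1$) factors through $f\colon T_1 \to T_2$. This is false in general. Even after the reduction (which the paper also carries out) to the case where $f$ is isometric on dense-orbit subtrees, the morphism $f$ can still \emph{fold} edges in the simplicial part of $T_1$, in particular identify distinct attaching points of the dense-orbit subtrees to the same point of $T_2$ — exactly the configuration $f(x_1) = f(x_2)$ depicted in the paper's Figure \ref{fig-goa}. Because those edges survive in $S_1$ but get folded by $f$, the collapse $T_1 \to S_1$ does not factor through $f$, and $S_1$ is not shown to be compatible with $T_2$. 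So your putative $S_0 \in \mathcal{R}^1(T_1)\cap\mathcal{R}^1(T_2)$ is not produced; indeed, the paper never claims that $\mathcal{R}^1(T_1)\cap\mathcal{R}^1(T_2)$ is nonempty.

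The paper's actual route is different and designed precisely to sidestep this folding issue: after the reduction and after showing that $f(\mathcal{Y}_1)\cup f(\mathcal{Z}_1)$ is a transverse covering of $T_2$ inducing an isomorphism of coarsened skeletons $\mathcal{G}_3$, one does \emph{not} collapse the dense-orbit subtree $Y$. Instead, Proposition \ref{stabilizers} supplies a $\mathcal{Z}$-splitting $S'$ of $(G_Y,\mathcal{F}_{|G_Y})$ in which the stabilizers of all attaching points of $Y$ are elliptic. Replacing $Y$ by $S'$ in each $\mathcal{G}_i$ produces $\mathcal{Z}$-splittings $T'_1$, $T'_2$, $T'_3$ with $T'_i$ collapsing onto both $\psi(T_i)$ and $T'_3$; the ellipticity of attaching points is what makes this replacement well-defined simultaneously on $\mathcal{G}_1$, $\mathcal{G}_2$, $\mathcal{G}_3$ despite the folding. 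This gives $d(\psi(T_1),\psi(T_2))\le 2$, and combined with the easy fact that $\mathcal{R}^1(T_i)$ lies in the $1$-neighborhood of $\psi(T_i)$, the diameter bound $4$ follows. The input from Proposition \ref{stabilizers} is the "real work" your proposal is missing.
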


The strategy of our proof of Proposition \ref{freely-reducible-case} is the following. We will first show that we can reduce the proof to the case where $\overline{T}$ has dense orbits, and no part of a subtree in the dense orbit part of the Levitt decomposition of either $T_1$ or $T_2$ is collapsed when mapping to $\overline{T}$. In this situation, we prove that the morphism $f:T_1\to T_2$ has a very simple form: all folding occurs in the simplicial part of $T_1$, and the maps $p_i$ collapse the simplicial part of $T_i$. From this observation, it is easy to find $\mathcal{Z}$-splittings of $(G,\mathcal{F})$ which are close to both $\mathcal{R}^1(T_1)$ and $\mathcal{R}^1(T_2)$ in $FZ(G,\mathcal{F})$. 

\begin{proof}
If $\overline{T}$ does not have dense orbits, then any splitting in $\mathcal{R}^1(T_1)\cup\mathcal{R}^1(T_2)$ is at distance $1$ from a splitting defined by a simplicial edge in $\overline{T}$. From now on, we assume that $\overline{T}$ has dense orbits. For all $i\in\{1,2\}$, let $\mathcal{Y}_i$ be the collection of all nondegenerate vertex subtrees with dense orbits of the Levitt decomposition of $T_i$ as a graph of actions (Proposition \ref{Levitt}). Let $\mathcal{Z}_i$ be the collection of all connected components of the union of the closures of the edges in the simplicial part of $T_i$. Then $\mathcal{Y}_i\cup\mathcal{Z}_i$ is a transverse covering of $T_i$. By definition, the vertex set of its skeleton is equal to $\mathcal{Y}_i\cup\mathcal{Z}_i\cup V_i$, where $V_i$ is the set of intersection points between distinct trees in $\mathcal{Y}_i\cup\mathcal{Z}_i$. Notice that trees in $\mathcal{Y}_i$ are pairwise disjoint, and similarly trees in $\mathcal{Z}_i$ are pairwise disjoint. Therefore, any vertex of $V_i$ is joined by an edge to exactly one vertex in $\mathcal{Y}_i$, and one vertex in $\mathcal{Z}_i$.

We first explain how to reduce to the case where for all $i\in\{1,2\}$, the map $p_i$ is isometric in restriction to the dense orbits part of $T_i$. For all $i\in\{1,2\}$, let $\overline{T_i}$ be the tree obtained from the decomposition of $T_i$ as a graph of actions by replacing each subtree $Y\in\mathcal{Y}_i$ by $p_i(Y)$, each $v\in V_i$ by $p_i(v)$, and leaving each $Z\in\mathcal{Z}_i$ unchanged. Let $\overline{\mathcal{Y}_i}$ be the collection of all subtrees of $\overline{T_i}$ defined by the trees $p_i(Y)$ with $Y\in\mathcal{Y}_i$. All $f$-images of trees $Y\in\mathcal{Y}_1$ are contained in the dense orbits part of $T_2$, and $p_2(f(Y))=p_1(Y)$. Hence $f$ induces a $1$-Lipschitz map $\overline{f}:\overline{T_1}\to\overline{T_2}$, which is an isometry in restriction to each of the subtrees in $\overline{\mathcal{Y}_1}$. By modifying the lengths of the edges in the simplicial part of $T_1$ if needed, we may turn $\overline{f}$ into a morphism. Some of these edges may have to be assigned length $0$, however some edge with positive length must survive because $T_2$ has a nontrivial simplicial part. Denoting by $\overline{p_1}:\overline{T_1}\to\overline{T}$ and by $\overline{p_2}:\overline{T_2}\to\overline{T}$ the induced alignment-preserving maps, we still have $\overline{p_1}=\overline{p_2}\circ\overline{f}$. In this way, as $\mathcal{R}^1(T_i)\subseteq\mathcal{R}^1(\overline{T_i})$ for all $i\in\{1,2\}$, we have reduced the proof of Proposition \ref{freely-reducible-case} to the case where the maps $f$ and $p_1$ (resp. $p_2$) are isometric in restriction to the trees in the dense orbits part of the Levitt decomposition of $T_1$ (resp. $T_2$). From now on, we assume that we are in this case.

\begin{figure}
\begin{center}
\def\JPicScale{.8}
\input{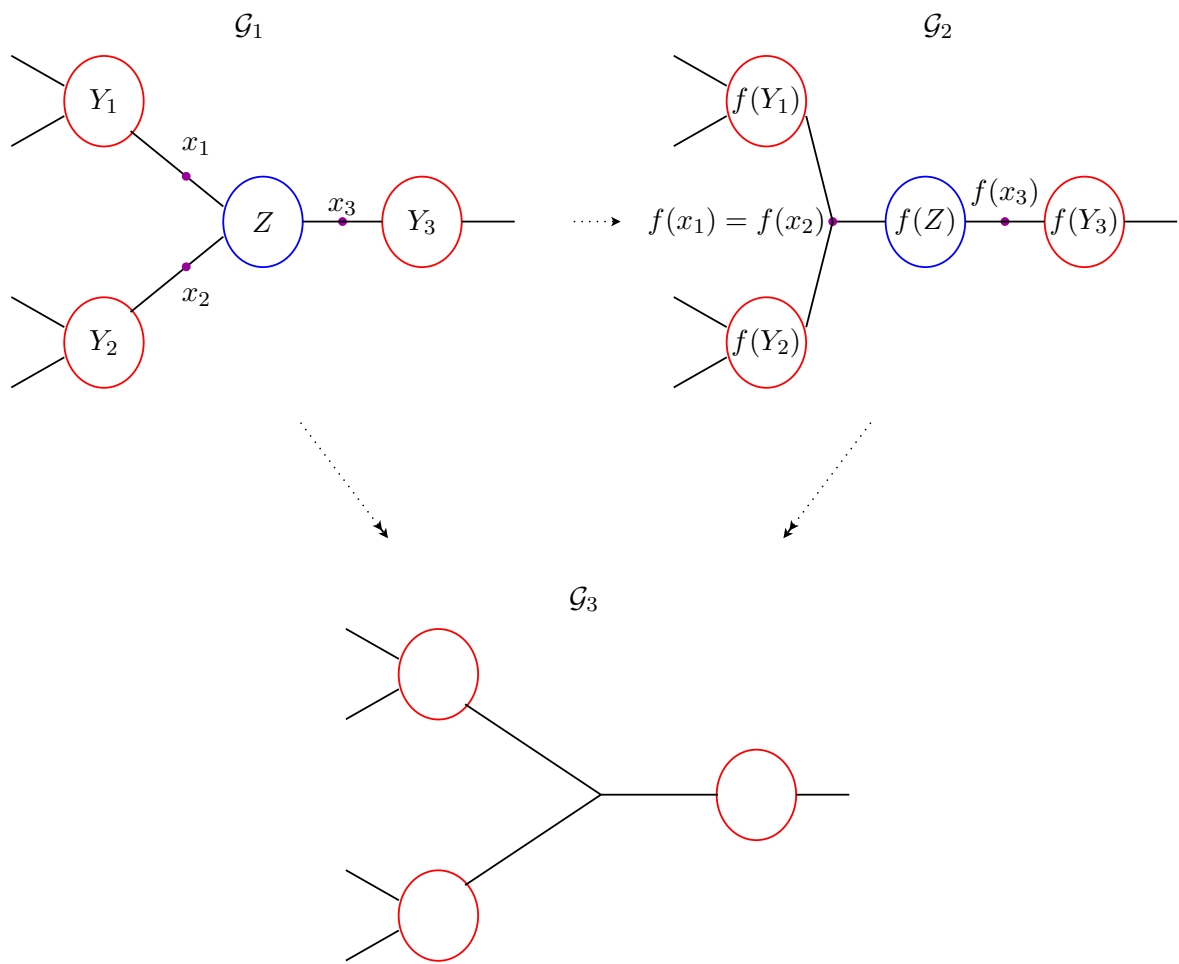}
\caption{The skeletons $\mathcal{G}_1$, $\mathcal{G}_2$ and $\mathcal{G}_3$ in the proof of Proposition \ref{freely-reducible-case}.}
\label{fig-goa}
\end{center}
\end{figure}

We now prove that $f(\mathcal{Y}_1)\cup f(\mathcal{Z}_1)$ is a transverse covering of $T_2$. This follows from the following observations.

\begin{itemize}
\item If $Y_1\neq Y_2\in\mathcal{Y}_1$, then $f(Y_1)\cap f(Y_2)$ contains at most one point. Indeed, the map $p_2$ is isometric on both $f(Y_1)$ and $f(Y_2)$. If $f(Y_1)\cap f(Y_2)$ contained a nondegenerate arc $I$, then $p_2\circ f(Y_1)\cap p_2\circ f(Y_2)$ would contain the nondegenerate arc $p_2(I)$. In other words, the intersection $p_1(Y_1)\cap p_1(Y_2)$ would be nondegenerate. This is impossible because $p_1$ preserves alignment and $Y_1\cap Y_2=\emptyset$.
\item If $Z_1\neq Z_2\in\mathcal{Z}_1$, then $f(Z_1)\cap f(Z_2)=\emptyset$. Indeed, since $\overline{T}$ has dense orbits, the image $p_1(Z_i)$ is a point for all $i\in\{1,2\}$, so $p_2(f(Z_i))$ is a point. If $f(Z_1)\cap f(Z_2)\neq\emptyset$, then the subtrees $f(Z_1)$ and $f(Z_2)$ would be mapped to the same point $z\in\overline{T}$ under $p_2$. Let $z_1\in Z_1$ and $z_2\in Z_2$. Then $p_1(z_1)=p_1(z_2)$, so $p_1$ collapses the segment $[z_1,z_2]$ to a point. Since $Z_1\neq Z_2$, it follows from the description of the skeleton $\mathcal{G}_1$ that the segment $[z_1,z_2]$ intersects some tree $Y\in{\mathcal{Y}_1}$ along a nondegenerate segment $I$, and $p_1(I)$ is nondegenerate, a contradiction.
\item Similarly, the $f$-images of two trees $Y_1\in{\mathcal{Y}_1}$ and $Z_1\in\mathcal{Z}_1$ intersect nontrivially if and only if $Y_1\cap Z_1\neq\emptyset$ (otherwise the bridge between $Y_1$ and $Z_1$ would be collapsed by $p_1$), and in this case their intersection is the $f$-image of the intersection point between $Y_1$ and $Z_1$.
\end{itemize}

This implies that the union $f({\mathcal{Y}_1})\cup {f}(\mathcal{Z}_1)$ is a transverse covering of ${T_2}$. We denote by $\mathcal{G}_2$ its skeleton, which is depicted on Figure \ref{fig-goa}. The above observations imply that the map $f$ induces a map from $\mathcal{G}_1$ to $\mathcal{G}_2$, that sends edges to edges, and is injective in a neighborhood of any vertex in $\mathcal{Y}_1$. The map $f$ can fold several edges attached to a vertex in $\mathcal{Z}_1$. In particular, the map $f$ induces an isomorphism between the graphs $\mathcal{G}'_1$ and $\mathcal{G}'_2$, obtained by equivariantly collapsing the $1$-neighborhood of all vertices in $\mathcal{Z}_1$ (resp. $f(\mathcal{Z}_1)$). We let $\mathcal{G}_3:=\mathcal{G}'_1=\mathcal{G}'_2$, see Figure \ref{fig-goa}.

Let $Y$ be one of the nontrivial subtrees in ${\mathcal{Y}_1}$ (this exists because $\overline{T}$ has dense orbits). We denote by $G_Y$ the stabilizer of $Y$. The subgroup $G_Y$ is a vertex stabilizer in a $\mathcal{Z}$-splitting, obtained from $T_1$ by collapsing all vertex trees of the Levitt decomposition to points. Therefore, we have $\text{rk}_K(G_Y)<+\infty$ (see \cite[Corollary 4.5]{Hor14-5}). Let $S'$ be a $\mathcal{Z}$-splitting of $(G_Y,\mathcal{F}_{|G_Y})$, such that the stabilizers of all attaching points in $Y$ are elliptic in $S'$ (this exists by Proposition \ref{stabilizers}). Let $e$ be an edge of $S'$. 

For all $i\in\{1,2,3\}$, let $T'_i$ be the $(G,\mathcal{F})$-tree obtained by replacing $Y$ by $S'$ in $\mathcal{G}_i$. This is well-defined because attaching points and edge stabilizers are the same in neighborhoods of $Y$ and $f(Y)$. Then $T'_1$ and $T'_2$ both collapse to $T'_3$, and for all $i\in\{1,2\}$, the tree $T'_i$ collapses to $\psi(T_i)$. This implies that for all $i\in\{1,2\}$, we have $d_{FZ(G,\mathcal{F})}(\psi(T_i),T'_3)\le 1$, and hence $d_{FZ(G,\mathcal{F})}(\psi(T_1),\psi(T_2))\le 2$. Since $T_i$ has a nontrivial simplicial part, the set $\mathcal{R}^1(T_i)$ is contained in the $1$-neighborhood of $\psi(T_i)$ in $FZ(G,\mathcal{F})$. Hence $\mathcal{R}^1(T_1)\cup\mathcal{R}^1(T_2)$ has diameter at most $4$ in $FZ(G,\mathcal{F})$.
\end{proof}

\noindent We now finish the proof of Theorem \ref{canonical-splittings}. In the case where $\mathcal{R}^1(T)\neq\emptyset$, we start by proving the following lemma.

\begin{lemma}\label{case-nonempty}
There exists a constant $C>0$ such that the following holds. Let $T$ be a tame $(G,\mathcal{F})$-tree, such that $\mathcal{R}^1(T)\neq\emptyset$. Let $(\gamma(n))_{n\in\mathbb{N}}$ be a tame optimal folding sequence ending at $T$. Then there exists $n_0\in\mathbb{N}$ such that the diameter in $FZ(G,\mathcal{F})$ of the set $\psi(\gamma([n_0,+\infty)))\cup\mathcal{R}^1(T)$ is at most $C$. 
\end{lemma}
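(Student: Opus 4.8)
The plan is to reduce Lemma~\ref{case-nonempty} to Proposition~\ref{freely-reducible-case} by interpolating between the tree~$T$ (or rather a collapse of it) and the trees $\gamma(n)$ on the folding sequence. Since $\mathcal{R}^1(T)\neq\emptyset$, by definition $T$ is compatible with a $\mathcal{Z}$-splitting, hence (Proposition~\ref{mixing}) $T$ is $\mathcal{Z}$-compatible and in particular $T\notin\mathcal{X}(G,\mathcal{F})$. By Theorem~\ref{averse} (condition $(3)$, applied after noting $\mathcal{R}^1(T)\ne\emptyset$ already witnesses $\mathcal{Z}$-compatibility), the tree $T$ collapses to a tame $\mathcal{Z}$-compatible tree; more to the point, I will produce a tame $(G,\mathcal{F})$-tree $\overline{T}$ onto which $T$ collapses and which ``records'' the $\mathcal{Z}$-splitting in $\mathcal{R}^1(T)$, e.g.\ by taking $\overline{T}$ to be an appropriate collapse of the standard common refinement $T+S$ for some $S\in\mathcal{R}^1(T)$. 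The key point is that $\overline{T}$ should have the property that every splitting in $\mathcal{R}^1(T)$ lies within bounded distance in $FZ(G,\mathcal{F})$ of $\psi(\overline{T})$ (when $\overline T$ has a nontrivial simplicial part, $\mathcal{R}^1(\overline T)$ is in the $1$-neighborhood of $\psi(\overline T)$, exactly as used at the end of the proof of Proposition~\ref{freely-reducible-case}), and $\mathcal{R}^1(T)\subseteq\mathcal{R}^1(\overline T)$.

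Next I would invoke Proposition~\ref{def-collapse}: from the given tame optimal folding sequence $(\gamma(n))_{n\in\mathbb N}$ ending at $T$, and the $1$-Lipschitz alignment-preserving map $\pi\colon T\to\overline{T}$, we get a collapsed tame optimal folding sequence $(\overline{\gamma}(n))_{n\in\mathbb N}$ ending at $\overline{T}$, together with alignment-preserving maps $\overline{\gamma}(n)\to\gamma(n)$... — wait, the direction is $\gamma(n)\to\overline\gamma(n)$, so in fact $\gamma(n)$ collapses onto $\overline\gamma(n)$, and both $\gamma(n)$ and $\overline\gamma(n)$ come with morphisms to $T$ and $\overline T$ respectively, compatibly with $\pi$. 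Then for each $n$, the trees $T_1:=\gamma(n)$ and $T_2:=\overline\gamma(n)$, together with $\overline T$, fit the hypotheses of Proposition~\ref{freely-reducible-case}: there is a morphism $f\colon\gamma(n)\to\overline\gamma(n)$ (from the collapse construction of Section~\ref{sec-collapse}, or from composing folding morphisms suitably), and alignment-preserving maps $p_1\colon\gamma(n)\to\overline T$ (the composite of the morphism $\gamma(n)\to T$ with $\pi$) and $p_2\colon\overline\gamma(n)\to\overline T$ (the morphism from the collapsed folding sequence), with $p_1=p_2\circ f$. By Proposition~\ref{folding-nondense} applied to $(\overline\gamma(n))_{n}$ ending at $\overline T$ (assuming $\overline T$ has a nontrivial simplicial part, which holds since $\overline T$ is $\mathcal Z$-compatible, being a collapse recording a $\mathcal Z$-splitting), for $n$ large we have $d_{FZ(G,\mathcal F)}(\psi(\overline\gamma(n)),\psi(\overline T))\le 2$; and by Proposition~\ref{freely-reducible-case}, $\mathcal{R}^1(\gamma(n))\cup\mathcal{R}^1(\overline\gamma(n))$ has diameter at most $4$, so in particular $d_{FZ(G,\mathcal F)}(\psi(\gamma(n)),\psi(\overline\gamma(n)))\le 2$ (using that $\psi(\gamma(n))$ is within $1$ of $\mathcal{R}^1(\gamma(n))$ since $\gamma(n)$ has a nontrivial simplicial part, by Corollary~\ref{alignment-preserved} and the Levitt decomposition). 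Chaining these bounds, $\psi(\gamma(n))$ stays within a fixed distance of $\psi(\overline T)$ for $n\ge n_0$, and $\psi(\overline T)$ is within bounded distance of all of $\mathcal{R}^1(T)$; this gives the desired constant $C$.

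The main obstacle I anticipate is making precise the construction of the ``recording'' collapse $\overline T$ and checking that Proposition~\ref{folding-nondense} genuinely applies to $(\overline\gamma(n))_n$ — i.e.\ that $\overline T$ has a nontrivial simplicial part and that the collapsed sequence really is a \emph{tame optimal} folding sequence ending at $\overline T$ (this is exactly what Proposition~\ref{def-collapse} is for, but one must verify the hypotheses, in particular that $\pi\colon T\to\overline T$ is $1$-Lipschitz alignment-preserving, which requires normalizing metrics appropriately). A secondary technical point is that Proposition~\ref{freely-reducible-case} requires \emph{both} $T_1=\gamma(n)$ and $T_2=\overline\gamma(n)$ to have a nontrivial simplicial part; for $\gamma(n)$ this follows from Corollary~\ref{alignment-preserved} (a nonstationary folding sequence ending at $T$ cannot consist of trees with dense orbits), and for $\overline\gamma(n)$ it follows because $\overline\gamma(n)$ collapses onto $\overline T$ which has a nontrivial simplicial part — but one should double-check that the collapse $\gamma(n)\to\overline\gamma(n)$ does not destroy all simplicial edges, which is where the specific choice of $\overline T$ retaining an edge of the $\mathcal Z$-splitting matters. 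I expect the argument to go through cleanly once these bookkeeping issues about simplicial parts and metrics are settled.
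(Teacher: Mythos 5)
Your argument breaks down at its very first step: you want a tame $(G,\mathcal{F})$-tree $\overline{T}$ \emph{onto which $T$ collapses} and which has a nontrivial simplicial part ``recording'' a splitting $S\in\mathcal{R}^1(T)$. But the only case that needs a new argument is when $T$ has dense orbits (the other case is Proposition~\ref{folding-nondense} applied directly, as the paper notes at the start of its proof), and a collapse of a tree with dense orbits again has dense orbits: any $1$-Lipschitz alignment-preserving surjection carries a finite fundamental domain of diameter $<\epsilon$ in $T$ to one of diameter $<\epsilon$ in $\overline T$. So no such $\overline T$ exists in precisely the case you need it. Relatedly, your description of $\overline T$ is internally inconsistent: a ``collapse of the standard common refinement $T+S$'' need not be a collapse of $T$ (for instance $S$ itself is a collapse of $T+S$ but not of $T$), and there is no alignment-preserving map $T\to T+S$, so you cannot feed $\pi\colon T\to\overline T$ into Proposition~\ref{def-collapse} and still have $\overline T$ see the edge of $S$.

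The paper goes the other way. Starting from $p\colon \widehat T:=T+S\to T$, it applies the \emph{pullback} construction (Proposition~\ref{def-pullback}, Section~\ref{sec-pullbacks}) to the folding sequence $(\gamma(n))$, lifting it to a tame optimal folding sequence $(\widehat\gamma(n))$ ending at a tree $\widehat\gamma(\infty)$ that maps by a morphism $\widehat f_\infty$ to $\widehat T$, with each $\widehat\gamma(n)$ collapsing onto $\gamma(n)$, and with $p_\infty = p\circ\widehat f_\infty$. Because $\widehat\gamma(\infty)$ morphs onto $\widehat T$, it inherits a nontrivial simplicial part. Then Proposition~\ref{folding-nondense} controls $d(\psi(\widehat\gamma(n)),\psi(\widehat\gamma(\infty)))$, Proposition~\ref{freely-reducible-case} (applied to $T_1=\widehat\gamma(\infty)$, $T_2=\widehat T$, $\overline T=T$) controls $d(\psi(\widehat\gamma(\infty)),\psi(\widehat T))$, and collapsing $\widehat\gamma(n)\to\gamma(n)$ and $\widehat T\to S$ transfers the bound to $d(\psi(\gamma(n)),S)$. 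Your proposal uses the right auxiliary results (Propositions~\ref{folding-nondense} and~\ref{freely-reducible-case}) but plugs them into the collapse construction, where the pullback construction is essential; the bookkeeping issues you flagged at the end are actually the symptom of this directional mismatch, not a separate technicality.
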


\begin{proof}
The case of trees without dense orbits has been dealt with in Proposition \ref{folding-nondense}, hence we can assume $T$ to have dense orbits. Let $S\in\mathcal{R}^1(T)$, let $\widehat{T}:=T+S$, and let $p:T+S\to T$ be the corresponding $1$-Lipschitz alignment-preserving map. Let $(\widehat{\gamma}(n))_{n\in\mathbb{N}}$ be a pullback of $(\gamma(n))_{n\in\mathbb{N}}$ induced by $p$, provided by Proposition \ref{def-pullback}: the sequence $(\widehat{\gamma}(n))_{n\in\mathbb{N}}$ is a tame optimal folding sequence that ends at a tame tree $\widehat{\gamma}(\infty)$, and there is a $1$-Lipschitz alignment-preserving map $p_{\infty}:\widehat{\gamma}(\infty)\to T$, and a morphism $\widehat{f}_{\infty}:\widehat{\gamma}(\infty)\to T+S$, such that $p_{\infty}=p\circ \widehat{f}_{\infty}$. As $T+S$ has a nontrivial simplicial part, so does $\widehat{\gamma}(\infty)$. Proposition \ref{folding-nondense} implies that there exists $n_0\in\mathbb{N}$ such that for all $n\ge n_0$, we have $d_{FZ(G,\mathcal{F})}(\psi(\widehat{\gamma}(n)),\psi(\widehat{\gamma}(\infty)))\le 4$. By applying Proposition \ref{freely-reducible-case} to $T_1=\widehat{\gamma}(\infty)$ and $T_2=\widehat{T}$, we get that $d_{FZ(G,\mathcal{F})}(\psi(\widehat{\gamma}(\infty)),\psi(\widehat{T}))\le 2$. So for all $n\ge n_0$, we have $d_{FZ(G,\mathcal{F})}(\psi(\widehat{\gamma}(n)),\psi(\widehat{T}))\le 6$. As $\widehat{\gamma}(n)$ collapses to $\gamma(n)$, and $\widehat{T}$ collapses to $S$, this implies that for all $n\ge n_0$, we have $d_{FZ(G,\mathcal{F})}(\psi(\gamma(n)),S)\le 8$. The simplicial tree $S$ has been chosen independently from  the sequence $(\gamma(n))_{n\in\mathbb{N}}$, so the above inequality holds for all $S\in\mathcal{R}^1(T)$. This implies that the distance in $FZ(G,\mathcal{F})$ between any two splittings $S,S'\in\mathcal{R}^1(T)$ is at most $16$, and proves the lemma. 
\end{proof}

\begin{lemma}\label{push}
There exists a constant $C>0$ such that the following holds. Let $T$ and $T'$ be tame $(G,\mathcal{F})$-trees. Assume that $T$ admits a $1$-Lipschitz alignment-preserving map onto $T'$, and that $\mathcal{R}^1(T')\neq\emptyset$. Let $\gamma$ be a tame optimal liberal folding sequence ending at $T$. Then there exists $n_0\in\mathbb{N}$ such that the diameter in ${FZ(G,\mathcal{F})}$ of the set $\psi(\gamma([n_0,+\infty)))\cup\mathcal{R}^1(T')$ is at most $C$.
\end{lemma}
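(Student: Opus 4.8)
The plan is to reduce Lemma \ref{push} to the already-settled case of Lemma \ref{case-nonempty} by pushing the folding sequence across the collapse. First I would feed the $1$-Lipschitz alignment-preserving map $\pi\colon T\to T'$ and the given tame optimal folding sequence $\gamma$ ending at $T$ into the discrete collapse construction of Proposition \ref{def-collapse}: this yields a tame optimal folding sequence $\overline{\gamma}$ ending at $T'$, together with alignment-preserving maps $\gamma(n)\to\overline{\gamma}(n)$ for every $n\in\mathbb{N}$.

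Next I would check that $\psi$ is defined all along both sequences. Since $\gamma$ and $\overline{\gamma}$ are tame optimal folding sequences, they come with morphisms onto $T$, respectively $T'$, and are nonstationary; by Corollary \ref{alignment-preserved} no tree $\gamma(n)$ or $\overline{\gamma}(n)$ can then have dense orbits, so by the Levitt decomposition (Proposition \ref{Levitt}) each of these trees has a nontrivial simplicial part, and $\psi(\gamma(n))$, $\psi(\overline{\gamma}(n))$ make sense. Moreover, since $\gamma(n)$ collapses onto $\overline{\gamma}(n)$, the $\mathcal{Z}$-splittings obtained from their Levitt decompositions by collapsing the dense-orbit vertex trees are themselves related by a collapse; hence $d_{FZ(G,\mathcal{F})}(\psi(\gamma(n)),\psi(\overline{\gamma}(n)))$ is bounded by a universal constant (at most $2$), exactly as in the final lines of the proof of Lemma \ref{case-nonempty}.

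Then I would invoke Lemma \ref{case-nonempty} with the tree $T'$, which satisfies $\mathcal{R}^1(T')\neq\emptyset$ by hypothesis, and with the folding sequence $\overline{\gamma}$ ending at $T'$: this produces $n_0\in\mathbb{N}$ and the universal constant $C'$ from that lemma such that $\psi(\overline{\gamma}([n_0,+\infty)))\cup\mathcal{R}^1(T')$ has diameter at most $C'$ in $FZ(G,\mathcal{F})$. Combining this with the uniform comparison of the previous paragraph gives that $\psi(\gamma([n_0,+\infty)))\cup\mathcal{R}^1(T')$ has diameter at most $C'+4$, and setting $C:=C'+4$ finishes the proof; note in particular that $C$ depends on nothing but the universal constant of Lemma \ref{case-nonempty}.

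The argument is essentially formal once Proposition \ref{def-collapse} is available, so there is no serious obstacle; the only point needing a little care — and the step I would regard as the main (minor) difficulty — is the comparison $d_{FZ(G,\mathcal{F})}(\psi(\gamma(n)),\psi(\overline{\gamma}(n)))\le 2$, i.e.\ verifying that the extension of $\psi$ to tame trees with a nontrivial simplicial part behaves well under collapse maps. This is dealt with exactly as in Lemma \ref{case-nonempty}, using that any two collapses of a common simplicial $(G,\mathcal{F})$-tree are compatible, so that their one-edge collapses lie at $FZ(G,\mathcal{F})$-distance at most $2$.
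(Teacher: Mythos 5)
Your proposal is correct and follows essentially the same route as the paper: apply the discrete collapse construction (Proposition \ref{def-collapse}) to $\pi$ and $\gamma$ to produce a tame optimal folding sequence $\overline{\gamma}$ ending at $T'$, invoke Lemma \ref{case-nonempty} for $T'$ and $\overline{\gamma}$, and transfer the bound back to $\gamma$ using the alignment-preserving maps $\gamma(n)\to\overline{\gamma}(n)$. You simply make explicit two points that the paper leaves implicit — that $\psi$ is defined along both sequences (via Corollary \ref{alignment-preserved} and the Levitt decomposition), and the quantitative comparison $d_{FZ(G,\mathcal{F})}(\psi(\gamma(n)),\psi(\overline{\gamma}(n)))\le 2$ — and this extra bookkeeping is accurate.
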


\begin{proof}
Let $p:T\to T'$ be a $1$-Lipschitz alignment-preserving map, and let $(\overline{\gamma}(n))_{n\in\mathbb{N}}$ be a collapse of $(\gamma(n))_{n\in\mathbb{N}}$ induced by $p$, provided by Proposition \ref{def-collapse}: the sequence $(\overline{\gamma}(n))_{n\in\mathbb{N}}$ is a tame optimal folding sequence ending at $T'$. Lemma \ref{case-nonempty} applied to $T'$ ensures that the diameter in $FZ(G,\mathcal{F})$ of $\psi(\overline{\gamma}([n_0,+\infty))\cup\mathcal{R}^1(T')$ is bounded. As $\gamma(n)$ collapses to $\overline{\gamma}(n)$ for all $n\in\mathbb{N}$, the diameter of $\psi(\gamma([n_0,+\infty)))\cup\mathcal{R}^1(T')$ is also bounded.
\end{proof}

\begin{proof} [Proof of Theorem \ref{canonical-splittings}]
As $T\notin\mathcal{X}(G,\mathcal{F})$, there exists a tame $(G,\mathcal{F})$-tree $T'$ that is compatible with $T$, such that $\mathcal{R}^1(T')\neq\emptyset$. Let $(\gamma(n))_{n\in\mathbb{N}}$ be a tame optimal folding sequence ending at $T$, whose existence follows from Proposition \ref{existence-folding}. We will show that there exists $n_0\in\mathbb{N}$ such that the diameter in $FZ(G,\mathcal{F})$ of $\psi(\gamma([n_0,+\infty)))\cup\mathcal{R}^1(T')$ is bounded. As this is true for any tree $T'$ which is compatible with $T$ and satisfies $\mathcal{R}^1(T')\neq\emptyset$, and as $\gamma$ is chosen independently from $T'$, all trees in $\mathcal{R}^2(T)$ will be close to each other (and close to the end of $\gamma$), and Theorem \ref{canonical-splittings} will follow. 

Let $\widehat{T}:=T+T'$. If $\widehat{T}$ has a nontrivial simplicial part, then any splitting determined by this simplicial part belongs to both $\mathcal{R}^1(T)$ and $\mathcal{R}^1(T')$. Lemma \ref{case-nonempty} implies that both $\mathcal{R}^1(T)$ and $\mathcal{R}^1(T')$ have bounded diameter, and since $\mathcal{R}^1(T)\cap\mathcal{R}^1(T')\neq\emptyset$, the union $\mathcal{R}^1(T)\cup\mathcal{R}^1(T')$ also has bounded diameter. Lemma \ref{case-nonempty} applied to $T$ and the sequence $(\gamma(n))_{n\in\mathbb{N}}$ shows the existence of $n_0\in\mathbb{N}$ such that the diameter of $\psi(\gamma([n_0,+\infty)))\cup\mathcal{R}^1(T)$ is bounded. The claim follows in this case.

From now on, we assume that $\widehat{T}$ has dense orbits. Consider $1$-Lipschitz alignment-preserving maps $p:\widehat{T}\to T$ and $p':\widehat{T}\to T'$. First suppose that $\mathcal{R}^1(T)\neq\emptyset$, and let $(\widetilde{\gamma}(n))_{n\in\mathbb{N}}$ be any tame optimal folding sequence ending at $\widehat{T}$. Lemma \ref{push} applied to both $p$ and $p'$ implies the existence of $n_0\in\mathbb{N}$ such that $\psi(\widetilde{\gamma}([n_0,+\infty)))\cup\mathcal{R}^1(T)$ and $\psi(\widetilde{\gamma}([n_0,+\infty)))\cup\mathcal{R}^1(T')$ are bounded. In addition, Lemma \ref{case-nonempty} ensures that we can choose $n_0$ so that the diameter of $\psi(\gamma([n_0,+\infty)))\cup\mathcal{R}^1(T)$ is also bounded. The claim follows.

Suppose now that $\mathcal{R}^1(T)=\emptyset$. Let $(\widehat{\gamma}(n))_{n\in\mathbb{N}}$ be a pullback of $(\gamma(n))_{n\in\mathbb{N}}$ induced by $p$ (provided by Proposition \ref{def-pullback}). Then $(\widehat{\gamma}(n))_{n\in\mathbb{N}}$ ends at $\widehat{T}$ by Proposition \ref{pullback}. Lemma \ref{push} applied to $p'$ shows the existence of $n_0\in\mathbb{N}$ such that the diameter of $\psi(\widehat{\gamma}([n_0,+\infty)))\cup\mathcal{R}^1(T')$ is bounded. As $\widehat{\gamma}(n)$ collapses to $\gamma(n)$ for all $n\in\mathbb{N}$, the diameter of $\psi({\gamma}([n_0,+\infty)))\cup\mathcal{R}^1(T')$ is also bounded, and we are done.
\end{proof}

\begin{cor} \label{reducing-projection}
There exists $C\in\mathbb{R}$ such that for all tame $(G,\mathcal{F})$-trees $T_1,T_2\notin\mathcal{X}(G,\mathcal{F})$, if $T_1$ and $T_2$ are both refined by a common tame $(G,\mathcal{F})$-tree, then the diameter of $\mathcal{R}^2(T_1)\cup\mathcal{R}^2(T_2)$ in $FZ(G,\mathcal{F})$ is bounded by $C$.
\end{cor}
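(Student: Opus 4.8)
The plan is to exhibit a single $\mathcal{Z}$-splitting lying in both $\mathcal{R}^2(T_1)$ and $\mathcal{R}^2(T_2)$, and then to combine this with the uniform bound $C_1$ on the diameter of each $\mathcal{R}^2(T_i)$ furnished by Theorem \ref{canonical-splittings}. Let $\widehat{T}$ be a tame $(G,\mathcal{F})$-tree refining both $T_1$ and $T_2$, so that there are alignment-preserving maps $\widehat{T}\to T_1$ and $\widehat{T}\to T_2$ (one may take $\widehat{T}=T_1+T_2$, which is tame by Section \ref{sec-compatibility} together with Proposition \ref{good}, but any common tame refinement works).

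First I would check that $\widehat{T}$ satisfies the equivalent conditions of Theorem \ref{averse}. Since $T_1\notin\mathcal{X}(G,\mathcal{F})$, the conditions of Theorem \ref{averse} hold for $T_1$; in particular condition $(1)$ yields a finite sequence $(T_1=S_0,S_1,\dots,S_k=S)$ of tame $(G,\mathcal{F})$-trees with $S$ simplicial and consecutive trees compatible. As $\widehat{T}$ refines $T_1=S_0$, it is compatible with $S_0$, so the sequence $(\widehat{T},S_0,S_1,\dots,S_k)$ witnesses condition $(1)$ for $\widehat{T}$. Applying the implication $(1)\Rightarrow(3)$ of Theorem \ref{averse} to $\widehat{T}$, we obtain a tame $\mathcal{Z}$-compatible $(G,\mathcal{F})$-tree $\widehat{T}'$ onto which $\widehat{T}$ collapses by an alignment-preserving map. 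Thus $\widehat{T}$ admits alignment-preserving maps onto $\widehat{T}'$, onto $T_1$, and onto $T_2$; in particular $\widehat{T}'$ is compatible with $T_1$ and with $T_2$. Since $\widehat{T}'$ is $\mathcal{Z}$-compatible, $\mathcal{R}^1(\widehat{T}')\neq\emptyset$; fixing $S'\in\mathcal{R}^1(\widehat{T}')$, the tree $\widehat{T}'$ is tame, compatible with $T_i$, and compatible with the $\mathcal{Z}$-splitting $S'$, so $S'\in\mathcal{R}^2(T_1)\cap\mathcal{R}^2(T_2)$.

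To conclude, Theorem \ref{canonical-splittings} gives $\mathrm{diam}_{FZ(G,\mathcal{F})}\mathcal{R}^2(T_i)\le C_1$ for $i\in\{1,2\}$; since $S'$ belongs to both sets, any $S_1\in\mathcal{R}^2(T_1)$ and $S_2\in\mathcal{R}^2(T_2)$ satisfy $d_{FZ(G,\mathcal{F})}(S_1,S_2)\le d_{FZ(G,\mathcal{F})}(S_1,S')+d_{FZ(G,\mathcal{F})}(S',S_2)\le 2C_1$, so $C:=2C_1$ works. The only step requiring genuine input is the passage from ``$T_1\notin\mathcal{X}(G,\mathcal{F})$ and $\widehat{T}$ refines $T_1$'' to ``$\widehat{T}$ collapses to a tame $\mathcal{Z}$-compatible tree'', which rests on the nontrivial implication $(1)\Rightarrow(3)$ of Theorem \ref{averse}; the rest is a diagram chase with alignment-preserving maps plus the diameter bound already established in Theorem \ref{canonical-splittings}, so I do not expect a serious obstacle.
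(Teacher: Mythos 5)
Your proof is correct and takes essentially the same approach as the paper: produce a single $\mathcal{Z}$-splitting lying in $\mathcal{R}^2(T_1)\cap\mathcal{R}^2(T_2)$ and then apply the uniform diameter bound $C_1$ from Theorem \ref{canonical-splittings} twice. The paper's version is more terse — it simply observes that the common refinement $T$ has $\mathcal{R}^2(T)\neq\emptyset$ (Theorem \ref{averse}) and $\mathcal{R}^2(T)\subseteq\mathcal{R}^2(T_i)$ for $i=1,2$ — whereas you route the argument through the nontrivial implication $(1)\Rightarrow(3)$ of Theorem \ref{averse}, producing an explicit collapse $\widehat{T}'$ and a splitting $S'\in\mathcal{R}^1(\widehat{T}')$. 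That detour is not necessary (condition $(1)\Rightarrow(2)$ already gives a splitting in $\mathcal{R}^2(\widehat{T})$, and the containment $\mathcal{R}^2(\widehat{T})\subseteq\mathcal{R}^2(T_i)$ is a short compatibility diagram chase via $\widehat{T}$), but it is correct and yields the same constant $C=2C_1$.
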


\begin{proof}
Denoting by $T$ the common refinement of $T_1$ and $T_2$, Corollary \ref{reducing-projection} follows from Theorem \ref{canonical-splittings} and the fact that for all $i\in\{1,2\}$, we have $\mathcal{R}^2(T)\subseteq\mathcal{R}^2(T_i)$, and $\mathcal{R}^2(T)\neq\emptyset$ by Proposition \ref{averse}.
\end{proof}

\section{The Gromov boundary of the graph of cyclic splittings} \label{sec-boundary}

We now turn to the proof of our main theorem, which gives a description of the Gromov boundary of $FZ(G,\mathcal{F})$. We will extend the map $\psi:\mathcal{O}(G,\mathcal{F})\to FZ(G,\mathcal{F})$ to a map $\partial\psi:\mathcal{X}(G,\mathcal{F})/{\sim}\to\partial_{\infty} FZ(G,\mathcal{F})$, and show that this extension is an $\text{Out}(G,\mathcal{F})$-equivariant homeomorphism. 

\begin{theo}\label{main}
Let $G$ be a countable group, and let $\mathcal{F}$ be a free factor system of $G$. There exists a unique $\text{Out}(G,\mathcal{F})$-equivariant homeomorphism $$\partial\psi:\mathcal{X}(G,\mathcal{F})/{\sim}\to \partial_{\infty} FZ(G,\mathcal{F}),$$ so that for all $T\in\mathcal{X}(G,\mathcal{F})$ and all sequences $(T_n)_{n\in\mathbb{N}}\in \mathcal{O}(G,\mathcal{F})^{\mathbb{N}}$ converging to $T$, the sequence $(\psi(T_n))_{n\in\mathbb{N}}$ converges to $\partial\psi (T)$. 
\end{theo}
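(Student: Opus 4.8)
The plan is to carry out the Klarreich-type argument sketched in the introduction, in four stages: constructing $\partial\psi$ on $\mathcal{X}(G,\mathcal{F})$ together with the convergence property, descending it to the quotient and proving continuity, proving injectivity, and proving surjectivity and closedness.

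\textbf{Construction of $\partial\psi$ and the convergence property.} For $T\in\mathcal{X}(G,\mathcal{F})$ I would fix a mixing representative $\overline{T}$ of its $\sim$-class, given by Proposition~\ref{mixing-representative}, and a tame optimal liberal folding path $\gamma$ ending at $\overline{T}$, given by Proposition~\ref{existence-folding}. Since $\overline{T}$ is mixing and $\mathcal{Z}$-incompatible, Proposition~\ref{folding-uncollapsible} ensures that $\gamma(t)$ is simplicial with trivial edge stabilizers for every $t$ strictly before the endpoint, so $\psi(\gamma(t))$ is defined; by Theorem~\ref{FZ-hyperbolic} the path $t\mapsto\psi(\gamma(t))$ is a reparameterized quasi-geodesic, and by Theorem~\ref{Luo} it is unbounded, hence converges to a point of $\partial_{\infty}FZ(G,\mathcal{F})$ which I call $\partial\psi(T)$. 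The crucial point is the following independence statement: for every sequence $(T_n)_{n\in\mathbb{N}}\in\mathcal{O}(G,\mathcal{F})^{\mathbb{N}}$ converging to $T$ (equivalently, after rescaling, to $\overline{T}$), one has $(\psi(T_n)\mid\psi(\gamma(t)))_{\ast}\to+\infty$ as $n,t\to+\infty$. To prove this, I would realize a geodesic from $\psi(T_n)$ to $\psi(\gamma(t))$, up to bounded Hausdorff error by Theorem~\ref{FZ-hyperbolic}, as the $\psi$-image of a tame optimal folding path running from a rescaling of $T_n$ to $\gamma(t)$, furnished by Theorem~\ref{candidate} and Proposition~\ref{limit-folding}; if this Gromov product remained bounded, there would be trees $Z_{n,t}$ on these folding paths whose $\psi$-images stay in a fixed ball, and a projective accumulation point $Z$ of the $Z_{n,t}$ would, by Proposition~\ref{Lipschitz-limit} together with a comparison of translation length functions, carry $1$-Lipschitz maps to and from $\overline{T}$; these are alignment-preserving by Corollary~\ref{alignment-preserved}, forcing $Z\cong\overline{T}$, whereupon Theorem~\ref{Luo} makes $\psi(Z_{n,t})$ unbounded, a contradiction. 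This argument simultaneously shows that $\partial\psi(T)$ does not depend on the choice of $\gamma$ or of the sequence, yields the ``moreover'' clause $\psi(T_n)\to\partial\psi(T)$, and, since equivalent $\mathcal{Z}$-averse trees share a mixing representative unique up to weak homeomorphism (Proposition~\ref{mixing-representative}), shows that $\partial\psi$ descends to a well-defined map $\mathcal{X}(G,\mathcal{F})/{\sim}\to\partial_{\infty}FZ(G,\mathcal{F})$.

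\textbf{Continuity and injectivity.} For continuity, given $[T_k]\to[T]$ in $\mathcal{X}(G,\mathcal{F})/{\sim}$, I would represent the classes by trees $T_k\to T$ in $\overline{\mathcal{O}(G,\mathcal{F})}$, all with dense orbits, and combine the Lipschitz approximations of Proposition~\ref{Lipschitz-approximation}, Proposition~\ref{csq-Lipschitz}, and a diagonal extraction to produce a single sequence in $\mathcal{O}(G,\mathcal{F})$ whose $\psi$-image converges to $\partial\psi(T)$ while, along suitable subsequences, also realizing each $\partial\psi(T_k)$; this forces $\partial\psi(T_k)\to\partial\psi(T)$. For injectivity on the quotient, suppose $\partial\psi(T)=\partial\psi(T')$, and choose folding rays ending at the mixing representatives $\overline{T},\overline{T'}$ whose $\psi$-images both converge to this common boundary point. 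They then fellow-travel near infinity, so a tame optimal folding path joining $\gamma(t)$ to $\gamma'(t')$ (which stays among simplicial trees with trivial edge stabilizers by Proposition~\ref{folding-simplicial}) has $\psi$-image escaping to infinity; analysing a projective accumulation point $Z$ of trees on these paths exactly as above produces $1$-Lipschitz maps $\overline{T}\to Z$ and $Z\to\overline{T'}$, and using that $\mathcal{Z}$-averse trees are $\mathcal{Z}$-incompatible, hence have dense orbits, together with Corollary~\ref{alignment-preserved} and a translation-length comparison (and Proposition~\ref{Levitt} to exclude that $Z$ has a nontrivial simplicial part), one obtains a $1$-Lipschitz alignment-preserving map $\overline{T}\to\overline{T'}$. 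Thus $\overline{T}$ collapses onto $\overline{T'}$, the two are compatible, and $T\sim T'$.

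\textbf{Surjectivity, closedness, uniqueness.} This is where the material of Section~\ref{sec-not-X} is essential. I would first prove the intermediate statement that a sequence $(T_n)_{n\in\mathbb{N}}\in\mathcal{O}(G,\mathcal{F})^{\mathbb{N}}$ converging to a tree $T\notin\mathcal{X}(G,\mathcal{F})$ has $(\psi(T_n))_{n\in\mathbb{N}}$ confined to a bounded region of $FZ(G,\mathcal{F})$, in fact eventually within bounded distance of $\mathcal{R}^{2}(T)$, which has bounded diameter by Theorem~\ref{canonical-splittings}. The proof combines Theorem~\ref{canonical-splittings} (for the diameter bound and for the fact that $\psi$-images of tame optimal folding sequences ending at $T$ eventually stay near $\mathcal{R}^{2}(T)$), the collapse and pullback constructions of Section~\ref{sec-coll-pull}, and Proposition~\ref{limit-folding}, used to transfer control from a folding sequence ending at $T$ to the arbitrary sequence $(T_n)$; the case where $T$ has a nontrivial simplicial part is handled directly by Proposition~\ref{folding-nondense}. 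Granting this, surjectivity follows: for $\xi\in\partial_{\infty}FZ(G,\mathcal{F})$, take vertices of $FZ(G,\mathcal{F})$ converging to $\xi$ and, using \cite[Lemma 5.11]{Hor14-5} to replace each one-edge $\mathcal{Z}$-splitting by a nearby point of $\psi(\mathcal{O}(G,\mathcal{F}))$, obtain $(T_n)_{n\in\mathbb{N}}\in\mathcal{O}(G,\mathcal{F})^{\mathbb{N}}$ with $\psi(T_n)\to\xi$; every projective accumulation point $T_{\infty}$ of $(T_n)$ in $\overline{P\mathcal{O}(G,\mathcal{F})}$ must lie in $\mathcal{X}(G,\mathcal{F})$ by the intermediate statement, and then $\xi=\partial\psi(T_{\infty})$ by the convergence property of the first stage, so all such accumulation points are $\sim$-equivalent and $\xi\in\text{Im}(\partial\psi)$. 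Closedness of $\partial\psi$ is obtained by the same scheme plus a diagonal argument: if $\partial\psi([T_k])\to\xi$, a diagonal sequence in $\mathcal{O}(G,\mathcal{F})$ has $\psi$-image converging to $\xi$ and projectively accumulates onto a tree $T_{\infty}\in\mathcal{X}(G,\mathcal{F})$ realizing $\xi$ and lying in the closure of the classes $[T_k]$. Finally, $\text{Out}(G,\mathcal{F})$-equivariance is immediate from coarse equivariance of $\psi$ and the defining convergence property, and uniqueness holds because that property already determines $\partial\psi$ on $\mathcal{X}(G,\mathcal{F})$, which surjects onto the quotient. I expect the main obstacle to be the intermediate boundedness statement for sequences converging to non-$\mathcal{Z}$-averse trees: passing from ``the set of reducing splittings of $T$ is bounded'' to ``the $\psi$-image of an arbitrary sequence converging to $T$ is bounded'' is the step that forces one through the collapse/pullback technology of Section~\ref{sec-coll-pull} and is the technical heart of the proof.
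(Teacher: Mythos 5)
The overall scheme (define $\partial\psi$ via convergence of $\psi$-images, establish continuity, injectivity, surjectivity via boundedness of reducing splittings, closedness) is the same as the paper's, but there is a genuine gap in your construction of $\partial\psi$ that stems from taking the folding path to end at the mixing representative $\overline{T}$ rather than at $T$ itself (or at a refinement of $T$).

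The difficulty is one of direction. You want to show $(\psi(T_n)\mid\psi(\gamma(t)))\to\infty$ for an \emph{arbitrary} sequence $T_n\to T$ and for $\gamma(t)\to\overline{T}$. Your parenthetical ``(equivalently, after rescaling, to $\overline{T}$)'' is false when $T\neq\overline{T}$: $T$ and $\overline{T}$ are distinct trees with distinct translation-length functions, and a sequence converging to one does not converge to the other under any rescaling. To run the contradiction via Proposition~\ref{limit-folding} you would need $\text{Lip}$ between $T_n$ and $\gamma(t)$ to tend to $1$. Proposition~\ref{csq-Lipschitz} only supplies this when you have a Lipschitz approximation of a tree $S$, an arbitrary sequence converging to a tree $T'$, and a $1$-Lipschitz alignment-preserving map $S\to T'$. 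In your setup the Lipschitz approximation ($\gamma(t)$) is of $\overline{T}$, and the arbitrary sequence ($T_n$) converges to $T$; the $1$-Lipschitz collapse map runs from $T$ to $\overline{T}$, i.e.\ exactly backwards, and there is in general no $1$-Lipschitz map $\overline{T}\to T$. Since $T_n$ is \emph{not} assumed to be a Lipschitz approximation of $T$, there is also no way to control $\text{Lip}(T_n,\gamma(t))$ directly, and indeed it can blow up (for instance if some element is hyperbolic in $T$ but elliptic in $\overline{T}$, nothing forces $\text{Lip}(\gamma(t),T_n)$ to approach $1$). The argument as written therefore only proves the convergence property when $T$ is already mixing, which is a strict subcase.

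The paper resolves this by routing the comparison through a common refinement: it forms $\widehat{T}:=T+T'$ for some $T'$ compatible with $T$, so that there \emph{is} a $1$-Lipschitz alignment-preserving map $\widehat{T}\to T$, takes a Lipschitz approximation $(\widehat{T}_i)$ of $\widehat{T}$ (Proposition~\ref{Lipschitz-approximation}), applies Proposition~\ref{csq-Lipschitz} to get $\text{Lip}(\widehat{T}_i,T_j)\le 1+1/i$, then invokes Lemma~\ref{lemma-psi} and Lemma~\ref{hyp-prop}. Your proposal can be repaired by inserting such an intermediary: take a Lipschitz approximation (equivalently a folding path, by Proposition~\ref{existence-folding}) ending at $T$ itself rather than at $\overline{T}$, compare that with $T_n$ (using the identity map $T\to T$), and, separately, compare it with $\gamma(t)$ (using the collapse $T\to\overline{T}$). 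Note in passing that ending the folding path at $T$ rather than $\overline{T}$ removes your appeal to Proposition~\ref{folding-uncollapsible}: one instead observes that intermediate trees on a folding path to a dense-orbits tree have trivial arc stabilizers and a nontrivial simplicial part, so $\psi$ is still defined via Proposition~\ref{Levitt}. A smaller issue in the same paragraph is the claim that the accumulation point $Z$ carries maps ``to and from $\overline{T}$'' and is therefore isometric to $\overline{T}$; from the limiting construction one gets maps $T\to Z$ and $Z\to\overline{T}$, so $Z$ lies ``between'' $T$ and $\overline{T}$ and is hence $\mathcal{Z}$-averse, which already suffices to invoke Theorem~\ref{Luo}, but the stronger conclusion $Z\cong\overline{T}$ does not follow. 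Finally, your ``intermediate statement'' in the surjectivity step, phrased as boundedness of $(\psi(T_n))$, is stronger than what is proved (and needed); the correct formulation is that $(\psi(T_n))$ does not converge to any boundary point (Proposition~\ref{psi-not-infty}), which suffices for the argument.
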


Theorem \ref{main} also holds true (with the same proof) for the graph $FZ^{max}(G,\mathcal{F})$. 

\begin{theo}\label{main-2}
Let $G$ be a countable group, and let $\mathcal{F}$ be a free factor system of $G$. There exists a unique $\text{Out}(G,\mathcal{F})$-equivariant homeomorphism $$\partial\psi^{max}:\mathcal{X}^{max}(G,\mathcal{F})/{\sim}\to \partial_{\infty} FZ^{max}(G,\mathcal{F}),$$ so that for all $T\in\mathcal{X}^{max}(G,\mathcal{F})$ and all sequences $(T_n)_{n\in\mathbb{N}}\in \mathcal{O}(G,\mathcal{F})^{\mathbb{N}}$ converging to $T$, the sequence $(\psi^{max}(T_n))_{n\in\mathbb{N}}$ converges to $\partial\psi^{max} (T)$. 
\end{theo}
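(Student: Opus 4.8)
The plan is to carry out Klarreich's strategy as sketched in the introduction, uniformly in the two cases: throughout, $FZ(G,\mathcal{F})$, $\psi$, $\mathcal{X}(G,\mathcal{F})$ stand either for themselves or for their $\mathcal{Z}^{max}$-counterparts, and the arguments are the same, the only substitutions being Theorems~\ref{averse}, \ref{Luo} and~\ref{canonical-splittings} by Theorem~\ref{maxi-averse}, Theorem~\ref{max-Luo} and the $\mathcal{Z}^{max}$-version of Theorem~\ref{canonical-splittings}, Proposition~\ref{mixing-representative} by Proposition~\ref{mixing-rep-max}, and Proposition~\ref{unique-projection-2} by Proposition~\ref{unique-projection-2max}; I will write everything for $FZ(G,\mathcal{F})$. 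The first task is to \emph{define} $\partial\psi$. I would prove that whenever $T\in\mathcal{X}(G,\mathcal{F})$ and $(T_n)\in\mathcal{O}(G,\mathcal{F})^{\mathbb{N}}$ converges to $T$, the sequence $(\psi(T_n))$ converges to infinity in $FZ(G,\mathcal{F})$. This refines Theorem~\ref{Luo}: if the Gromov products $(\psi(T_n)\mid\psi(T_m))_p$ did not tend to infinity, then, after passing to subsequences, geodesics from $\psi(T_{n_j})$ to $\psi(T_{m_j})$ would meet a fixed ball around the basepoint, hence by Theorem~\ref{FZ-hyperbolic} so would the $\psi$-images of optimal liberal folding paths from $T_{n_j}$ to $T_{m_j}$ (all of whose trees are simplicial with trivial edge stabilizers by Proposition~\ref{folding-simplicial}); taking a projective limit of bounded trees $Z_j$ on these paths, invoking Proposition~\ref{limit-one-edge}, and then applying Proposition~\ref{Lipschitz-limit} and Corollary~\ref{alignment-preserved} to the morphisms $T_{n_j}\to Z_j\to T_{m_j}$, one would produce a finite chain of pairwise compatible tame trees from $T$ to a simplicial tree, contradicting $\mathcal{Z}$-averseness of $T$ via condition~(1) of Theorem~\ref{averse}. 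Interleaving two sequences converging to the same $T$ shows the resulting boundary point depends only on $T$; and comparing $T$ with a mixing representative of its $\sim$-class furnished by Proposition~\ref{mixing-representative}, through Lipschitz approximations (Propositions~\ref{Lipschitz-approximation} and~\ref{csq-Lipschitz}) and the limiting statement for folding paths (Proposition~\ref{limit-folding}), shows it depends only on $[T]$. This defines $\partial\psi\colon\mathcal{X}(G,\mathcal{F})/{\sim}\to\partial_\infty FZ(G,\mathcal{F})$, and the convergence property in the statement holds by construction.

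Next I would show $\partial\psi$ is continuous. If $[T^{(k)}]\to[T]$ in $\mathcal{X}(G,\mathcal{F})/{\sim}$, choose representatives $T^{(k)}\to T$ in $\overline{\mathcal{O}(G,\mathcal{F})}$, and for each $k$ a sequence $(T^{(k)}_m)_m$ in $\mathcal{O}(G,\mathcal{F})$ converging to $T^{(k)}$ with $\psi(T^{(k)}_m)$ converging to $\partial\psi([T^{(k)}])$; picking $m_k$ large enough that $(\psi(T^{(k)}_{m_k})\mid\partial\psi([T^{(k)}]))_p\geq k$ and that $T^{(k)}_{m_k}\to T$, the first step gives $\psi(T^{(k)}_{m_k})\to\partial\psi([T])$, and $\delta$-hyperbolicity then yields $(\partial\psi([T^{(k)}])\mid\partial\psi([T]))_p\to\infty$, i.e.\ $\partial\psi([T^{(k)}])\to\partial\psi([T])$. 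Injectivity of $\partial\psi$ I would deduce from the main claim of the next step: if $\partial\psi([T])=\partial\psi([T'])$, interleaving sequences converging to $T$ and to $T'$ gives a single sequence $(S_n)$ in $\mathcal{O}(G,\mathcal{F})$ with $\psi(S_n)$ converging to that common boundary point, whose projective accumulation points — among them $T$ and $T'$ — all lie in one $\sim$-class, whence $T\sim T'$.

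The heart of the proof is the following claim, which also yields surjectivity: if $(T_n)\in\mathcal{O}(G,\mathcal{F})^{\mathbb{N}}$ and $(\psi(T_n))$ converges to a point $\xi\in\partial_\infty FZ(G,\mathcal{F})$, then every projective accumulation point of $(T_n)$ lies in $\mathcal{X}(G,\mathcal{F})$, all such accumulation points are $\sim$-equivalent, and $\partial\psi$ sends their common $\sim$-class to $\xi$. The first assertion is the contrapositive of the fact that if $T_{n_j}\to T_\infty\notin\mathcal{X}(G,\mathcal{F})$ then $\psi(T_{n_j})$ stays within bounded distance of the set $\mathcal{R}^2(T_\infty)$ of reducing splittings of $T_\infty$, which has bounded diameter by Theorem~\ref{canonical-splittings}: one relates $(T_{n_j})$ to a Lipschitz approximation of $T_\infty$ via Propositions~\ref{Lipschitz-approximation}, \ref{csq-Lipschitz} and~\ref{limit-folding}, and then uses the collapse and pullback constructions of Section~\ref{sec-coll-pull} (Propositions~\ref{def-collapse}, \ref{def-pullback} and~\ref{pullback}) to produce a tame optimal folding sequence ending at $T_\infty$ whose $\psi$-image simultaneously stays near $\psi(T_{n_j})$ (by the quasi-geodesic property of Theorem~\ref{FZ-hyperbolic}) and eventually near $\mathcal{R}^2(T_\infty)$ (by Theorem~\ref{canonical-splittings}), contradicting $\psi(T_n)\to\xi$. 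For the second assertion, two accumulation points $T_\infty$ and $T'_\infty$ then both lie in $\mathcal{X}(G,\mathcal{F})$, hence have dense orbits; pairing well-chosen terms $T_{p_j}\to T_\infty$ and $T_{q_j}\to T'_\infty$ and taking a projective limit (after rescaling) of trees lying in the middle of optimal folding paths from $T_{p_j}$ to $T_{q_j}$ — whose $\psi$-images fellow-travel a quasi-geodesic ray to $\xi$, since both endpoints converge to $\xi$, by Theorem~\ref{FZ-hyperbolic} and $\delta$-hyperbolicity — and controlling the limit using Proposition~\ref{Lipschitz-limit} and Corollary~\ref{alignment-preserved}, one obtains a tree onto which $T_\infty$ collapses and which collapses onto $T'_\infty$; hence $T_\infty\sim T'_\infty$ (using Lemma~\ref{compatible-limit} and Proposition~\ref{mixing-representative}). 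The third assertion is the first step applied to a subsequence $T_{n_j}\to T_\infty$. Finally, given $\xi\in\partial_\infty FZ(G,\mathcal{F})$, a quasi-geodesic ray converging to $\xi$ together with coarse density of $\psi(\mathcal{O}(G,\mathcal{F}))$ in $FZ(G,\mathcal{F})$ provides a sequence $(T_n)$ in $\mathcal{O}(G,\mathcal{F})$ with $\psi(T_n)\to\xi$, and the claim exhibits $\xi$ as $\partial\psi$ of the $\sim$-class of any accumulation point of $(T_n)$, so $\partial\psi$ is onto. This step — controlling projective accumulation points of $(T_n)$ when $\psi(T_n)\to\infty$, both pinning them inside $\mathcal{X}(G,\mathcal{F})$ and collecting them into one $\sim$-class — is the main obstacle, and it is exactly where the boundedness of reducing splittings and the collapse/pullback technology of Section~\ref{sec-coll-pull} are indispensable and where one must keep careful track of rescaling factors in limits of folding paths.

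To conclude that $\partial\psi$ is a homeomorphism I would check it is closed: if $\xi$ is a limit point of $\partial\psi(C)$ for a closed $C\subseteq\mathcal{X}(G,\mathcal{F})/{\sim}$, write $\xi=\lim_k\partial\psi([T^{(k)}])$ with $[T^{(k)}]\in C$; by compactness of $\overline{P\mathcal{O}(G,\mathcal{F})}$ the $T^{(k)}$ admit a projective accumulation point $T_\infty$, and a diagonal version of the main claim (choosing $(T^{(k)}_m)_m\to T^{(k)}$ with $\psi(T^{(k)}_m)\to\partial\psi([T^{(k)}])$ and diagonalizing) shows $T_\infty\in\mathcal{X}(G,\mathcal{F})$ and $\partial\psi([T_\infty])=\xi$; since $C$ is closed, $[T_\infty]\in C$, so $\xi\in\partial\psi(C)$. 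Combined with the previous steps this makes $\partial\psi$ a continuous closed bijection, hence a homeomorphism. Equivariance follows because $\psi$ is $\text{Out}(G,\mathcal{F})$-equivariant up to bounded error and all of the constructions above are natural, so $\partial\psi([T\Phi])=\partial\psi([T])\Phi$ for every $\Phi\in\text{Out}(G,\mathcal{F})$; and uniqueness is immediate, since the convergence property in the statement determines $\partial\psi$ on all of $\mathcal{X}(G,\mathcal{F})$ and the projection $\mathcal{X}(G,\mathcal{F})\to\mathcal{X}(G,\mathcal{F})/{\sim}$ is onto. Theorem~\ref{main-2} follows verbatim after the substitutions indicated at the outset.
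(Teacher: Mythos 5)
Your overall plan is exactly Klarreich's strategy, which is indeed the route the paper follows, and your substitution dictionary at the outset is the right one: the $\mathcal{Z}^{max}$-statements (Theorems~\ref{maxi-averse}, \ref{max-Luo}, Propositions~\ref{mixing-rep-max}, \ref{unique-projection-2max} and the $\mathcal{Z}^{max}$-version of Theorem~\ref{canonical-splittings}) slot into the $\mathcal{Z}$-proof verbatim. The structure (definition of $\partial\psi^{max}$, then continuity, injectivity, surjectivity, closedness) also matches. There is, however, a recurring and genuine gap in the way you pass to limits, and it appears in the two most delicate steps.

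Your mechanism, in both the definition step and in the ``all accumulation points lie in one $\sim$-class'' step, is to take optimal folding paths \emph{between successive approximating Grushko trees} $T_{n_j}$ and $T_{m_j}$, pick trees $Z_j$ on them, and invoke Propositions~\ref{Lipschitz-limit}, \ref{limit-folding} and Corollary~\ref{alignment-preserved} on the maps $T_{n_j}\to Z_j\to T_{m_j}$. These propositions require a uniform Lipschitz bound (for Proposition~\ref{limit-folding}, the hypothesis is $\text{Lip}(S_i,T_i)\le 1+\frac{1}{i}$). A folding path from $T_{n_j}$ to $T_{m_j}$ must start from $T_{n_j}$ rescaled by $\text{Lip}(T_{n_j},T_{m_j})$, and this quantity need \emph{not} stay bounded as $j\to\infty$ even though both sequences converge to the same $T$: the finite candidate sets $X(T_{n_j})$ furnished by Theorem~\ref{candidate-finite} vary with $j$, so no fixed finite collection of translation lengths controls $\text{Lip}(T_{n_j},T_{m_j})$. (Also, your citation of Proposition~\ref{limit-one-edge} for the limit of the $Z_j$'s is misplaced: it concerns limits of one-edge splittings, not of trees on a folding path.) The paper avoids this issue in two different ways in the two places it arises. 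For the definition of $\partial\psi^{max}$ it fixes a Lipschitz approximation $(\widehat{T}_i)_{i\in\mathbb{N}}$ of a refinement $\widehat{T}=T+T'$ (Proposition~\ref{Lipschitz-approximation}); here the candidate set $X(\widehat{T}_i)$ is fixed once $i$ is fixed, so Proposition~\ref{csq-Lipschitz} supplies $\text{Lip}(\widehat{T}_i,T_j)\le 1+\frac{1}{i}$, after which Proposition~\ref{limit-folding} applies, packaged as Lemma~\ref{lemma-psi} and combined with the general hyperbolic-space Lemma~\ref{hyp-prop}. For injectivity (Proposition~\ref{psi-injective}) and for Proposition~\ref{psi-bounded}, the paper does not take limits of folding-path midpoints in $\overline{\mathcal{O}(G,\mathcal{F})}$ at all: it takes geodesic segments in $FZ^{max}(G,\mathcal{F})$ between the projected points, projectively rescales the associated one-edge splittings, and uses Proposition~\ref{limit-one-edge} together with Lemma~\ref{compatible-limit} to obtain a finite chain of pairwise compatible tame trees. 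This sidesteps the Lipschitz issue entirely. Your deduction of injectivity from the ``all accumulation points are $\sim$-equivalent'' assertion is a tidy piece of bookkeeping, but as written that assertion inherits the Lipschitz gap.

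Two further remarks. First, your sketch of pinning the accumulation points inside $\mathcal{X}^{max}(G,\mathcal{F})$ conflates where the collapse/pullback machinery of Section~\ref{sec-coll-pull} is used (it proves the boundedness of $\mathcal{R}^{2,max}(T)$, Theorem~\ref{canonical-splittings}) with the separate comparison step (Proposition~\ref{psi-bounded}), which again goes via limits of one-edge splittings plus the general hyperbolicity statement Proposition~\ref{hyperbolic} and the continuity of the canonical folding path of~\cite{GL07} -- not via Lipschitz approximations. Second, a Lipschitz approximation of $T_\infty$ exists only when $T_\infty$ has dense orbits, whereas a tree $T_\infty\notin\mathcal{X}^{max}(G,\mathcal{F})$ may well have nontrivial simplicial part, so the route you propose for the first assertion does not cover that case. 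The closedness, uniqueness and equivariance parts of your argument are fine and match the paper's.
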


The following consequence regarding the topology of $\partial_{\infty}FZ^{(max)}(G,\mathcal{F})$ was suggested to us by the referee.

\begin{cor}
The Gromov boundaries $\partial_{\infty}FZ(G,\mathcal{F})$ and $\partial_{\infty}FZ^{max}(G,\mathcal{F})$ have cohomological dimension at most $3N+2k-5$.
\end{cor}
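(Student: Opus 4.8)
Looking at this corollary, I need to bound the cohomological dimension of $\partial_\infty FZ^{(max)}(G,\mathcal{F})$ by $3N+2k-5$, using the facts established earlier: the boundary map $\partial\psi$ is cell-like (its fibers in $\overline{P\mathcal{O}(G,\mathcal{F})}$ are compact and contractible, hence cell-like), and there is a known bound on the topological dimension of $\partial P\mathcal{O}(G,\mathcal{F})$ from \cite{Hor14-5}.

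Let me think through the plan.

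\textbf{The plan is to} combine three ingredients: (1) the identification of $\partial_\infty FZ^{(max)}(G,\mathcal{F})$ as a quotient of the subspace $P\mathcal{X}^{(max)}(G,\mathcal{F}) \subseteq \partial P\mathcal{O}(G,\mathcal{F})$ via the cell-like map $\partial\psi$; (2) the dimension bound on $\partial P\mathcal{O}(G,\mathcal{F})$ — namely that it has covering (equivalently, for these spaces, cohomological) dimension at most $3N + 2k - 5$ where $N = \mathrm{rk}_f(G,\mathcal{F})$ and $k = |\mathcal{F}|$, which is \cite[Theorem 0.2 / Corollary of the main dimension theorem]{Hor14-5}; and (3) the general principle that cell-like maps do not raise cohomological dimension. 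More precisely, if $f: X \to Y$ is a cell-like map between (say) metrizable, finite-dimensional compact spaces, then $\dim_{\mathbb{Z}} Y \le \dim_{\mathbb{Z}} X$ (indeed, by the Vietoris–Begle theorem a cell-like map induces isomorphisms on Čech cohomology of the relevant subspaces, so it cannot increase the cohomological dimension — see e.g. Dydak–Walsh or the classical Vietoris–Begle mapping theorem). I would first cite Proposition~\ref{contractible} and its $\mathcal{Z}^{max}$-analogue together with Propositions~\ref{mixing-representative} and \ref{mixing-rep-max} to recall that the fibers of $\partial\psi$ inside $\overline{P\mathcal{O}(G,\mathcal{F})}$ are compact and contractible, hence cell-like (a compact contractible finite-dimensional metric space is cell-like, in fact any compact contractible ANR-like subspace works, but here contractibility plus being a closed subset of the finite-dimensional compact space $\overline{P\mathcal{O}(G,\mathcal{F})}$ suffices).

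\textbf{The key steps in order} would be: first, restrict $\partial\psi$ to obtain a genuine surjection $\partial\psi: P\mathcal{X}^{(max)}(G,\mathcal{F}) \to \partial_\infty FZ^{(max)}(G,\mathcal{F})$ which is continuous (by Theorems~\ref{main}, \ref{main-2}) and whose point-preimages are exactly the $\sim$-classes; second, note that $P\mathcal{X}^{(max)}(G,\mathcal{F})$ is a subspace of the compact metrizable space $\overline{P\mathcal{O}(G,\mathcal{F})}$, and each $\sim$-class is a compact contractible subset of $\overline{P\mathcal{O}(G,\mathcal{F})}$ by Proposition~\ref{contractible}. Then one should observe that $\partial\psi$ extends — or rather that $P\mathcal{X}^{(max)}(G,\mathcal{F})$ can be taken inside the closure of each fiber without changing things, since the fibers are already compact and contained in $P\mathcal{X}^{(max)}$ — so that $\partial\psi$ is a cell-like map from a separable metrizable space of cohomological dimension at most $3N+2k-5$. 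Third, invoke the Vietoris–Begle / cell-like mapping theorem: a cell-like surjection $f: X \to Y$ of separable metrizable spaces satisfies $\dim_{\mathbb{Z}} Y \le \dim_{\mathbb{Z}} X$. (One needs here that $X = P\mathcal{X}^{(max)}$ is finite-dimensional, which comes from its being a subspace of the finite-dimensional $\overline{P\mathcal{O}(G,\mathcal{F})}$, and one should be careful that the cell-like mapping theorem as usually stated requires finite-dimensionality of the source — which we have.) Conclude $\dim_{\mathbb{Z}} \partial_\infty FZ^{(max)}(G,\mathcal{F}) \le \dim_{\mathbb{Z}} \overline{P\mathcal{O}(G,\mathcal{F})} \le 3N + 2k - 5$.

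\textbf{The main obstacle} I anticipate is a technical point about the cell-like mapping theorem: in full generality, a cell-like map between infinite-dimensional spaces can raise dimension (this is the celebrated Dranishnikov example), so one must genuinely use that the source has finite covering dimension, which is exactly what \cite{Hor14-5} provides for $\overline{P\mathcal{O}(G,\mathcal{F})}$ (and hence for the subspace $P\mathcal{X}^{(max)}$, since covering dimension is monotone under taking subspaces). Given finite-dimensionality of the source, the cell-like mapping theorem (equivalently the Vietoris–Begle theorem, since cell-like maps are $\mathbb{Z}$-acyclic) yields that Čech cohomology with $\mathbb{Z}$-coefficients is preserved, whence $\dim_{\mathbb{Z}}$ does not increase; this is the step to cite carefully (e.g.\ \cite{Lac77} or Dydak–Walsh). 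A secondary point is checking that a compact contractible subspace of a finite-dimensional compactum is cell-like in the sense of having trivial shape — contractibility implies trivial shape for such spaces, so this is immediate — and that $\partial\psi$ restricted to $P\mathcal{X}^{(max)}$ is proper enough (it is, since its fibers are compact and it is a closed map by the final part of the proof of Theorem~\ref{main}) for the cell-like mapping theorem to apply. Once these are in place, the numerical bound $3N + 2k - 5$ is exactly the topological dimension bound for $\partial P\mathcal{O}(G,\mathcal{F})$ from \cite{Hor14-5}, and the corollary follows.
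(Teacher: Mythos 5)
Your proposal is correct and follows essentially the same route as the paper: view $\partial\psi$ as a cell-like map from $P\mathcal{X}^{(max)}(G,\mathcal{F})\subseteq\partial P\mathcal{O}(G,\mathcal{F})$ (using Proposition~\ref{contractible} for compactness and contractibility of fibers), then invoke the cell-like mapping theorem together with the finite-dimensionality of $\partial P\mathcal{O}(G,\mathcal{F})$ from \cite{Hor14-5}. The paper cites Lacher and Walsh for the cell-like machinery where you cite Vietoris--Begle and Dydak--Walsh, but the argument is the same.
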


\begin{proof}
The maps $\partial\psi$ and $\partial\psi^{max}$ from Theorems \ref{main} and \ref{main-2} induce maps from the projectivization $P\mathcal{X}(G,\mathcal{F})$ to $\partial_{\infty}FZ(G,\mathcal{F})$ and $\partial_{\infty}FZ^{max}(G,\mathcal{F})$. Corollary \ref{contractible} implies that these induced maps are cell-like in the sense of \cite{Lac69}. Therefore \cite{Wal81}, the cohomological dimension of $\partial_{\infty}FZ(G,\mathcal{F})$ and $\partial_{\infty}FZ^{max}(G,\mathcal{F})$ is bounded above by the topological dimension of $\partial P\mathcal{O}(G,\mathcal{F})$, which was proved in \cite{Hor14-5} to be equal to $3N+2k-5$.
\end{proof}

\begin{rk}
We do not know however whether the covering dimension of either $\partial_{\infty}FZ(G,\mathcal{F})$ or $\partial_{\infty}FZ^{max}(G,\mathcal{F})$ is finite.
\end{rk}

\paragraph{Definition of $\partial\psi$.}

The following lemma may be viewed as a kind of Cauchy criterion for Gromov products.

\begin{lemma}\label{lemma-psi}
Let $S,T\in\mathcal{X}(G,\mathcal{F})$, such that there exists a $1$-Lipschitz alignment-preserving map from $S$ to $T$. Let $(S_i)_{i\in\mathbb{N}}\in\mathcal{O}(G,\mathcal{F})^{\mathbb{N}}$ (resp. $(T_i)_{i\in\mathbb{N}}\in\mathcal{O}(G,\mathcal{F})^{\mathbb{N}}$) be a sequence of trees that converges (non-projectively) to $S$ (resp. to $T$). Assume that for all $i\in\mathbb{N}$, there exists $J_i\in\mathbb{N}$ so that for all $j\ge J_i$, we have $\text{Lip}(S_i,T_j)\le 1+\frac{1}{i}$. Then 
\begin{displaymath}
\forall C\ge 0, \exists I_C\in\mathbb{N}, \forall i\ge I_C, \exists J_{i,C}\in\mathbb{N}, \forall j\ge J_{i,C}, (\psi(S_i)|\psi(T_j))\ge C.
\end{displaymath}
\end{lemma}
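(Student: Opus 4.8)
\textbf{Proof plan for Lemma \ref{lemma-psi}.}

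The plan is to combine the boundedness of reducing splittings (Theorem \ref{canonical-splittings}) with the quasi-geodesic property of the $\psi$-images of optimal folding paths (Theorem \ref{FZ-hyperbolic}), in the manner of Klarreich's original argument. First I would fix $C \ge 0$. Since $S \in \mathcal{X}(G,\mathcal{F})$, it is $\mathcal{Z}$-averse, hence by Proposition \ref{mixing} it collapses to a mixing $\mathcal{Z}$-incompatible tree $\overline S$, which is again in $\mathcal{X}(G,\mathcal{F})$; the hypothesis that $S$ admits a $1$-Lipschitz alignment-preserving map to $T$ together with Proposition \ref{unique-projection-2} forces $T$ to collapse to $\overline S$ as well, so we may replace both $S$ and $T$ by $\overline S$ if convenient, or simply record that $S$ collapses onto $\overline S$ and $T$ collapses onto $\overline S$. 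The point of passing to $\overline S$ is that, by Proposition \ref{folding-uncollapsible}, any optimal liberal folding path ending at $\overline S$ consists of simplicial trees with trivial edge stabilizers until it reaches $\overline S$ itself, so such a path is (after $\psi$) a $(K,L)$-reparameterized quasi-geodesic, with constants independent of everything in sight.

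The main step is then the following. For each $i$, the hypothesis $\mathrm{Lip}(S_i,T_j) \le 1 + \frac{1}{i}$ for $j \ge J_i$, combined with Theorem \ref{candidate}, yields an optimal liberal folding path $\gamma_{i,j}$ from (a tree in the cone of) $S_i$ to $T_j$, all of whose trees are simplicial Grushko trees (Proposition \ref{folding-simplicial}). I would then invoke Proposition \ref{limit-folding}: for $i$ fixed and $j \to \infty$, along a subsequence the paths $\gamma_{i,j}$ accumulate (pointwise, after reparameterization) onto an optimal liberal folding path from the cone of $S_i$ to a tree $Z_i$ carrying $1$-Lipschitz alignment-preserving maps $S \to Z_i$ and $Z_i \to T$. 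Concatenating the $Z_i \to T$ map with $T \to \overline S$, the tree $Z_i$ collapses onto $\overline S$, hence onto a mixing $\mathcal{Z}$-incompatible tree; in particular $Z_i \notin \mathcal{X}(G,\mathcal{F})$ is impossible unless $Z_i$ itself is $\mathcal{Z}$-averse — either way, the key output I want is that $\psi(S_i)$ and $\psi(T_j)$ are joined, via the folding path $\gamma_{i,j}$ through $\psi$, by a reparameterized quasi-geodesic that passes near the set $\mathcal{R}^2(Z_i)$ of reducing splittings when $Z_i$ is not $\mathcal{Z}$-averse, and which ``escapes to infinity in the direction of $\overline S$'' otherwise. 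Concretely: by Theorem \ref{canonical-splittings} applied to the tail of $\gamma_{i,j}$ near $Z_i$ (or directly to $Z_i$ if $Z_i \notin \mathcal{X}$), the portion of $\psi(\gamma_{i,j})$ beyond a bounded distance of $\psi(S_i)$ lies within uniformly bounded distance of $\mathcal{R}^2(Z_i)$, while the portion before it moves along a quasi-geodesic of length comparable to $d(\psi(S_i),\mathcal{R}^2(Z_i))$; since $Z_i \to \overline S$ with $\overline S \in \mathcal{X}$, Theorem \ref{Luo} (applied to a sequence in $\mathcal{O}(G,\mathcal{F})$ converging to $\overline S$, compared with the cone of $S_i$) shows $d(\ast, \psi(S_i)) \to \infty$ and that $\psi(S_i)$ travels a distance tending to infinity in $i$ along these quasi-geodesics before reaching the bounded region $\mathcal{R}^2(Z_i)$.

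To finish, I would unwind this into the Gromov product estimate. Fix a basepoint $\ast$. For $i$ large and $j \ge J_{i}$, the images $\psi(S_i)$ and $\psi(T_j)$ are joined by a $(K,L)$-reparameterized quasi-geodesic $\sigma_{i,j}$ (namely $\psi \circ \gamma_{i,j}$) whose closest point to $\ast$ lies at distance $\ge \rho(i)$ from both $\psi(S_i)$ and $\psi(T_j)$, where $\rho(i) \to \infty$ as $i \to \infty$, uniformly in $j \ge J_{i}$: this is exactly the content of the previous paragraph, the uniform lower bound $\rho(i)$ coming from Theorem \ref{Luo} (the distance from $\ast$ to $\psi(S_i)$ grows, and the excursion length along $\sigma_{i,j}$ before entering the uniformly bounded set of reducing splittings also grows), together with the uniform boundedness of $\mathcal{R}^2(Z_i)$ (Theorem \ref{canonical-splittings}). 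In a $\delta$-hyperbolic space, if $x,y$ are joined by a $(K,L)$-quasi-geodesic all of whose points are at distance $\ge R$ from $\ast$, then $(x|y)_\ast \ge R - K'$ for a constant $K' = K'(K,L,\delta)$; applying this with $R = \rho(i)$, choosing $I_C$ so that $\rho(i) - K' \ge C$ for all $i \ge I_C$, and then $J_{i,C} := \max(J_i, \text{the index beyond which the accumulation argument is valid})$ gives $(\psi(S_i)|\psi(T_j))_\ast \ge C$ for all $i \ge I_C$ and $j \ge J_{i,C}$, which is the assertion. The main obstacle I expect is the uniformity in $j$: Proposition \ref{limit-folding} only produces accumulation \emph{points} $Z_i$ along subsequences of $(T_j)_j$, so I must argue that the escape distance $\rho(i)$ is bounded below independently of $j$ — this requires running the contradiction through: if some sequence $j_m \to \infty$ had the closest-point-to-$\ast$ distance along $\sigma_{i,j_m}$ bounded, passing to a limit would contradict $\overline S \in \mathcal{X}(G,\mathcal{F})$ via Theorem \ref{Luo} and Lemma \ref{compatible-limit}, exactly as in the proof of Theorem \ref{Luo} itself.
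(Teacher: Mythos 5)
Your proof plan has the right ingredients --- optimal liberal folding paths from (cones of) $S_i$ to $T_j$, Proposition \ref{limit-folding}, the quasi-geodesic property of $\psi$-images (Theorem \ref{FZ-hyperbolic}), and Theorem \ref{Luo} --- but the direct quantitative argument you sketch does not close, and the paper's proof is a much shorter argument by contradiction that sidesteps all the machinery around $\overline S$, Proposition \ref{mixing}, Proposition \ref{unique-projection-2}, Proposition \ref{folding-uncollapsible}, and Theorem \ref{canonical-splittings}.

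The principal gap is the dichotomy in your second paragraph. You observe that the accumulation trees $Z_i$ collapse onto $\overline S$, which (by the hereditary property of $\mathcal{Z}$-averseness under compatibility) forces $Z_i \in \mathcal{X}(G,\mathcal{F})$, so $\mathcal{R}^2(Z_i)=\emptyset$ and Theorem \ref{canonical-splittings} gives no information. You are therefore always in the ``escapes to infinity in the direction of $\overline S$'' branch, which is precisely the conclusion you are trying to establish; no quantitative escape rate $\rho(i)$ is actually extracted. There is a second, subtler gap in the proposed per-$i$ contradiction argument for uniformity in $j$: for $i$ fixed, the source trees $S_i$ are Grushko trees, so an accumulation point of $Z_{i,j_m}$ as $j_m\to\infty$ comes with a $1$-Lipschitz map to $T$ but need not have dense orbits, and Corollary \ref{alignment-preserved} cannot be invoked to promote that map to an alignment-preserving one. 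Proposition \ref{limit-folding} is designed for a diagonal sequence where both indices tend to infinity together, precisely so that the hypotheses on $S$ with dense orbits can be brought to bear; a per-$i$ argument does not fit its hypotheses.

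The paper does the following instead, in one paragraph. Suppose the conclusion fails: then for some $C$ there are increasing $(i_k)$ and $(j_k)$ with $j_k\ge J_{i_k}$ and $(\psi(S_{i_k})|\psi(T_{j_k}))\le C$. Take the folding paths $\gamma_k$ from the cone of $S_{i_k}$ to $T_{j_k}$ given by Proposition \ref{limit-folding}. Because $\psi\circ\gamma_k$ is uniformly close to a geodesic (Theorem \ref{FZ-hyperbolic}) and the Gromov product is $\le C$, one can pick $Z_k\in\mathrm{Im}(\gamma_k)$ with $(\psi(Z_k))_k$ bounded. Proposition \ref{limit-folding} now applies to this \emph{diagonal} sequence and yields an accumulation point $Z$ with $1$-Lipschitz alignment-preserving maps $S\to Z\to T$; hence $Z\in\mathcal{X}(G,\mathcal{F})$, and Theorem \ref{Luo} forces $(\psi(Z_k))_k$ to be unbounded, a contradiction. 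When you reorganize your plan around a single contradiction in the pair $(i,j)$ rather than attempting a two-stage quantitative argument, it collapses to this.
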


\begin{proof}
Otherwise, there would exist $C\ge 0$ and increasing sequences $(i_k)_{k\in\mathbb{N}}$ and $(j_k)_{k\in\mathbb{N}}$ of integers, so that for all $k\in\mathbb{N}$, we have $j_k\ge J_{i_k}$ and $(\psi(S_{i_k})|\psi(T_{j_k}))\le C$. For all $k\in\mathbb{N}$, let $\gamma_k$ be an optimal liberal folding path from a point in the cone of $S_{i_k}$ to $T_{j_k}$ given by Proposition \ref{limit-folding}. As $\psi$-images of optimal liberal folding paths are uniformly Hausdorff close to geodesics (Theorem \ref{FZ-hyperbolic}), for all $k\in\mathbb{N}$, we can find $Z_{k}$ in the image of $\gamma_k$, so that the sequence $(\psi(Z_{k}))_{k\in\mathbb{N}}$ is bounded in $FZ(G,\mathcal{F})$. Proposition \ref{limit-folding} implies that $(Z_{k})_{k\in\mathbb{N}}$ has an accumulation point $Z\in\overline{\mathcal{O}(G,\mathcal{F})}$ that comes with alignment-preserving maps from $S$ to $Z$ and from $Z$ to $T$. In particular, we have $Z\in\mathcal{X}(G,\mathcal{F})$. By Theorem \ref{Luo}, the sequence $(\psi(Z_k))_{k\in\mathbb{N}}$ should be unbounded, a contradiction.
\end{proof}

We will also make use of the following general statement about Gromov hyperbolic metric spaces. 

\begin{lemma}\label{hyp-prop}
Let $X$ be a Gromov hyperbolic metric space. Let $(X_i)_{i\in\mathbb{N}}\in X^{\mathbb{N}}$ and $(Y_i)_{i\in\mathbb{N}}\in X^{\mathbb{N}}$ be two sequences in $X$. Assume that 
\begin{displaymath}
\forall C\ge 0, \exists I_C\in\mathbb{N}, \forall i\ge I_C, \exists J_{i,C}\in\mathbb{N},\forall j\ge J_{i,C},(X_i|Y_j)\ge C.
\end{displaymath} 

\noindent Then $(X_i)_{i\in\mathbb{N}}$ and $(Y_i)_{i\in\mathbb{N}}$ both converge to the same point of the Gromov boundary $\partial_{\infty} X$.
\end{lemma}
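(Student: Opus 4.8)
The plan is to verify the two defining properties of convergence to a common boundary point directly from the hypothesis, using only the defining inequality of Gromov hyperbolicity (the four-point condition with constant $\delta$) together with the standard fact that the Gromov product is $2\delta$-coarsely well-behaved under the equivalence relation on sequences converging to infinity. First I would show that $(X_i)_{i\in\mathbb{N}}$ converges to infinity. Given $C\ge 0$, apply the hypothesis with the constant $C$ to obtain $I_C$ and, for each $i\ge I_C$, an integer $J_{i,C}$; now fix any $i,i'\ge I_C$ and pick $j\ge\max\{J_{i,C},J_{i',C}\}$. Then $(X_i|Y_j)\ge C$ and $(X_{i'}|Y_j)\ge C$, so by the four-point inequality $(X_i|X_{i'})\ge\min\{(X_i|Y_j),(X_{i'}|Y_j)\}-\delta\ge C-\delta$. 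Since $C$ was arbitrary, $(X_i|X_{i'})\to+\infty$ as $i,i'\to\infty$, so $(X_i)_{i\in\mathbb{N}}$ converges to infinity and hence defines a point $\xi\in\partial_\infty X$. The same argument with the roles symmetric (using that $(X_i|Y_j)\ge C$ for all large $i$ and all $j\ge J_{i,C}$, and applying hyperbolicity to two indices $j,j'$) shows $(Y_j|Y_{j'})\ge C-\delta$ for $j,j'$ large, so $(Y_j)_{j\in\mathbb{N}}$ also converges to infinity and defines a point $\eta\in\partial_\infty X$.

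Next I would show $\xi=\eta$, i.e. that the two sequences are equivalent: one must check that $(X_i|Y_j)\to+\infty$ as $i,j\to\infty$ in any manner, not merely along the selected subsequence $j\ge J_{i,C}$. This is where a little care is needed, and I expect it to be the only mildly delicate point. Fix $C\ge 0$ and set $C':=C+2\delta$. Apply the hypothesis with $C'$ to get $I_{C'}$ and the $J_{i,C'}$. Fix $i\ge I_{C'}$; then for $j\ge J_{i,C'}$ we have $(X_i|Y_j)\ge C'$. Now for an arbitrary index $j$ (not necessarily $\ge J_{i,C'}$), choose an auxiliary index $j_0\ge\max\{J_{i,C'},\,$ a threshold past which $(Y_j|Y_{j_0})\ge C'$ for all $j$ exceeding that threshold$\}$, which exists because $(Y_j)$ converges to infinity. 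Then for all sufficiently large $j$, $(X_i|Y_{j_0})\ge C'$ and $(Y_j|Y_{j_0})\ge C'$, so $(X_i|Y_j)\ge\min\{(X_i|Y_{j_0}),(Y_j|Y_{j_0})\}-\delta\ge C'-\delta$. Finally, upgrading from a fixed $i$ to $i\to\infty$: for $i,i'$ large, $(X_i|X_{i'})\ge C'$, so $(X_{i'}|Y_j)\ge\min\{(X_i|X_{i'}),(X_i|Y_j)\}-\delta\ge C'-2\delta=C$ for all large $i'$ and all large $j$. Hence $\liminf_{i,j\to\infty}(X_i|Y_j)\ge C$; since $C$ was arbitrary this $\liminf$ is $+\infty$, so the sequences are equivalent and $\xi=\eta$.

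The main obstacle, such as it is, is bookkeeping: the hypothesis only controls $(X_i|Y_j)$ when $j$ is large \emph{relative to} $i$, so the argument must route the comparison through auxiliary indices and absorb a bounded number of applications of the $\delta$-inequality, tracking the resulting constant shifts ($C\mapsto C-\delta$, $C\mapsto C-2\delta$, etc.). Each such step is routine, and since the final conclusion only asserts divergence to $+\infty$, the accumulated additive constants are harmless. No properties of $X$ beyond hyperbolicity and the standard definition of $\partial_\infty X$ via sequences converging to infinity are needed; in particular no geodesics, no local compactness, and no visibility hypotheses enter. This lemma will then be combined with Lemma \ref{lemma-psi} (whose conclusion is precisely the hypothesis of Lemma \ref{hyp-prop}, with $X=FZ(G,\mathcal{F})$, $X_i=\psi(S_i)$ and $Y_j=\psi(T_j)$) to define the boundary map $\partial\psi$ and establish its well-definedness.
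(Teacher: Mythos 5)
Your proof is correct and follows essentially the same route as the paper's: in both cases everything comes down to repeated use of the four-point inequality to transport lower bounds on Gromov products between the various pairs of indices. The paper's version is more compact because it first shows only that $(Y_j)$ converges to some $\xi\in\partial_\infty X$ (by fixing $i=I_C$ and varying $j,j'\ge J_{I_C,C}$), and then invokes in one line the standard fact that $(X_i|\xi)\to\infty$ forces $(X_i)\to\xi$, absorbing the ``$j$ large relative to $i$'' asymmetry into the definition of the Gromov product with a boundary point. You instead show that both sequences converge to infinity and then establish $\xi=\eta$ by hand, chasing the definition of equivalence and tracking the additive $\delta$-losses explicitly; this makes the proof a bit longer and more self-contained but is not a different idea. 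One minor imprecision: the argument for $(Y_j|Y_{j'})\ge C-\delta$ is not literally ``the same argument with the roles symmetric'' (the hypothesis quantifies $j$ after $i$, so the two directions need slightly different index bookkeeping — fix one $i$ and let $j,j'$ vary versus fix one $j$ large enough for both $i$ and $i'$), but the parenthetical you give describes the correct manipulation, so this is a wording issue rather than a gap.
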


\begin{proof}
Let $\delta$ be the hyperbolicity constant of $X$. The assumption implies that for all $C\ge 0$, and all $j,j'\ge J_{I_C,C}$, we have $(X_{I_C}|Y_j)\ge C$ and $(X_{I_C}|Y_{j'})\ge C$, whence $(Y_j|Y_{j'})\ge C-\delta$. Therefore, the sequence $(Y_j)_{j\in\mathbb{N}}$ converges to some point $\xi\in\partial_{\infty} X$. Then for all $C\ge 0$, there exists $I_C\in\mathbb{N}$ such that for all $i\ge I_C$, we have $(X_i|\xi)\ge C-\delta$. This implies that $(X_i)_{i\in\mathbb{N}}$ also converges to $\xi$.
\end{proof}

\begin{prop}\label{X-psi}
There exists a unique map $\partial\psi:\mathcal{X}(G,\mathcal{F})\to\partial_{\infty} FZ(G,\mathcal{F})$ such that for all $T\in\mathcal{X}(G,\mathcal{F})$ and all sequences $(T_j)_{j\in\mathbb{N}}\in \mathcal{O}(G,\mathcal{F})^{\mathbb{N}}$ converging to $T$, the sequence $(\psi(T_j))_{j\in\mathbb{N}}$ converges to $\partial\psi(T)$. In addition, if $S,T\in\mathcal{X}(G,\mathcal{F})$ satisfy $S\sim T$, then $\partial\psi(S)=\partial\psi(T)$.
\end{prop}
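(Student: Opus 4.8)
The plan is to establish Proposition \ref{X-psi} in two stages: first showing that the limit point in $\partial_\infty FZ(G,\mathcal F)$ is well-defined and independent of the approximating sequence, and then upgrading this to invariance under the equivalence relation $\sim$. For the first stage, let $T\in\mathcal X(G,\mathcal F)$ and let $(T_j)_{j\in\mathbb N}$ be any sequence in $\mathcal O(G,\mathcal F)$ converging (non-projectively) to $T$. I would first invoke Theorem \ref{Luo} to conclude that $(\psi(T_j))_{j\in\mathbb N}$ is unbounded in $FZ(G,\mathcal F)$. To see that it actually converges to a point of the Gromov boundary, I would apply Lemma \ref{lemma-psi} and Lemma \ref{hyp-prop} in the special case $S=T$ with both approximating sequences equal to $(T_j)_{j}$: since $T$ admits the identity ($1$-Lipschitz, alignment-preserving) map to itself, and since $\text{Lip}(T_i,T_j)\to 1$ along the diagonal — this is where I would need to cite Proposition \ref{csq-Lipschitz}, which guarantees precisely that for each $i$ there is $J_i$ with $\text{Lip}(T_i,T_j)\le 1+\frac1i$ for $j\ge J_i$, after passing to the reindexing needed — the hypothesis of Lemma \ref{lemma-psi} is satisfied, so the Cauchy-type conclusion holds, and Lemma \ref{hyp-prop} then yields convergence of $(\psi(T_j))_j$ to some $\xi\in\partial_\infty FZ(G,\mathcal F)$.

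Next I would show this limit is independent of the chosen sequence. Given two sequences $(S_j)_j$ and $(T_j)_j$ in $\mathcal O(G,\mathcal F)$ both converging non-projectively to the same tree $T$, I would again apply Lemma \ref{lemma-psi} with $S=T$, using the two sequences as the two inputs: the required Lipschitz bound $\text{Lip}(S_i,T_j)\le 1+\frac1i$ for $j$ large follows from Proposition \ref{csq-Lipschitz} (taking $(S_i)_i$ as a Lipschitz approximation of $T$ — or reducing to that case — and noting the identity map from $T$ to $T$ is $1$-Lipschitz). Then Lemma \ref{hyp-prop} forces $(\psi(S_j))_j$ and $(\psi(T_j))_j$ to converge to the same boundary point. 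This defines $\partial\psi(T)$ unambiguously, and uniqueness of the map $\partial\psi$ with the stated property is immediate from the definition. One subtlety I should address carefully: the hypothesis of Proposition \ref{csq-Lipschitz} requires one of the sequences to be a Lipschitz approximation; for a general sequence converging to $T$ one first compares it, via the diagonal trick and the triangle-type inequality for $(\cdot|\cdot)$, to a fixed Lipschitz approximation of $T$ supplied by Proposition \ref{Lipschitz-approximation} (valid since $\mathcal Z$-averse trees have dense orbits).

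For the invariance under $\sim$, suppose $S\sim T$ with $S,T\in\mathcal X(G,\mathcal F)$. By definition of $\sim$ there is a finite chain $S=U_0,U_1,\dots,U_k=T$ of trees in $\overline{\mathcal O(G,\mathcal F)}$ with $U_i$ compatible with $U_{i+1}$. By Theorem \ref{averse}, being $\mathcal Z$-averse is preserved along such a chain, so every $U_i$ lies in $\mathcal X(G,\mathcal F)$; moreover by Proposition \ref{mixing-representative} each $U_i$ collapses to a common mixing representative of the $\sim$-class, and in particular there is a $1$-Lipschitz alignment-preserving map realizing each compatibility as a pair of collapses through $U_i+U_{i+1}$. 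It therefore suffices to treat one step: assume $S$ admits a $1$-Lipschitz alignment-preserving map to $T$ (the general compatibility reduces to two such steps via the standard common refinement $S+T$, which is itself $\mathcal Z$-averse). Choose Lipschitz approximations $(S_i)_i$ of $S$ and let $(T_j)_j$ be any sequence converging to $T$; Proposition \ref{csq-Lipschitz} gives the Lipschitz bounds needed to apply Lemma \ref{lemma-psi}, and Lemma \ref{hyp-prop} then shows $(\psi(S_i))_i$ and $(\psi(T_j))_j$ converge to the same point, i.e. $\partial\psi(S)=\partial\psi(T)$.

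The main obstacle I anticipate is bookkeeping the quantifier order in the Cauchy-type statement of Lemma \ref{lemma-psi} and matching it precisely to the hypotheses of Proposition \ref{csq-Lipschitz}: the latter is stated for a Lipschitz approximation on the source side and an arbitrary convergent sequence on the target side, whereas to prove sequence-independence I need it symmetrically. Handling this requires inserting a fixed Lipschitz approximation of $T$ as an intermediary and using hyperbolicity of $FZ(G,\mathcal F)$ (the $\delta$-inequality for Gromov products, exactly as in the proof of Lemma \ref{hyp-prop}) to glue the two comparisons. The rest is a routine assembly of Theorem \ref{Luo}, Lemma \ref{lemma-psi}, Lemma \ref{hyp-prop}, and the structural results Theorem \ref{averse} and Proposition \ref{mixing-representative}.
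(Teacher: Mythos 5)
Your proposal is correct and follows essentially the same route as the paper: both arguments rest on the assembly Proposition~\ref{Lipschitz-approximation} (existence of a Lipschitz approximation) $+$ Proposition~\ref{csq-Lipschitz} (quantitative Lipschitz bound) $+$ Lemma~\ref{lemma-psi} (Cauchy-type estimate) $+$ Lemma~\ref{hyp-prop} (conversion to boundary convergence). The only difference is organizational: the paper introduces once and for all a tree $T'\in\mathcal{X}(G,\mathcal{F})$ compatible with $T$ and uses a Lipschitz approximation of $\widehat{T}:=T+T'$ as the single intermediary, which yields the well-definedness of $\partial\psi(T)$, the sequence-independence, and the compatibility-invariance $\partial\psi(T)=\partial\psi(T')$ in one pass, whereas you treat these as separate steps, first comparing against a Lipschitz approximation of $T$ itself (the case $T'=T$ of the paper's set-up) and then reducing the $\sim$-invariance to the one-step collapse case via $S+T$; both versions work, and you correctly flag and resolve the one genuine pitfall (that Proposition~\ref{csq-Lipschitz} is asymmetric, requiring a Lipschitz approximation on the source side, so a fixed Lipschitz approximation must be interposed when comparing two arbitrary sequences).
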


\begin{proof}
Let $T\in\mathcal{X}(G,\mathcal{F})$. We will prove that for all sequences $(T_j)_{j\in\mathbb{N}}\in\mathcal{O}(G,\mathcal{F})^{\mathbb{N}}$ that converge (non-projectively) to $T$, the sequence $(\psi(T_j))_{j\in\mathbb{N}}$ converges to a point in $\partial_{\infty} FZ(G,\mathcal{F})$. This implies that all such sequences have the same limit, which we call $\partial\psi(T)$. This will define $\partial\psi$ (and show its uniqueness). 

Let $(T_j)_{j\in\mathbb{N}}\in\mathcal{O}(G,\mathcal{F})^{\mathbb{N}}$ be a sequence that converges (non-projectively) to $T$. Let $T'\in\mathcal{X}(G,\mathcal{F})$ be any tree that is compatible with $T$, and let $\widehat{T}:=T+T'$. The tree $\widehat{T}$ has dense orbits, otherwise $T$ would be $\mathcal{Z}$-compatible, so Proposition \ref{Lipschitz-approximation} shows the existence of a Lipschitz approximation $(\widehat{T}_i)_{i\in\mathbb{N}}\in \mathcal{O}(G,\mathcal{F})^{\mathbb{N}}$ of $\widehat{T}$. Proposition \ref{csq-Lipschitz} ensures that for all $i\in\mathbb{N}$, there exists $J_i\in\mathbb{N}$ such that for all $j\ge J_i$, we have $\text{Lip}(\widehat{T}_i,T_j)\le 1+\frac{1}{i}$. Lemma \ref{lemma-psi} then shows that $$\forall C\in\mathbb{N},\exists I_C\in\mathbb{N},\forall i\ge I_C,\exists J_{i,C}\in\mathbb{N},\forall j\ge J_{i,C},(\psi(\widehat{T}_i)|\psi(T_j))\ge C.$$ Together with Lemma \ref{hyp-prop}, this implies that $(\psi(T_j))_{j\in\mathbb{N}}$ converges to some point $\xi\in\partial_{\infty} FZ(G,\mathcal{F})$. This defines $\partial\psi$. 

Furthermore, the sequence $(\psi(\widehat{T}_i))_{i\in\mathbb{N}}$ also converges to $\xi$ (Lemma \ref{hyp-prop}). Therefore, we have proved that for all $T,T'\in\mathcal{X}(G,\mathcal{F})$, if $T$ is compatible with $T'$, then $\partial\psi(T)=\partial\psi(T')$. Therefore, if $T,T'\in\mathcal{X}(G,\mathcal{F})$ satisfy $T\sim T'$, then  $\partial\psi(T)=\partial\psi(T')$. 
\end{proof}

\noindent Proposition \ref{X-psi} shows that $\partial\psi$ descends to a map $\partial\psi:\mathcal{X}(G,\mathcal{F})/{\sim}\to\partial_{\infty} FZ(G,\mathcal{F})$.

\paragraph{Continuity of $\partial\psi$.}

\begin{prop}\label{psi-continuous}
The map $\partial\psi:\mathcal{X}(G,\mathcal{F})/{\sim}\to\partial_{\infty} FZ(G,\mathcal{F})$ is continuous.
\end{prop}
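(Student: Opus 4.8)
The plan is to prove that the composition $\partial\psi\circ\pi:\mathcal{X}(G,\mathcal{F})\to\partial_{\infty}FZ(G,\mathcal{F})$ is continuous, where $\pi$ denotes the quotient map onto $\mathcal{X}(G,\mathcal{F})/{\sim}$; by the universal property of the quotient topology this is equivalent to continuity of $\partial\psi$ itself. Since $G$ is countable, the space $\mathbb{R}^{G}$, and hence its subspace $\mathcal{X}(G,\mathcal{F})\subseteq\overline{\mathcal{O}(G,\mathcal{F})}$, is metrizable, so it suffices to argue with sequences. Let $(T_{n})_{n\in\mathbb{N}}$ be a sequence in $\mathcal{X}(G,\mathcal{F})$ converging (non-projectively) to $T\in\mathcal{X}(G,\mathcal{F})$. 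As the sets $N_{r}(\partial\psi(T))$ form a neighbourhood basis of $\partial\psi(T)$ in $\partial_{\infty}FZ(G,\mathcal{F})$, it is enough to prove that $(\partial\psi(T_{n})\,|\,\partial\psi(T))_{p}\to+\infty$.

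First I would perform a diagonal extraction. Fix a metric $d$ on $\overline{\mathcal{O}(G,\mathcal{F})}$ inducing the axes topology. For each $n$, Proposition \ref{X-psi} provides a sequence $(U^{n}_{j})_{j\in\mathbb{N}}\in\mathcal{O}(G,\mathcal{F})^{\mathbb{N}}$ converging (non-projectively) to $T_{n}$ with $\psi(U^{n}_{j})\to\partial\psi(T_{n})$ as $j\to\infty$; in particular $(\psi(U^{n}_{j})\,|\,\partial\psi(T_{n}))_{p}\to+\infty$ and $d(U^{n}_{j},T_{n})\to 0$. Choose $j(n)$ large enough that $S_{n}:=U^{n}_{j(n)}$ satisfies
\begin{displaymath}
(\psi(S_{n})\,|\,\partial\psi(T_{n}))_{p}\ \ge\ n \qquad\text{and}\qquad d(S_{n},T_{n})\ \le\ \tfrac{1}{n}.
\end{displaymath}
Then $S_{n}\in\mathcal{O}(G,\mathcal{F})$, and since $T_{n}\to T$ and $d(S_{n},T_{n})\to 0$, the sequence $(S_{n})_{n\in\mathbb{N}}$ converges (non-projectively) to $T$. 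Applying the defining property of $\partial\psi$ in Proposition \ref{X-psi} to the tree $T$ yields $\psi(S_{n})\to\partial\psi(T)$, hence $(\psi(S_{n})\,|\,\partial\psi(T))_{p}\to+\infty$.

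It remains to combine the two estimates. The graph $FZ(G,\mathcal{F})$ is Gromov hyperbolic by Theorem \ref{FZ-hyperbolic}; let $\delta$ be a hyperbolicity constant. By the standard extension of the four-point inequality to Gromov products involving points of the boundary \cite{BH99,CDP90,GdlH90}, there is a constant $\delta'$ depending only on $\delta$ such that for every $n$,
\begin{displaymath}
(\partial\psi(T_{n})\,|\,\partial\psi(T))_{p}\ \ge\ \min\bigl\{(\partial\psi(T_{n})\,|\,\psi(S_{n}))_{p},\ (\partial\psi(T)\,|\,\psi(S_{n}))_{p}\bigr\}-\delta'.
\end{displaymath}
The first term of the minimum is at least $n$, while the second tends to $+\infty$ because $\psi(S_{n})\to\partial\psi(T)$; hence the right-hand side tends to $+\infty$, so $(\partial\psi(T_{n})\,|\,\partial\psi(T))_{p}\to+\infty$ and $\partial\psi(T_{n})\to\partial\psi(T)$. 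This proves continuity. The one point requiring care is the diagonal choice of $S_{n}$: it must be simultaneously close to $T_{n}$ in $\overline{\mathcal{O}(G,\mathcal{F})}$ (so that $(S_{n})$ converges to $T$ and Proposition \ref{X-psi} applies) and have $\psi$-image within Gromov product $n$ of $\partial\psi(T_{n})$; granting this, everything else is formal.
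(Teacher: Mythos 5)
Your proof is correct and follows the same overall strategy as the paper's: reduce to continuity of $\partial\psi$ on $\mathcal{X}(G,\mathcal{F})$ via the quotient topology, argue with sequences, use Proposition \ref{X-psi} to produce a diagonally extracted sequence $(S_n)\in\mathcal{O}(G,\mathcal{F})^{\mathbb{N}}$ that converges to $T$ while $(\psi(S_n)|\partial\psi(T_n))_p$ is forced to diverge, and close with hyperbolicity. The one place where the two arguments genuinely part ways is the final combination step. The paper introduces two auxiliary sequences $(S_k)$ and $(S'_k)$ in $\mathcal{O}(G,\mathcal{F})$, both with $\psi$-image converging to $\partial\psi(T)$, and after a subsequence extraction obtains \emph{upper} bounds $(\psi(S_k)|\psi(S'_k))_p\le(\partial\psi(T)|\partial\psi(T_k))_p+2\delta$ directly from the $\sup\liminf$ definition of the Gromov product at boundary points; divergence of the left-hand side then forces divergence of the right-hand side. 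You instead work with a single diagonal sequence $(S_n)$ and invoke the four-point inequality extended to mixed Gromov products to bound $(\partial\psi(T_n)|\partial\psi(T))_p$ from \emph{below}. Both routes are valid; yours is slightly more economical, hews more closely to the standard playbook for Gromov-hyperbolic spaces, and is also more careful in noting that sequential arguments suffice because the axes topology on a subspace of $\mathbb{R}^G$ with $G$ countable is metrizable (a point the paper leaves implicit).
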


\begin{proof}
By definition of the quotient topology, it is enough to check that $\partial\psi:\mathcal{X}(G,\mathcal{F})\to\partial_{\infty} FZ(G,\mathcal{F})$ is continuous. Let $T\in\mathcal{X}(G,\mathcal{F})$, and let $(T_i)_{i\in\mathbb{N}}\in\mathcal{X}(G,\mathcal{F})^{\mathbb{N}}$ be a sequence that converges non-projectively to $T$. We want to show that $(\partial\psi(T_i))_{i\in\mathbb{N}}$ converges to $\partial\psi(T)$. Let $(S_k)_{k\in\mathbb{N}}\in\mathcal{O}(G,\mathcal{F})^{\mathbb{N}}$ be a sequence that converges to $T$. Proposition \ref{X-psi} implies that the sequence $(\psi(S_k))_{k\in\mathbb{N}}$ converges to $\partial\psi(T)$. Therefore, up to replacing $(S_k)_{k\in\mathbb{N}}$ by a subsequence, we can assume that for all $k\in\mathbb{N}$, we have 
\begin{equation}\label{eq-1}
(\psi(S_k)|\partial\psi(T_k))\le(\partial\psi(T)|\partial\psi(T_k))+\delta.
\end{equation}

\noindent Recall that for all $k\in\mathbb{N}$, we have $T_k\in\mathcal{X}(G,\mathcal{F})$. Therefore, using Proposition \ref{X-psi}, we can find a sequence $(S'_k)_{k\in\mathbb{N}}\in\mathcal{O}(G,\mathcal{F})^{\mathbb{N}}$ (where we choose $S'_k$ sufficiently close to $T_k$) such that 

\begin{itemize}
\item the sequence $(S'_k)_{k\in\mathbb{N}}$ converges to $T$ in $\overline{\mathcal{O}(G,\mathcal{F})}$, and 
\item for all $k\in\mathbb{N}$, we have 
\begin{equation}\label{eq-2}
(\psi(S_k)|\psi(S'_k))\le (\psi(S_k)|\partial\psi(T_k))+\delta. 
\end{equation}
\end{itemize}

\noindent Combining Equations \eqref{eq-1} and \eqref{eq-2}, we then get that for all $k\in\mathbb{N}$, we have
\begin{equation}\label{eq1}
(\psi(S_k)|\psi(S'_k))\le(\partial\psi(T)|\partial\psi(T_k))+2\delta.
\end{equation}

\noindent As both $(\psi(S_k))_{k\in\mathbb{N}}$ and $(\psi(S'_k))_{k\in\mathbb{N}}$ converge to $\partial\psi(T)$ (Proposition \ref{X-psi}), the Gromov product $(\psi(S_k)|\psi(S'_k))$ tends to $+\infty$, and hence $(\partial\psi(T)|\partial\psi(T_k))$ tends to $+\infty$. This implies that $(\partial\psi(T_k))_{k\in\mathbb{N}}$ converges to $\partial\psi(T)$.
\end{proof}

\paragraph{Injectivity of $\partial\psi$.}

\begin{prop}\label{psi-injective}
The map $\partial\psi:\mathcal{X}(G,\mathcal{F})/{\sim}\to\partial_{\infty} FZ(G,\mathcal{F})$ is injective.
\end{prop}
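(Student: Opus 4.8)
The plan is to show that $\partial\psi(T)=\partial\psi(T')$ forces $T\sim T'$, which is exactly injectivity of the induced map on $\mathcal{X}(G,\mathcal{F})/{\sim}$. Write $\xi:=\partial\psi(T)=\partial\psi(T')$ and fix the basepoint $p\in FZ(G,\mathcal{F})$ used for Gromov products. First I would pick sequences $(T_i)_{i\in\mathbb{N}},(T'_i)_{i\in\mathbb{N}}\in\mathcal{O}(G,\mathcal{F})^{\mathbb{N}}$ converging non-projectively to $T$ and $T'$; by Proposition \ref{X-psi} the sequences $(\psi(T_i))_{i}$ and $(\psi(T'_i))_{i}$ both converge to $\xi$ in $FZ(G,\mathcal{F})\cup\partial_{\infty}FZ(G,\mathcal{F})$, so the Gromov products $(\psi(T_i)|\psi(T'_j))_p$ tend to $+\infty$ as $i,j\to+\infty$.

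Next I would extract a candidate intermediate tree. For each $i$, after rescaling $T_i$ by $\mathrm{Lip}(T_i,T'_i)$ if necessary, there is an optimal liberal folding path $\gamma_i$ from a point in the cone of $T_i$ to $T'_i$; since $\mathcal{O}(G,\mathcal{F})$ consists of simplicial trees with trivial edge stabilizers, Proposition \ref{folding-simplicial} and Theorem \ref{FZ-hyperbolic} show that $\psi\circ\gamma_i$ stays uniformly Hausdorff-close to a geodesic from $\psi(T_i)$ to $\psi(T'_i)$. Because $(\psi(T_i)|\psi(T'_i))_p\to+\infty$, in the hyperbolic space $FZ(G,\mathcal{F})$ such a geodesic passes at distance tending to $+\infty$ from $p$, so there are trees $W_i:=\gamma_i(t_i)$ on the folding paths with $d_{FZ(G,\mathcal{F})}(p,\psi(W_i))\to+\infty$. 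By compactness of $\overline{P\mathcal{O}(G,\mathcal{F})}$, after passing to a subsequence we may assume $W_i\to W$ projectively for some $W\in\overline{\mathcal{O}(G,\mathcal{F})}$.

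The core of the argument is then to show that $W$ is compatible with both $T$ and $T'$: this produces a finite chain $(T,W,T')$ of pairwise compatible trees in $\overline{\mathcal{O}(G,\mathcal{F})}$, hence $T\sim T'$ and $[T]=[T']$, as wanted. Along $\gamma_i$ there are morphisms $T_i\to W_i$ and $W_i\to T'_i$ between the representatives appearing on the path; these are $1$-Lipschitz and do not increase translation lengths. Renormalizing the three sequences so that they converge non-projectively to representatives of $T$, $W$, $T'$, one passes to the limit of these maps via Proposition \ref{Lipschitz-limit}, obtaining $1$-Lipschitz maps $T\to W$ and $W\to T'$ into the respective metric completions. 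Since $T$ and $T'$ have dense orbits (being $\mathcal{Z}$-averse) and $W$ can be shown to have dense orbits as well — otherwise its nontrivial simplicial part would yield $\mathcal{Z}$-splittings keeping $\psi(W_i)$ in a bounded region of $FZ(G,\mathcal{F})$ in the spirit of Theorem \ref{canonical-splittings} and Corollary \ref{reducing-projection}, contradicting $d_{FZ(G,\mathcal{F})}(p,\psi(W_i))\to+\infty$ — Corollary \ref{alignment-preserved} forces these maps to be alignment-preserving and $\overline{\mathcal{O}(G,\mathcal{F})}$-valued. Alternatively one can feed the common refinements $T_i+W_i$ and $W_i+T'_i$ directly into Lemma \ref{compatible-limit}.

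The step I expect to be the main obstacle is the normalization bookkeeping in this limiting argument. A priori the scalars needed to make $(T_i)$, $(W_i)$, $(T'_i)$ converge non-projectively are unrelated, and one must rule out that the renormalized midpoints $W_i$ degenerate to the trivial tree relative to one of the two sides, in which case no Lipschitz map survives in the limit. Controlling this requires extra structure along folding paths — monotonicity and linear scaling of quotient volume together with bounded cancellation (Proposition \ref{bcc}) — and the fact that $W_i$ is a genuine quasi-geodesic midpoint, so $\psi(W_i)$ is far from both $\psi(T_i)$ and $\psi(T'_i)$; if convenient one first replaces $T$ and $T'$ by the canonical mixing representatives of their $\sim$-classes furnished by Proposition \ref{mixing-representative}, for which the relevant collapse maps restrict to isometries on the dense-orbit parts. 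Once the limiting collapse maps are in hand, the conclusion is immediate.
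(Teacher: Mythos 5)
Your overall strategy — exhibiting a tree in $\overline{\mathcal{O}(G,\mathcal{F})}$ compatible with both $T$ and $T'$ — is in the spirit of what the paper does, but the specific route through folding paths from $T_i$ to $T'_i$ and a single midpoint $W$ runs into two genuine problems, one of which you partly flag yourself.

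The first is the renormalization obstacle, and it is not merely bookkeeping. Writing $L_i:=\mathrm{Lip}(T_i,T'_i)$, the optimal folding path $\gamma_i$ starts at $L_i T_i$, so the morphism onto $W_i$ is $1$-Lipschitz only after this rescaling. There is no a priori reason for $L_i$ to stay bounded: if some nonperipheral $g$ is elliptic in $T$ but hyperbolic in $T'$ — perfectly possible for two $\mathcal Z$-averse trees, e.g.\ one dual to a surface lamination, the other not of surface type — then $\|g\|_{T_i}\to 0$, $\|g\|_{T'_i}\to\|g\|_{T'}>0$, hence $L_i\to+\infty$. Then $L_iT_i$ does not converge nonprojectively, and the hypothesis of Proposition~\ref{Lipschitz-limit} on the left side fails; no $1$-Lipschitz map $T\to W$ is produced. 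The fix you suggest (passing to the canonical mixing representatives of the two $\sim$-classes) does not help, because $T$ and $T'$ need not lie in the same class a priori — this is exactly what one must prove — so their mixing representatives are unrelated, and the stretch obstruction persists. Quotient-volume monotonicity along folding paths bounds $\mathrm{qvol}(W_i)$ from above by $\mathrm{qvol}(L_iT_i)$, which is again unbounded, so that avenue does not close the gap either. The argument as written would establish only that $W$ is compatible with $T'$.

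The second problem is the alternative ``feed the common refinements $T_i+W_i$ and $W_i+T'_i$ into Lemma~\ref{compatible-limit}.'' This presupposes that $T_i$ and $W_i$ are compatible, which is false for a general point $W_i$ on a folding path from $T_i$: a folding map is a morphism, not an alignment-preserving map, and a nontrivial fold destroys compatibility. So $T_i+W_i$ is simply not defined. (This is exactly why the paper instead passes through a genuine geodesic in $FZ(G,\mathcal{F})$, where consecutive vertices are compatible by definition of the graph.)

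The paper sidesteps both issues with a different choice of intermediate objects. It first replaces $T,T'$ by their mixing $\mathcal Z$-incompatible representatives (using Proposition~\ref{mixing-representative}, which is legitimate since it only changes the $\sim$-class representative), then takes folding paths from a common basepoint $\ast$ to $T$ and to $T'$; by Proposition~\ref{folding-uncollapsible} these stay in the simplicial, trivial-edge-stabilizer part. Hyperbolicity of $FZ(G,\mathcal{F})$ plus $\partial\psi(T)=\partial\psi(T')$ forces the two $\psi$-images to be at bounded Hausdorff distance, so one can take a geodesic $\psi(T_i)=\psi(T_i^0),\dots,\psi(T_i^C)=\psi(T'_i)$ of uniformly bounded length $C$. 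The crucial gain is that each $T_i^k$ is a one-edge splitting, so each column $(T_i^k)_i$ can be rescaled \emph{independently} to converge nonprojectively to a tame tree $T_\infty^k$ (Proposition~\ref{limit-one-edge}), and since compatibility is a projective condition, Lemma~\ref{compatible-limit} carries it to the limit: $T_\infty^k$ is compatible with $T_\infty^{k+1}$, $T$ collapses to $T_\infty^0$, and $T'$ collapses to $T_\infty^C$. This produces the chain $(T,T_\infty^0,\dots,T_\infty^C,T')$ witnessing $T\sim T'$ with no Lipschitz normalization to control. If you want to keep your ``single midpoint'' picture, you must replace the $W_i$ by one-edge vertices of a geodesic in $FZ(G,\mathcal{F})$ and converge each column separately; a point on a folding path in $\overline{\mathcal O(G,\mathcal F)}$ will not do.
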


\begin{proof}
Let $T,T'\in\mathcal{X}(G,\mathcal{F})$ be such that $\partial\psi(T)=\partial\psi(T')$. We choose $T$ and $T'$ to be mixing and $\mathcal{Z}$-incompatible representatives in their equivalence classes (Proposition \ref{mixing-representative}). Let $\ast\in \mathcal{O}(G,\mathcal{F})$, and consider an optimal liberal folding path $\gamma$ (resp. $\gamma'$) from the cone of $\ast$ to $T$ (resp. to $T'$). By Proposition \ref{folding-uncollapsible}, all trees along the paths $\gamma$ and $\gamma'$ are simplicial and have trivial arc stabilizers. As $\partial\psi(T)=\partial\psi(T')$, it follows from Theorem \ref{FZ-hyperbolic} that the images $\psi(\gamma)$ and $\psi(\gamma')$ are at finite Hausdorff distance $M$ from each other, where $M$ is bounded independently from the paths $\gamma$ and $\gamma'$. Let $(\psi(T_i))_{i\in\mathbb{N}}$ be a sequence of points lying on $\psi(\gamma)$ and converging to $\partial\psi(T)$, and let $(\psi(T'_i))_{i\in\mathbb{N}}$ be a sequence of points lying on $\psi(\gamma')$ and converging to $\partial\psi(T)=\partial\psi(T')$, so that for all $i\in\mathbb{N}$, we have $d_{FZ(G,\mathcal{F})}(\psi(T_i),\psi(T'_i))\le M$. Up to passing to a subsequence, we may assume that the distance between $\psi(T_i)$ and $\psi(T'_i)$ is constant, we denote it by $C$. For all $i\in\mathbb{N}$, let $\psi(T_i)=\psi(T_i^0),\dots,\psi(T_i^C)=\psi(T'_i)$ be a geodesic segment in $FZ(G,\mathcal{F})$ joining $\psi(T_i)$ to $\psi(T'_i)$. Up to rescaling and passing to a subsequence, we may assume that for all $k\in\{0,\dots,C\}$, the sequence of one-edge simplicial $(G,\mathcal{F})$-trees $(T_i^k)_{i\in\mathbb{N}}$ converges non-projectively to a tame $(G,\mathcal{F})$-tree $T_{\infty}^k$ (Proposition \ref{limit-one-edge}). For all $i\in\mathbb{N}$, the splitting $\psi(T_i)$ (resp. $\psi(T'_i)$) is a collapse of $T_i$ (resp. $T'_i$), so $T$ (resp. $T'$) collapses to $T_{\infty}^0$ (resp. $T_{\infty}^C$). For all $k\in\{0,\dots,C-1\}$ and all $i\in\mathbb{N}$, the trees $T_i^k$ and $T_i^{k+1}$ are compatible, so Lemma \ref{compatible-limit} implies that $T_{\infty}^k$ and $T_{\infty}^{k+1}$ are compatible. Therefore $T\sim T'$.  
\end{proof}

\paragraph{Surjectivity of $\partial\psi$.}

\begin{prop} \label{psi-bounded}
For all $M\in\mathbb{R}$, there exists $C_M\in\mathbb{R}$ such that the following holds. Let $T\in\overline{\mathcal{O}(G,\mathcal{F})}\smallsetminus\mathcal{X}(G,\mathcal{F})$, and let $(T_n)_{n\in\mathbb{N}}\in \mathcal{O}(G,\mathcal{F})^{\mathbb{N}}$ be a sequence that converges to $T$, such that the sequence $(\psi(T_n))_{n\in\mathbb{N}}$ lies in a region of $FZ(G,\mathcal{F})$ of diameter bounded by $M$. Then $d_{FZ(G,\mathcal{F})}(\psi(T_n),\mathcal{R}^2(T))\le C_M$ for all $n\in\mathbb{N}$.
\end{prop}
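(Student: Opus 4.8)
The plan is to reduce the statement, via Theorem \ref{canonical-splittings}, to the construction of a \emph{single} tame optimal folding sequence ending at $T$ whose $\psi$-image is bounded and stays close to the points $\psi(T_n)$. Recall that $T$, lying in $\overline{\mathcal{O}(G,\mathcal{F})}$, is tame, and that $T\notin\mathcal{X}(G,\mathcal{F})$ means $\mathcal{R}^2(T)\neq\emptyset$; write $C_1$ for the uniform bound on $\mathrm{diam}_{FZ(G,\mathcal{F})}\mathcal{R}^2(T)$ and $C_2$ for the uniform constant from the second part of Theorem \ref{canonical-splittings}. Once we have produced, for some subsequence $(m_k)_{k\in\mathbb{N}}$, a tame optimal folding sequence $(U_k)_{k\in\mathbb{N}}$ ending at $T$ with $d_{FZ(G,\mathcal{F})}(\psi(U_k),\psi(T_{m_k}))\le C_0$ for a constant $C_0$ depending only on $(G,\mathcal{F})$, the conclusion is immediate: Theorem \ref{canonical-splittings} applied to $(U_k)_k$ gives $d_{FZ(G,\mathcal{F})}(\psi(U_k),\mathcal{R}^2(T))\le C_2$ for $k$ large, hence $d_{FZ(G,\mathcal{F})}(\psi(T_{m_k}),\mathcal{R}^2(T))\le C_0+C_2$; and since $\{\psi(T_n)\}_{n\in\mathbb{N}}$ has diameter at most $M$, we get $d_{FZ(G,\mathcal{F})}(\psi(T_n),\mathcal{R}^2(T))\le M+C_0+C_2=:C_M$ for every $n$, with $C_M$ depending only on $M$ and $(G,\mathcal{F})$.

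To build $(U_k)$, I would exploit that $T_n\to T$ more strongly than just in the axes topology, using the Lipschitz-approximation technology of Section \ref{sec-Lipschitz}. When $T$ has dense orbits, Proposition \ref{Lipschitz-approximation} provides a Lipschitz approximation of $T$, and Proposition \ref{csq-Lipschitz} lets one compare Lipschitz distances between the $T_n$ and such an approximation; the upshot is that one can extract a subsequence $(T_{m_k})_k$ and choose trees $U_k$ in the closure of the cone of $T_{m_k}$, together with optimal morphisms $U_k\to U_{k+1}$ of Lipschitz constant at most $1+2^{-k}$ (their existence coming from Theorem \ref{candidate}), so that the composite morphisms $U_k\to U_\ell$ converge, as $\ell\to\infty$, to an optimal morphism $U_k\to T$. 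Organizing these into the coherent system $f_{p,q}$, $f_p$ required by the definition of a folding sequence, and noting that tameness is automatic for the Grushko trees $U_k$, produces the desired tame optimal folding sequence ending at $T$; and since $\psi$ depends only on the simplicial structure up to collapsing boundedly many orbits of edges, $d_{FZ(G,\mathcal{F})}(\psi(U_k),\psi(T_{m_k}))\le C_0$ with $C_0$ depending only on $(G,\mathcal{F})$. When $T$ has a nontrivial simplicial part one runs the same scheme on the dense-orbit vertex trees of the Levitt decomposition of $T$ (Proposition \ref{Levitt}) while keeping the simplicial skeleton fixed; alternatively, this case follows directly from Proposition \ref{folding-nondense} and Lemma \ref{case-nonempty}, which already pin the tail of any folding sequence ending at such a $T$ near $\mathcal{R}^1(T)\subseteq\mathcal{R}^2(T)$.

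The step I expect to be the main obstacle is precisely this construction of $(U_k)$: one must choose the subsequence and the rescalings so that consecutive Lipschitz constants tend to $1$ quickly enough that the composed morphisms have a well-defined limit equal to $T$ — this is where the Lipschitz-approximation hypotheses, rather than mere convergence in the axes topology, are essential, much as in the proofs of Propositions \ref{csq-Lipschitz} and \ref{bcc} — and one must arrange the coherence relations $f_p=f_q\circ f_{p,q}$ and $f_{p,r}=f_{q,r}\circ f_{p,q}$; Proposition \ref{limit-folding} and Lemma \ref{compatible-limit} are the natural tools for controlling the limits that arise. All the remaining constants ($C_0$, $C_1$, $C_2$ and the passage from $T_n$ to the closure of its cone) are visibly independent of $T$ and of the sequence $(T_n)$, so the final bookkeeping is routine.
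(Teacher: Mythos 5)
Your reduction to Theorem~\ref{canonical-splittings} is logically sound in outline, but the key step---producing a tame optimal folding sequence $(U_k)$ ending at $T$ with $\psi(U_k)$ uniformly close to $\psi(T_{m_k})$---does not go through for an arbitrary sequence $(T_n)$ converging to $T$ in the axes topology, and I do not see how to repair it with the tools you cite. The construction requires optimal morphisms $U_k\to U_{k+1}$ with Lipschitz constant $1+2^{-k}$, hence in effect $\mathrm{Lip}(T_{m_k},T_{m_{k+1}})\to 1$ along the subsequence; but axes-topology convergence $T_n\to T$ gives no such control between terms of the sequence. Proposition~\ref{csq-Lipschitz} points in the wrong direction: it bounds $\mathrm{Lip}(S_i,T_j)$ where $(S_i)$ is a \emph{specific} Lipschitz approximation of $T$, not $\mathrm{Lip}(T_j,T_{j'})$ or $\mathrm{Lip}(T_j,S_i)$ for the given $T_j$'s, and $\psi(S_i)$ need not be anywhere near $\psi(T_n)$. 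The coherence relations $f_{p,r}=f_{q,r}\circ f_{p,q}$ are also a genuine constraint you cannot impose after the fact on independently chosen optimal morphisms. Note, moreover, that if the conclusion were false for some sequence $(T_n)$---that is, $\psi(T_n)$ stayed far from $\mathcal{R}^2(T)$---Theorem~\ref{canonical-splittings} would \emph{preclude} the existence of the folding sequence you want, since its $\psi$-image must eventually land near $\mathcal{R}^2(T)$; so the construction cannot succeed by a general recipe that ignores the geometry of where $\psi(T_n)$ actually sits.

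The paper's proof avoids this by working with geodesics in $FZ(G,\mathcal{F})$ rather than with morphisms between trees in $\mathcal{O}(G,\mathcal{F})$. After passing to a subsequence so that $d(\ast,\psi(T_n))=M'\le M$ for a fixed basepoint $\ast$, one chooses for each $n$ a geodesic $(\psi(T_n^k))_{k=0,\dots,M'}$ from $\ast$ to $\psi(T_n)$; after rescaling and extracting a further subsequence, each sequence $(T_n^k)_n$ converges to a tame tree $T^k$ (Proposition~\ref{limit-one-edge}), consecutive $T^k$'s are compatible by Lemma~\ref{compatible-limit}, $T^0=\ast$, and $T^{M'}$ is compatible with $T$. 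None of the $T^k$ is $\mathcal{Z}$-averse (a chain back to the simplicial tree $\ast$), so Corollary~\ref{reducing-projection} bounds $\mathrm{diam}(\mathcal{R}^2(T^k)\cup\mathcal{R}^2(T^{k+1}))$ uniformly, and chaining these $M'+1$ bounds together with the triangle inequality finishes the proof. This uses only axes-topology limits of one-edge splittings along the geodesics---precisely the robustness your approach lacks.
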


\begin{proof}
It is enough to prove the desired bound for a subsequence of $(\psi(T_n))_{n\in\mathbb{N}}$, since the bound for the whole sequence follows by replacing $C_M$ by $C_M+M$. Up to passing to a subsequence, we can assume that there exists $\ast\in FZ(G,\mathcal{F})$ and $M'\le M$ such that for all $n\in\mathbb{N}$, we have $d_{FZ(G,\mathcal{F})}(\ast,\psi(T_n))=M'$. For all $n\in\mathbb{N}$, let $(\psi(T_n^k))_{k=0,\dots,M'}$ be a geodesic segment joining $\ast$ to $\psi(T_n)$ in $FZ(G,\mathcal{F})$. We may rescale the one-edge simplicial trees $T_n^k$ so that up to passing to a subsequence, for all $k\in\{0,\dots,M'\}$, the sequence $(T_n^k)_{n\in\mathbb{N}}$ converges non-projectively to a tame $(G,\mathcal{F})$-tree $T^k$ (Proposition \ref{limit-one-edge}). For all $k\in\{0,\dots,M'-1\}$ and all $n\in\mathbb{N}$, the trees $T_n^k$ and $T_n^{k+1}$ are compatible, so Lemma \ref{compatible-limit} implies that $T^k$ and $T^{k+1}$ are compatible. None of the trees $T^k$ is $\mathcal{Z}$-averse, and Corollary \ref{reducing-projection} shows the existence of $C'\in\mathbb{R}$ such that for all $k\in\{0,\dots,M'-1\}$, the diameter of $\mathcal{R}^2(T^k)\cup\mathcal{R}^2(T^{k+1})$ is bounded by $C'$. Since $T^0=\ast$ and $T^{M'}$ is compatible with $T$, the distance between $\ast$ and $\mathcal{R}^2(T)$ is at most $(M+1)C'$. In addition, we have $d_{FZ(G,\mathcal{F})}(\psi(T_n),\ast)\le M$ for all $n\in\mathbb{N}$. It then follows from the triangular inequality that for all $n\in\mathbb{N}$, the distance in $FZ(G,\mathcal{F})$ between $\psi(T_n)$ and $\mathcal{R}^2(T)$ is at most $(M+1)C'+M$.
\end{proof}

The following statement follows from classical arguments about Gromov hyperbolic spaces. We leave its proof to the reader.

\begin{prop} \label{hyperbolic}
Let $\delta>0$, let $X$ be a $\delta$-hyperbolic geodesic metric space. There exists $M\in\mathbb{R}$ only depending on $\delta$ such that the following holds. Let $\xi\in\partial_{\infty} X$, let $(x_n)_{n\in\mathbb{N}}\in X^{\mathbb{N}}$ be a sequence that converges to $\xi$, and for all $n\in\mathbb{N}$, let $\gamma_n$ be a geodesic segment from $x_0$ to $x_n$. Let $R>0$, and for all $n\in\mathbb{N}$, let $y_n\in\gamma_n$ be a point at distance exactly $R$ from $x_0$. Then there exists $n_0\in\mathbb{N}$, such that $\{y_n\}_{n\ge n_0}$ is contained in a region of $X$ of diameter at most $M$. 
\qed 
\end{prop}

\begin{prop}\label{psi-not-infty}
Let $T\in\overline{\mathcal{O}(G,\mathcal{F})}\smallsetminus\mathcal{X}(G,\mathcal{F})$, and let $(T_n)_{n\in\mathbb{N}}\in \mathcal{O}(G,\mathcal{F})^{\mathbb{N}}$ be a sequence that converges to $T$. Then $(\psi(T_n))_{n\in\mathbb{N}}$ does not converge to any point in $\partial_{\infty} FZ(G,\mathcal{F})$.
\end{prop}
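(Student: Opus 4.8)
Here is my plan for proving Proposition~\ref{psi-not-infty}, which asserts that if $T \in \overline{\mathcal{O}(G,\mathcal{F})} \smallsetminus \mathcal{X}(G,\mathcal{F})$ and $(T_n)_{n \in \mathbb{N}}$ converges to $T$ in $\mathcal{O}(G,\mathcal{F})$, then $(\psi(T_n))_{n \in \mathbb{N}}$ does not converge to a point of $\partial_\infty FZ(G,\mathcal{F})$.

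\medskip

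\noindent\textbf{Strategy.} The idea is to argue by contradiction using the uniform boundedness of the set $\mathcal{R}^2(T)$ of reducing splittings (Theorem~\ref{canonical-splittings}) together with the fact, just established in Proposition~\ref{psi-bounded}, that any sequence $(\psi(T_n))$ coming from trees converging to $T$, and lying in a bounded region of $FZ(G,\mathcal{F})$, is uniformly close to $\mathcal{R}^2(T)$. The geometric input will be Proposition~\ref{hyperbolic}: in a $\delta$-hyperbolic space, if $(x_n)$ converges to a boundary point $\xi$, then the points at a fixed distance $R$ from the basepoint along geodesics $[x_0, x_n]$ eventually all lie in a region of diameter at most $M = M(\delta)$. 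Combining these three facts will force the whole geodesic rays toward $\xi$ to lie in a bounded neighborhood of $\mathcal{R}^2(T)$, whose diameter is bounded (by $C_1$, say), and this is incompatible with $\xi$ being a genuine boundary point.

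\medskip

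\noindent\textbf{Key steps.} First I would suppose toward a contradiction that $(\psi(T_n))_{n \in \mathbb{N}}$ converges to some $\xi \in \partial_\infty FZ(G,\mathcal{F})$. Fix a basepoint $\ast := \psi(T_0)$, and for each $n$ let $\gamma_n$ be a geodesic segment in $FZ(G,\mathcal{F})$ from $\ast$ to $\psi(T_n)$. Each vertex of $\gamma_n$ is a one-edge $\mathcal{Z}$-splitting; as in the proof of Proposition~\ref{psi-bounded}, after rescaling and passing to a subsequence, for each fixed position $k$ the corresponding splittings converge to a tame $(G,\mathcal{F})$-tree, consecutive ones are compatible (Lemma~\ref{compatible-limit}), and none of the limit trees is $\mathcal{Z}$-averse since $T$ is not. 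Now apply Proposition~\ref{hyperbolic}: for each integer $R \geq 0$, there is $n_0(R)$ so that the points $y_n^R \in \gamma_n$ at distance exactly $R$ from $\ast$ all lie, for $n \geq n_0(R)$, in a region of diameter at most $M$. Each such $y_n^R$ is of the form $\psi$ of a tree $T_n$ itself (when $R$ equals the full length) or, more to the point, is a one-edge $\mathcal{Z}$-splitting lying on a geodesic, and by the standard argument (limit of one-edge splittings is tame, Proposition~\ref{limit-one-edge}, and Lemma~\ref{compatible-limit}) the limit of the $y_n^R$ is a tame tree $Z^R \notin \mathcal{X}(G,\mathcal{F})$ that is compatible with $T$ via a chain of compatible trees; hence $\mathcal{R}^2(T) \subseteq \mathcal{R}^2(Z^R) \neq \emptyset$ and, by Theorem~\ref{canonical-splittings}, $d_{FZ(G,\mathcal{F})}(\psi(y_n^R), \mathcal{R}^2(T))$ is bounded by some constant $C$ independent of $R$ and $n$. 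Combined with the $M$-boundedness from Proposition~\ref{hyperbolic}, this shows every point of the geodesic rays toward $\xi$, at every distance $R$, stays within $C + M$ of the fixed bounded set $\mathcal{R}^2(T)$ — so a quasi-geodesic ray converging to $\xi$ has bounded image, contradicting the existence of such a ray (recall any $\xi \in \partial_\infty X$ is the limit of an unbounded quasi-geodesic ray).

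\medskip

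\noindent\textbf{Main obstacle.} The delicate point is making the passage from "the points $y_n^R$ at distance $R$ cluster in a bounded region" to "their limits $Z^R$ are non-$\mathcal{Z}$-averse trees compatible with $T$ through a compatibility chain, so that $\mathcal{R}^2(T) \subseteq \mathcal{R}^2(Z^R)$." One must take a diagonal subsequence so that, simultaneously for all $R$ in a fixed finite window, the one-edge splittings $y_n^R$ converge and consecutive limits along the geodesic are compatible, then verify the chain $\ast = Z^0, Z^1, \ldots, Z^R, \ldots$ running out to $T$ exhibits $Z^R$ as a tree from which one can reach a simplicial tree by a finite compatibility chain (so $Z^R \notin \mathcal{X}(G,\mathcal{F})$) and, reading the chain the other way, $\mathcal{R}^2(Z^R) \supseteq \mathcal{R}^2(T)$. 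Once that is in place, the contradiction is immediate: $\xi$ would be accessible only through bounded sets, which is impossible. The rest is routine bookkeeping with subsequences and the already-established propositions.
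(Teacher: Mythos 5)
Your overall strategy (contradiction via Proposition~\ref{hyperbolic} plus the bounded-reducing-splittings machinery) is the right circle of ideas, and you correctly identify in your ``Main obstacle'' paragraph where the difficulty lies — but the obstacle is not surmountable the way you set it up, and this is exactly where your proof differs from the paper's.

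The gap is in the claim that for each fixed $R$, the limit $Z^R$ of the geodesic vertices $y_n^R$ ``is compatible with $T$ via a chain of compatible trees'' and hence $\mathcal{R}^2(T) \subseteq \mathcal{R}^2(Z^R)$. Your chain $\ast = Z^0, Z^1, \ldots$ is built from vertices of geodesic segments $\gamma_n$ whose lengths tend to infinity; it does not terminate at $T$ (or at anything compatible with $T$) in any uniformly bounded number of steps, because $\psi(T_n)$ sits at position $L_n \to \infty$ of $\gamma_n$, not at position $R$. Consequently there is no relation between $Z^R$ and $T$ that one could feed into Theorem~\ref{canonical-splittings} or Corollary~\ref{reducing-projection}: iterating Corollary~\ref{reducing-projection} along the chain $Z^0, \ldots, Z^R$ only bounds $d(\mathcal{R}^2(Z^R), \mathcal{R}^2(\ast))$ by roughly $R \cdot C'$, which is precisely consistent with $\psi(y_n^R)$ being at distance $R$ from $\ast$ — no contradiction. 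So the key estimate you want, that $d_{FZ(G,\mathcal{F})}(\psi(y_n^R), \mathcal{R}^2(T))$ is bounded independently of $R$, cannot be extracted from your setup. (Indeed, the claimed containment $\mathcal{R}^2(T) \subseteq \mathcal{R}^2(Z^R)$ does not even follow from $Z^R$ being directly compatible with $T$, let alone through an unbounded chain.)

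The paper closes this gap by a different choice at the start: instead of taking arbitrary geodesics in $FZ(G,\mathcal{F})$ from $\ast$ to $\psi(T_n)$, it takes the \emph{canonical optimal folding paths} $\gamma_m$ from a fixed basepoint towards $T_m$, built from optimal morphisms $f_m : T_0^m \to T_m$ chosen so that $f_m \to f$ where $f$ guides a folding path $\gamma$ to $T$ itself. By \cite[Proposition~3.4]{GL07}, $\gamma_m(t) \to \gamma(t)$ pointwise, so after extracting $t_m \to t_\infty$, the cluster points $\gamma_m(t_m)$ converge to a tree $\gamma(t_\infty)$ that \emph{actually lies on the single folding path $\gamma$ ending at $T$}. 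Theorem~\ref{canonical-splittings} then makes the $\psi$-image of all of $\gamma$ bounded, and puts $\mathcal{R}^2(\gamma(t_\infty))$ within $C_M$ of some fixed $\psi(\gamma(t_0))$. Combined with Proposition~\ref{psi-bounded} applied to the sequence $\gamma_m(t_m) \to \gamma(t_\infty)$, this yields the uniform bound, and choosing $R$ large relative to the diameter of $\psi(\gamma)$ gives the contradiction. The use of converging folding paths in place of arbitrary geodesics is thus not cosmetic: it is the mechanism that uniformizes the relationship between the intermediate limit trees and $T$, and it is what your proposal is missing.

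A secondary technical note: the limits $Z^R$ of one-edge $\mathcal{Z}$-splittings are only tame $(G,\mathcal{F})$-trees (Proposition~\ref{limit-one-edge}) and need not lie in $\overline{\mathcal{O}(G,\mathcal{F})}$, whereas Proposition~\ref{psi-bounded} is stated for $T \in \overline{\mathcal{O}(G,\mathcal{F})} \smallsetminus \mathcal{X}(G,\mathcal{F})$; in the paper's proof the corresponding trees $\gamma(t_\infty)$ arise on a tame folding path, and the estimates one needs are supplied directly by Theorem~\ref{canonical-splittings}, which is already phrased for tame trees.
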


\begin{proof}
We will assume that $T_0$ is a standard Grushko $(G,\mathcal{F})$-tree. We first notice that there exist a tree $T'_0\in\overline{\mathcal{O}(G,\mathcal{F})}$ in the closure of the cone of $T_0$, and trees $T_0^m\in\mathcal{O}(G,\mathcal{F})$ in the cone of $T_0$ for all $m\in\mathbb{N}$, together with optimal morphisms $f:T'_0\to T$ and $f_{m}:T_0^m\to T_m$, so that up to passing to a subsequence, the sequence $(f_{m})_{m\in\mathbb{N}}$ converges to $f$. The proof of this fact only requires to find a continuous way of sending a representative in each orbit of vertices in $T_0$ to the trees $T_m$ and $T$. The images of the vertices with peripheral stabilizers are prescribed, and if $\mathcal{F}\neq\emptyset$, we can choose to map the other vertex to the vertex stabilized by $G_1$ (we refer to \cite[Section 3]{GL07-2} for an argument in the case where $G$ is a free group). Let $\gamma$ (resp. $\gamma_{m}$) be the canonical folding path directed by $f$ (resp. $f_{m}$) constructed in \cite[Section 3]{GL07} (see Section \ref{sec-folding} of the present paper for a brief review of Guirardel and Levitt's construction). By \cite[Proposition 3.4]{GL07}, for all $t\in\mathbb{R}_+$, the trees $\gamma_{m}(t)$ converge to $\gamma(t)$ as $m$ tends to $+\infty$. In addition, by Proposition \ref{canonical-good}, all trees in the image of $\gamma$ are tame $(G,\mathcal{F})$-trees.

Assume towards a contradiction that the sequence $(\psi(T_n))_{n\in\mathbb{N}}$ converges to some $\xi\in\partial_{\infty} FZ(G,\mathcal{F})$. Let $M\in\mathbb{R}$ be the constant provided by Proposition \ref{hyperbolic}, and let $C_M\in\mathbb{R}$ be the constant provided by Proposition \ref{psi-bounded}, which we can choose to be greater than the constants $C_1$ and $C_2$ from Theorem \ref{canonical-splittings}. By Theorem \ref{canonical-splittings}, the $\psi$-image of $\gamma$ is a bounded region of $FZ(G,\mathcal{F})$. We apply Proposition \ref{hyperbolic} to the collection of paths $\psi(\gamma_n)$, which are all uniformly Hausdorff close to geodesic segments in $FZ(G,\mathcal{F})$ by Theorem \ref{FZ-hyperbolic}. We choose $R$ to be large enough compared to the diameter of the $\psi$-image of $\gamma$. This provides an integer $m_0\in\mathbb{N}$, and a sequence $(t_m)_{m\ge m_0}\in\mathbb{R}^{\mathbb{N}}$, so that $(\psi(\gamma_m(t_m)))_{m\ge m_0}$ lies in a region of diameter bounded by $M$ in $FZ(G,\mathcal{F})$, and for all $m\in\mathbb{N}$, the distance between $\psi(\gamma_m(t_m))$ and the $\psi$-image of $\gamma$ is at least $4C_M$. Up to passing to a subsequence, we can assume that $(t_m)_{m\in\mathbb{N}}$ converges to some $t_{\infty}\in\mathbb{R}\cup\{+\infty\}$, and hence $(\gamma_m(t_m))_{m\in\mathbb{N}}$ converges to $\gamma(t_{\infty})$. We have $\mathcal{R}^2(\gamma(t_{\infty}))\neq\emptyset$, and

\begin{itemize}
\item for all $m\in\mathbb{N}$, we have $d_{FZ(G,\mathcal{F})}(\psi(\gamma_m(t_m)),\mathcal{R}^2(\gamma(t_{\infty})))\le C_M$ (Proposition \ref{psi-bounded}), and
\item the diameter of $\mathcal{R}^2(\gamma(t_{\infty}))$ is at most $C_M$ (Theorem \ref{canonical-splittings}), and 
\item there exists $t_0\in\mathbb{R}$ so that $d_{FZ(G,\mathcal{F})}(\mathcal{R}^2(\gamma(t_{\infty})),\psi(\gamma(t_0)))\le C_M$ (Theorem \ref{canonical-splittings}).
\end{itemize}

\noindent Therefore, for all $m\in\mathbb{N}$, we have $d_{FZ(G,\mathcal{F})}(\psi(\gamma_m(t_m)),\psi(\gamma(t_0)))\le 3C_M$, which is a contradiction. 
\end{proof}

\begin{prop}\label{psi-surjective}
The map $\partial\psi:\mathcal{X}(G,\mathcal{F})/{\sim}\to\partial_{\infty} FZ(G,\mathcal{F})$ is surjective.
\end{prop}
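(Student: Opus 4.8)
Let $\xi\in\partial_{\infty} FZ(G,\mathcal{F})$. By the definition of the Gromov boundary, there exists a sequence $(S_n)_{n\in\mathbb{N}}\in FZ(G,\mathcal{F})^{\mathbb{N}}$ of one-edge $\mathcal{Z}$-splittings that converges to $\xi$. Each $S_n$ is a simplicial tree with cyclic (or trivial) nonperipheral edge stabilizers; after rescaling, we may regard $S_n$ as a point of $\overline{\mathcal{O}(G,\mathcal{F})}$ with $\psi(S_n)=S_n$ (up to bounded error, as $\psi$ is a bounded-distance approximation of a choice of one-edge collapse). Choosing for each $n$ a tree $T_n\in\mathcal{O}(G,\mathcal{F})$ that refines $S_n$ (so that $d_{FZ(G,\mathcal{F})}(\psi(T_n),S_n)\le 1$), we obtain a sequence $(T_n)_{n\in\mathbb{N}}\in\mathcal{O}(G,\mathcal{F})^{\mathbb{N}}$ such that $(\psi(T_n))_{n\in\mathbb{N}}$ converges to $\xi$.

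By compactness of $\overline{P\mathcal{O}(G,\mathcal{F})}$, the sequence $(T_n)_{n\in\mathbb{N}}$ has a projective accumulation point; after passing to a subsequence and rescaling, we may assume that $(T_n)_{n\in\mathbb{N}}$ converges (non-projectively) to some tree $T\in\overline{\mathcal{O}(G,\mathcal{F})}$. We claim that $T\in\mathcal{X}(G,\mathcal{F})$. Indeed, if $T\notin\mathcal{X}(G,\mathcal{F})$, then since $(\psi(T_n))_{n\in\mathbb{N}}$ converges to a boundary point it certainly converges to infinity, and in particular it is unbounded; but Proposition \ref{psi-not-infty} asserts that $(\psi(T_n))_{n\in\mathbb{N}}$ does not converge to any point of $\partial_{\infty}FZ(G,\mathcal{F})$, contradicting the choice of $(T_n)_{n\in\mathbb{N}}$. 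Hence $T\in\mathcal{X}(G,\mathcal{F})$.

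Since $T\in\mathcal{X}(G,\mathcal{F})$ and $(T_n)_{n\in\mathbb{N}}\in\mathcal{O}(G,\mathcal{F})^{\mathbb{N}}$ converges to $T$, Proposition \ref{X-psi} implies that $(\psi(T_n))_{n\in\mathbb{N}}$ converges to $\partial\psi(T)$. On the other hand, by construction $(\psi(T_n))_{n\in\mathbb{N}}$ converges to $\xi$. By uniqueness of limits in $\partial_{\infty}FZ(G,\mathcal{F})$, we get $\partial\psi(T)=\xi$. Therefore $\partial\psi:\mathcal{X}(G,\mathcal{F})/{\sim}\to\partial_{\infty} FZ(G,\mathcal{F})$ is surjective.
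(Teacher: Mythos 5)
Your proof follows the same route as the paper: lift a sequence converging to $\xi$ to trees $T_n\in\mathcal{O}(G,\mathcal{F})$ with $\psi(T_n)$ close to it, pass to a subsequence (after rescaling) converging to some $T\in\overline{\mathcal{O}(G,\mathcal{F})}$, invoke Proposition \ref{psi-not-infty} to rule out $T\notin\mathcal{X}(G,\mathcal{F})$, and invoke Proposition \ref{X-psi} to identify $\xi=\partial\psi(T)$. One small caveat: your parenthetical remark that ``after rescaling, we may regard $S_n$ as a point of $\overline{\mathcal{O}(G,\mathcal{F})}$'' is not quite right, since a one-edge $\mathcal{Z}$-splitting whose edge stabilizer lies in $\mathcal{Z}\smallsetminus\mathcal{Z}^{max}$ is not very small and therefore does not live in $\overline{\mathcal{O}(G,\mathcal{F})}$; but this is never used, as you only need the existence of Grushko trees $T_n$ with $\psi(T_n)$ uniformly close to $S_n$, which holds. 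With that wording corrected, your argument is essentially the paper's proof.
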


\begin{proof}
Let $\xi\in\partial_{\infty}FZ(G,\mathcal{F})$, and let $(Z_n)_{n\in\mathbb{N}}\in FZ(G,\mathcal{F})^{\mathbb{N}}$ be a sequence that converges to $\xi$. For all $n\in\mathbb{N}$, let $T_n\in\mathcal{O}(G,\mathcal{F})$ be a simplicial tree such that $\psi(T_n)$ is at bounded distance from $Z_n$. Up to passing to a subsequence and rescaling, we can assume that $(T_n)_{n\in\mathbb{N}}$ converges non-projectively to some tree $T\in\overline{\mathcal{O}(G,\mathcal{F})}$. Proposition \ref{psi-not-infty} ensures that $T\in\mathcal{X}(G,\mathcal{F})$, and Proposition \ref{X-psi} ensures that $\xi=\partial\psi(T)$.  
\end{proof}

\paragraph{Closedness of $\partial\psi$.}

\begin{prop}\label{psi-closed}
The map $\partial\psi:{\mathcal{X}(G,\mathcal{F})}\to\partial_{\infty} FZ(G,\mathcal{F})$ is closed.
\end{prop}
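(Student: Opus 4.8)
The plan is to prove closedness via sequences, which is legitimate here: the source space $\overline{\mathcal{O}(G,\mathcal{F})}$ is metrizable (it embeds into $\mathbb{R}^G$ with $G$ countable), while $\partial_{\infty}FZ(G,\mathcal{F})$ is first countable (each boundary point $\xi$ admits the countable neighborhood basis $\{N_n(\xi)\}_{n\in\mathbb{N}}$) and Hausdorff (distinct boundary points are separated by sets of the form $N_r$ for $r$ large, using the extension of the hyperbolicity inequality to boundary points). So I would fix a closed subset $A\subseteq\mathcal{X}(G,\mathcal{F})$, take $\xi_n=\partial\psi(T_n)$ with $T_n\in A$ and $\xi_n\to\xi\in\partial_{\infty}FZ(G,\mathcal{F})$, and aim to show $\xi\in\partial\psi(A)$. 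By compactness of $\overline{P\mathcal{O}(G,\mathcal{F})}$, after passing to a subsequence and rescaling the $T_n$, I may assume that $(T_n)_{n\in\mathbb{N}}$ converges non-projectively to some $T\in\overline{\mathcal{O}(G,\mathcal{F})}$; since $\partial\psi$ depends only on the projective class of $T$ (as $\psi$ does on $\mathcal{O}(G,\mathcal{F})$), this does not affect the identities $\xi_n=\partial\psi(T_n)$.

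The key step, and the main obstacle, is to rule out the possibility that $T\notin\mathcal{X}(G,\mathcal{F})$. For this I would run a diagonal argument. Fix a basepoint $p\in FZ(G,\mathcal{F})$ and let $\delta$ denote a hyperbolicity constant for $FZ(G,\mathcal{F})$, which is hyperbolic by Theorem \ref{FZ-hyperbolic}. For each $n$, Proposition \ref{X-psi} applied to $T_n\in\mathcal{X}(G,\mathcal{F})$ says that any sequence in $\mathcal{O}(G,\mathcal{F})$ converging (non-projectively) to $T_n$ has $\psi$-image converging to $\xi_n$; so I can choose $S_n\in\mathcal{O}(G,\mathcal{F})$ with $d(S_n,T_n)\le\frac{1}{n}$ (for a fixed metric $d$ on $\overline{\mathcal{O}(G,\mathcal{F})}$) and $(\psi(S_n)|\xi_n)_p\ge n$. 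Then $d(S_n,T)\le d(S_n,T_n)+d(T_n,T)\to 0$, so $S_n$ converges non-projectively to $T$, and the inequality $(\psi(S_n)|\xi)_p\ge\min\{(\psi(S_n)|\xi_n)_p,(\xi_n|\xi)_p\}-2\delta$ together with $\xi_n\to\xi$ forces $(\psi(S_n)|\xi)_p\to+\infty$, i.e. $\psi(S_n)$ converges to $\xi$ in $\partial_{\infty}FZ(G,\mathcal{F})$. This contradicts Proposition \ref{psi-not-infty}, which asserts that for a tree in $\overline{\mathcal{O}(G,\mathcal{F})}\smallsetminus\mathcal{X}(G,\mathcal{F})$ no sequence in $\mathcal{O}(G,\mathcal{F})$ converging to it has $\psi$-image converging to a point of the Gromov boundary. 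Hence $T\in\mathcal{X}(G,\mathcal{F})$, and since $A$ is closed and $T_n\to T$ we get $T\in A$.

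To finish, I would invoke continuity of $\partial\psi$ on $\mathcal{X}(G,\mathcal{F})$ (the content of the proof of Proposition \ref{psi-continuous}): from $T_n\to T$ we get $\xi_n=\partial\psi(T_n)\to\partial\psi(T)$, and since $\partial_{\infty}FZ(G,\mathcal{F})$ is Hausdorff and also $\xi_n\to\xi$, we conclude $\xi=\partial\psi(T)\in\partial\psi(A)$. Therefore $\partial\psi(A)$ is closed, which proves the proposition; combined with Propositions \ref{psi-continuous}, \ref{psi-injective} and \ref{psi-surjective}, this shows that $\partial\psi:\mathcal{X}(G,\mathcal{F})/{\sim}\to\partial_{\infty}FZ(G,\mathcal{F})$ is a homeomorphism. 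The only genuinely delicate input is the dichotomy in the middle step, which rests on the boundedness of reducing splittings from Theorem \ref{canonical-splittings} (via Proposition \ref{psi-not-infty}); everything else is soft point-set topology together with the already-established behaviour of $\partial\psi$.
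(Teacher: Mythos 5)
Your argument is the same as the paper's: pass to a convergent (rescaled) subsequence $T_n\to T$, use Proposition \ref{X-psi} to manufacture a diagonal sequence $S_n\in\mathcal{O}(G,\mathcal{F})$ with $S_n\to T$ and $\psi(S_n)\to\xi$, invoke Proposition \ref{psi-not-infty} to rule out $T\notin\mathcal{X}(G,\mathcal{F})$, and then conclude $\partial\psi(T)=\xi$ via Proposition \ref{X-psi} again. You have spelled out the diagonal extraction explicitly (choosing $S_n$ with $d(S_n,T_n)\le\frac1n$ and $(\psi(S_n)|\xi_n)_p\ge n$, then a $2\delta$-bookkeeping to get $\psi(S_n)\to\xi$), whereas the paper compresses that into a single sentence; your version is correct.

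One point deserves a word. After you replace $T_n$ by the rescaled $\lambda_nT_n$ in order to obtain a non-projectively convergent subsequence, the sentence ``since $A$ is closed and $T_n\to T$ we get $T\in A$'' is using that the \emph{rescaled} trees lie in $A$, which need not hold for an arbitrary closed $A\subseteq\mathcal{X}(G,\mathcal{F})$. What saves the argument (and what is actually needed for Theorem \ref{main}) is that the proposition is only ever applied to $\sim$-saturated closed sets --- preimages under the quotient map $\mathcal{X}(G,\mathcal{F})\to\mathcal{X}(G,\mathcal{F})/{\sim}$ --- and such sets are homothety-invariant since $T\sim\lambda T$. With that restriction your proof is complete; the paper's own phrasing (``for all limit points $T$ of $(T_n)$'') has the same implicit rescaling hidden in it, so this is a shared gloss rather than a defect specific to your write-up, but it is worth stating the saturation hypothesis explicitly.
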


\begin{proof}
Let $\xi\in\partial_{\infty} FZ(G,\mathcal{F})$, and let $(T_n)_{n\in\mathbb{N}}\in {\mathcal{X}(G,\mathcal{F})}^{\mathbb{N}}$ be such that $(\partial\psi(T_n))_{n\in\mathbb{N}}$ converges to $\xi$. We will show that for all limit points $T\in\overline{\mathcal{O}(G,\mathcal{F})}$ of the sequence $(T_n)_{n\in\mathbb{N}}$, we have $T\in \mathcal{X}(G,\mathcal{F})$ and $\partial\psi(T)=\xi$. For all $n\in\mathbb{N}$, we have $T_n\in \mathcal{X}(G,\mathcal{F})$, so it follows from Proposition \ref{X-psi} that there exists a sequence $(X_n)_{n\in\mathbb{N}}\in \mathcal{O}(G,\mathcal{F})^{\mathbb{N}}$ that converges to $T$, so that $(\psi(X_n))_{n\in\mathbb{N}}$ converges to $\xi$. Proposition \ref{psi-not-infty} ensures that $T\in{\mathcal{X}(G,\mathcal{F})}$, and Proposition \ref{X-psi} then ensures that $\partial\psi(T)=\xi$.
\end{proof}

\paragraph{End of the proof of the main theorem.}

\begin{proof}[Proof of Theorem \ref{main}]
The map $\partial\psi:\mathcal{X}(G,\mathcal{F})/{\sim}\to\partial_{\infty} FZ(G,\mathcal{F})$ is a continuous, bijective, closed map (Propositions \ref{psi-continuous}, \ref{psi-injective}, \ref{psi-surjective} and \ref{psi-closed}), and hence a homeomorphism. That $\partial\psi$ is $\text{Out}(G,\mathcal{F})$-equivariant follows from its construction.
\end{proof}

\appendix 
\section{Hyperbolicity results}

In this appendix, we sketch proofs of Theorems \ref{FS-hyperbolic} and \ref{FZ-hyperbolic}, by following very closely Bestvina--Feighn's \cite[Appendix]{BF13} and Mann's \cite{Man12} approaches.

\subsection{Hyperbolicity of $FS(G,\mathcal{F})$: proof of Theorem \ref{FS-hyperbolic}}

We follow very closely the exposition from \cite[Appendix]{BF13}. The first step of the proof consists in establishing several distance estimates in $FS(G,\mathcal{F})$ (Corollaries \ref{cor-descendant} and \ref{cor-hanging} below). 

A \emph{natural vertex} of a $(G,\mathcal{F})$-free splitting is a vertex which either has valence at least $3$, or is the center of an inversion. A \emph{natural edge} is a complementary component of the set of natural vertices. Assume that both $T$ and $T'$ have been equipped with a simplicial metric, and let $f:T\to T'$ be an optimal morphism. Let $R$ and $B$ be two disjoint $G$-invariant sets of points in $T'$, both disjoint from the set of natural vertices, which project to finite sets in the quotient graph $T/G$. A \emph{mixed region} in $T'$ is a component of the complement of $R\cup B$ in $T'$ whose frontier intersects both $R$ and $B$. Assuming in addition that $f^{-1}(R)$ and $f^{-1}(B)$ are disjoint from the set of natural vertices, we also define mixed regions in $T$.

\begin{prop}(Bestvina--Feighn \cite[Lemma A.4]{BF13})\label{descendant}
There exists a constant $C_1>0$ such that the following holds. 
\\
Let $T,T'\in FS(G,\mathcal{F})$. Assume that $T$ and $T'$ are equipped with simplicial metrics, and let $f:T\to T'$ be an optimal morphism. Let $R$ and $B$ be nonempty disjoint $G$-invariant sets in $T'$, both disjoint from the set of natural vertices in $T'$, whose projections to the quotient graph $T'/G$ are finite, and such that $f^{-1}(R)$ and $f^{-1}(B)$ are disjoint from the set of natural vertices in $T$. Let $N_0$ denote the number of $G$-orbits of mixed regions in $T$. Then $d_{FS(G,\mathcal{F})}(T,T')\le 2N_0+2\text{rk}_K(G,\mathcal{F})+4$.
\end{prop}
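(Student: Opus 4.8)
The statement to prove is Proposition~\ref{descendant} (a $(G,\mathcal{F})$-version of Bestvina--Feighn's Lemma~A.4), bounding $d_{FS(G,\mathcal{F})}(T,T')$ in terms of the number $N_0$ of orbits of mixed regions in $T$. The plan is to follow the Bestvina--Feighn argument step by step, checking at each stage that the free-group arguments go through verbatim in the free-product setting once one replaces ``rank of $F_N$'' by the Kurosh rank $\text{rk}_K(G,\mathcal{F})$. First I would recall that a one-edge collapse of a $(G,\mathcal{F})$-free splitting is adjacent to it in $FS(G,\mathcal{F})$, and more generally that any two $(G,\mathcal{F})$-free splittings that are simultaneously refined by a common splitting are at distance at most $2$; this is the analogue of the ``compatible implies distance $\le 2$'' estimate used implicitly throughout. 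The coloring data $(R,B)$ in $T'$ and its $f$-preimage in $T$ play the role of Bestvina--Feighn's red and blue points: the point is to interpolate between $T$ and $T'$ through a bounded chain of splittings obtained by collapsing/blowing up along the monochromatic regions.

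The key steps, in order, would be: (1) Cut $T$ along $f^{-1}(R)$ to produce a $(G,\mathcal{F})$-forest, and note that collapsing all edges of $T$ outside the ``red'' pieces, one obtains a $(G,\mathcal{F})$-free splitting $T_R$ which is a common refinement related to $T$; symmetrically define $T_B$ using $f^{-1}(B)$. (2) Observe that $T_R$ and $T_B$ are both obtained from $T$ by collapses, hence each is at distance $\le 1$ (or $\le 2$) from $T$ in $FS(G,\mathcal{F})$; the real work is to connect $T_R$ to $T_B$. (3) Bound the number of splittings needed to pass from $T_R$ to $T_B$ by the number $N_0$ of orbits of mixed regions: each mixed region of $T$ is where the red and blue partitions genuinely interact, and one uses a folding/collapsing move localized to a single orbit of mixed regions, each move costing a bounded number of edges in $FS(G,\mathcal{F})$, so the total cost is $\le 2N_0$ plus a correction term. (4) The correction term $2\text{rk}_K(G,\mathcal{F})+4$ accounts for the bounded combinatorial complexity of a $(G,\mathcal{F})$-free splitting: the number of orbits of natural edges in any reduced $(G,\mathcal{F})$-free splitting is bounded in terms of $\text{rk}_K(G,\mathcal{F})$ (this is where I would invoke the finite-Kurosh-rank bounds from Section~\ref{sec-free-product}, e.g. the fact that $(G,\mathcal{F})$-free factors have Kurosh rank at most $\text{rk}_K(G,\mathcal{F})$), giving the additive slack needed at the two endpoints of the interpolation. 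Throughout, I would invoke Proposition~\ref{folding-simplicial} to ensure that all the intermediate trees produced by folding remain simplicial with trivial edge stabilizers, so they are genuine vertices of $FS(G,\mathcal{F})$.

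The main obstacle I anticipate is \textbf{step (3)}: making precise the claim that a single orbit of mixed regions can be ``resolved'' at bounded cost, and that after resolving it the number of remaining mixed orbits strictly decreases, so that an induction on $N_0$ closes. In the free-group case this is handled by a careful analysis of how an optimal morphism $f:T\to T'$ interacts with the bicoloring, using that $f$ is an isometry on edges and that preimages of monochromatic arcs are controlled; one must track what happens to natural vertices under the collapse moves (the hypothesis that $f^{-1}(R)$ and $f^{-1}(B)$ miss the natural vertices of $T$ is exactly what keeps this analysis clean). In the free-product setting the only genuinely new feature is the presence of vertices with nontrivial (peripheral) stabilizers, but since $(G,\mathcal{F})$-free splittings have \emph{trivial} edge stabilizers, collapsing and blowing up behaves formally just as for free groups; the peripheral vertex groups simply ride along inside vertex groups and never obstruct a fold. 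So I expect the adaptation to be essentially bookkeeping, with the one real check being that the constant $2\text{rk}_K(G,\mathcal{F})+4$ — rather than some constant depending on the $G_i$'s themselves — suffices, which follows because all complexity bounds on $(G,\mathcal{F})$-free splittings depend only on the Kurosh rank. Once Proposition~\ref{descendant} is in place, the remaining corollaries (Corollaries~\ref{cor-descendant} and~\ref{cor-hanging}) and then the hyperbolicity of $FS(G,\mathcal{F})$ follow by the same formal scheme as in \cite[Appendix]{BF13}.
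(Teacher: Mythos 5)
The paper's own proof of this proposition is a single sentence: it declares that the argument is \emph{exactly} Bestvina--Feighn's proof of \cite[Lemma A.4]{BF13}, with the rank of $F_N$ replaced by the Kurosh rank $\text{rk}_K(G,\mathcal{F})$ in the definition of the complexity. Your observation that the only substantive adaptation is to replace rank by Kurosh rank is therefore exactly the author's point, and your opening paragraph (interpolate from $T$ to $T'$ through a chain of collapse/blow-up moves controlled by mixed regions, with the Kurosh rank bounding the combinatorial complexity) has the right overall shape for the Bestvina--Feighn argument.

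However, the interpolation scheme you set up in steps (1)--(3) does not work as written. You define both $T_R$ and $T_B$ as collapses of $T$ (collapse the edges of $T$ lying outside the red, respectively outside the blue, regions) and correctly note that each is then at distance at most $1$ from $T$ in $FS(G,\mathcal{F})$. But then $d_{FS(G,\mathcal{F})}(T_R,T_B)\le 2$ for free, so your step (3), ``bound the number of splittings needed to pass from $T_R$ to $T_B$,'' has nothing to prove and gives no information about $d_{FS(G,\mathcal{F})}(T,T')$, which is the quantity the proposition bounds. For the argument to go anywhere, the chain has to start at $T$ and \emph{end near $T'$}. In Bestvina--Feighn's argument one performs collapse/expand moves, each localized to a mixed region of the current tree and each strictly decreasing a complexity of the form (number of orbits of mixed regions) plus (number of orbits of natural edges); the Kurosh rank enters because it bounds the number of orbits of natural edges of any $(G,\mathcal{F})$-free splitting and hence bounds how many times the second summand can grow. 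The actual substance, namely why this process terminates at a tree compatible with $T'$ and why each intermediate tree is still a genuine $(G,\mathcal{F})$-free splitting, is exactly what you flag as your main obstacle in step (3) and leave unfilled; once the $T_R/T_B$ framing is stripped away, what remains is essentially a pointer to \cite[Lemma A.4]{BF13}, which is also all the paper itself supplies.
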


\begin{proof}
The proof goes exactly as in \cite[Lemma A.4]{BF13}, by using the Kurosh rank instead of the rank when defining the complexity. 
\end{proof}

By choosing $R$ and $B$ to consist of the $G$-orbits of two nearby points in the interior of an edge of $T'$, Proposition \ref{descendant} yields the following distance estimate in $FS(G,\mathcal{F})$.

\begin{cor}(Bestvina--Feighn \cite[Lemma A.3]{BF13})\label{cor-descendant}
Let $T,T'\in FS(G,\mathcal{F})$. Assume that $T$ and $T'$ are equipped with simplicial metrics, and let $f:T\to T'$ be an optimal morphism. Let $y\in T'$ be a point that belongs to the interior of an edge, such that $f^{-1}(y)$ does not intersect the set of natural vertices in $T$. Then $d_{FS(G,\mathcal{F})}(T,T')$ is bounded by a linear function of the cardinality of $f^{-1}(y)$.
\end{cor}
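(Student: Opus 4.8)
The statement to prove is Corollary \ref{cor-descendant}, which deduces a linear distance bound in $FS(G,\mathcal{F})$ from the more general Proposition \ref{descendant}. Here is the plan.

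The plan is to apply Proposition \ref{descendant} with a carefully chosen pair of sets $R$ and $B$ sitting near the point $y$, and then to bound the resulting quantity $N_0$ (the number of $G$-orbits of mixed regions in $T$) linearly in $|f^{-1}(y)|$. First I would fix a natural edge $e'$ of $T'$ containing $y$ in its interior, pick two points $y_R,y_B$ in the interior of $e'$ on either side of $y$, close enough to $y$ that the open arc $(y_R,y_B)$ contains $y$ and no natural vertex, and set $R := G\cdot y_R$ and $B := G\cdot y_B$. By shrinking the distance from $y_R,y_B$ to $y$ if necessary (using that $f$ is an optimal morphism, hence piecewise isometric, so $f^{-1}$ of a sufficiently small sub-arc of $e'$ around $y$ is a disjoint union of arcs each contained in the interior of a single natural edge of $T$), I can also ensure that $f^{-1}(R)$ and $f^{-1}(B)$ avoid the natural vertices of $T$. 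Then the hypotheses of Proposition \ref{descendant} are satisfied, and it gives $d_{FS(G,\mathcal{F})}(T,T')\le 2N_0 + 2\,\text{rk}_K(G,\mathcal{F}) + 4$, so everything reduces to controlling $N_0$.

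Next I would estimate $N_0$. A mixed region in $T$ is a component of $T\smallsetminus(f^{-1}(R)\cup f^{-1}(B))$ whose closure meets both $f^{-1}(R)$ and $f^{-1}(B)$. Since $f$ is a morphism and $R,B$ are contained in the interior of a single natural edge of $T'$ with $R$, $B$ separated only by the sub-arc through $y$, the only way a component of the complement of $f^{-1}(R\cup B)$ can be "mixed" is that it contains a preimage of the separating sub-arc, i.e. contains a point of $f^{-1}(y)$; conversely each preimage point of $y$ lies in exactly one such region. I would make this precise: each point $x\in f^{-1}(y)$ has a neighbourhood on which $f$ is an isometry onto a neighbourhood of $y$ in $e'$, so near $x$ there is one preimage arc of $(y_R,y)$ on one side and one of $(y,y_B)$ on the other, and the region containing $x$ therefore has frontier meeting both $f^{-1}(R)$ and $f^{-1}(B)$. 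Distinct points of $f^{-1}(y)$ can lie in the same region, but each region contains at least one such point, so the number of mixed regions is at most $|f^{-1}(y)|$, hence $N_0 \le |f^{-1}(y)|$ (one $G$-orbit of mixed regions contains at least one $G$-orbit's worth of preimage points of $y$). Plugging this in yields $d_{FS(G,\mathcal{F})}(T,T')\le 2|f^{-1}(y)| + 2\,\text{rk}_K(G,\mathcal{F}) + 4$, which is linear in $|f^{-1}(y)|$ with constants depending only on $(G,\mathcal{F})$.

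The main obstacle I anticipate is the bookkeeping around natural vertices and the passage to $G$-orbits: I must genuinely guarantee that $y_R,y_B$ can be chosen so that $f^{-1}(R)$, $f^{-1}(B)$ miss all natural vertices of $T$ (this uses finiteness of the number of $G$-orbits of natural vertices and edges, so there is a uniform lower bound on edge lengths and a uniform $\varepsilon$ that works, after possibly rescaling the simplicial metric on $T'$ near $e'$), and I must be careful that "number of $G$-orbits of mixed regions" rather than "number of mixed regions" is what appears, so the final bound is in terms of the number of $G$-orbits of points in $f^{-1}(y)$ — which is still at most $|f^{-1}(y)|$, and is really the quantity one wants. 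Beyond that, the argument is a direct specialization of Proposition \ref{descendant}, exactly as in \cite[Lemma A.3]{BF13}, so no new ideas are needed.
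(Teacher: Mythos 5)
Your overall approach is exactly what the paper intends: the paper gives no proof of this corollary beyond the one-line remark preceding it (``choose $R$ and $B$ to be the $G$-orbits of two nearby points in the interior of an edge of $T'$''), and you have fleshed that out. The setup — placing $y_R,y_B$ in the interior of the natural edge $e'$ on either side of $y$, shrinking until $f^{-1}(R)$ and $f^{-1}(B)$ avoid natural vertices of $T$, then invoking Proposition \ref{descendant} — is correct, and your remark that the optimal morphism cannot fold at valence-$2$ points is the right reason why $f$ is a local isometry at each preimage of $y$.

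There is, however, a gap in your estimate $N_0\le |f^{-1}(y)|$, namely the claim that ``the only way a component of $T\smallsetminus f^{-1}(R\cup B)$ can be mixed is that it contains a preimage of the separating sub-arc.'' A mixed region $C\subseteq T$ maps under $f$ into a single component $D$ of $T'\smallsetminus(R\cup B)$, and $D$ is itself a mixed region of $T'$. Such $D$ come in two kinds: the ``middle'' arcs $g(y_R,y_B)$, and the remaining ``outer'' components, which typically have translates $g y_R$ on some of their frontier points and $h y_B$ (with $h\neq g$) on others and which do \emph{not} meet $Gy$. If $f(C)$ lies in a middle arc, your argument works and $C$ does contain a point of $f^{-1}(Gy)$; but if $f(C)$ lies in an outer component, $C$ contains no preimage of $y$ at all, and your injection from mixed regions to $f^{-1}(y)$ breaks down. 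So $N_0$ can exceed $|f^{-1}(y)|$.

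Fortunately the corollary survives, because one does not need the sharp count. For $y_R,y_B$ close enough to $y$ one has $|f^{-1}(y_R)|=|f^{-1}(y_B)|=|f^{-1}(y)|=:k$ (each preimage of $y$ is a local isometry point, and a compactness argument in a fundamental domain shows no extra preimages appear nearby); moreover, since $y$, $y_R$, $y_B$ lie in the interior of an edge of the free splitting $T'$, they have trivial stabilizers, so these are also the $G$-orbit counts. Thus the marked set $(f^{-1}(R)\cup f^{-1}(B))/G$ in the connected finite graph $T/G$ has exactly $2k$ points, all interior to edges, and deleting $m$ interior points from a connected graph yields at most $m+1$ components. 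Hence $N_0\le 2k+1=2|f^{-1}(y)|+1$, and Proposition \ref{descendant} gives $d_{FS(G,\mathcal{F})}(T,T')\le 4|f^{-1}(y)|+2\,\text{rk}_K(G,\mathcal{F})+6$, which is the required linear bound. You should replace the ``each mixed region contains a preimage of $y$'' step with this coarser but correct count.
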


Let $T$ be a $(G,\mathcal{F})$-free splitting, equipped with a train track structure. A \emph{hanging tree} in $T$ is a triple $(H,l,v)$, where 
\begin{itemize}
\item there exists an element $g_0\in G$ that is hyperbolic in $T$, whose axis in $T$ is equal to $l$ and is legal, and 
\item the subtree $H\subseteq T$ is a finite (not necessarily closed) subtree of $T$, such that 
\begin{itemize}
\item we have $gH\cap H=\emptyset$ for all $g\in G\smallsetminus\{e\}$, and
\item the intersection $H\cap l$ is a finite segment of the form $[v,v']$ (possibly reduced to a point), and 
\item the vertex $v$ has exactly two gates in $T$, and $v$ has exactly one gate in $H$, and 
\item every other vertex in $H$ has two gates in $T$, with the direction towards $v$ being its own gate.
\end{itemize}
\end{itemize}

\noindent We call $v$ the \emph{top vertex} of the hanging tree. The following proposition is a restatement in our context of \cite[Lemma A.7]{BF13}. Bestvina--Feighn's lemma is stated in terms of graphs in Culler--Vogtmann's outer space. As trees in the relative outer space $\mathcal{O}(G,\mathcal{F})$ are not locally compact, it is more adapted to state the result in terms of trees (and not in terms of their quotient graphs) in our setting. We note for the statement that any simplicial edge $e$ in a $(G,\mathcal{F})$-tree defines a $(G,\mathcal{F})$-splitting by collapsing all edges that do not belong to the $G$-orbit of $e$ to points. 

\begin{prop}(Bestvina--Feighn \cite[Lemma A.7]{BF13})\label{hanging}
There exists $C_2>0$ such that for all $T,T'\in FS(G,\mathcal{F})$, equipped with simplicial metrics, and all optimal morphisms $f:T\to T'$, if there exist edges $e\subset T$ and $e'\subset T'$ that define the same $(G,\mathcal{F})$-free splitting, then either
\begin{itemize}
\item there exists a point in the interior of $e'$ whose $f$-preimage has cardinality at most $C_2$, or
\item there is a hanging tree $(H,l,v)$ in $T$ (for the train track structure determined by $f$), such that the $f$-preimage of any interior point of $e'$ is contained in the union of $l$ and of the $\text{Stab}(l)$-translates of $H$, and contains at most one point in $l$. 
\end{itemize}
\end{prop}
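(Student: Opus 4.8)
\textbf{Proof strategy for Proposition \ref{hanging}.}

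The plan is to follow Bestvina--Feighn's argument \cite[Lemma A.7]{BF13} essentially verbatim, taking care only to replace the combinatorial complexity of a quotient graph of a free group by the Kurosh rank $\text{rk}_K(G,\mathcal{F})$, and to phrase everything in terms of trees (rather than their quotient graphs, which are not compact in our setting). The starting point is the morphism $f:T\to T'$ and an interior point $y$ of $e'$; we want to understand the structure of $f^{-1}(y)$. If $f^{-1}(y)$ is small (bounded by a constant $C_2$ depending only on $\text{rk}_K(G,\mathcal{F})$), we are in the first alternative and there is nothing more to do, so assume $f^{-1}(y)$ is large. The set $f^{-1}(y)$ is a $G$-invariant, locally finite subset of $T$ disjoint from natural vertices (as in Corollary \ref{cor-descendant}); by equivariance and by a counting argument over the finitely many $G$-orbits of edges and natural vertices of $T$, largeness of $f^{-1}(y)$ forces a single orbit of preimage points to accumulate in a controlled way, which is precisely what produces a hanging tree.

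The key steps, in order, are as follows. First I would fix $R$ and $B$ to be the $G$-orbits of two nearby interior points of $e'$ straddling $y$, so that the "mixed regions'' of Proposition \ref{descendant} and the components of $f^{-1}(y)$'s complement become the relevant objects; since $e$ and $e'$ define the same $(G,\mathcal{F})$-free splitting, $T+T'$ or an appropriate common refinement lets us compare the two collapses. Second, I would analyze a component $Y$ of $T\smallsetminus f^{-1}(B)$ meeting $f^{-1}(R)$: using optimality of $f$ (so that every vertex has at least two gates) together with the train track structure determined by $f$, one argues that if the number of $G$-orbits of mixed regions is bounded then we land in the first alternative via Corollary \ref{cor-descendant}, whereas if it is unbounded one extracts a legal axis $l$ of some hyperbolic $g_0\in G$ inside the "tension'' part of $T$ — this is where the candidate-element machinery (Theorem \ref{candidate}, in the guise of the existence of legal axes in the tension graph) enters, guaranteeing that a legal axis really exists. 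Third, I would cut $T$ along $l$ and identify the finite subtrees $H$ hanging off $l$: the conditions in the definition of a hanging tree (disjointness of $G$-translates, $H\cap l$ a single segment $[v,v']$, the two-gates-in-$T$ / one-gate-in-$H$ condition at the top vertex $v$, and the "direction toward $v$ is its own gate'' condition at the other vertices of $H$) are read off directly from the train track structure, exactly as in \cite{BF13}; the point is that $f^{-1}(y)$, being large, must consist of one point on $l$ per period together with $\text{Stab}(l)$-translates of the points of $H$ lying over $y$. Finally I would record that the constant $C_2$ so obtained depends only on $\text{rk}_K(G,\mathcal{F})$, since the only quantities entering the estimate are the number of $G$-orbits of natural vertices and edges of $T$ and $T'$, each bounded in terms of the Kurosh rank.

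The main obstacle is the verification that the finitely-many-orbits bookkeeping of Bestvina--Feighn goes through unchanged in the free-product setting: in \cite{BF13} the relevant bound is the rank of the free group and the finiteness of the marked graph, whereas here $T$ and $T'$ are genuinely non-compact trees with vertex stabilizers the subgroups in $\mathcal{F}$. The right substitute is that a $(G,\mathcal{F})$-free splitting has boundedly many $G$-orbits of edges and of natural vertices in terms of $\text{rk}_K(G,\mathcal{F})$ (a Grushko-type bound), and that optimal morphisms between such splittings interact with natural vertices as in the free case; once this is in place the combinatorial extraction of the legal axis $l$ and the hanging tree $H$ is formally identical to Bestvina--Feighn's, and no genuinely new idea is needed beyond the translation of "rank'' to "Kurosh rank'' already used in Proposition \ref{descendant}.
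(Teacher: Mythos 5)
Your proposal does not follow the paper's actual argument and contains a genuine misstep in the machinery it invokes.

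The paper's proof proceeds by a \emph{folding-path decomposition}: after subdividing so that $f$ maps edgelets to edgelets, it chooses a liberal folding path $(T_t)_{t\in[0,L]}$ guided by $f$ so that on an initial segment $[0,s]$ all folds \emph{not} involving the image of $e$ are performed, and at time $s$ the edge $e_s := f_{0,s}(e)$ is involved in every illegal turn. Writing $\widehat{e_s}$ for the natural edge containing $e_s$ and $G'$ for an elliptic group of the associated splitting, the proof then splits into two cases according to whether the complementary $G'$-component $\widetilde{T_s}$ of $T_s\smallsetminus G\cdot\widehat{e_s}$ is minimal. In the minimal case one shows directly that the first alternative holds, by passing to the collapses $\widehat{T_s}$ and $\widehat{T'}$ (both equivariantly homeomorphic to the common splitting), observing that the induced simplicial self-map sends edges to themselves, and concluding that the $f_{s,L}$-preimage of an interior point of $e'$ is a single point. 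In the non-minimal case (a loop-edge in an HNN extension, giving a valence-three vertex with trivial stabilizer), the local finiteness is restored and the hanging-tree construction of Bestvina--Feighn carries over verbatim. None of this appears in your sketch.

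Your proposal instead goes through $R$, $B$ and mixed regions (objects from Proposition \ref{descendant}, which the paper only uses for a \emph{different} corollary), and more seriously, it tries to produce the legal axis $l$ by invoking ``the candidate-element machinery (Theorem \ref{candidate})'' and legal axes in the ``tension graph.'' This is the wrong tool: Theorem \ref{candidate} (Francaviglia--Martino) concerns the Lipschitz metric on $\mathcal{O}(G,\mathcal{F})$ and stretch factors, and plays no role whatsoever in the free-splitting-graph distance estimates. Legal axes arise here from the train track structure of the optimal morphism $f$ together with the specific folding-path setup, not from Lipschitz-metric considerations. Moreover, your sketch never isolates the point where the argument genuinely needs a case split — namely whether the complementary component of the splitting edge is minimal — and this is precisely where the hanging tree enters versus where the cardinality bound enters. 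As written, the proposal substitutes hand-waving (``the combinatorial extraction \dots is formally identical to Bestvina--Feighn's'') for the folding-path reduction that actually makes the Bestvina--Feighn argument apply; without setting up the time-$s$ tree $T_s$ and the minimality dichotomy for $\widetilde{T_s}$, you cannot conclude.
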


\begin{proof}
We borrow the proof from Bestvina--Feighn's paper; the only novelty here is that arguments are phrased at the level of trees, while they are given in the quotient graphs in Bestvina--Feighn's paper.

We assume that the edges of $T$ and $T'$ have been $G$-equivariantly subdivided, so that $f$ maps edgelets to edgelets. Let $(T_t)_{t\in [0,L]}$ be a liberal folding path guided by $f$, chosen so that on an initial segment $[0,s]$, we perform all folds which do not involve the image of $e$, and the edge $e_s:=f_{0,s}(e)$ is involved in all illegal turns in $T_s$. We denote by $\widehat{e_s}$ the natural edge in $T_s$ that contains $e_s$. If $T_s=T'$, then $\widehat{e_s}=e'$, and the first conclusion of the proposition holds, so we assume otherwise. Note that since all folds in $T_s$ involve $e_s$, and $f$ is optimal, at least one endpoint of $e_s$ is a natural vertex (but the other might not be natural). 

Let $G'$ be one of the elliptic groups of the $(G,\mathcal{F})$-free splitting determined by $\widehat{e_s}$. Let $\widetilde{T_s}$ (resp. $\widetilde{T'}$) denote the $G'$-tree which is one of the components of $T_s\smallsetminus G.\widehat{e_s}$ (resp. $T'\smallsetminus G.e'$). By construction, the splitting determined by $\widehat{e_s}$ in $T_s$ is the same as the splitting determined by $e$ in $T$, and it is also the same as the splitting determined by $e'$ in $T'$ by assumption.
\\
\\
\textit{Case 1}: The tree $\widetilde{T_s}$ is a minimal $G'$-tree.
\\ In this case, we will show that the first conclusion of Proposition \ref{hanging} holds. The morphism $f_{s,L}$ restricts to an isometric embedding from $\widetilde{T_s}$ to the minimal $G'$-subtree of $T'$, which is contained in $\widetilde{T'}$ because $e$ and $e'$ define the same $(G,\mathcal{F})$-splitting. In other words, the image $f_{s,L}(\widetilde{T_s})$ does not meet the interior of $e'$. 

Let $\widehat{T_s}$ (resp. $\widehat{T'}$) be the tree obtained by collapsing all edges in $T_s\smallsetminus G.\widehat{e_s}$ (resp. $T'\smallsetminus G.e'$) to points in $T_s$ (resp. $T'$). The optimal morphism $f_{s,L}$ induces a $G$-equivariant simplicial map $\widehat{f}_{s,L}:\widehat{T_s}\to\widehat{T'}$ which preserves alignment in restriction to each natural edge of $\widehat{T_s}$. The trees $\widehat{T_s}$ and $\widehat{T'}$ are $G$-equivariantly homeomorphic by construction, so $\widehat{f}_{s,L}$ can be viewed as a map from $\widehat{T_s}$ to itself. As vertices of $\widehat{T_s}$ have nontrivial stablizer, the map $\widehat{f}_{s,L}$ maps vertices to themselves. Optimality of $f_{s,L}$ then implies that $\widehat{f}_{s,L}$ also maps edges to themselves. This implies that $\widehat{f}_{s,L}(\widehat{e}_s)=ge'$ for some $g\in G$, so $f_{s,L}(\widehat{e_s})$ crosses the $G$-orbit of $e'$ exactly once. Therefore, the $f_{s,L}$-preimage of any point $y$ in the interior of $e'$ is a single point $y_s$ in some $G$-translate of $\widehat{e_s}$. As the $f_{0,s}$-image of any edge in $T\smallsetminus G.e$ is disjoint from $e_s$, we can choose $y$ in the interior of $e'$ so that the $f_{0,s}$-preimage of $y_s$ has cardinality $1$, and the first conclusion of the proposition follows.
\\
\\
\textit{Case 2}: The tree $\widetilde{T_s}$ is not minimal.\\
This implies that the splitting of $G$ defined by $e$ (and $\widehat{e_s}$) is an HNN extension, and the edge $\widehat{e_s}$ projects in the quotient graph $T_s/G$ to a loop-edge, whose extremal vertex lifts to a valence $3$ vertex in $T_s$ (with trivial stabilizer). In this case, the vertex we are considering has finite valence, and the proof then runs exactly as in \cite[Lemma A.7]{BF13}.
\end{proof}

As in \cite[Proposition A.9]{BF13}, we get as a corollary the following distance estimate in $FS(G,\mathcal{F})$.

\begin{cor}(Bestvina--Feighn \cite[Proposition A.9]{BF13})\label{cor-hanging}
There exists $C_3>0$ so that for all $T,T'\in FS(G,\mathcal{F})$, and all optimal morphisms $f:T\to T'$, if $T$ and $T'$ contain edges which determine the same $(G,\mathcal{F})$-free splitting, then the image in $FS(G,\mathcal{F})$ of any liberal folding path guided by $f$ has diameter at most $C_3$. 
\end{cor}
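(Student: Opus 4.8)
The plan is to combine the two distance estimates already obtained, namely Corollaries \ref{cor-descendant} and \ref{cor-hanging}, with a careful bookkeeping along the liberal folding path guided by $f$, following exactly the scheme of \cite[Proposition A.9]{BF13}. First I would fix $T,T'\in FS(G,\mathcal{F})$ equipped with simplicial metrics, an optimal morphism $f:T\to T'$, and edges $e\subset T$, $e'\subset T'$ determining the same $(G,\mathcal{F})$-free splitting. Let $(T_t)_{t\in[0,L]}$ be a liberal folding path guided by $f$; by Proposition \ref{folding-simplicial} every $T_t$ is a $(G,\mathcal{F})$-free splitting, so the path lies in $FS(G,\mathcal{F})$. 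For each $t$ the morphism $f_{t,L}:T_t\to T'$ is optimal, and I want to bound $d_{FS(G,\mathcal{F})}(T_t,T')$ uniformly in $t$; a bound $d_{FS(G,\mathcal{F})}(T_0,T_t)\le d_{FS(G,\mathcal{F})}(T_0,T')+d_{FS(G,\mathcal{F})}(T_t,T')$ (with $T=T_0$) then gives the desired diameter bound up to multiplying constants.

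The key dichotomy is supplied by Proposition \ref{hanging} applied to $f_{t,L}:T_t\to T'$ with the edge $e'\subset T'$ (and an edge of $T_t$ determining the same splitting — such an edge exists because all $T_t$ refine the same underlying decomposition on the relevant orbit). In the first case there is an interior point $y$ of $e'$ whose $f_{t,L}$-preimage has cardinality at most $C_2$, and then Corollary \ref{cor-descendant} bounds $d_{FS(G,\mathcal{F})}(T_t,T')$ by a linear function of $C_2$, hence by a universal constant. In the second case there is a hanging tree $(H,l,v)$ in $T_t$ for the train-track structure determined by $f_{t,L}$, with the $f_{t,L}$-preimage of any interior point of $e'$ contained in $l$ together with the $\mathrm{Stab}(l)$-translates of $H$ and meeting $l$ in at most one point. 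Here I would reproduce the hanging-tree surgery of \cite[Proposition A.9]{BF13}: the existence of a hanging tree whose top vertex $v$ has exactly two gates means one can fold along $l$ in a controlled way, producing a splitting at uniformly bounded $FS(G,\mathcal{F})$-distance from both $T_t$ and $T'$ — concretely, collapsing the complement of $l$ and using the legal axis $l$ to find a common refinement with a splitting determined by $e'$. This yields a universal constant $C_3$ so that every $T_t$ on the path is within $C_3$ of $T'$ in $FS(G,\mathcal{F})$.

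The main obstacle I anticipate is not the logical skeleton — which is identical to Bestvina--Feighn's — but the need to make sure the hanging-tree surgery is phrased at the level of $(G,\mathcal{F})$-trees rather than their (non-locally-finite) quotient graphs, exactly as was already done in the proof of Proposition \ref{hanging}. In particular one must check that the relevant vertices in the hanging-tree case have the right number of gates and that the groups $\mathrm{Stab}(l)$ interacting with $H$ behave well in the HNN case, where the quotient of $\widehat{e_s}$ is a loop-edge; but since Proposition \ref{hanging} has already been established in this generality, the surgery argument goes through verbatim with $\mathrm{rk}_K(G,\mathcal{F})$ playing the role of the rank in the complexity bounds. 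Collecting the constants from the two cases and from Corollary \ref{cor-descendant} gives the constant $C_3$ in the statement; applying this with $T=T_0$ and varying $t\in[0,L]$ shows the $\psi$-image of the folding path has diameter at most, say, $3C_3$, which is the content of the corollary. Finally I would remark that this corollary, together with Proposition \ref{descendant}, is exactly the input needed to run the Masur--Minsky-style projection argument and conclude hyperbolicity of $FS(G,\mathcal{F})$ together with the fact that $\phi$-images of optimal liberal folding paths are uniform reparameterized quasi-geodesics, as asserted in Theorem \ref{FS-hyperbolic}.
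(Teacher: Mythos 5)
The paper does not actually supply a proof of Corollary~\ref{cor-hanging}: after establishing the relative version of Bestvina--Feighn's hanging-tree dichotomy (Proposition~\ref{hanging}), it simply cites \cite[Proposition~A.9]{BF13} and asserts that the deduction carries over verbatim. There is therefore no paper proof to compare your argument against; I instead flag the points where your reconstruction does not close.

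The central gap is in the per-time-$t$ invocation of Proposition~\ref{hanging}. To apply it to $f_{t,L}:T_t\to T'$ you need $T_t$ to contain an edge determining the same $(G,\mathcal{F})$-free splitting as $e'$. You assert this (``such an edge exists because all $T_t$ refine the same underlying decomposition on the relevant orbit''), but this is not a property of an arbitrary liberal folding path: a fold involving (the $f_{0,t}$-image of) $e$ can change the elliptic structure the tree realizes, and the resulting tree need not collapse to the original splitting $S$ at all. In the proof of Proposition~\ref{hanging} a specific folding path is deliberately constructed so that the edge $e$ can be tracked; the Corollary concerns \emph{any} liberal folding path guided by $f$, so you cannot assume such control. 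Note also that the first case of the dichotomy does not actually require this per-$t$ application: if at time $0$ some interior point $y$ of $e'$ has $|f^{-1}(y)|\le C_2$, then since the maps $f_{t,t'}$ are surjective morphisms, $|f_{t,L}^{-1}(y)|$ is non-increasing in $t$, and Corollary~\ref{cor-descendant} applied to each $f_{t,L}$ bounds $d_{FS(G,\mathcal{F})}(T_t,T')$ uniformly -- this sidesteps the hypothesis issue for case~1. The genuine difficulty is then concentrated entirely in the hanging-tree case, and here your argument is a gesture rather than a proof: the claim that ``collapsing the complement of $l$ and using the legal axis $l$ to find a common refinement with a splitting determined by $e'$'' yields a splitting at uniformly bounded $FS(G,\mathcal{F})$-distance from both $T_t$ and $T'$ is asserted, not established. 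In particular $l$ is an invariant line, not an edge orbit, so collapsing its complement does not obviously produce a $(G,\mathcal{F})$-free splitting, and no argument is given linking the outcome of this surgery to $e'$. That surgery is precisely the content of Bestvina--Feighn's Proposition~A.9 and has to be carried out, not invoked by name.
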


Once the above distance estimates have been established, the proof of Theorem \ref{FS-hyperbolic} goes by checking Masur and Minsky's axioms \cite[Theorem 2.3]{MM99} for the set of $\phi$-images in $FS(G,\mathcal{F})$ of optimal liberal folding paths between simplicial trees with trivial edge stabilizers in $\overline{\mathcal{O}(G,\mathcal{F})}$, which is coarsely transitive (the existence of optimal morphisms between splittings in $\mathcal{O}(G,\mathcal{F})$ follows from \cite[Corollary 6.8]{FM13}, and there is a canonical way to build a folding path from a morphism, as recalled in Section \ref{sec-folding}).

\subsection{Hyperbolicity of $FZ^{(max)}(G,\mathcal{F})$: proof of Theorem \ref{FZ-hyperbolic}}

We denote by $FZ'(G,\mathcal{F})$ (resp. $(FZ^{max})'(G,\mathcal{F})$) the graph whose vertices are one-edge $(G,\mathcal{F})$-free splittings, two splittings being joined by an edge if they are both compatible with a common $\mathcal{Z}$-splitting (resp. $\mathcal{Z}^{max}$-splitting). Since every one-edge $\mathcal{Z}$-splitting is compatible with a one-edge $(G,\mathcal{F})$-free splitting \cite[Lemma 5.11]{Hor14-5}, the graphs $FZ'(G,\mathcal{F})$ and $FZ(G,\mathcal{F})$ are quasi-isometric to each other, and similarly $(FZ^{max})'(G,\mathcal{F})$ and $FZ^{max}(G,\mathcal{F})$ are quasi-isometric. Following Mann's proof \cite{Man12}, we will show hyperbolicity of $FZ'(G,\mathcal{F})$ and $(FZ^{max})'(G,\mathcal{F})$. This will follow from the hyperbolicity of $FS(G,\mathcal{F})$ by applying a criterion due to Kapovich and Rafi, which we now recall, to the natural inclusion maps from $FS(G,\mathcal{F})$ to these graphs. 

\begin{prop}(Kapovich--Rafi \cite[Proposition 2.5]{KR12})\label{Kapovich-Rafi}
For all $\delta_0,M>0$, there exist $\delta_1,H>0$ such that the following holds.
\\
Let $X$ and $Y$ be connected graphs, such that $X$ is $\delta_0$-hyperbolic. Let $f:X\to Y$ be a map sending $V(X)$ onto $V(Y)$, and sending edges to edges. Assume that for all $x,y\in V(X)$, if $d_Y(f(x),f(y))\le 1$, then the $f$-image of any geodesic segment joining $x$ to $y$ in $X$ has diameter bounded by $M$ in $Y$. Then $Y$ is $\delta_1$-hyperbolic, and for any $x,y\in V(X)$, the $f$-image of any geodesic segment joining $x$ to $y$ in $X$ is $H$-Hausdorff close to any geodesic segment joining $f(x)$ to $f(y)$ in $Y$.
\end{prop}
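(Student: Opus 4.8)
\textbf{Proof plan for Proposition \ref{Kapovich-Rafi} (the Kapovich--Rafi criterion).}

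The plan is to reduce hyperbolicity of $Y$ to a "thin triangles up to bounded error" statement and then invoke the classical fact that a geodesic metric space in which triangles are uniformly thin is Gromov hyperbolic. First I would set up the hypothesis carefully: $f$ sends $V(X)$ onto $V(Y)$ and edges to edges, so $f$ is $1$-Lipschitz on the vertex set (in the graph metrics), hence $d_Y(f(x),f(y))\le d_X(x,y)$ for all $x,y\in V(X)$. The key hypothesis is that whenever $d_Y(f(x),f(y))\le 1$, the $f$-image of any $X$-geodesic $[x,y]$ has diameter at most $M$ in $Y$. The first step is to bootstrap this to a coarse-Lipschitz-in-reverse statement along geodesics: given any $x,y\in V(X)$, subdivide an $X$-geodesic $[x,y]$ at the preimages of a coarsest possible "$Y$-monotone" subdivision, and conclude that $f$ maps $[x,y]$ to a path in $Y$ that $M$-fellow-travels a $Y$-geodesic $[f(x),f(y)]$. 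Concretely, I would show that the $f$-image of $[x,y]$ is an unparametrized $(1,2M)$-quasigeodesic in $Y$: break $[x,y]$ into maximal subsegments on which the $f$-images of the endpoints are at $Y$-distance $\le 1$; on each such subsegment the image has diameter $\le M$, and consecutive subsegment endpoints move $Y$-distance exactly $1$, which gives both the upper bound $d_Y(f(x),f(y))\le d_X(x,y)$ and a lower bound $d_X(x,y)\le (2M+1)\,d_Y(f(x),f(y)) + 2M$ once one checks that you cannot pack too many unit steps into a bounded region — this last point uses $\delta_0$-hyperbolicity of $X$ via the Morse lemma to rule out long $X$-geodesic segments whose $f$-image has small diameter. (This is the main obstacle: showing that the subdivision is efficient, i.e. that $f\circ[x,y]$ does not backtrack too much in $Y$, which is where $\delta_0$-hyperbolicity of $X$ is genuinely used rather than just the combinatorial hypothesis.)

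The second step is to prove thin triangles in $Y$. Let $a,b,c\in V(Y)$ and lift them to $x,y,z\in V(X)$ with $f(x)=a$, $f(y)=b$, $f(z)=c$ (possible since $f$ is onto on vertices). Take $X$-geodesics $[x,y],[y,z],[z,x]$; by $\delta_0$-hyperbolicity these form a $\delta_0$-thin triangle in $X$. Apply $f$: by Step 1 the images $f([x,y])$, $f([y,z])$, $f([z,x])$ are unparametrized quasigeodesics in $Y$ with uniform constants (depending only on $M$), so by the Morse stability lemma — valid once we know $Y$ is a geodesic space, which it is, being a connected graph — each $f$-image stays uniformly Hausdorff-close to the corresponding $Y$-geodesic $[a,b],[b,c],[c,a]$, provided $Y$ is hyperbolic. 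To avoid circularity, I would instead argue directly: a point $p$ on $[a,b]$ is within bounded distance of a point $p'\in f([x,y])$ by Step 1; writing $p'=f(q)$ with $q\in[x,y]$, $\delta_0$-thinness gives $q$ within $\delta_0$ of a point $q''$ on $[y,z]\cup[z,x]$, and then $f(q'')$ lies on $f([y,z])\cup f([z,x])$, which in turn lies within bounded distance of $[b,c]\cup[c,a]$ again by Step 1. Chaining these bounds shows every side of the triangle $(a,b,c)$ lies in a uniformly bounded neighbourhood of the union of the other two; the bound depends only on $\delta_0$ and $M$. This yields the uniform thin-triangles constant $\delta_1$.

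The third step is to record the Hausdorff-closeness conclusion: for $x,y\in V(X)$, the $f$-image of an $X$-geodesic $[x,y]$ is, by Step 1, an unparametrized quasigeodesic in $Y$ with constants depending only on $M$; since $Y$ is now known to be $\delta_1$-hyperbolic, the Morse/stability lemma in $Y$ gives a constant $H = H(\delta_1, M) = H(\delta_0,M)$ such that $f([x,y])$ is $H$-Hausdorff-close to any $Y$-geodesic from $f(x)$ to $f(y)$. All constants produced depend only on $\delta_0$ and $M$, as required. I expect Step 1 — turning the purely combinatorial diameter hypothesis into a genuine quasigeodesic statement with controlled backtracking — to be the crux; Steps 2 and 3 are then standard applications of thin-triangle and Morse-lemma technology, with care taken to order the arguments so that hyperbolicity of $Y$ is established before the Morse lemma is invoked in $Y$.
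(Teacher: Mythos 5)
The paper does not prove this proposition; it cites it as Proposition~2.5 of Kapovich--Rafi. So the comparison is against what is needed to make the criterion hold, and against Kapovich--Rafi's actual proof, which goes through Bowditch's ``guessing geodesics'' criterion.

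Your Step~1 contains a false claim. There is no lower bound of the form $d_X(x,y)\le (2M+1)\,d_Y(f(x),f(y))+2M$: the hypothesis gives no control whatsoever over how much $f$ can \emph{collapse}. Take $X$ an arbitrarily long segment and $Y$ a single vertex; then $X$ is $0$-hyperbolic, the hypothesis holds with $M=0$, yet $d_X(x,y)$ is arbitrary while $d_Y(f(x),f(y))=0$. Consequently $f$-images of $X$-geodesics are not (parametrized) quasigeodesics, and the appeal to the Morse lemma in $X$ ``to rule out long $X$-geodesic segments whose $f$-image has small diameter'' cannot work, since the Morse lemma controls geodesics, not the behaviour of an external map on them. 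The weaker unparametrized claim is also left unsupported once the parametrized bound is withdrawn.

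Your Step~2 does not in fact avoid the circularity you flag. The ``direct argument'' begins with ``a point $p$ on $[a,b]$ is within bounded distance of a point of $f([x,y])$ by Step~1,'' and ends with ``which in turn lies within bounded distance of $[b,c]\cup[c,a]$ again by Step~1.'' Both of these sentences assert that a $Y$-geodesic fellow-travels the $f$-image of an $X$-geodesic, which is precisely the Morse-lemma consequence of hyperbolicity of $Y$ that you are trying to establish. What your chain of estimates actually proves, correctly, is the \emph{slim-triangles} property for the family of paths $\Lambda(a,b):=f\bigl([\tilde a,\tilde b]\bigr)$ (where $\tilde a,\tilde b$ are chosen lifts): given $a,b,c$, any point of $\Lambda(a,b)$ is within $\delta_0$ of $\Lambda(a,c)\cup\Lambda(c,b)$, since $f$ is $1$-Lipschitz on $X$ and $X$-triangles are $\delta_0$-thin. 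Together with the hypothesis (which says $\mathrm{diam}_Y\Lambda(a,b)\le M$ whenever $d_Y(a,b)\le 1$), these are exactly the two hypotheses of Bowditch's criterion (Bowditch, \emph{Intersection numbers and the hyperbolicity of the curve complex}, Prop.~3.1), which outputs both hyperbolicity of $Y$ with a constant depending only on $\delta_0,M$, and the fact that each $\Lambda(a,b)$ lies within controlled Hausdorff distance of a $Y$-geodesic $[a,b]$. That criterion, not the Morse lemma, is the missing ingredient that breaks the circle; without it your proposal does not close. Once $Y$ is known hyperbolic and the $\Lambda$-paths known Hausdorff-close to geodesics, the final assertion for \emph{arbitrary} $x,y\in V(X)$ follows because Bowditch's constants are uniform over the choice of lifts, so one may choose the lift of $(f(x),f(y))$ to be $(x,y)$ itself.
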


\begin{proof}[Proof of Theorem \ref{FZ-hyperbolic}]
The arguments from the following proof are due to Mann \cite{Man12}. Let $T_1$ and $T_2$ be two one-edge $(G,\mathcal{F})$-free splittings, both compatible with a one-edge $\mathcal{Z}^{(max)}$-splitting $T$. For simplicity of notations, we consider the case where the quotient graphs $T_1/G$, $T_2/G$ and $T/G$ are segments. The case of loop-edges is left to the reader, as the argument is similar, and similar to that in the proof of \cite[Theorem 5]{Man12}. Then $T$ is of the form $A\ast_{\langle w\rangle} B$. Without loss of generality, we can assume that there exist two free splittings of $B$ of the form $B=B_1\ast B'_1$ and $B=B_2\ast B'_2$, such that for all $i\in\{1,2\}$, the splitting $T+T_i$ is of the form $A\ast_{\langle w\rangle}B_i\ast B'_i$. Indeed, otherwise, the trees $T_1$ and $T_2$ are compatible, in which case they are already at distance $1$ in $FS(G,\mathcal{F})$. 

By blowing up the vertex groups of the splitting $T+T_1$, using their action on $T+T_2$ (which is possible because $T+T_1$ and $T+T_2$ have the same edge stabilizers), we get a tree $\widehat{T_1}$ that collapses to $T+T_1$, and comes with a morphism $f:\widehat{T}_1\to T+T_2$. We denote by $\widehat{p_1}:\widehat{T_1}\to T$ and $p_2:T+T_2\to T$ the natural alignement-preserving maps. The $B$-minimal subtree of $\widehat{T_1}$ is mapped by $f$ to the $B$-minimal subtree of $T+T_2$, so $\widehat{p_1}=p_2\circ f$. Using Lemma \ref{folding-collapse}, we see that all trees $T_t$ on an optimal liberal folding path guided by $f$ collapse to $T$. By equivariantly collapsing the edge with stabilizer $\langle w\rangle$ to a point in $T_t$, we get an optimal liberal folding path $\gamma$ from $T_1$ to $T_2$, whose $\psi$-image stays at bounded distance from $T$ in $FZ^{(max)}(G,\mathcal{F})$.  

We already know that $FS(G,\mathcal{F})$ is Gromov hyperbolic, and $\phi$-images of optimal liberal folding paths between simplicial trees with trivial edge groups are reparameterized quasi-geodesics with uniform constants. Therefore, any geodesic from $T_1$ to $T_2$ in $FS(G,\mathcal{F})$ is uniformly close to the folding path $\gamma$, with constants depending only on the hyperbolicity constant of $FS(G,\mathcal{F})$. Hence there is a constant $M$ such that the diameter of the $f$-image of any geodesic segment joining $T_1$ to $T_2$ in $FS(G,\mathcal{F})$ is bounded by $M$ in $FZ'(G,\mathcal{F})$. By choosing for $T$ a $\mathcal{Z}^{max}$-splitting, the same holds true for $(FZ^{max})'(G,\mathcal{F})$.
\end{proof}

\bibliographystyle{amsplain}
\bibliography{bdyZ}

\end{document}